\DeclareSymbolFont{cyrletters}{OT2}{wncyr}{m}{n}
\DeclareMathSymbol{\Sha}{\mathalpha}{cyrletters}{"58}
\newtheorem{theorem}{Theorem}[section]
\newtheorem{lemma}[theorem]{Lemma}
\newtheorem{example}[theorem]{Example}
\newtheorem{definition}[theorem]{Definition}
\newtheorem{corollary}[theorem]{Corollary}
\theoremstyle{remark}
\newcommand{\Z}{\mathbb{Z}}
\newcommand{\R}{\mathbb{R}}
\newcommand{\T}{\mathbb{T}}
\newcommand{\C}{\mathbb{C}}
\newcommand{\N}{\mathbb{N}}
\numberwithin{equation}{section}
\begin{document}
\title[Dyadic harmonic analysis and Weighted inequalities]{\textbf{Dyadic harmonic analysis and weighted inequalities: the sparse revolution}}

\author[M.C. Pereyra]{Mar\'{i}a Cristina Pereyra}

\address{Mar\'{i}a Cristina Pereyra\\
Department of Mathematics and Statistics\\
 1 University of New Mexico\\ 311 Terrace St. NE, MSC01 1115\\ 
Albuquerque, NM 87131-0001} \email{crisp@math.unm.edu}

\begin{abstract}
 We will introduce the basics of dyadic harmonic analysis and how it can be used to obtain weighted estimates for classical Calder\'on-Zygmund singular integral operators and their commutators.
Harmonic analysts have used dyadic models for many years as a first step towards the understanding of more complex continuous operators. In 2000 Stefanie Petermichl discovered a representation formula for the venerable Hilbert transform as an average (over grids) of dyadic shift operators, allowing her to reduce arguments to finding  estimates for these simpler dyadic models. For the next decade the technique used to get sharp weighted inequalities was the Bellman function method introduced by Nazarov, Treil, and Volberg, paired with sharp extrapolation by Dragi\v{c}evi\'c et al. Other methods where introduced by Hyt\"onen, Lerner, Cruz-Uribe, Martell, P\'erez, Lacey, Reguera, Sawyer, Uriarte-Tuero, involving stopping time and median oscillation arguments, precursors of the very successful domination by positive sparse operators methodology. The culmination of this work was Tuomas Hyt\"onen's 2012 proof of the $A_2$ conjecture based on a representation formula for any Calder\'on-Zygmund operator as an average of appropriate dyadic operators. Since then domination by sparse dyadic operators has taken central stage and has found applications well beyond Hyt\"onen's $A_p$ theorem.  We will survey this remarkable progression and more in these lecture notes.

\end{abstract}

\subjclass[2010]{Primary  42B20, 42B25 ; Secondary 47B38}
\keywords{Weighted norm estimate, Hilbert transform, commutators, Dyadic operators,  $A_p$-weights, Carleson sequences, Bellman functions, sparse operators. }

\maketitle

\tableofcontents

\section{Introduction}

These notes are based on lectures delivered by the author on August 7-9,  2017 at the  CIMPA 2017 \emph{Research School -- IX Escuela Santal\'o: Harmonic Analysis, Geometric Measure Theory and Applications}, held in  Buenos Aires, Argentina. The course was titled  "Dyadic Harmonic Analysis and Weighted Inequalities".

The main question of interest  in these notes is to decide for a given operator or class of operators and 
a pair of weights $(u,v)$, if there is  a positive constant, $C_p(u,v,T)$,  such that 
$$
\|Tf\|_{L^p(v)}\leq C_p(u,v,T) \, \|f\|_{L^p(u)} \;\;\mbox{for all functions} \;\; f\in L^p(u).
$$

The main goals in these lectures are two-fold. 
First, given an operator  $T$ (or family of operators), identify and classify pairs of weights $(u,v)$ 
for which the operator(s)  $T$ is(are) bounded  on weighted Lebesgue spaces, more specifically from
$L^p(u)$ to $L^p(v)$ --\emph{qualitative bounds}--.
Second, understand the nature of the constant $C_p(u,v,T)$  --\emph{quantitative bounds}--.

We concentrate on one-weight $L^p$ inequalities for $1<p<\infty$,  that is the case when $u=v=w$, for the prototypical operators, dyadic models, and their commutators, although we will state some of the known two-weight results.
The operators we will focus on are
the Hardy-Littlewood maximal function;
Calder\'on-Zygmund operators  $T$, such as the Hilbert transform $H$; and 
 {their dyadic analogues}, specifically the dyadic maximal function, the martingale transform, the dyadic square function, the Haar shift multipliers, the dyadic paraproducts, and  the {sparse dyadic operators}.

The question now reduces to:
Given weight $w$ and $1< p <\infty$, is there  a constant  $C_p(w,T)>0$ such that for all functions $f\in L^p(w)$
\[
\|Tf\|_{L^p(w)}\leq C_p(w,T) \, \|f\|_{L^p(w)} \;? 
\]

We have known since the 70's that the maximal function is bounded on $L^p(w)$ if an only if the weight $w$ is in the Muckenhoupt $A_p$ class \cite{Mu}, similar result holds for the Hilbert transform \cite{HMW}.  General  Calder\'on-Zygmund operators and dyadic analogues are bounded on $L^p(w)$  \cite{CoFe} when
the weight $w\in A_p$ and the same holds for  their commutators with functions in the space of bounded mean oscillation $({\rm BMO}$)  \cite{Bl, ABKPz}. 
The quantitative versions of these results were obtained several decades later, in 1993  for the maximal function  \cite{Bu}, in 2007 for the Hilbert transform \cite{Pet2}, in 2012  for Calder\'on-Zygmund singular integral operators \cite{Hyt2} and for their commutators \cite{ChPPz}. We will say more about $A_p$ weights and the quantitative versions of these classical results in the following pages. 

We will show or at least describe, for the model operators $T$, the validity of a weighted $L^2$ inequality  that is linear on  $[w]_{A_2}$, the $A_2$ characteristic of the weight, namely
 there is a constant $C>0$ such that for all weights $w\in A_2$ and for all functions $f\in L^2(w)$ 
\[ \|Tf\|_{L^2(w)}\leq C [w]_{A_2} \|f\|_{L^2(w)}.\]
That this holds for all Calder\'on-Zygmund singular integrals operators was the $A_2$ conjecture. We will also describe several approaches for the corresponding quadratic estimate for the commutator $[b,T]=bT-Tb$ where
$b$ is a function in ${\rm BMO}$, namely
\[ \|[b,T]f\|_{L^2(w)}\leq C [w]^2_{A_2}\|b\|_{{\rm BMO}} \|f\|_{L^2(w)}.\]

Dyadic models have been used in harmonic analysis  and  other areas of mathematics for a long time,
Terry Tao has an interesting post in his blog\footnote{https://terrytao.wordpress.com/2007/07/27/dyadic-models/}
regarding the ubiquitous "dyadic model". For a presentation suitable for beginners, see  the lecture notes by the author \cite{P1}, which describe the status quo of dyadic harmonic analysis  and weighted inequalities as of 2000.
This millennium has seen new dyadic techniques evolve, become mainstream, and help settle old problems, these lecture notes try to illustrate some of this progress. In particular averaging and  sparse domination techniques with and by dyadic operators have allowed researchers to transfer results from the dyadic world to the continuous world. No longer the dyadic models are just toy models in harmonic analysis, they can truly inform the continuous models.
Here are some  examples where this dyadic paradigm has been  useful. 

 The {dyadic maximal function} controls the maximal function  (the converse is immediate) by means of the one-third trick.  Estimates for the dyadic maximal function are easier to obtain and  transfer to the maximal function painlessly. 

  The Walsh model is the dyadic counterpart to Fourier analysis. The first real progress towards proving boundedness of the {bilinear Hilbert transform}  \cite{LTh}, result that earned Christoph Thiele and Michael Lacey  the 1996 Salem Prize\footnote{The Salem Prize, founded by the widow of Raphael Salem, is awarded every year to a young mathematician judged to have done outstanding work in Salem's field of interest, primarily the theory of Fourier series. The prize is considered highly prestigious and many Fields Medalists previously received Salem prize (Wikipedia)}, was made by Thiele in his 1995 PhD thesis proving the Walsh model version of such result \cite{Th}.

  {Stefanie Petermichl }  showed in 2000
that one can write the Hilbert transform  as an {"average of dyadic shift operators"} over {random dyadic grids} \cite{Pet1}. 
She achieved this using the well-known symmetry properties that  characterize the Hilbert transform. Namely, the Hilbert transform commutes with translations and dilations, and anticommutes
 with reflections. A linear and bounded operator 
on $L^2(\R )$ with those properties {must be} a constant multiple of the Hilbert transform. 
Similarly, the  Riesz  transforms  \cite{Pet3} can be written as  averages of suitable dyadic operators. Petermichl proved the $A_2$ conjecture for these dyadic operators using  Bellman function techniques \cite{Pet2,Pet3}.
These results added a very precise new dyadic  perspective to   such  classic and well-studied operators  in harmonic analysis and earned Petermichl the 2006 Salem Prize, first time this prize was awarded to a female mathematician.

The Martingale transform was considered the dyadic toy model "par excellence" for Calde-r\'on-Zygmund singular integral operators. For many years  one would test the martingale transform first and, if successful,  then worry about the continuous versions. In 2000, Janine Wittwer proved the $A_2$ conjecture for the martingale transform using Bellman functions \cite{W1}.
 The Beurling transform can be written as an average of martingale transforms in the complex plane, and this allowed Stefanie Petermichl and  Sasha Volberg \cite{PetV} to prove in 2002 linear weighted inequalities on $L^p(w)$ for $p\geq 2$, and as a consequence deduce an important end-point result in the theory of quasiconformal mappings that had been conjectured by  Kari Astala,  Tadeusz Iwaniec, and  Eero Saksman \cite{AIS}.
 
 Surprisingly, all Calder\'on-Zygmund singular integral  operators, can be written as averages of Haar shift dyadic operators of arbitrary complexity and dyadic paraproducts  as proven by Tuomas Hyt\"onen  \cite{Hyt2}. 
In 2008, Oleksandra Beznosova proved the $A_2$ conjecture for the dyadic paraproduct  \cite{Be} and, together with  Hyt\"onen's dyadic representation theorem, this lead to Hyt\"onen's  proof of the full $A_2$ conjecture \cite{Hyt2}.

 Leading towards Hyt\"onen's result there were a number of breakthroughs  that have recently coalesced under the umbrella of {"domination by finitely many sparse  positive dyadic operators"}.
 Andrei Lerner's early results \cite{Le6} played a central role in this development.   It is usually straightforward  to verify that these sparse operators have desired  (quantitative) estimates, it is harder to prove appropriate domination results for each particular operator and function it acts on. This methodology has seen an explosion of applications well-beyond the original $A_2$ conjecture where it originated.
Identifying the sparse collections associated to a given operator and function is the most difficult part of the argument and it  involves using weak-type inequalities, {stopping time techniques}, and {adjacent dyadic grids}. 

We will explore some of these examples in the lecture notes with emphasis on quantitative weighted estimates. We will illustrate in a few case studies different techniques  that have evolved as a result of these investigations such as Bellman functions, quantitative extrapolation and transference theorems, and reduction to studying dyadic operators either by averaging or by sparse domination.

The structure of the lecture notes remains faithful to the lectures delivered by the author in Buenos Aires except for some minor reorganization. Some themes are touched at the beginning, to wet the appetite of the audience, and are expanded on later sections. Most objects are defined as they make their first appearance in the story.  Naturally  more details are provided than in the actual lectures, some details were in the original slides, but had to be skipped or fast forwarded, those topics are included in these lecture notes.
The sections are peppered with historical remarks and references, but inevitably some will be missing or could be inaccurate despite the time and effort spent by  the author on them. Thus, the author apologizes in advance for any inaccuracy  or omission, and gratefully would like to hear about any corrections for future reference.    

In Section~\ref{sec:weighted-norm-inequalities}, we introduce the basic model operators: the Hilbert transform and the maximal function and we discuss their classical $L^p$  and weighted $L^p$ boundedness properties. 
We show that  $A_p$ is a necessary condition for the boundedness of the maximal function on weighted Lebesgue spaces $L^p$.
We describe  why are we interested on weighted estimates, and more recently on quantitative weighted estimates. In particular we describe the linear weighted $L^2$ estimates saga leading towards the resolution of the $A_2$ conjecture and how to derive quantitative weighted $L^p$ estimates using  sharp extrapolation. We finalize the section with a brief summary of the two-weight results known for the Hilbert transform and the maximal function.

In Section~\ref{sec:Dyadic-harmonic-analysis}, we introduce the elements of dyadic harmonic analysis and  the basic dyadic maximal function. More precisely we discuss dyadic grids (regular, random, adjacent) and Haar functions (on the line, on $\R^d$,  on spaces of homogeneous type).  As a first example,  illustrating the power of the dyadic techniques, we present Lerner's proof of Buckley's quantitative $L^p$ estimates for the  maximal function, which reduces, using the one-third trick, to estimates for the dyadic maximal function. We also describe, given dyadic cubes on spaces of homogeneous type, how to construct corresponding Haar bases, and briefly describe the Auscher-Hyt\"onen "wavelets" in this setting.

In Section~\ref{sec:Dyadic-operators}, we discuss the basic dyadic operators:  the martingale transform, the dyadic square function, the Haar shifts multipliers (Petermichl's and those of arbitrary complexity), and the dyadic paraproducts. These are the ingredients needed to state Petermichl's and Hyt\"onen's representation theorems for the Hilbert transform and Calder\'on-Zygmund operators respectively.  For each of these dyadic model operators we describe the known $L^p$  and weighted $L^p$ theory and we state both Petermichl's and Hyt\"onen's representation theorems.

In Section~\ref{sec:A2paraproduct}, we sketch Beznosova's proof of the $A_2$ conjecture for the dyadic paraproduct, this is a Bellman function argument. As a first approach we get a 3/2 estimate, and with a refinement the linear estimate for the dyadic paraproduct is obtained. Along the way we introduce weighted Carleson sequences, a weighted Carleson embedding lemma, some Bellman function lemmas: the Little Lemma and the $\alpha$-Lemma,  and weighted Haar functions needed in the argument, we also sketch the proofs of these auxiliary results.

 In Section~\ref{sec:Commutator},  we discuss weighted inequalities in a case study:  the  commutator of the Hilbert transform $H$ with a function $b$ in ${\rm BMO}$. We summarize chronologically the weighted norm inequalities known for the commutator. We sketch the  dyadic proof of the quantitative weighted $L^2$ estimate for the commutator $[b,H]$  due to Daewon Chung, yielding the optimal quadratic dependence on the $A_2$ characteristic of the weight. We  discuss a very useful transference theorem of Daewon Chung, Carlos P\'erez and the author, and present its proof based on the celebrated Coifman--Rochberg--Weiss argument. The transference theorem allows to deduce quantitative weighted $L^p$ estimates for the commutator of a linear operator  with a ${\rm BMO}$ function, from given weighted $L^p$ estimates for the operator. 
 
In Section~\ref{sec:Sparse}, we introduce the sparse domination by positive dyadic operators paradigm that has 
emerged and proven to be very powerful with applications in many areas not only weighted inequalities.
We discuss a characterization of sparse families of cubes via Carleson families of dyadic cubes due to  Andrei Lerner and Fedja Nazarov. We  present the beautiful proof of the $A_2$ conjecture for sparse operators due to David Cruz-Uribe,  Chema Martell, and Carlos P\'erez. We illustrate with one toy model example, the martingale transform, how to achieve the pointwise domination by sparse operators following an argument by Michael Lacey. Finally we briefly discuss a sparse domination theorem for commutators valid for (rough) Calderón-Zygmund singular integral operators due to Andrei Lerner, Sheldon Ombrosi, and Israel Rivera-Ríos that yields a new quantitative two weight estimates of Bloom type, and recovers all known  weighted results for the commutators.

Finally, in Section~\ref{sec:Recent-progress} we present a summary and briefly discuss some very recent progress.

Throughout the lecture notes a constant $C>0$ might change from line to line. The notation $A:=B$ or $B=:A$ means that $A$ is defined to be $B$. The notation $A\lesssim B$  means that there is a constant $C>0$ such that $A\leq CB$.  The notation $A\sim B$ means that  $A\lesssim B$ and $B\lesssim A$.  The notation $A\lesssim_{r,s} B$  means that the constant $C>0$ in the implied  inequality depends only on the parameters $r,s$.

\vskip .1in
{\bf Acknowledgements:} 
I would like to thank Ursula Molter, Carlos Cabrelli, and all the organizers  of the CIMPA 2017 \emph{Research School -- IX Escuela Santal\'o: Harmonic Analysis, Geometric Measure Theory and Applications}, held in  Buenos Aires, Argentina from July 31 to  August 11, 2017,    for the invitation to give the course on which  these lecture notes are based.  It meant a lot to me to teach in  the "Pabell\'on 1 de la Facultad de Ciencias Exactas", 
 having grown up hearing stories about the mythical Universidad de Buenos Aires (UBA) from  my 
  parents, {Concepci\'on Ballester and Victor Pereyra}, and dear friends\footnote{Dear friends such as Juli\'an and Amanda Araoz, Manolo Bemporad, Mischa and Yanny Cotlar, Rebeca Guber, Mauricio and Gloria Milchberg, Cora Ratto and Manuel Sadosky, Cora Sadosky and Daniel Goldstein, and Cristina Zoltan, some saddly no longer with us.} who, like us,  were welcomed in Venezuela in the late 60's and 70's, and to whom I would like to dedicate these lecture notes. Unfortunately the flow is now being reversed as many Venezuelans of all walks of life are fleeing their country and many, among them mathematicians and scientists, are finding a home in other South American countries, in particular in Argentina.  
  I would also like to thank the enthusiastic students and other attendants, as always, there is no course without an audience, you are always an inspiration for us.  I thank the kind referee,  who made many comments that greatly improved the presentation, and my former PhD student David Weirich, who kindly provided the figures.
Last but not least, I would like to thank my husband, who looked after our boys while I was traveling, and my family in Buenos Aires who lodged and fed me.

\section{Weighted Norm Inequalities}\label{sec:weighted-norm-inequalities}

In this section, we introduce some  basic  notation and the model operators: the Hilbert transform and the maximal function and we discuss their classical $L^p$  and weighted $L^p$ boundedness properties. 
We show that  $A_p$ is a necessary condition for the boundedness of the maximal function on weighted $L^p$.
We describe  why are we interested in weighted estimates, and more recently on quantitative weighted estimates. In particular we describe the linear weighted $L^2$ estimates saga leading towards the resolution of the $A_2$ conjecture and how to derive quantitative weighted $L^p$ estimates using  sharp extrapolation. We finalize the section with a brief summary of the two-weight results known for the Hilbert transform and the maximal function.

\subsection{Some basic notation and prototypical operators}

We  introduce some basic notation used throughout the lecture notes. We remind the reader the basic spaces (weighted $L^p$ and bounded mean oscillation, ${\rm BMO}$), and  the prototypical continuous operators to be studied, namely the maximal function, the Hilbert transform  and  its commutator with functions in ${\rm BMO}$. We briefly recall some of the settings where these operators appear. 

 The \emph{weights} $u$ and $v$ are locally integrable functions  on $\R^d$, namely $u,v \in L^1_{loc}(\R^d)$,  that are almost everywhere  positive functions. 
  
 Given a weight $u$,  a measurable function $f$ is in $L^p(u) $ if and only if  
\[\|f\|_{L^p(u)}:=\left (\int_{\R^d} |f(x)|^p \, u(x)\,dx \right )^{1/p} < \infty.\]
 When $u\equiv 1$ we denote $L^p(\R^d )=L^p(u)$ and  $\|f\|_{L^p}:=\|f\|_{L^p(\R^d )}$.
 
  Given $f,g\in L^1(\R^d )$ their \emph{convolution} is given by
 \begin{equation}\label{convolution}
  f*g(x)=\int_{\R^d} f(x-y)\, g(y)\,dy.
  \end{equation}
 
 A locally integrable function $b$ is in the space of \emph{bounded mean oscillation},  namely $b\in {\rm BMO}$,  if and only if
 \begin{equation}\label{BMO}
  \|b\|_{{\rm BMO}} :=\sup_{Q} \frac{1}{|Q|} \int_Q |b(x) -\langle b\rangle_Q|\,dx  <\infty,  \;\mbox{where} \; \langle b \rangle_Q= \frac{1}{|Q|} \int_Q b(t)\;dt,
  \end{equation}
here  $Q\subset \R^d$ are cubes with sides parallel to the axes, $|Q|$ denotes the volume of the cube $Q$, and more generally, $|E|$ denotes the Lebesgue measure of a measurable set  $E$ in $\R^d$. 
Note that  $L^{\infty}(\R^d)$,  the space of essentially bounded functions on $\R^d$, is a proper subset of ${\rm BMO}$ (e.g. $\log |x|$ is a function in ${\rm BMO}$ but  not in $L^{\infty}(\R )$).
 
We will consider linear or sublinear operators  $T: L^p(u)\to L^p(v)$.  Among the linear operators the Calder\'on-Zygmund singular integral operators  and their dyadic analogues will be most important for us.

  The prototypical \emph{Calder\'on-Zygmund singular integral operator}  is the  \emph{Hilbert transform} on $\R$,  given by convolution with the distributional  \emph{Hilbert kernel}  $k_H(x):={\rm p.v.}\big ({1}/{(\pi x)}\big )$
\begin{equation}\label{def:Hpv} Hf(x):= k_H * f(x)=\mbox{p.v.} \frac{1}{\pi}\int \frac{f(y)}{x-y}\,dy
 :=\lim_{\epsilon\to 0} \frac{1}{\pi} \int_{|x-y|>\epsilon  } \frac{f(y)}{x-y} \,dy.
 \end{equation}
The Hilbert transform and its periodic analogue naturally appear in complex analysis and  in the study of convergence on $L^p$ of partial Fourier sums/integrals. The Hilbert transform  siblings, the Riesz transforms on $\R^d$ and  the Beurling transform on $\C$, are intimately connected to partial differential equations and to quasiconformal theory, respectively.  Its cousin, the Cauchy integral on curves and  higher dimensional analogues, is  connected to rectifiability  and  geometric measure theory.

A prototypical sublinear operator is the \emph{Hardy-Littlewood  maximal function} 
\begin{equation}\label{def:Maximal}
Mf(x):= \sup_{Q: x\in Q} \frac{1}{|Q|}\int_{Q} |f(y)|\,dy,
\end{equation}
here the supremum is taken over all cubes $Q\subset \R^d$ containing $x$ and  with sides parallel to the axes. The maximal function naturally controls many singular integral operators and approximations of the identity, its weak-boundedness properties on $L^1(\R^d)$ imply the Lebesgue differentiation theorem. Another sublinear operator that we will encounter in these lectures is the dyadic square function, see Section~\ref{sec:dyadic-square-function}.
 
Given $T$ a linear or sublinear operator, its  \emph{commutator} with a function $b$ is given by 
$$[b,T](f):= b\,T(f)-T(bf).$$
  The commutators are important in the study of factorization for Hardy spaces and to  characterize the space of bounded mean oscillation (BMO). They also play a central role in the theory of partial differential equations (PDEs).

  We refer the reader to \cite{Gr1,Gr2, St} for encyclopedic presentations of classical harmonic analysis,
  \cite{Duo1} for a more succinct yet deep presentation, and \cite{PW} for an elementary presentation emphasizing the dyadic point of view.

\subsection{Hilbert transform}
We now recall familiar facts about the Hilbert transform, including its $L^p$ and  one-weight (quantitative) $L^p$ boundedness properties.

The Hilbert transform  is defined  by \eqref{def:Hpv} on the underlying space and on frequency space the following representation as a \emph{Fourier multiplier} with \emph{Fourier symbol} $m_H$, holds,
\begin{equation}\label{def:Hfouriermultiplier}
\widehat{Hf}(\xi ) = m_H(\xi)\, \widehat{f}(\xi), 
\;\;\mbox{where}\;\; m_H(\xi) := -i \,\mbox{sgn} (\xi ).
\end{equation}
To connect the two representations for the Hilbert transform, on the underlying space and on the frequency space, remember that multiplication on the Fourier side corresponds to convolution on the underlying space. Therefore, $k_H$, the Hilbert kernel, is  given by the inverse Fourier transform of the Fourier symbol $m_H$, 
$$
{ Hf(x)=  k_H * f(x), \quad\mbox{where} \;\; k_H(x) := (m_H)^{\vee}(x) =  \mbox{p.v.} \frac{1}{\pi x}},
$$
which  is precisely the content of \eqref{def:Hpv}.
Here the {\em Fourier transform}  and \emph{inverse Fourier transform} of  a Schwartz function $f$ on $\R$ are defined by
  $$\widehat{f}(\xi ) :=\int_{\R} f(x)\, e^{-2\pi i \xi x} \,dx, \quad\quad  (f)^{\vee}(x):= \int_{\R} f(\xi) \, e^{2\pi i \xi x} \, d\xi. $$
  The  Fourier transform is a bijection and an $L^2$ isometry on the Schwartz class that can be extended to be an  isometry on $L^2(\R )$, that is $\|\widehat{f}\|_{L^2(\R )} = \|f\|_{L^2(\R )}$ (\emph{Plancherel's identity}), and it can also be extended to be a bijection on the space of tempered distributions. The convolution $f*g$ is a well-defined function on $L^r(\R )$ when $f\in L^p(\R) $ and $g\in L^p(\R )$, provided $\frac1p + \frac1q=\frac1r +1$ and $p,q,r\in [1,\infty]$. Moreover, on the same range,  \emph{Young's inequality} holds,
 \begin{equation}\label{Youngs}
 \|g*f\|_{L^r} \leq \|g\|_{L^q}\|f\|_{L^p}. 
 \end{equation}
 In these lecture notes we will explore, in Section~\ref{sec:Petermichl'sSha}, a third representation for the Hilbert transform in terms of dyadic shift operators  discovered by Stefanie Petermichl \cite{Pet1} in 2000.

\subsubsection{$L^p$ boundedness properties of $H$}

 Fourier theory ensures boundedness on $L^2(\R)$  for the Hilbert transform $H$. In fact, applying Plancherel's identity twice and using the fact that $|m_H(\xi)|=1$ a.e., one immediately verifies that  $H$ is an isometry on $L^2(\R )$, namely
$$\|Hf\|_{L^2}=\|\widehat{Hf}\|_{L^2}=\|\widehat{f}\|_{L^2}=\|f\|_{L^2}.$$
Young's inequality \eqref{Youngs} for ${p\geq 1}$, $q=1$ (hence $r=p$), imply that if
$g\in L^1(\R)$ and  $f\in L^p(\R)$ then $g*f\in L^p(\R)$, moreover
\[
\|g*f\|_{L^p}\leq \|g\|_{L^1}\|f\|_{L^p}.
\]
This would imply boundedness on $L^p(\R )$ for the Hilbert transform {\sc if} 
the Hilbert kernel, {$k_H$,  were integrable, but is not. Despite this fact,
the following boundedness properties for the Hilbert transform hold (shared by all Calder\'on-Zygmund  singular integral operators).

The Hilbert transform is not bounded on $L^1(\R )$, it is of \emph{weak-type} (1,1) ({Kolmogorov 
$1927$}), that is there is a constant $C>0$ such that for all $\lambda >0$ and for all $f\in L^1(\R )$
\[ 
|\{x\in\R: |Hf(x)| > \lambda\}| \leq \frac{C}{\lambda} \|f\|_{L^1}.
\]

The Hilbert transform is bounded on 
${L^p(\R )}$ for all ${1<p<\infty}$  ({M. Riesz $1927$}), namely there is a constant $C_p>0$ such that for all $f\in L^p(\R)$
\[
 \|Hf\|_{L^p}\leq C_p \|f\|_{L^p} \quad\mbox{(best constant was found by Pichorides in $1972$)}.
 \]
 Note that for $1<p<2$ the $L^p$ boundedness can be obtained by Marcinkiewicz interpolation theorem, from the  weak-type (1,1) and the $L^2$ boundedness.  Then, for $2<p<\infty$, the boundedness on $L^p(\R )$ can be obtained by a duality argument,  suffices to observe that  the adjoint of $H$  is $-H$, that is the Hilbert transform is almost self-adjoint. However the Marcinkiewicz interpolation did not exist in 1927,  Riesz proved instead that boundedness on $L^p(\R )$ implied boundedness on $L^{2p}(\R )$, hence boundedness on $L^2(\R )$ implied boundedness on $L^4(\R )$, then on $L^8(\R )$ and by induction on $L^{2^n}(\R )$. Strong interpolation, which already existed, then gave boundedness on $L^p(\R)$ for  $2^n\leq p \leq 2^{n+1}$ and for all $n\geq 1$, that is for all $2\leq p<\infty$. Finally a duality argument took care of $1<p<2$. In Section~\ref{sec:Petermichl'sSha} we will  deduce the $L^p$ boundedness of the Hilbert transform from the $L^p$ boundedness of dyadic shift operators, see Section~\ref{sec:extrapolation}
 
  Interpolation is an extremely powerful tool in analysis that allows  to deduce intermediate  norm inequalities given two end-point  (weak)norm inequalities. We will not discuss interpolation further in these notes, instead we will focus on extrapolation, that allows us to deduce weighted $L^p$ norm inequalities  for all $1<p<\infty$ given weighted $L^r$ norm inequalities for one index $r>1$. 
 
Finally it is important to note that the  Hilbert transform   is not bounded on $L^{\infty}(\R )$, however  it is bounded on the larger space ${\rm BMO}$ of functions of bounded mean oscillation ({C. Fefferman $1971$}).

To illustrate the lack of boundedness on $L^{\infty}(\R )$ and on $L^1(\R )$  it is helpful to calculate  the Hilbert transform for some simple functions, showing in fact that the Hilbert transform does not map neither $L^1(\R )$ nor $L^{\infty}(\R )$ into themselves.  This immediately eliminates the possibility for the Hilbert transform  being bounded on either space.

\begin{example}[Hilbert transform of  an indicator function]
\[H\mathbbm{1}_{[a,b]}(x)= (1/\pi) \,\log \big (|x-a|/|x-b|\big ),\]
 where the indicator $\mathbbm{1}_{[a,b]}(x):=1$ when $x\in [a,b]$ and zero otherwise, a bounded and integrable function, that is $\mathbbm{1}_{[a,b]}\in L^1(\R )\cap L^{\infty}(\R)$. 
However  $\log |x|$  is neither in $L^{\infty}(\R )$ nor in $L^1(\R)$, but it is a function of bounded mean
 oscillation. The functions $f$ in $L^1(\R)$ whose Hilbert transforms  $Hf$ are also in $L^1(\R)$ constitute the Hardy space $H^1(\R )$, such functions need to have some cancellation $(\int_{\R} f(x)\,dx =0)$, clearly not shared by the indicator function $\mathbbm{1}_{[a,b]}$.
\end{example}

\subsubsection{One-weight inequalities for $H$}\label{sec:one-weight-inequalities-for-H}

The one-weight theory \`a la Muckenhoupt for the Hilbert transform is well understood, the qualitative theory has been known since 1973 \cite{HMW}, the quantitive estimates were settled by Stefanie Petermichl in  2007 \cite{Pet2}. The two-weight problem on the other hand, was studied for a long time but the necessary and sufficient conditions  \`a la Muckenhoupt for pairs of weights  $(u,v)$ that ensure boundedness of the Hilbert transform from $L^p(u)$ into $L^p(v)$ were only settled in 2014 by Michael Lacey, Chun-Yen Shen, Eric Sawyer, and Ignacio Uriarte-Tuero \cite{L2, LSSU}.
\begin{theorem}[Hunt, Muckenhoupt, Wheeden 1973]\label{thm:HMW}
The Hilbert transform is bounded on $L^p(w)$ for $1<p<\infty$ if and only if the weight $w\in A_p$. In either case there is a constant $C_p(w)>0$ depending on $p$ and  on the weight $w$ such that
\[
 \|Hf\|_{L^p(w)}\leq C_p(w) \|f\|_{L^p(w)}\quad \mbox{ for all $f\in L^p(w)$}.
\]
\end{theorem}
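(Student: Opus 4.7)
My plan is to prove the two implications separately, using Muckenhoupt's theorem for the maximal function (cited in the introduction) as the workhorse for sufficiency, and exploiting the explicit form of the Hilbert kernel for necessity.

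For the necessity direction, assume $H$ is bounded on $L^p(w)$ with constant $C_p(w)$. I will test the boundedness inequality on $f = \sigma \mathbbm{1}_I$ where $\sigma := w^{1-p'}$ and $I = [a,b]$ is an arbitrary bounded interval. For the adjacent interval $J := [b, 2b-a]$ of equal length and for $x \in J$, the explicit formula gives
\[ H(\sigma\mathbbm{1}_I)(x) \;=\; \frac{1}{\pi}\int_a^b \frac{\sigma(y)}{x-y}\,dy \;\geq\; \frac{1}{2\pi|I|}\int_I \sigma, \]
since $0 < x - y \leq 2|I|$. Raising to the $p$-th power, integrating against $w$ over $J$, and using the pointwise identity $\sigma^p w = \sigma$ so that $\|\sigma\mathbbm{1}_I\|_{L^p(w)}^p = \sigma(I)$, the boundedness hypothesis yields
\[ \Big(\frac{w(J)}{|I|}\Big)\Big(\frac{\sigma(I)}{|I|}\Big)^{p-1} \;\lesssim\; C_p(w)^p. \]
The same argument with the adjacent interval $J'$ on the left produces the analogous bound. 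Since $w(J)+w(J')+w(I) \sim w(3I)$ and $3I$ is comparable to $I$, a short rearrangement (or equivalently, recognising that the doubling property is built into the argument) upgrades this to the genuine $A_p$ inequality for $w$.

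For the sufficiency direction, assume $w \in A_p$. The classical route is to reduce the problem to Muckenhoupt's theorem for $M$ via a Coifman--Fefferman good-$\lambda$ inequality: since $A_p \subset A_\infty$, there exist $\delta, C > 0$ such that for all $\lambda > 0$ and all sufficiently small $\gamma > 0$,
\[ w\big(\{|Hf| > 2\lambda\} \cap \{Mf \leq \gamma\lambda\}\big) \;\leq\; C\gamma^\delta \, w\big(\{|Hf| > \lambda\}\big). \]
Multiplying by $p\lambda^{p-1}$, integrating in $\lambda$, and using the layer-cake identity $\|g\|_{L^p(w)}^p = p\int_0^\infty \lambda^{p-1} w(\{|g|>\lambda\})\,d\lambda$, one chooses $\gamma$ small enough to absorb the $\|Hf\|_{L^p(w)}^p$ term on the right. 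This yields $\|Hf\|_{L^p(w)} \lesssim \|Mf\|_{L^p(w)}$, and Muckenhoupt's $A_p$ theorem then gives $\|Mf\|_{L^p(w)} \lesssim \|f\|_{L^p(w)}$, closing the proof.

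The main obstacle is the good-$\lambda$ inequality itself. Its proof decomposes the open level set $\{|Hf|>\lambda\}$ into Whitney cubes $\{Q_k\}$; on each $Q_k$ meeting $\{Mf\leq\gamma\lambda\}$ one splits $f = f\mathbbm{1}_{2Q_k} + f\mathbbm{1}_{(2Q_k)^c}$. Kolmogorov's weak-$(1,1)$ bound for $H$ handles the local piece, while the H\"ormander smoothness of the Hilbert kernel, combined with the hypothesis $Mf\leq\gamma\lambda$ on $Q_k$, controls the oscillation of $H(f\mathbbm{1}_{(2Q_k)^c})$ and keeps it close to a comparison value $\leq \lambda$. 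This produces the Lebesgue estimate $|\{|Hf|>2\lambda,\, Mf\leq\gamma\lambda\}\cap Q_k| \lesssim \gamma|Q_k|$, which the $A_\infty$ self-improvement property of $w$ converts into the claimed $w$-measure bound. As later sections of these notes will show, these very ingredients are refined dramatically by sparse domination, producing not only qualitative but sharp quantitative versions of this theorem.
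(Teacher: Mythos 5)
A preliminary remark: these notes never prove Theorem~\ref{thm:HMW} — it is quoted from \cite{HMW} — so your proposal can only be measured against the classical argument. Your sufficiency half is exactly that classical route: the Coifman--Fefferman good-$\lambda$ inequality comparing $Hf$ with $Mf$ (Whitney decomposition, Kolmogorov's weak-$(1,1)$ bound for the local piece, kernel smoothness for the far piece, then the $A_\infty$ self-improvement to pass from Lebesgue to $w$-measure), followed by Muckenhoupt's theorem for $M$. That outline is sound; the only points you should make explicit are the usual ones: the absorption of $C\gamma^\delta\|Hf\|_{L^p(w)}^p$ requires knowing a priori that $\|Hf\|_{L^p(w)}$ is finite, so one argues with truncations of $H$ (or with a suitable dense class) and passes to the limit.

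The necessity half, however, has a genuine gap at its last step. Testing on $\sigma\mathbbm{1}_I$ and integrating over the adjacent intervals gives only the cross conditions $\langle w\rangle_{J}\,\langle \sigma\rangle_I^{p-1}\lesssim C_p(w)^p$ and $\langle w\rangle_{J'}\,\langle \sigma\rangle_I^{p-1}\lesssim C_p(w)^p$; nothing in these displays controls $w(I)$ itself, so the proposed rearrangement via $w(J)+w(J')+w(I)\sim w(3I)$ is circular — doubling of $w$ is a \emph{consequence} of $A_p$, which is precisely what you are trying to establish. The standard repair: run the same test with the roles of $I$ and $J$ interchanged, i.e.\ test on $\sigma\mathbbm{1}_J$ and integrate over $I$, to get $\langle w\rangle_I\,\langle\sigma\rangle_J^{p-1}\lesssim C_p(w)^p$; then combine the first cross condition with the H\"older lower bound $\langle w\rangle_J\,\langle\sigma\rangle_J^{p-1}\geq 1$ (valid for every interval, since $1=\langle w^{1/p}w^{-1/p}\rangle_J\leq \langle w\rangle_J^{1/p}\langle\sigma\rangle_J^{1/p'}$) to deduce $\langle\sigma\rangle_I\lesssim\langle\sigma\rangle_J$, and conclude $\langle w\rangle_I\,\langle\sigma\rangle_I^{p-1}\lesssim \langle w\rangle_I\,\langle\sigma\rangle_J^{p-1}\lesssim C_p(w)^p$. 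Finally, as in the necessity argument for $M$ given in these notes, you should test with $(w+\epsilon)^{-1/(p-1)}\mathbbm{1}_I$ and let $\epsilon\to 0$, since a priori $\sigma=w^{1-p'}$ need not be locally integrable, so $\sigma\mathbbm{1}_I$ may fail to lie in $L^p(w)$.
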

At this point we remind the reader that a weight $w$ is in the \emph{Muckenhoupt $A_p$ class} if and only if  $[w]_{A_p}<\infty$, where the \emph{$A_p$ characteristic} of the weight $w$ is defined to be
$$[w]_{A_p}:=\sup_Q\bigg(\frac{1}{|Q|}\int_Q w(x)\,dx\bigg)
\bigg(\frac{1}{|Q|}\int_Qw^{\frac{-1}{p-1}}(x)\, dx\bigg)^{p-1}\quad \mbox{for}\;\;1<p<\infty\,,
$$
 the supremum is taken over all cubes $Q$  in $\R^d$ with sides parallel to the axes. We will denote integral averages with respect to Lebesgue measure on cubes or on measurable sets $E$ by $\langle f\rangle_E := \frac{1}{|E|}\int_E f(x)\, dx$. Also given $w$,  a weight, $w(E)$ will denote the $w$-mass of the measurable set $E$, that is, $w(E)=\int_E w(x)\, dx$. With this notation 
 $$[w]_{A_2}:=\sup_Q \langle w\rangle_Q \langle w^{-1} \rangle_Q.$$ 
 Note that $w\in A_2$ if and only if $w^{-1}\in A_2$.
 \begin{example} Power weights offer examples of $A_p$ weights on $\R^d$, $w(x)=|x|^{\alpha}$ is in $A_p$
if and only if $-d \leq \alpha\leq d(p-1)$ for $1<p<\infty$. 
\end{example}

In Theorem~\ref{thm:HMW}, the optimal dependence of the constant  $C_p(w)$  on  the $A_p$ characteristic $[w]_{A_p}$  of the weight $w$,    was  
found more than 30 years later.
\begin{theorem}[{Petermichl 2007}] Given $1<p<\infty$,
for all  $w\in A_p$ and for all $f\in L^p(w)$ we have that
$$
 \|Hf\|_{L^p(w)}\lesssim_p {[w]_{A_p}^{\max{\{1,\frac{1}{p-1}\}}}} \|f\|_{L^p(w)}.
$$
\end{theorem}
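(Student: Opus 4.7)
The plan is to reduce the general $L^p$ bound to the case $p=2$ via sharp extrapolation, and then establish the linear $A_2$ estimate for $H$ using Petermichl's dyadic representation together with a Bellman function bound for dyadic shift operators.

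First, assuming the $A_2$ estimate $\|Hf\|_{L^2(w)} \leq C\,[w]_{A_2}\|f\|_{L^2(w)}$, I would invoke the sharp extrapolation theorem alluded to in the introduction (Dragi\v{c}evi\'c, Grafakos, Pereyra, Petermichl). Starting from a linear bound at the index $p_0=2$, sharp extrapolation produces
$$\|Hf\|_{L^p(w)} \lesssim_p [w]_{A_p}^{\max\{1,\, 1/(p-1)\}}\|f\|_{L^p(w)}$$
for every $1<p<\infty$, which is precisely the exponent claimed. (The exponent is $1$ for $p\ge 2$ and $1/(p-1)$ for $1<p<2$, which is the symmetric pattern produced by extrapolating around $p_0=2$.) This reduces the whole theorem to the single endpoint estimate at $p=2$.

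Second, for the $A_2$ endpoint I would invoke Petermichl's representation, announced in the excerpt, that expresses $H$ as an average of Haar shift operators over random dyadic grids: schematically,
$$H f \;=\; c\int \Sha^{\omega} f\; d\mu(\omega),$$
where $\Sha^\omega$ is the dyadic shift attached to the grid parameter $\omega$ and $\mu$ is a probability measure on the space of grids. Since $[w]_{A_2}$ is grid-independent, a uniform-in-$\omega$ linear $A_2$ bound for $\Sha^\omega$ transfers to $H$ by Minkowski's inequality. The identification of $H$ with such an average follows, as recalled in the introduction, from the fact that the only bounded $L^2(\R)$ operator commuting with translations and dilations and anticommuting with reflections is a multiple of $H$, and the average on the right-hand side has the same symmetries.

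Third, the substantive analytic step is to prove the linear $A_2$ estimate for the dyadic shift,
$$\|\Sha f\|_{L^2(w)} \;\leq\; C\,[w]_{A_2}\|f\|_{L^2(w)}.$$
I would prove this by a Bellman function argument in the spirit of the Nazarov--Treil--Volberg program: construct a non-negative function $B(X,Y,Z,Z')$ on a convex domain encoding the relevant averages of $f$, $wf$, $w$, and $w^{-1}$ on dyadic cubes, with size controlled by $C\,[w]_{A_2}$ times the natural product of averages and with a concavity (second-difference) property that, when summed over the dyadic tree, dominates the bilinear form $\langle \Sha f,\, g\,w\rangle$. The ``$A_2$ condition'' enters only through the domain constraint $Z\cdot Z' \le [w]_{A_2}$, which is exactly where the linear-in-$[w]_{A_2}$ factor materializes.

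The main obstacle is precisely this third step: designing a Bellman function with the correct boundary size and concavity so that the resulting constant is linear, rather than a higher power, in $[w]_{A_2}$. Sharp extrapolation is an off-the-shelf tool once the endpoint bound is in hand, and the dyadic-average representation of $H$ is a clean symmetry argument; but squeezing out the sharp linear dependence on $[w]_{A_2}$ for the shift is the delicate heart of the proof, and it is this step that constituted Petermichl's 2007 breakthrough.
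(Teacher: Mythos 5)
Your proposal follows essentially the same route as the paper's sketch: write $H$ as an average over random dyadic grids of Petermichl's shift operators, prove uniform linear $L^2(w)$ bounds for the shifts by a Bellman function argument, and then pass to general $p$ via the sharp extrapolation theorem of Dragi\v{c}evi\'c et al. Your identification of the Bellman-function step for the shift as the delicate heart of the argument matches the paper's account of Petermichl's contribution, so the approach and its division of labor are the same.
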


Note that the estimate is \emph{linear}  on $[w]_{A_p}$ for $p\geq 2$, and of power ${\frac{1}{p-1}}$ for $1<p <2$.

\begin{proof}[Cartoon of the proof] The following is a very brief sketch of Petermichl's argument.
 First, write $H$ as an {average over dyadic grids} of {dyadic shift  operators} \cite{Pet1}.
Second, find  {linear estimates}, uniform  (on the dyadic grids), for  the  dyadic shift operators on $L^2(w)$ \cite{Pet2}. Deduce from the first two steps linear estimates on $L^2(w)$ for the Hilbert transform, 
namely estimates  valid for all $w\in A_2$ and for all $f\in L^2(w)$ of the form
$$ \|Hf\|_{L^2(w)}\lesssim[w]_{A_2}\|f\|_{L^2(w)}.$$
 Third, use a {sharp  extrapolation theorem} \cite{DGPPet} to get estimates for $p\neq 2$ from  the linear $L^2(w)$ estimate.
\end{proof}
Same estimates hold for {\sc all} Calder\'on-Zygmund singular integral operators, solving the famous $A_2$ conjecture,  which was proven by Tuomas Hyt\"onen in 2012, see \cite{Hyt2}. We will say more about Petermichl's and Hyt\"onen's  landmark results  as well as about  sharp extrapolation later in Section~\ref{sec:extrapolation} and in Section~\ref{sec:Dyadic-operators}.

\subsection{Maximal function}
We summarize  the $L^p$  and  one-weight (quantitative)  $L^p$ boundedness properties for the maximal function. We also show that the $A_p$ condition on the weight $w$  is a necessary condition for boundedness of the maximal function on $L^p(w)$.

\subsubsection{$L^p$ boundedness  properties of $M$}

From its definition \eqref{def:Maximal}, it is clear that the  maximal function is bounded on $L^{\infty}(\R^d )$ with norm one.  The maximal function is not bounded on $L^1(\R^d)$, however it  is  of weak-type (1,1)   ({Hardy, Littlewood  1930}). The next example shows that the maximal function does not map $L^1(\R )$ onto itself.

\begin{example}  The characteristic function $\mathbbm{1}_{[0,1]}$ is integrable however its image, under the maximal function,  $M\mathbbm{1}_{[0,1]}$, is not.      The diligent reader can verify that  $M\mathbbm{1}_{[0,1]} (x) = {1}/(1-x)$ if $x<0$, $M\mathbbm{1}_{[0,1]} (x) = 1$ if $ 0\leq x\leq 1$, and $M\mathbbm{1}_{[0,1]} (x) = 1/x$ if $x>1$.
\end{example}

 Marcinkiewicz interpolation  gives boundedness of the maximal function on $L^p(\R^d )$ for $1<p<\infty$ from the strong $L^{\infty}$ and the weak-type $(1,1)$ boundedness results. We will present an alternate argument in Section~\ref{sec:Lerners-proof} that will cover the weighted $L^p$ estimates as well without reference to neither interpolation nor extrapolation.

\subsubsection{One-weight  $L^p$ inequalities for $M$}
 The maximal function is of weak $L^p(w)$ type if and only if  $w\in A_p$, moreover the following quantitative result was proven in 1972 by Benjamin Muckenhoupt  \cite{Mu}, for $p\geq 1$ and for all $w\in A_p$, 
\begin{equation}\label{Muckenhoupt-weak-Lp-bound-for-M}
\|M\|_{L^p(w)\to L^{p,\infty}(w)}  \lesssim_p { [w]_{A_p}^{1/p}}, 
 \end{equation}
where the quantity  on the left-hand-side, $\|M\|_{L^p(w)\to L^{p,\infty}(w)} $,  denotes the smallest constant $C>0$ such that for all $\lambda>0$ and for all $f\in L^p(w)$
$$ w \big (  \{x\in\R^d: Mf(x)>\lambda\}  \big) \leq  \left (\frac{C}{\lambda} \|f\|_{L^p(w)}\right )^p.$$
We say a weight $w$ is in the \emph{Muckenhoupt $A_1$ class} if and only if there is a constant $C>0$ such that 
\[ Mw(x) \leq C w(x) \quad \mbox{for a.e. $x\in \R^d$}.\]
The infimum over all possible such constants $C$ is denoted $[w]_{A_1}$. The $A_1$ class of weights  is contained in all $A_p$ classes  of weights for $p>1$.

 The maximal function is bounded on $L^p(w)$, moreover the following quantitative result was proven in 1993 by Stephen Buckley \cite{Bu} valid for $p>1$ and for all $ w\in A_p$ and   $f\in L^p(w)$,
 \begin{equation}\label{Buckleys}
  \|Mf\|_{L^p(w)} \lesssim_p [w]_{A_p}^{1/(p-1)}\|f\|_{L^p(w)}.  
  \end{equation} 
 Buckley  deduced these estimates from quantitative self-improvement integrability results known for $A_p$ weights, the weak $L^{p\pm\epsilon}(w)$  boundedness of the maximal function, and Marcinkie-wicz interpolation. More precisely,  $w\in A_p$ implies  $w\in A_{p-\epsilon}$ with $\epsilon\sim {[w]^{1-p'}_{A_p}}$ and $[w]_{A_{p-\epsilon}}\leq 2[w]_{A_p}$, on the other hand H\"older's inequality  implies $A_p\subset A_{p+\epsilon}$ and $[w]_{A_{p+\epsilon}}\leq [w]_{A_p}$. Interpolating between weak $L^{p-\epsilon}(w)$   and weak $L^{p+\epsilon}(w)$ estimates and keeping track of the constants one gets  Buckley's quantitative estimate  \eqref{Buckleys}.
 
In particular when $p=2$ the maximal function obeys a linear estimate on $L^2(w)$ with respect to the $A_2$ characteristic of the weight, namely for all $w\in A_2$ and $f\in L^2(w)$
 $$\|Mf\|_{L^2(w)} \lesssim {[w]_{A_2}}\|f\|_{L^2(w)}.$$

A beautiful proof  of Buckley's quantitative estimate for the maximal function was presented in 2008 by Andrei Lerner \cite{Le1}, mixed $A_p$-$A_{\infty}$ estimates in 2011 by Tuomas Hyt\"onen and Carlos P\'erez \cite{HPz}, and  extensions to spaces of homogeneous type in 2012 by Tuomas Hyt\"onen and  Anna Kairema  \cite{HK}. We will present Lerner's proof of Buckley's inequality~\eqref{Buckleys}
in Section~\ref{sec:Lerners-proof}.

\subsubsection{$A_p$ is a necessary condition for $L^p(w)$ boundedness of $M$} 

We would like to demystify the appearance of the $A_p$ weights in the theory by showing that $w\in A_p$ is a necessary condition for the maximal function to be bounded on $L^p(w)$ when $p>1$.

We will show that {\sc If} the maximal function is bounded on $L^p(w)$
  {\sc then}  the weight $w$ must be in the Muckenhoupt  $A_p$ class.

\begin{proof}
By hypothesis,  there is a constant $C>0$ such that for all $f\in L^p(w)$,
\[
\|Mf\|_{L^p(w)}\leq C \|f\|_{L^p(w)}.
\]
For all $\lambda>0$, let $E^{Mf}_{\lambda}$ be the $\lambda$-level set for the maximal function $Mf$, that is
\[E_{\lambda}^{Mf}:= \{ x\in \R^d: Mf(x)\geq \lambda \},\]
 then, by Chebychev's inequality\footnote{Namely, for $g\in L^1(\mu )$ it holds that $\mu \{x\in\R: |g(x)|>\lambda\}|\leq \frac{1}{\lambda}\|g\|_{L^1(\mu )}$ for all $\lambda>0$, in other words if $g\in L^1(\mu )$ then
 $g\in L^{1,\infty}(\mu )$, where $g\in L^{p,\infty}(\mu )$  means $\|g\|_{L^{p,\infty}(\mu )} := \sup_{\lambda>0} \lambda \mu^{1/p}\{x\in\R^d: |g(x)| >\lambda\}<\infty$.}, and using the hypothesis we conclude that
$$ w (E_{\lambda}^{Mf})= \int_{E_{\lambda}^{Mf}} w(x)\,dx \leq \frac{1}{\lambda^p}\int_{\R^d} |Mf(x)|^p w(x)\,dx
\leq  \frac{C^p}{\lambda^p} \|f\|_{L^p(w)}^p.$$
Fix a cube $Q\subset \R^d$, for any integrable function $f\geq 0$, supported on the cube $Q$, let $\lambda := \frac{1}{|Q|}\int_Q f(y)\,dy$. Then $Mf(x) \geq \lambda$ for all $x\in Q$ hence $Q\subset E_{\lambda}^{Mf}$, moreover
 \begin{equation}\label{eqn:Ap-necessary-M}
\Big (\frac{1}{|Q|}\int_Q f(x)\,dx \Big )^p w(Q) \leq \lambda^p \, w(E_{\lambda}^{Mf})
 \leq C^p \int_Q f^p(x)\,w(x)\,dx.
 \end{equation}
Consider the specific function $f=w^{\frac{-1}{p-1}}\, \mathbbm{1}_Q$ supported on $Q$ and chosen  so that   both integrands coincide, namely $f=f^pw$. Substitute this specific function $f$ into \eqref{eqn:Ap-necessary-M} to obtain the following inequality only pertaining the weight $w$ and the cube $Q$,
\[
 \frac{1}{|Q|^p}\Big ( \int_Q w^{\frac{-1}{p-1}}(x)\,dx\Big )^{p-1}\, w(Q) \leq C^p.
 \]
 Distribute $|Q|$ and take the supremum over all cubes $Q$ to conclude that  $[w]_{A_p}\leq C^p$, and hence $w\in A_p$.  There is one technicality, the chosen function may not be integrable, choose instead $f_{\epsilon}=\mathbbm{1}_Q(w+\epsilon)^{\frac{-1}{p-1}}$, run the argument for each $\epsilon>0$ then let $\epsilon$ go to zero.
\end{proof}

We  just showed that { if} the maximal function $M$ is bounded on $L^p(w)$ then it is of  weak~$L^p(w)$ type. Moreover
$ [w]_{A_p}^{1/p}\leq \|M\|_{L^p(w)\to L^{p,\infty}(w)}$,  therefore Muckenhoupt's  weak  $L^p(w)$ bound \eqref{Muckenhoupt-weak-Lp-bound-for-M} is optimal.

\subsection{Why are we interested in these estimates?}

We record a few instances where $L^p$~and weighted $L^p$ estimates are of importance in analysis.
\begin{itemize}
\item[-] {\sc Fourier Analysis}: Boundedness of the periodic Hilbert transform on $L^p(\T)$ implies convergence on $L^p(\T )$
of the partial Fourier sums. 
\item[-] {\sc Complex Analysis}: $Hf$ is the boundary value of the harmonic conjugate of the Poisson extension
to the upper-half-plane of a function $f\in L^p(\R )$.  
\item[-] {\sc Factorization}: Theory of (holomorphic) Hardy spaces $H^p$. Elements of $H^p$ can be defined as those distributions whose image under properly defined maximal functions (or other suitable singular operators or square functions) are in $L^p$.
\item[-] {\sc Approximation Theory}: Boundedness properties of the martingale transform (a dyadic analogue of the Hilbert transform) show that Haar functions and other wavelet families   are unconditional bases of several functional spaces. 
\item[-] {\sc PDEs}: Boundedness of the  \emph{Riesz transforms} (analogues of the Hilbert transform on $\R^d$) and their commutators 
have deep connections to partial differential equations. 
\item[-] {\sc Quasiconformal Theory}: Boundedness of the
 \emph{Beurling transform}  (singular integral operator on $\C$) on $L^p(w)$
for $p>2$ and with linear estimates on $[w]_{A_p}$ implies borderline regularity result. \item[-] {\sc Operator Theory:} Weighted inequalities appear naturally in the theory of \emph{Hankel and Toeplitz operators}, perturbation theory, etc.
\end{itemize}

We expand on the  weighted estimate needed in quasiconformal theory which propelled the interest in quantitative weighted estimates. This was work by Kari Astala, Tadeusz Iwaniec, and Eero Saksman in 2001, we  refer to their paper \cite{AIS} for appropriate definitions. They showed that for $1<K<\infty$
every weakly $K$-quasi-regular mapping, contained in a
Sobolev space $W^{1,q}_{{\rm loc}}(\Omega)$ with {$2K/(K+1)< q\leq 2$}, is quasi-regular on $\Omega$,
that is to say,  it belongs to  $W^{1,2}_{{\rm loc}}(\Omega)$. 
 For each {$q< 2K/(K + 1)$} there are weakly $K$-quasi-regular mappings $f\in W^{1,q}_{{\rm loc}}(\C)$
which are not quasi-regular. The only value of $q$ that remained unresolved was the endpoint, 
they conjectured that all weakly $K$-quasi-regular mappings $f\in W^{1,q}_{{\rm loc}}$ with {$q =
2K/(K + 1)$} are in fact quasi-regular. 
 They reduced the conjecture to showing {\cite[Proposition 22]{AIS}}  that the Beurling transform  $T$ satisfies {linear bounds} in $L^p(w)$ for "$p>1$", namely
\[ \|Tg\|_{L^p(w)}\lesssim_p {[w]_{A_p}}\|g\|_{L^p(w)}, \quad \mbox{for all} \, w\in A_p \; \mbox{and}\; g\in L^p(w). \] 
Fortunately the values of interest for $q$ are $1<q<2$ and $p=q' >2$. 
Linear bounds for the Beurling transform and {$p\geq 2$} were proven in 2002 by  Stefanie Petermichl and Sasha Volberg \cite{PetV}. As a consequence the regularity at the borderline case $q=2K/(K+1)$ was established.
For $1<p<2$ the correct estimate for the Beurling transform is of the form
\[  \|Tg\|_{L^p(w)}\lesssim_p[w]_{A_p}^{1/(p-1)} \|g\|_{L^p(w)}, \quad \mbox{for all } \,w\in A_p \; \mbox{and}\;  g\in L^p(w),\]
as shown in \cite{DGPPet}.

\subsection{First Linear Estimates}

Interest in quantitative weighted estimates exploded in this millennium.  A chronology of the early linear estimates on $L^2(w)$ for the weight $w$ in the Muckenhoupt  $A_2$ class, namely 
$ \|Tf\|_{L^2(w)}\leq C[w]_{A_2}\|f\|_{L^2(w)}$, is as follows.

\begin{itemize}
\item[-] \emph{Maximal function} ({Buckley `93} \cite{Bu}). 
\item[-]  \emph{Martingale transform} ({Wittwer `00} \cite{W1}). 
\item[-] \emph{(Dyadic) square function} ({Hukovic, Treil, Volberg `00} \cite{HTV} ; {Wittwer `02} \cite{W2}). 
\item[-] \emph{Beurling transform} ({Petermichl, Volberg  `02} \cite{PetV}).
\item[-] \emph{Hilbert transform}  ({Petermichl `07} \cite{Pet2}).
\item[-] \emph{Riesz transforms} ({Petermichl  `08} \cite{Pet3}). 
\item[-] \emph{Dyadic paraproduct} ({Beznosova  `08} \cite{Be}).  
\end{itemize} 

Except for the maximal function, all these  linear estimates were obtained using {Bellman functions} and {(bilinear) Carleson estimates} for certain dyadic operators (Petermichl dyadic shift operators, martingale transform, dyadic paraproducts, dyadic square function), and then either the operator under study was one of them or had enough symmetries that it could be represented as a suitable average of dyadic operators (Beurling, Hilbert, and Riesz transforms). 
The Bellman function method  was  introduced in the 90's to harmonic analysis by  Fedja Nazarov, Sergei Treil, and Sasha Volberg \cite{NT, NTV2}, although they credit Donald Burkholder in his celebrated work  finding the exact $L^p$ norm for the martingale transform \cite{Bur}. With  their students and collaborators they  have been able to use the Bellman function method 
to obtain a number of astonishing results not only in this area,    see {Volberg}'s INRIA lecture notes \cite{V} and references. In Volberg's own words\footnote{http://www-sop.inria.fr/apics/ahpi/summerschool11/bellman$_-$lectures$_-$volberg-1.pdf} "the Bellman function method makes apparent the hidden multiscale properties of Harmonic Analysis problems".

A flurry of work ensued and other techniques were brought into play including stopping time techniques (corona decompositions) and median oscillation techniques. These techniques  became the precursors of what is now known as the method of domination by dyadic sparse operators, with important contributions from David Cruz-Uribe, Chema Martell, Carlos P\'erez, Andrei Lerner, Tuomas  Hyt\"onen, Michael Lacey, Mari Carmen Reguera, Stefanie Petermichl, Fedja Nazarov, Sergei Treil,  Sasha Volberg and others.  We will say more about sparse domination in Section~\ref{sec:Sparse}.

The culmination of this work was the celebrated resolution of the $A_2$ conjecture by Tuomas Hyt\"onen \cite{Hyt2} in 2012 where he showed  first  every Calder\'on-Zygmund operator  could be written as an average of dyadic shift operators of arbitrary complexity, dyadic paraproducts and their adjoints,  second  the weighted $L^2$ norm of the dyadic shifts depended linearly on the $A_2$ characteristic of the weight and polynomially on the complexity,  and third these ingredients implied that the Calder\'on-Zygmund operator obeyed  linear bounds on $L^2(w)$. How about weighted $L^p$ estimates for $1<p<\infty$?

\subsection{Extrapolation and Hyt\"onen's $A_p$ theorem}\label{sec:extrapolation}
There is a, by now, classical technique to obtain weighted $L^p$ estimates from weighted $L^2$ estimates 
or more generally from weighted $L^r$ estimates, called extrapolation. 
In this section, we recall the classical Rubio de Francia extrapolation theorem, a quantitative version, due to Oliver Dragi\v{c}evi\'c et al, called "sharp extrapolation",  and deduce from the later Hyt\"onen's  $A_p$ theorem.

\subsubsection{Rubio de Francia Extrapolation Theorem }
Jos\'e Luis Rubio de Francia introduced in the 80's  his celebrated extrapolation result, a theorem that allowed to transfer estimates from  weighted $L^r$ (provided it held for all $A_r$ weights) to weighted $L^p$ for all $1<p<\infty$ and all $A_p$ weights.

\begin{theorem}[{Rubio de Francia 1981}]   
Given $T$ a  sublinear operator and  $r\in \R$ with $1<r<\infty$. If {for all ${ w\in A_r}$} there is a constant  $C_{T,r,d,w}>0$ such that
 $$
 \| Tf\|_{L^r(w)} \leq  C_{T, r,d, w}\|f\|_{L^r(w)} \;\;\mbox{for all} \;\;f\in L^r(w).
$$
Then {for each $1<p<\infty$} and {for all ${ w\in A_p}$}, there is a constant $C_{T,p,r,d,w}>0$ such that
$$
 \| Tf\|_{L^p(w)} \leq  
C_{T,p,r,d,w}\|f\|_{L^p(w)} \;\;\mbox{for all} \;\; f\in L^p(w).
$$
\end{theorem}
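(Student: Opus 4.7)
The plan is to use the classical Rubio de Francia iteration algorithm. The basic building block is: given a sublinear operator $S$ bounded on a Banach space $X$, the Neumann-type series
\[
Rh := \sum_{k=0}^{\infty} \frac{S^k h}{(2\|S\|_X)^k}
\]
converges for each $h\in X$ and satisfies (i) $|h|\leq Rh$ pointwise, (ii) $\|Rh\|_X\leq 2\|h\|_X$, and (iii) $S(Rh)\leq 2\|S\|_X\, Rh$. When $S=M$ is the Hardy--Littlewood maximal function acting on some weighted $L^q$ space on which $M$ is bounded, property (iii) says exactly that $Rh$ is an $A_1$ weight with $[Rh]_{A_1}\leq 2\|M\|_{L^q\to L^q}$.

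Given this tool, I would split into two cases according to whether $p>r$ or $p<r$ (the case $p=r$ is the hypothesis). For $p>r$, factor $L^p(w)$ through $L^{p/r}(w)$ by duality:
\[
\|Tf\|_{L^p(w)}^{\,r} \;=\; \big\||Tf|^r\big\|_{L^{p/r}(w)} \;=\; \sup_h \int |Tf|^r\, h\, w\, dx,
\]
where the supremum runs over $h\geq 0$ of unit norm in $L^{(p/r)'}(w)$. The goal is to construct, via the algorithm applied to a suitable weighted maximal function, a function $H\geq h$ with $\|H\|_{L^{(p/r)'}(w)}\lesssim 1$ such that the weight $Hw$ lies in $A_r$ with characteristic depending only on $[w]_{A_p}$, $p$, $r$ and $d$. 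Applying the hypothesis with weight $Hw$ and closing with H\"older's inequality yields the desired bound. For $p<r$ a dual argument exchanges the roles of $f$ and the test function: write $\|Tf\|_{L^p(w)}$ as a supremum over $g$ in the unit ball of $L^{p'}(\sigma)$, where $\sigma=w^{1-p'}$ is the dual weight (so $\sigma\in A_{p'}$), iterate on the $\sigma$-side to build an analogous dominating function, and apply the hypothesis.

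The main obstacle---and the delicate part---is coordinating the iteration space with a Jones-type factorization so that the output $Rh$, multiplied by an appropriate power of $w$, genuinely lands in $A_r$ with a controlled characteristic. The enabling fact is that $u_1 u_2^{1-r}\in A_r$ for $u_1,u_2\in A_1$, with $[u_1 u_2^{1-r}]_{A_r}\leq [u_1]_{A_1}[u_2]_{A_1}^{r-1}$. The difficulty is that $M$ is bounded on $L^q(v)$ only when $v\in A_q$, so the space in which to iterate must be chosen with $q$ and $v$ compatible with $[w]_{A_p}$; this typically forces the use of a weighted maximal function $M_v f := M(fv)/v$ or of the self-improvement (reverse H\"older) properties of $A_p$ weights. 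Tracking the constants in (ii) together with the resulting $A_r$-characteristic of $Hw$ carefully through this construction is precisely what yields the sharp quantitative version of the theorem alluded to earlier in the excerpt.
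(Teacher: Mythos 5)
The paper never actually proves this theorem: it states it, refers to \cite{GR} and \cite{CrMPz1}, and (for the sharp variant stated immediately after) says only that the proof ``follows by now standard arguments involving the celebrated Rubio de Francia algorithm'' together with Buckley's bound for $M$. So your outline is exactly the route the paper points to, and your statements of the algorithm's properties and of the factorization $u_1u_2^{1-r}\in A_r$ are correct. But as a proof it stops at the one step that makes the theorem work, and the most natural reading of that step fails. In the case $p>r$, you propose to obtain $H\ge h$ with $\|H\|_{L^{s}(w)}\lesssim 1$, $s=(p/r)'$, and $Hw\in A_r$ by iterating ``a suitable weighted maximal function'' on $L^{s}(w)$. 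The natural candidate $Sh=M(hw)/w$ is bounded on $L^{s}(w)$ if and only if $w^{1-s}\in A_{s}$, i.e.\ if and only if $w\in A_{s'}=A_{p/r}$ --- a strictly stronger hypothesis than $w\in A_p$ --- so the iteration cannot be run on that space, and no amount of multiplying by $A_1$ weights alone moves you from $A_p$ into the smaller class $A_r$. (A side remark: as literally stated the hypothesis lets $C_{T,r,d,w}$ depend arbitrarily on $w$; the algorithm proof needs the constant to be controlled by $[w]_{A_r}$, uniformly over the weights it constructs, which is the intended reading and the one you are implicitly using.)

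The standard repair, carried out in \cite{GR}, \cite{CrMPz1}, \cite{Duo2}, is to shift the exponent of the dual iteration and to use a different weight-combination lemma than the one you quote. For $p>r$: since $h\in L^{s}(w)$ implies $h^{s/p'}\in L^{p'}(w)$, apply the algorithm $R_2$ built from $Sh=M(hw)/w$ acting on $L^{p'}(w)$ --- bounded there precisely because $\sigma=w^{1-p'}\in A_{p'}$, with norm $\lesssim[w]_{A_p}$ by Buckley --- to $h^{s/p'}$, and set $H=\big(R_2(h^{s/p'})\big)^{p'/s}$. Then $H\ge h$, $\|H\|_{L^{s}(w)}\le 2^{p'/s}$, and $Hw=\big(R_2(h^{s/p'})\,w\big)^{\theta}w^{1-\theta}$ with $\theta=(p-r)/(p-1)\in(0,1)$; since $R_2(h^{s/p'})\,w\in A_1$, the interpolation fact $[u^{\theta}v^{1-\theta}]_{A_{\theta+(1-\theta)p}}\le[u]_{A_1}^{\theta}[v]_{A_p}^{1-\theta}$ (for $u\in A_1$, $v\in A_p$), and not the reverse factorization by itself, gives $Hw\in A_r$ with characteristic controlled by $[w]_{A_p}$; H\"older then closes the case exactly as you describe. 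For $p<r$ you do not need duality at all: with $R$ the algorithm for $M$ on $L^p(w)$ and $h=|f|/\|f\|_{L^p(w)}$, the weight $W=(Rh)^{p-r}w$ satisfies $[W]_{A_r}\le[Rh]_{A_1}^{r-p}[w]_{A_p}$, the hypothesis applies with $W$, and H\"older with exponents $r/p$, $(r/p)'$ together with $Rh\ge|f|$ (so $(Rh)^{p-r}\le|f|^{p-r}$) and $\|Rh\|_{L^p(w)}\le2$ finishes. Writing out these choices --- which operator, at which exponent, applied to which function, and which weight-class lemma certifies membership in $A_r$ --- is precisely what your sketch still owes; the strategy itself is the right one.
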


If we choose $r=2$,   paraphrasing {Antonio C\'ordoba}\footnote{See page 8 in Jos\'e Garc\'ia-Cuerva's eulogy for  {Jos\'e Luis Rubio de Francia} (1949-1988) \cite{Ga}.} we will conclude that 

\centerline{\emph{There is no $L^p$ just weighted $L^2$.}}

\noindent (Since $w\equiv 1\in A_p$ for all $p$.)

There are books dedicated to the subject that cover this and many useful variants of this theorem,  the classical reference  is  the out-of-print  1985 book by {Garc\'ia-Cuerva and  Rubio de Francia} \cite{GR}. A  modern presentation,  including quantitative versions of this theorem,  is the 2011 book by David Cruz-Uribe, Chema Martell, and Carlos P\'erez \cite{CrMPz1}.

\subsubsection{Sharp extrapolation}\label{sec:sharp-extrapolation}
In the 80's and 90's the interest was on qualitative weighted estimates. Once the interest on quantitative weighted estimates was sparked it was natural to consider quantitative extrapolation theorems, what we call "sharp extrapolation theorems".  This is precisely what Stefanie Petermichl and Sasha Volberg did \cite{PetV}  to obtain linear estimates for the Beurling transform and $p\geq 2$, they missed the range $1<p<2$ because it was of no interest, and their calculation was very specific to the martingale transforms that properly averaged yielded the Beurling transform. It was soon realized that a general principle was at work  \cite{DGPPet}.  We state a simplified version  of what a quantitative extrapolation theorem says, useful for the purposes of this survey.
\begin{theorem}[Dragi\v{c}evi\'{c} et al 2005]   
Let $T$ be a  sublinear operator, $r\in\R$ with  $1<r<\infty$. If {for all ${ w\in A_r}$}  there are constants $\alpha, C_{T,r,d}>0$ such that
 $$
 \| Tf\|_{L^r(w)} \leq  C_{T, r,d}{[w]^{\alpha}_{A_r} }\|f\|_{L^r(w)} \;\;\mbox{for all} \;\;f\in L^r(w).
$$
Then for each $1<p<\infty$ and  {for all ${ w\in A_p}$}, there is a constant  $C_{T,p,r,d}>0$ such that
$$
 \| Tf\|_{L^p(w)} \leq  
C_{T,p,r,d}{ [w]^{\alpha\max{\{1,\frac{r-1}{p-1}\}}}_{A_p} }\|f\|_{L^p(w)} \;\;\mbox{for all} \;\; f\in L^p(w).
$$
\end{theorem}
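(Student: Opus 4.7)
The plan is to adapt Rubio de Francia's classical iteration argument while carefully tracking the dependence of all constants on $[w]_{A_p}$. The case $p=r$ is immediate, so I would split the remaining argument into $p>r$ and $p<r$; the asymmetric exponent $\max\{1,(r-1)/(p-1)\}$ will emerge because these two regimes are not symmetric in the $A_p$ condition, and the unfavorable power $(r-1)/(p-1)$ will appear only in the case $p<r$.

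Case $p>r$. First I would dualize the outer $L^{p/r}(w)$ norm: since $p/r>1$,
\[
 \|Tf\|_{L^p(w)}^r \;=\; \bigl\| |Tf|^r \bigr\|_{L^{p/r}(w)} \;=\; \sup_h \int_{\R^d} |Tf(x)|^r\, h(x)\, w(x)\,dx,
\]
the supremum running over nonnegative $h$ with $\|h\|_{L^{(p/r)'}(w)}\leq 1$. For each such $h$ I would construct, via Rubio de Francia iteration, a majorant $H\geq h$ with two properties: $\|H\|_{L^{(p/r)'}(w)}\leq 2$, and the product $Hw$ lies in $A_r$ with $[Hw]_{A_r} \lesssim_{p,r} [w]_{A_p}$. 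Concretely, with $s=(p/r)'$ and a suitable (possibly weighted) maximal operator $\widetilde M$ bounded on $L^s(w)$ with norm $K=\|\widetilde M\|_{L^s(w)\to L^s(w)}$, set
\[
 H := \sum_{k\geq 0} \frac{\widetilde M^k h}{(2K)^k}.
\]
Standard properties of this iterate give $h\leq H$, the norm bound $\|H\|_{L^s(w)}\leq 2$, and an $A_1$-type self-improvement that forces $Hw\in A_r$. Applying the hypothesis to the weight $Hw$ and then H\"older's inequality with exponents $p/r$ and $s$ (using $(1-r/p)\,s=1$) yields
\[
 \int |Tf|^r\, Hw\,dx \;\leq\; C\,[Hw]_{A_r}^{\alpha r}\, \|f\|_{L^p(w)}^r\, \|H\|_{L^s(w)},
\]
and taking $r$-th roots delivers the linear dependence on $[w]_{A_p}^\alpha$ predicted by $\max\{1,(r-1)/(p-1)\}=1$ in the range $p>r$.

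Case $p<r$. Here I would dualize in the conjugate direction. The duality $w\in A_p \Leftrightarrow \sigma:=w^{1-p'}\in A_{p'}$, together with the identity $[\sigma]_{A_{p'}}=[w]_{A_p}^{p'-1}$, allows one to rephrase the desired inequality as a weighted estimate on the $\sigma$-side, after writing the $L^p(w)$ norm via pairing with functions in an $L^{(r'/p')'}(\sigma)$ class. Running the Rubio de Francia construction above against $\sigma$ rather than $w$ produces a dominating weight in $A_r$ whose characteristic is controlled by $[\sigma]_{A_{p'}}^{(r-1)/(p'-1)} = [w]_{A_p}^{(p'-1)\cdot(r-1)/(p'-1)\cdot\frac{1}{p-1}} \sim [w]_{A_p}^{(r-1)/(p-1)}$, which is precisely the predicted exponent.

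The main obstacle, and the reason this is called \emph{sharp} extrapolation rather than a mere repetition of Rubio de Francia, is the quantitative bookkeeping. I must verify that the Rubio de Francia iterate $H$ produces a weight $Hw$ (resp.\ $H\sigma$) whose $A_r$-characteristic is bounded by a specific power of the operator norm $K$, then invoke Buckley's estimate~\eqref{Buckleys} to convert that $L^s(w)$-operator norm of the (weighted) maximal function into the correct power of $[w]_{A_p}$. Confirming that the exponents from the geometric series, from H\"older, and from Buckley's bound telescope to give exactly $\alpha\max\{1,(r-1)/(p-1)\}$ — rather than something strictly worse — is the delicate step, and it is here that the proof must be tighter than the qualitative version, where all losses are absorbed into a finite but unspecified constant.
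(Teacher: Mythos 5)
Your outline is exactly the argument the paper has in mind: it states only that the result "follows by now standard arguments involving the celebrated Rubio de Francia algorithm, inserting whenever possible Buckley's quantitative bounds," which is precisely your scheme of dualizing $L^{p/r}(w)$ (resp.\ passing to $\sigma=w^{1-p'}$ for $p<r$), running the Rubio de Francia iteration to produce an $A_r$ majorant weight, and using Buckley's estimate~\eqref{Buckleys} to track the operator norm of the maximal function. So you are on the same route as \cite{DGPPet}; the remaining work is only the quantitative bookkeeping you already identified.
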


The proof follows  by now standard arguments involving the celebrated Rubio de Francia algorithm, and  inserting  whenever possible  Buckley's quantitative bounds  \eqref{Buckleys} for the  maximal function  \cite{Bu}.

An alternative, streamlined proof of the sharp extrapolation theorem, was presented by  Javier Duoandikoetxea  in \cite{Duo2}, extending the result to more general settings including off-diagonal and partial range extrapolation.   It was observed \cite{CrMPz1} that one can replace the pair {$(Tf, f)$} by a pair of functions {$(g,f)$} in the extrapolation theorem, in particular one could consider the pair $(f,Tf)$ instead, as long as one has the corresponding initial weighted inequalities required to jump-start the theorem.

Sharp extrapolation is sharp in the sense that no better power for $[w]_{A_p}$ can appear in the conclusion that will work for all operators. For some operators it is known that the extrapolated $L^p(w)$ bounds from the known optimal $L^r(w)$ estimates are themselves  optimal for all $1<p<\infty$. However it is not necessarily optimal for a particular given operator. Here are some examples illustrating this phenomenon.
\begin{example}
Start with Buckley's sharp estimate on $L^r(w)$, $\alpha = \frac{1}{r-1}$, for the maximal function, extrapolation will give sharp bounds
{only for $1<p\leq r$}.
\end{example}

\begin{example}
Sharp extrapolation from $r=2$, $\alpha =1$, is sharp for the  Hilbert, 
Beurling, Riesz 
transforms for all $1<p<\infty$ {\rm (}for $p>2$ \cite{PetV}, \cite{Pet2}, \cite{Pet3}; $1< p <2$ \cite{DGPPet}{\rm )}. 
\end{example}

\begin{example}
Extrapolation from linear bound on $L^2(w)$  is sharp for the {dyadic square function} only when ${1<p\leq 2}$ 
$($"sharp" \cite{DGPPet}, "only" \cite{Le2}$)$. 
However, extrapolation from square root bound on {$L^3(w)$} is sharp for all $p>1$ \cite{CrMPz2}.
\end{example}

\subsubsection{Hyt\"onen's  $A_p$ Theorem}\label{sec:Hytonen-Ap-theorem}

Sharp extrapolation was used by Tuomas Hyt\"onen to prove the celebrated $A_p$ theorem, the quantitative weighted $L^p$ estimates for Calder\'on-Zygmund operators \cite{Hyt2}.
\begin{theorem}[{Hyt\"onen 2012}]
Let \, $1<p<\infty$\,
and let $T$ be any Calder\'on-Zygmund singular 
integral operator on $\R^d$, then  for all $w\in A_p$ and $f\in L^p(w)$
$$
\|Tf\|_{L^{p}(w)} \lesssim_{T,d,p} \,
{[w]_{A_p}^{\max\{1,\frac{1}{p-1} \} }}\|f\|_{L^p(w)}.
$$
\end{theorem}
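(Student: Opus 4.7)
The plan is to reduce the full $L^p(w)$ inequality to a single linear $L^2(w)$ estimate via sharp extrapolation, and to establish the latter through Hyt\"onen's dyadic representation theorem combined with uniform $A_2$ bounds for the dyadic building blocks. Since sharp extrapolation is already in hand (Section~\ref{sec:sharp-extrapolation}), and Beznosova's $A_2$ theorem for dyadic paraproducts is available (Section~\ref{sec:A2paraproduct}), the argument splits naturally into three modular steps.

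For the extrapolation step, I would first prove the $A_2$ conjecture for $T$, namely
$$\|Tf\|_{L^2(w)} \leq C_{T,d}\, [w]_{A_2} \|f\|_{L^2(w)} \qquad \text{for all } w \in A_2,$$
and then apply the sharp extrapolation theorem of Dragi\v{c}evi\'c et al.\ with starting index $r=2$ and exponent $\alpha=1$. The conclusion is precisely
$$\|Tf\|_{L^p(w)} \lesssim_{T,d,p}\, [w]_{A_p}^{\max\{1,(r-1)/(p-1)\}} \|f\|_{L^p(w)} = [w]_{A_p}^{\max\{1,1/(p-1)\}} \|f\|_{L^p(w)}$$
for all $1<p<\infty$ and all $w \in A_p$, which is the theorem. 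This cleanly reduces everything to the $L^2(w)$ endpoint.

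For the $L^2(w)$ bound itself, I would invoke Hyt\"onen's dyadic representation theorem: $T$ can be written as the expectation, over random dyadic grids $\mathcal{D}$, of a series in complexity parameters $(m,n)$ of Haar shift operators $S_{\mathcal{D}}^{m,n}$ of complexity $(m,n)$, together with a dyadic paraproduct $\Pi_{\mathcal{D}}^{b}$ and its adjoint, where the coefficients decay geometrically like $2^{-\alpha(m+n)}$ for some $\alpha>0$ depending on the smoothness of the kernel of $T$. One then needs two uniform ingredients: first, the linear $A_2$ bound for the paraproduct (Beznosova), and second, a uniform estimate of the form
$$\|S_{\mathcal{D}}^{m,n} f\|_{L^2(w)} \lesssim P(m+n)\, [w]_{A_2} \|f\|_{L^2(w)},$$
with $P$ a polynomial in the complexity and independent of the grid. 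Pairing with a test function $g \in L^2(w^{-1})$, applying these bounds term by term, averaging over grids, and summing the geometric series $\sum_{m,n} 2^{-\alpha(m+n)} P(m+n) < \infty$ yields the desired linear $A_2$ estimate for $T$.

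The main obstacle is the uniform linear $A_2$ estimate for Haar shifts of \emph{arbitrary} complexity with only polynomial dependence on $m+n$; Petermichl's classical Bellman argument treats only the complexity $(0,1)$ shift and does not visibly generalize. I would handle this via a corona/stopping-time decomposition that reduces the shift to sums of positive operators obeying a weighted Carleson condition, keeping careful track of how many stopping generations the complexity introduces, so that the constants accumulate polynomially rather than exponentially in $m+n$. A cleaner route now available---which also illuminates why the polynomial loss is natural---is to prove pointwise domination of $S_{\mathcal{D}}^{m,n}$ by finitely many sparse operators with sparseness constant polynomial in $m+n$, and then invoke the linear $A_2$ bound for sparse operators (Cruz-Uribe--Martell--P\'erez, Section~\ref{sec:Sparse}). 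Either way, matching the polynomial growth against the geometric decay in the representation is the technical heart of the argument.
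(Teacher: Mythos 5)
Your proposal follows essentially the same route as the paper's sketch: reduce to the $p=2$ case by sharp extrapolation with $r=2$, $\alpha=1$, and obtain the linear $L^2(w)$ bound from Hyt\"onen's dyadic representation theorem together with Beznosova's linear bound for the paraproduct and linear-in-$[w]_{A_2}$, polynomial-in-complexity bounds for the Haar shift operators, uniform over the random grids, summed against the exponential decay of the coefficients. Your closing remarks on how to secure the polynomial complexity dependence (stopping-time/corona arguments or sparse domination) are consistent with the methods the paper cites for that step, so there is no substantive divergence to report.
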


\begin{proof}[Cartoon of the proof] 
Enough to prove the  $p=2$ case thanks to sharp extrapolation.  To prove the linear weighted  $L^2$ estimate
two important steps were required.

First, prove a representation theorem in terms
of  Haar shift operators
of arbitrary complexity, dyadic paraproducts, and their adjoints on random dyadic grids introduced in \cite{NTV3}. This representation hinges on certain reductions obtained in \cite{PzTV}.

Second,  prove linear estimates on $L^2(w)$ with respect to the $A_2$ characteristic 
for paraproducts \cite{Be} and Haar shift operators  \cite{LPetR}  but with {polynomial dependence on the complexity} (independent of the
dyadic grid) \cite{Hyt2}.
\end{proof}
 We will say more about random dyadic grids, Haar shift operators, and paraproducts, the ingredients in Hyt\"onen's theorem, in Sections~\ref{sec:Dyadic-harmonic-analysis} and~\ref{sec:Dyadic-operators}.
 It is now well understood that the  $L^2(w)$ bounds for the Haar shift operators not only depend linearly on the $A_2$ characteristic of $w$ but also depend linearly on the complexity
 \cite{T}.

Sharp extrapolation has also  been  used to obtain quantitative estimates in other settings. For example, Sandra Pott and Mari Carmen Reguera used sharp extrapolation when studying  the Bergman projection on weighted Bergman spaces in terms of the B\'ekoll\'e constant \cite{PoR}.  They proved the  base estimate on $L^2(w)$  for certain sparse dyadic operators and then showed the Bergman projection could be dominated with  these sparse  dyadic operators.

\subsection{Two-weight problem for the Hilbert transform and the maximal function}
We briefly state a necessarily incomplete chronological list of two-weight results for the Hilbert transform, the maximal function, and allied  dyadic operators.

\subsubsection{Two-weight problem for $H$ and its dyadic model the martingale transform}
 In the `80s, Mischa Cotlar, and Cora Sadosky found necessary and sufficient conditions \`a la Helson-Szeg\"o solving the two-weight problem for the Hilbert transform. The methods used involved complex analysis and had applications to operator theory \cite{CS1,CS2}.
 Afterwards various sets of sufficient  conditions  \`a la Muckenhoupt were found to be valid also in the matrix-valued context, one of the earliest such sets appeared  in 1997 in joint work with Nets Katz   \cite{KP}, see also  the 2005 unpublished manuscript \cite{NTV5}.
 Necessary and sufficient conditions for (uniform and individual) martingale transform and  well-localized dyadic operators were found  in 1999 and 2008 respectively by {Fedja Nazarov,  Sergei Treil, Sasha Volberg} \cite{NTV1,NTV4}, using Bellman function techniques, we will say more about this in Section~\ref{sec:martingale}.
 Long-time sought  necessary and sufficient conditions for two-weight boundedness of the Hilbert transform were found in 2014 by {Michael Lacey, Eric Sawyer,  Chun-Yen Shen, and Ignacio Uriarte-Tuero   \cite{L2,LSSU} for pairs of weights that do not share a common point-mass.
Corresponding quantitative estimates were obtained using   very delicate stopping time arguments. See also \cite{L3}.
 Improvements have since been obtained, relaxing the conditions on the weights, by the same authors and Tuomas Hyt\"onen \cite{Hyt4}.

\subsubsection{Two-weight estimates for the maximal function}
 In 1982 Eric Sawyer showed in \cite{S1} that  the maximal function  $M$ is bounded from $L^2(u)$ into $L^2(v)$  if and only if the following testing conditions\footnote{Nowadays called "Sawyer's testing conditions".} hold for the weights $u$ and $v$: there is a constant $C_{u,v}>0$ such that for all cubes $Q$
$$ 
\int_Q \big (M({ \mathbbm{1}_Q u^{-1}})(x)\big )^2v(x)\,dx \leq {C_{u,v} }u^{-1}(Q) \;\;\mbox{and} \;\;
\int_Q \big (M({ \mathbbm{1}_Q v})(x)\big )^2u^{-1}(x)\,dx \leq {C_{u,v} }v(Q).
$$
Sawyer also identified  necessary and sufficient conditions for  two-weight inequalities for certain positive operators, the fractional and Poisson integrals \cite{S2}, these results were of qualitative type.
In 2009, Kabe Moen  presented the first quantitative result \cite{Moe}, he proved that the two weight operator norm of $M$ is comparable to the constants  ${C_{u,v}}$ in Sawyer's result.
Note that Sawyer's testing conditions imply the following \emph{joint $\mathcal{A}_2$ condition}: 
$$[u,v]_{\mathcal{A}_2}:= \sup_{Q} \, \langle u^{-1}\rangle_Q \langle v \rangle_Q<\infty, \quad\mbox{{\small where}} \quad \langle v\rangle_Q:= v(Q) / |Q|$$
 In 2015, Carlos P\'erez and Ezequiel Rela  \cite{PzR} considered a particular case
when {$(u,v)\in \mathcal{A}_2$} and ${u^{-1}\in A_{\infty}}$ and showed the following  so-called mixed-type estimate
$$
\|M\|_{L^2(u)\to L^2(v)}\lesssim{[u,v]_{\mathcal{A}_2}^{\frac{1}{2}}[u^{-1}]_{A_{\infty}}^{\frac{1}{2}}}.
$$
In the one-weight setting, when $u=v=w$, one gets the following improved mixed-type estimate
 $$\|M\|_{L^2(w)\to L^2(w)} \lesssim {[w]_{A_2}^{\frac{1}{2}}[w^{-1}]_{A_{\infty}}^{\frac{1}{2}}}
  \leq   {[w]_{A_2}}.
  $$
The \emph{$A_{\infty}$ class} of weights is defined to be the union of all the $A_p$ classes of weights for $p>1$, the \emph{classical $A_{\infty}$ characteristic}  is given by
\[ [w]_{A_{\infty}^{{\rm cl}}} := \sup_{Q} \langle w\rangle_Q \exp ({-\langle \log w\rangle_Q}).\]
A weight $w$  is in $A_{\infty}$ if and only if $[w]_{A_{\infty}^{{\rm cl}}}<\infty$. An equivalent characterization is obtained using instead the Fujii-Wilson characteristic, defined by
\[ [w]_{A_{\infty}} :=\sup_Q\frac{1}{w(Q)}\int_Q M(w\chi_Q)(x) \,dx. \]
The Fujii-Wilson $A_{\infty}$ characteristic is smaller than the classical one \cite{BeRe}.
For mixed-type estimates  of similar nature for Calder\'on-Zygmund singular integral operators see \cite{HPz}.

For sharp weighted inequalities for fractional integral operators see \cite{LMPzTo}.

\section{Dyadic harmonic analysis}\label{sec:Dyadic-harmonic-analysis}

In this section, we introduce the elements of dyadic harmonic analysis and  the basic dyadic maximal function. More precisely we discuss dyadic grids (regular, random, adjacent) and Haar functions on the line, on $\R^d$, and  on spaces of homogeneous type.  As a first example,  illustrating the power of the dyadic techniques, we present Lerner's proof of Buckley's quantitative $L^p$ estimates for the  maximal function, which reduces, using the one-third trick, to estimates for the dyadic maximal function. We also describe, given dyadic cubes on spaces of homogeneous type, how to construct corresponding Haar bases, and briefly describe the Auscher-Hyt\"onen "wavelets" in this setting.

\subsection{Dyadic intervals, dyadic maximal functions} \label{sec:dyadics-maximal}
In this section we  recall the dyadic intervals and   the weighted dyadic maximal function on the line,  as well as
basic $L^p$ estimates for the dyadic maximal function.

\subsubsection{Dyadic intervals}

The \emph{standard dyadic  grid} $\mathcal{D}$  on $\R$ is the collection of  intervals
 of the form
$[k2^{-j},(k+1)2^{-j})$,  for all integers $k,j\in \Z$.
The dyadic intervals  are organized by generations: $\mathcal{D}=\cup_{j\in\Z}\mathcal{D}_j$, where
 ${I\in \mathcal{D}_j}$ if and only if ${|I|=2^{-j}}$.   Note that the larger $j$ is the smaller the intervals are. For each interval $J\in \mathcal{D}$ denote by $\mathcal{D}(J)$ the collection of dyadic intervals $I$ contained in $J$.
 
 The standard dyadic intervals  satisfy 
 the following properties,
 
\begin{itemize}
\item[-] (Partition Property) Each generation $\mathcal{D}_j$  is a partition of $\R$. 
\item[-] (Nested property)  If $I,J\in \mathcal{D}$ then
$ I\cap J =\emptyset, \quad  I\subseteq J$, or $ J\subset I.$ 
\item[-] (One parent property) If ${I\in \mathcal{D}_j}$ then there   is a unique interval ${\widetilde{I}\in \mathcal{D}_{j-1}}$, called the \emph{parent} of $I$, such that ${I\subset \widetilde{I}}$. The parent is twice as long as the child, that is $|\widetilde{I}|=2|I|$.
\item[-] (Two children property)
  Given $I\in \mathcal{D}_j$, there are two disjoint intervals 
${I_r, I_l \in \mathcal{D}_{j+1}}$ (the right and left children), such that $I=I_l\cup I_r$. 
\item[-] (Tower of dyadic intervals) Each point $x\in \R$ belongs to exactly one dyadic interval $I_j(x)\in \mathcal{D}_j$. The family $\{I_j(x)\}_{j\in\Z}$ forms a "tower" or "cone" over $x$. The union of the intervals in a "tower", $\cup_{j\in\Z}I_j(x)$,  is a "quadrant".
 \item[-] (Two-quadrant property) The origin, $0$, separates the positive and the negative dyadic interval, creating two  "quadrants". 
\end{itemize}

More generally, a \emph{dyadic grid} on $\R$ is a collection of intervals organized in generations with the  partition, nested, and two children properties. In this subsection we reserve the name $\mathcal{D}$ for the standard dyadic grid, however later on we will use $\mathcal{D}$ to denote a general dyadic grid.

The partition and nested properties are common to all dyadic grids, the one parent property  is a consequence of these properties.  The two children property is responsible for the name "dyadic", the equal length property is a consequence of choosing to subdivide in halves, and is in general not so important, one could  subdivide into two  children of different lengths, if the ratio is uniformly bounded we have a homogeneous or doubling dyadic grid. One can manufacture dyadic grids on the line where each interval has two equal-length children but there is no distinguished point and only one quadrant.  This is because given an interval in the grid, its descendants are completely determined, however we have two  choices for the parent, hence four choices for the grandparent, etc.  In \cite{LeN} dyadic grids are defined to  have one quadrant, such grids have the additional useful property that given any compact set there will be a dyadic interval containing it.

There are many variants, for example, we could subdivide each interval into a uniformly bounded number of children or into arbitrarily finitely many children.  In fact, there are regular dyadic structures on $\R^d$ where the  role of the intervals is played by cubes with sides parallel to the axes. In this case,   each cube in the dyadic grid is subdivided into $2^d$ congruent  children, see Section~\ref{sec:Haar-in-Rd}. We will also see that there are dyadic structures in spaces of homogeneous type, where each "cube" may have no more than a fixed number of children, but sometimes it will only have one child (itself) for several generations, see Section~\ref{sec:dyadic-cubes-SHT}.
In all cases the dyadic grids  provide a hierarchical  structure that allows for simplified arguments in this setting, the so-called "induction on scale arguments".

\subsubsection{Dyadic  Maximal Function}
Given a dyadic grid $\mathcal{D}$ on $\R^d$ and a weight $u$,
the  (weighted) dyadic maximal function $M^{\mathcal{D}}_u$ is defined like the maximal function $M$ except that instead of taking the supremum over all cubes in $\R^d$ with sides parallel to the axes we restrict to the dyadic cubes. This is often how one transitions from continuous to dyadic models.

More precisely, the \emph{weighted dyadic maximal function} with respect to a weight $u$ and a dyadic grid $\mathcal{D}$  on $\R^d$ is defined by 
$$
{ M^{\mathcal{D}}_uf(x):= \sup_{Q\in\mathcal{D}, Q\ni x}\frac{1}{u(Q)} \int_Q |f(y)| \,u(y)\, dy.}
$$
Here $u(Q):=\int_Q u(x)\, dx$.   When $u =1$ a.e. then $M^{\mathcal{D}}_1=:M^{\mathcal{D}}$.

The dyadic maximal function inherits boundedness properties from  the regular maximal function. This is clear once one notices that the dyadic  maximal function is trivially pointwise  dominated by the maximal function. However these properties are much easier to verify for the dyadic maximal function. We  now list three basic boundedness properties of the dyadic maximal function, with a word or two as how one can verify each one of them.

First, the dyadic maximal function, $M^{\mathcal{D}}_u$,  is of weak~$L^1(u)$ type, with constant one (independent of dimension).  This is an immediate corollary of the Calder\'on-Zygmund  lemma (a stopping time), no covering lemmas are required unlike the usual arguments for $M$.

Second, clearly $M^{\mathcal{D}}_u$ is bounded on $L^{\infty}(u)$ with constant one. 
Interpolation between the weak $L^1(u)$ and the $L^{\infty}(u)$ estimates  shows that  $M^{\mathcal{D}}_u$  is  bounded on $L^p(u)$ for all $p>1$. Moreover the following estimate holds with a constant independent of the weight $v$ and the dimension $d$,
\begin{equation}\label{Mv-on-Lp(v)}
\|M^{\mathcal{D}}_uf\|_{L^p(u)}\lesssim  p' \|f\|_{L^p(u)} \quad\mbox{where $\frac1p+\frac{1}{p'} =1$ and $p>1$.}
\end{equation}

Third, the dyadic maximal function is pointwise comparable to the maximal function. 
 We explain in Section~\ref{sec:one-third-trick} why this domination holds in the one-dimensional case ($d=1$).

\subsection{One-third trick and Lerner's proof of Buckley's result}
 We present the one-third trick on $\R$ and how it can be used to dominate the maximal function by  a sum of dyadic maximal functions.  The one-third trick appeared in print  in 1991 in Kate Okikiolu   characterization of subsets of rectifiable curves in $\R^d$ \cite[Lemma 1(b)]{Ok}, see \cite[Footnote p.32]{Cr} for fascinating historical remarks on the one-third trick. This was probably well-known among the John Garnett's school of thought see for example \cite{GJ}, and also by the Polish school specifically by Tadeusz Figiel \cite{Fi}.
 We illustrate how this principle can be used to recover Buckley's quantitative weighted $L^p$ estimate for the maximal function. 

\subsubsection{One-Third Trick}\label{sec:one-third-trick}
The families of intervals $\displaystyle{\mathcal{D}^i := \cup_{j\in\Z} \mathcal{D}^i_j}$, for $i=0,1,2$, where

$$ \mathcal{D}^i_j := \{ 2^{-j}\big ([0,1) + m + (-1)^j \frac{i}{3}\big ): m\in \Z\},$$
are dyadic grids satisfying partition, nested, and two equal children properties. We make four observations. First, 
when $i=0$ we recover the standard dyadic grid, $\mathcal{D}^0=\mathcal{D}$.  
Second, the grids  $\mathcal{D}^1$ and $\mathcal{D}^2$ are {nested}  but there is only one quadrant (the line $\R$).
Third, the grids,  $\mathcal{D}^i$,  for $i=0,1,2$  are as "far away" as possible from each other,  to be  made more precise in Example~\ref{eg:1/3grids}.
Fourth, given any finite interval $I\subset \R$, for \emph{at least two values} 
of $i=0,1,2$, there are $J^i\in \mathcal{D}^i$ such that {$I\subset J^i$, $ 3|I|\leq  |J^i| \leq 6|I|$}. In particular this implies that given $i\neq k$, $i,k=0,1,2$, there is at least one interval $J\in \mathcal{D}^i\cup \mathcal{D}^k$ such that $I\subset J$ and $3|I|\leq |J|\leq 6|I|$, and furthermore
\[ \frac{1}{|I|} \int_I |f(y)|\, dy \leq \frac{6}{|J|} \int_J |f(y)|\, dy.\]
This last observation allows us to dominate the maximal function $M$ by its dyadic counterpart.
In fact,  the following estimate holds,
\begin{equation}\label{Md-dominates-M}
M f(x) \leq 6 \big ( M^{\mathcal{D}}f(x) + M^{\mathcal{D}^1}f(x) \big ).
\end{equation}

 More precisely, for $i\neq k$ 
\begin{eqnarray*}
Mf(x) &=& \sup_{I\ni x} \frac{1}{|I|}\int_I|f(y)|\,dy \leq 6 \sup_{J\in \mathcal{D}^i\cup\mathcal{D}^k: J\ni x} \frac{1}{|J|} \int_J |f(y)|\, dy\\
& \leq & 6\max\{M^{\mathcal{D}^i}f(x), M^{\mathcal{D}^k}f(x)\} \leq 6\Big [M^{\mathcal{D}^i}f(x) + M^{\mathcal{D}^k}f(x)\Big ].
\end{eqnarray*}
In particular setting  $i=0$ and $k=1$, we obtain  \eqref{Md-dominates-M}.

 There is an analogue of the one-third trick in higher dimensions. In  $\R^d$ one can get by with  $3^d$  grids as is very well explained in \cite[Section 3]{LeN}, with $2^d$ grids \cite{HPz}, or, with $d+1$ grids and this is optimal, by  cleverly choosing the grids,  for $\R$ and for the $d$-torus see  \cite{Me}, for $\R^d$ and  $d>1$ see \cite{C}.

\subsubsection{Buckley's $A_p$  estimate  for the maximal function}\label{sec:Lerners-proof}
We illustrate the use of dyadic techniques paired with domination to recover Stephen Buckley's quantitative weighted $L^p$ estimate for the maximal function \cite{Bu}.  Namely for all $w\in A_p$  and $f\in L^p(w)$ 
\[ \|Mf\|_{L^p(w)}\lesssim [w]_{A_p}^{\frac{1}{p-1}} \|f\|_{L^p(w)}.\]
The beautiful  argument we present is due to Andrei Lerner
\cite{Le1}.

\begin{proof}[Lerner's Proof]
By the one-third trick suffices to check that  for $1<p<\infty$ there is a constant $C_p>0$ such that for all $w\in A_p$ and for all $f\in L^p(w)$ then
$$ \|M^{\mathcal{D}}f\|_{L^p(w)}\leq C_p{[w]_{A_p}^{\frac{1}{p-1}}} \|f\|_{L^p(w)},$$
independently of the dyadic grid  $\mathcal{D}$ chosen on $\R^d$.

For any dyadic cube $Q\in\mathcal{D}$, let {$A_p(Q)=w(Q)\big (\sigma(Q)\big )^{p-1}/|Q|^p$}, where  we denote  by {$\sigma:=w^{\frac{-1}{p-1}}$} the dual weight of $w$,  then
 \begin{eqnarray*} 
\frac{1}{|Q|} \int_Q |f(x)|\,dx
  & = & A_p(Q)^{\frac{1}{p-1}}\left [ \frac{|Q|}{w(Q)}
            \Big (\frac{1}{\sigma (Q)}\int_Q |f(x)|\, \sigma^{-1}(x)\, \sigma (x)\, dx\Big )^{p-1}\right ]^{\frac{1}{p-1}}\\
  & \leq &{ [w]_{A_p}^{\frac{1}{p-1}}}\left [ \frac{1}{w(Q)}
   \int_Q   \big (M^{\mathcal{D}}_{\sigma}  (f\sigma^{-1})(x) \, w^{-1}(x)\, w(x)\,dx \big )^{p-1}\right ]^{\frac{1}{p-1}}.
   \end{eqnarray*}
Taking the  supremum over $Q\in\mathcal{D}$ we obtain
 $$ M^{\mathcal{D}}f(x)\leq   { [w]_{A_p}^{\frac{1}{p-1}}} \Big [M_w^{\mathcal{D}}(M_{\sigma}^{\mathcal{D}}(f\sigma^{-1})^{p-1}w^{-1})(x)\Big ]^{\frac{1}{p-1}}.$$
   Computing the  $L^p(w)$ norm on both sides, recalling that $(p-1)p' = p$ where $\frac{1}{p} + \frac{1}{p'}=1$ and  carefully peeling off the maximal functions, we get
\begin{eqnarray*} 
 \|M^{\mathcal{D}}f\|_{L^p(w)}  & \leq &  [w]_{A_p}^{\frac{1}{p-1}}\,
  \|M^{\mathcal{D}}_w(M_{\sigma}^{\mathcal{D}}(f\sigma^{-1})^{p-1}w^{-1}) \|^{\frac{1}{p-1}}_{L^{p'}(w)}\\
 & \leq & [w]_{A_p}^{\frac{1}{p-1}}\,  \|M^{\mathcal{D}}_w\|^{\frac{1}{p-1}}_{L^{p'}(w)} \,
 \|M_{\sigma}^{\mathcal{D}}(f\sigma^{-1})\|_{L^p(\sigma )}\\
 & \leq &  [w]_{A_p}^{\frac{1}{p-1}} \, \|M^{\mathcal{D}}_w\|^{\frac{1}{p-1}}_{L^{p'}(w)}\,
     \|M_{\sigma}^{\mathcal{D}}\|_{L^p(\sigma)}\|f\sigma^{-1}\|_{L^p(\sigma)} \\
  & \leq & p^{\frac{1}{p-1}} p'\, {[w]_{A_p}^{\frac{1}{p-1}}}\|f\|_{L^p(w)},
\end{eqnarray*} 
where we used in the last line the uniform bounds \eqref{Mv-on-Lp(v)} of $M^{\mathcal{D}}_w$ on $L^{p'}(w)$ and $M^{\mathcal{D}}_{\sigma}$ on $L^p(\sigma )$.
 \end{proof} 
 
 Notice that in this argument  neither extrapolation nor interpolation are used.
 For extensions to two-weight inequalities  and to the  {fractional maximal function} see \cite{Moe}.

\subsection{Random dyadic grids on $\R$}

For the purpose of this section, a dyadic grid on $\R$ is a collection of intervals that are organized 
in generations,
 each generation provides  a partition of $\R$ and the family 
has the nested, one parent,  and two equal-length children per interval properties.
 Shifted and scaled  regular dyadic grid are dyadic grids. These are not the only ones, 
 there are other dyadic  grids, such as the ones defined for the one-third trick: $\mathcal{D}^1$ and $\mathcal{D}^2$.
The following parametrization will capture {\sc all} dyadic grids in $\R$ \cite{Hyt1}.

\begin{lemma}[Hyt\"onen 2008]
 For each
{scaling parameter   $r$} with {$1\leq r<2$},
and  shift  parameter $\beta\in \{0,1\}^{\Z}$, meaning  ${\beta=\{\beta_i\}_{i\in\Z}}$ with  
${\beta_i=0}$ or~1,  then  
  $\mathcal{D}^{r,\beta}:=\cup_{j\in\Z}\mathcal{D}_j^{r,\beta}$ is a dyadic grid. Where
$$
{\mathcal{D}_j^{r,\beta}:= r\mathcal{D}_j^{\beta}}, \;\; \mbox{and} \;\;
{\mathcal{D}_j^{\beta}:= x_j+ \mathcal{D}_j}, \;\;\mbox{with}\;\; x_j=\sum_{i>j}\beta_i2^{-i}.
$$
\end{lemma}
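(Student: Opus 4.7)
The plan is to verify the four defining properties of a dyadic grid in turn: each generation $\mathcal{D}_j^{r,\beta}$ is a partition of $\mathbb{R}$, each parent interval splits into exactly two equal children in the next generation, each child has a unique parent in the previous generation, and nesting holds across arbitrary generations.

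The partition property is immediate: $\mathcal{D}_j$ tiles $\mathbb{R}$ by half-open intervals of length $2^{-j}$, and translation by $x_j$ followed by scaling by $r$ produces a tiling of $\mathbb{R}$ by half-open intervals of length $r\cdot 2^{-j}$. In particular the intervals in $\mathcal{D}_{j+1}^{r,\beta}$ have exactly half the length of those in $\mathcal{D}_j^{r,\beta}$, which is all that the ``two equal children'' clause requires once the child-parent containment is in hand.

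The crux is the identity
\[
x_j - x_{j+1} \;=\; \beta_{j+1}\, 2^{-(j+1)},
\]
which is immediate from the definition $x_j = \sum_{i>j} \beta_i 2^{-i}$. An arbitrary interval $I\in\mathcal{D}_j^{r,\beta}$ has the form $I = r\bigl[\,x_j + k\,2^{-j},\; x_j + (k{+}1)\,2^{-j}\,\bigr)$ for some $k\in\mathbb{Z}$. Substituting the identity, the left endpoint equals $r\bigl[x_{j+1} + (2k+\beta_{j+1})\,2^{-(j+1)}\bigr]$ and the right endpoint equals $r\bigl[x_{j+1} + (2k+\beta_{j+1}+2)\,2^{-(j+1)}\bigr]$. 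Setting $m := 2k+\beta_{j+1}\in\mathbb{Z}$, it follows that $I$ is the disjoint union of the two $\mathcal{D}_{j+1}^{r,\beta}$-intervals indexed by $m$ and $m+1$. Conversely, the $\mathcal{D}_{j+1}^{r,\beta}$-interval indexed by $m$ is contained in the unique $\mathcal{D}_j^{r,\beta}$-interval indexed by $k = \lfloor (m-\beta_{j+1})/2\rfloor$, which gives the one-parent property. Iterating the parent-child relation yields full nesting.

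No step is technically difficult; the conceptual step is seeing \emph{why} the particular shifts $x_j$ are forced. The hierarchical constraint across generations requires $x_j - x_{j+1}$ to lie in $\{0,\, 2^{-(j+1)}\}$, so that each endpoint of a parent interval coincides with an endpoint of one of its prospective children, and this is precisely the freedom encoded by the bit $\beta_{j+1}\in\{0,1\}$. The telescoping binary sum $x_j = \sum_{i>j}\beta_i 2^{-i}$ is (up to an overall integer translation, absorbed by reindexing $k$) the unique way to make compatible choices at all scales, which is what underlies the ``captures all'' assertion preceding the lemma.
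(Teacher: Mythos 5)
Your verification is correct: the identity $x_j-x_{j+1}=\beta_{j+1}2^{-(j+1)}$, the resulting splitting of each $I\in\mathcal{D}_j^{r,\beta}$ into the two $\mathcal{D}_{j+1}^{r,\beta}$-intervals indexed by $m=2k+\beta_{j+1}$ and $m+1$, and the induction to full nesting are exactly the mechanism the paper alludes to when it says that $\beta_{j+1}$ records whether a level-$(j+1)$ interval is the left or right half of its parent (the paper itself only cites Hyt\"onen and gives this remark, with no written proof). Your closing heuristic about the shifts being ``forced'' should be read modulo $2^{-j}$ (any difference in $2^{-(j+1)}\Z$ works after reindexing $k$), but that concerns the ``captures all grids'' comment preceding the lemma, not the statement you were asked to prove, which your argument establishes completely.
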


We shift by a different parameter $x_j$  at each level $j$, in a way that is consistent and preserves the nested property of the grid. Moreover the shift parameter $\beta_j=0,1$ for ${j\in\Z}$ encodes the information whether a base  interval at level $j$ will be the right or the left half of its parent.

\begin{example} Shifted and scaled  regular grids correspond to the shift parameter $\beta_i = 0$ for all $i<N$ $($or $\beta_i=1$ for all $i<N)$ for some integer $N$. These are the grids with two quadrants.
Comparatively speaking this set of dyadic grids is negligible, since it corresponds to a set of measure zero in parameter space described below. 
\end{example}

\begin{example}\label{eg:1/3grids}
The  $1/3$-shifted dyadic  grids introduced in the previous section correspond to  Hyt\"onen's   dyadic grids for $r=1$. More precisely, 
$$\mathcal{D}^i=\mathcal{D}^{1,\beta^i}\quad\mbox{for} \;\; i\in\{0,1,2\},$$
where  for all $j\in\Z$, $\beta^0_j\equiv 0$ (or $\equiv 1$), $\beta^1_j=\mathbbm{1}_{2\Z}(j)$, 
and $\beta_j^2=\mathbbm{1}_{2\Z+1}(j)$. 
\end{example}

We call these grids \emph{random dyadic grids} because we view the parameters $\beta_j$ and $r$ as independent  identically distributed random variables.
There is a very natural  probability space, say  $(\Omega, \mathbb{P})$ associated to the parameters, 
{$\Omega= [1,2) \times \{0,1\}^{\mathbb{Z}}.$}
Averaging in this context means calculating the expectation in this probability space, 
that is
{$$E_{\Omega} f = \int_{\Omega} f(\omega) \,d\mathbb{P}(\omega) = \int_1^2\int_{\{0,1\}^{\Z}} f(r,\beta) \, d\mu(\beta)\, \frac{dr}{r},$$
where $\mu$ stands for the canonical probability measure on $\{0,1\}^{\Z}$ which makes the coordinate functions $\beta_j$ independent with $\mu(\beta_j=0)=\mu(\beta_j=1) = 1/2$.

Random dyadic grids have been used for example in the study of 
 $T(b)$ theorems on  metric spaces with non-doubling measures \cite{NTV3, HMa} and 
of  ${\rm BMO}$ from dyadic ${\rm BMO}$ on the bidisc and product spaces of spaces of homogeneous type
\cite{PiW, CLW},
inspired   by celebrated work of John Garnett  and Peter Jones from the 80's \cite{GJ}.
 They have also been used in Hyt\"onen's representation theorem \cite{Hyt2} and  in the resolution  of the two-weight problem for the Hilbert transform \cite{LSSU, L2}.

\subsection{Haar bases}

Associated to dyadic intervals (or dyadic cubes) there is a very important collection of step functions, the Haar functions. In this section we recall the Haar bases on $\R$ and on $\R^d$, and some of their well-known properties.

\subsubsection{Haar basis on $\R$}

The  {\em Haar function} associated to an interval ${I}\subset \R$  is defined to be
$$
{h_I(x) := |I|^{-1/2}\big (\mathbbm{1}_{I_r}(x)-\mathbbm{1}_{I_l}(x)\big ),}
$$
where $I_r$ and $I_l$ are the right and left halves respectively of $I$, and the characteristic function $\mathbbm{1}_I(x)= 1$ if $x\in I$, zero otherwise. Haar functions have mean zero, that is,  $\int _{\R}h_I=0$, and they are normalized on $L^2(\R)$.

The Haar functions indexed on any dyadic grid $\mathcal{D}$, $\{h_I\}_{I\in\mathcal{D}} $, form a \emph{complete orthonormal system} of 
$L^2(\R)$ ({Haar 1910)}. In particular   for all $f\in L^2(\R )$, with $\langle f,g\rangle:=\int_\R f(x)\,\overline{g(x)}\,dx$,
$$ f  = \sum_{I\in\mathcal{D}} \langle f,h_I\rangle \, h_I .$$
You can find a complete proof of this statement in \cite[Chapter 9]{PW}.

 The Haar basis 
is an {unconditional basis} of $L^p(\R )$  and of {$L^p(w)$ if $w\in A_p$}
for $1<p<\infty$  \cite{TV}. 
This is deduced from the  boundedness properties of the  {martingale transform}, we will say more about this dyadic operator in Section~\ref{sec:martingale}.

The Haar basis constitutes the first example of a {wavelet basis}\footnote{An \emph{orthonormal wavelet basis} of $L^2(\R )$ is an orthonormal  basis  where all its elements are translations and dilations of a fixed function $\psi$, called the wavelet. More precisely, a function $\psi\in L^2(\R )$ is a \emph{wavelet} if and only if  the functions $\psi_{j,k}(x)=2^{j/2}\psi(2^jx-k)$ for $j,k\in\Z$ form an orthonormal basis of $L^2(\R )$.} and its corresponding {Haar multiresolution analysis} provides the canonical example of a multiresolution analysis \cite[Chapters 9-11]{PW}.

\subsubsection{Dyadic cubes and Haar basis on $\R^d$}\label{sec:Haar-in-Rd}

In $d$-dimensional Euclidean space  the regular {dyadic cubes} are  cartesian products of regular dyadic intervals of the same generation. More precisely, a cube $Q\in\mathcal{D}_j(\R^d)$ if and only if  $Q= I_1\times\dots\times I_d$, where  $I_n\in \mathcal{D}_j(\R )$ for $n=1,2,\dots, d$.
Each generation $\mathcal{D}_j(\R^d)$ is a partition of $\R^d$ and they form a nested grid, each cube has  one parent and  $2^d$ congruent children, and there are $2^d$ quadrants.  Had we used dyadic intervals with just one quadrant then the corresponding dyadic cubes in  $\R^d$ will also have only one quadrant. We denote $\mathcal{D}(\R^d)$ the collection of all dyadic cubes in all generations, that is, $\mathcal{D}(\R^d)=\cup_{j\in\Z}\mathcal{D}_j(\R^d)$. For $Q\in\mathcal{D}(\R^d)$ we denote $\mathcal{D}(Q)$ the set of dyadic cubes contained in $Q$.

For each dyadic cube $Q$ in $\R^d$ we can associate $2^d$ step functions, constant on each children of $Q$ by taking appropriate tensor products. More precisely,
for $Q\in\mathcal{D}(\R^d)$ and  $\vec{\epsilon}=(\epsilon_1,\dots,\epsilon_d)$, with $\epsilon_n=0$ or $1$, let  $$h^{\vec\epsilon}_Q(x_1,\dots, x_d) := h^{\epsilon_1}_{I_1}(x_1)\times \dots \times h^{\epsilon_d}_{I_d}(x_d), \quad $$
where for each dyadic interval $I$ we denote $h^0_I:=h_I$ and $h_I^1=|I|^{-1/2}\mathbbm{1}_I$.
Note that $h_Q^{\vec{1}}={{|Q|^{-1/2}}}{\mathbbm{1}_Q}$, where $\vec{1}=(1,1,\dots, 1)$. The remaining   ($2^d-1$) functions are the Haar functions associated to  the cube $Q$. The tensor product Haar functions
$h^{\vec\epsilon}_Q$, for $\vec{\epsilon}\neq \vec{1}$,  are supported on the corresponding dyadic cube $Q$, they have mean zero,  $L^2$ norm one, and they are constant on $Q$'s children. The collection  $\{h_Q^{\vec\epsilon}: \, \vec\epsilon\neq\vec1,\, \,Q\in\mathcal{D}(\R^d)\}$  is an orthonormal  basis of $L^2(\R^d)$, and an unconditional basis of $L^p(\R^d )$, $1<p<\infty$ (the Haar basis).  
Figure~\ref{fig:Haar-in-R2} and Figure~\ref{fig:haar-in-R3}
illustrate the Haar functions associated to a square in $\R^2$ and to a cube in $\R^3$ respectively.

\begin{figure}[ht]
\label{fig:Haar-in-R2}
\centering
\includegraphics[scale=0.5]{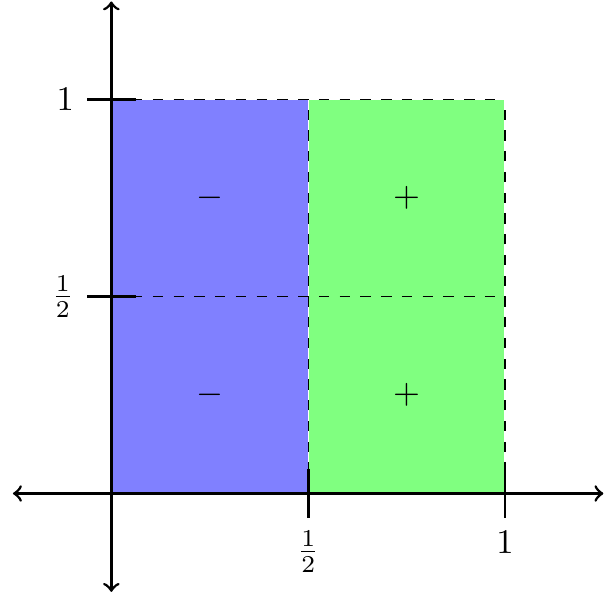}
\includegraphics[scale=0.5]{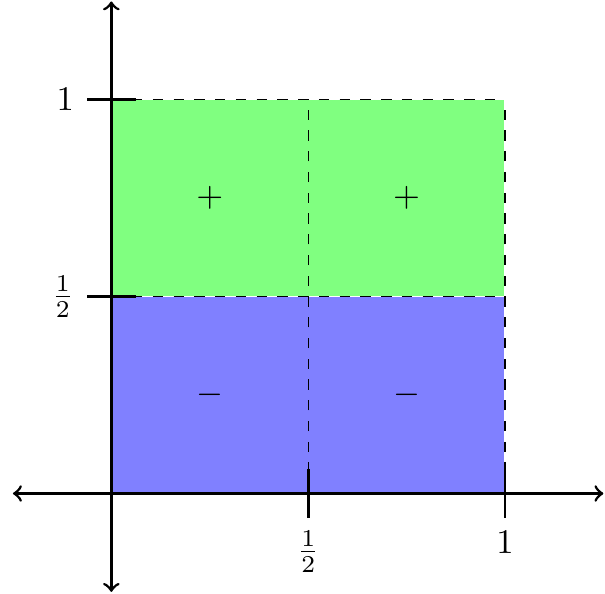}
\includegraphics[scale=0.5]{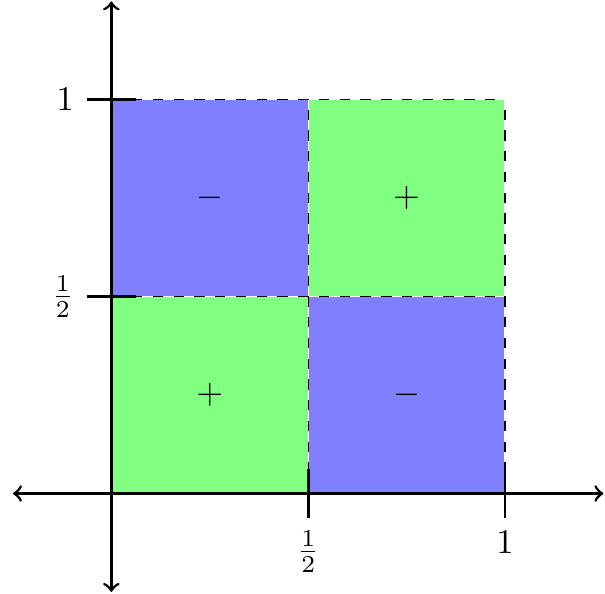}
\caption{The three Haar function associated to the unit square in $\R^2$.  \footnotesize{Figure kindly provided by David Weirich \cite{We}}.}
\end{figure}

\begin{figure}[ht]
\label{fig:haar-in-R3}
\centering
\includegraphics[scale=0.5]{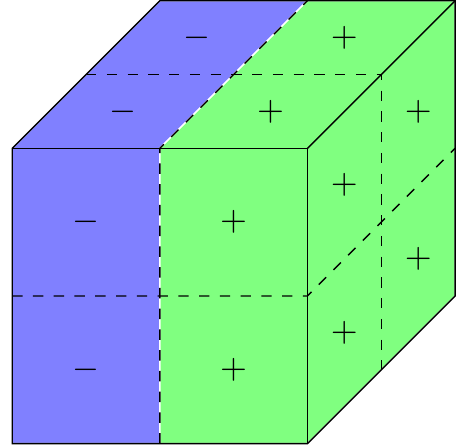}
\includegraphics[scale=0.5]{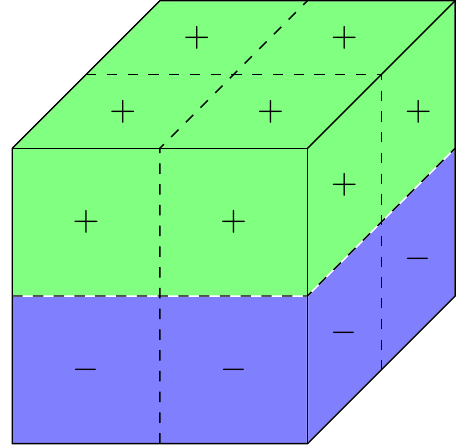}
\includegraphics[scale=0.5]{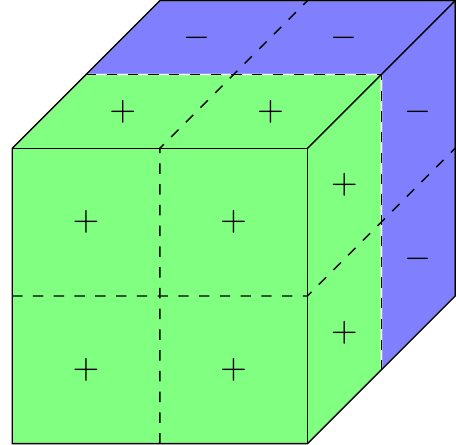}\\
\vspace{5mm}
\includegraphics[scale=0.5]{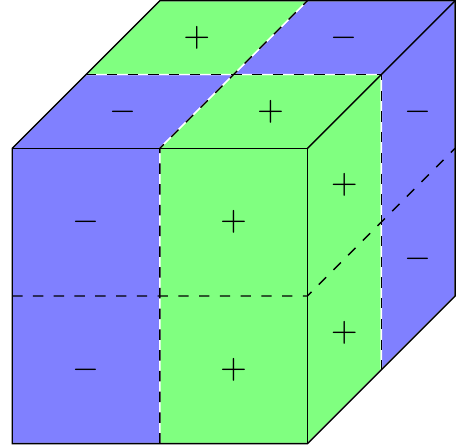}
\includegraphics[scale=0.5]{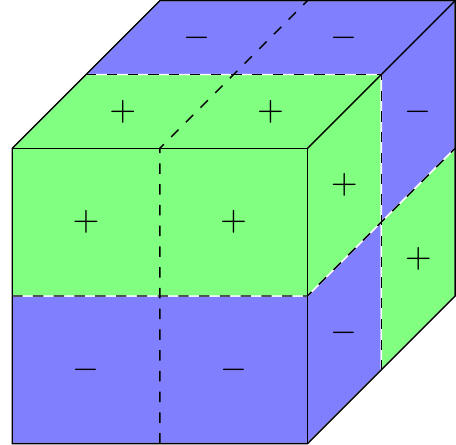}
\includegraphics[scale=0.5]{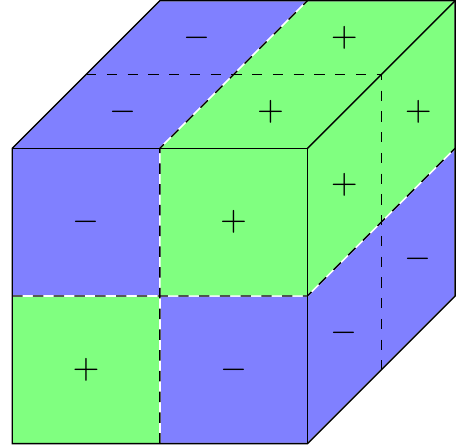}
\includegraphics[scale=0.5]{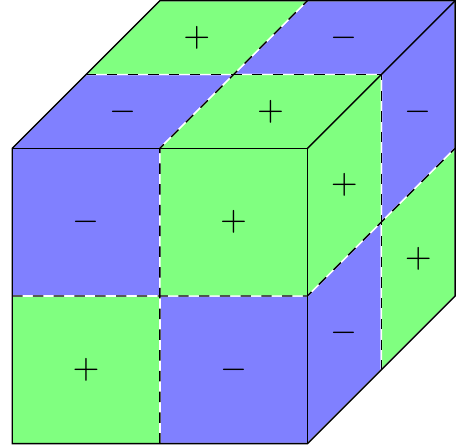}
\vspace{5mm}
\caption{The seven Haar functions associated to a cube in $\R^3$.  \footnotesize{Figure kindly provided by David Weirich \cite{We}}.}
\end{figure}

The tensor product construction just described  seems very rigid, it is very dependent on the geometry of the cubes and on the group structure of the Euclidean space $\R^d$.
{ Can we do dyadic analysis on other settings?}
The answer is a resounding {\sc yes}!!!!  One such setting  is on 
{spaces of homogeneous type}}
introduced by {Coifman and Weiss in the early 70s. 
 In Section~\ref{sec:dyadic-grids-Haar-functions-in-SHT} we will describe how to construct Haar basis on spaces of homogeneous type given suitable collections of "dyadic cubes" and arg\"ue why they constitute an orthonormal basis. This  argument  can be used to show that the Haar functions introduced in this section constitute an orthonormal basis of $L^2(\R^d)$.

\subsection{Dyadic analysis on spaces of homogeneous type}\label{sec:dyadic-grids-Haar-functions-in-SHT}
In this section we will define  spaces of homogeneous type.
We will present a generalization of the dyadic cubes  adapted to this setting. Given dyadic cubes we will show how to construct corresponding Haar functions, and briefly discuss the Auscher-Hyt\"onen wavelets on spaces of homogeneous type.

Before we start we would like to quote Yves Meyer.
 \begin{quote} \emph{One is amazed by the dramatic changes that occurred in analysis during the twentieth century. In the $1930$s complex methods and Fourier series played a seminal role. After many improvements, mostly achieved by the Calder\'on-Zygmund school, the action takes place today on spaces of homogeneous type. No group structure is available, the Fourier transform is missing, but a version of harmonic analysis is still present. Indeed the geometry is conducting the analysis.}
\end{quote}
\vskip -.1in
\hfill {Yves Meyer\footnote{Recipient of the 2017 Abel Prize. } in his preface to \cite{DH}.} \\

\subsubsection{Spaces of homogeneous type {\rm (SHT)}}
Let us first define what is a space of homogeneous type in the sense of Coifman and Weiss~\cite{CW}.

\begin{definition} [{Coifman, Weiss 1971}] For a set $X$, a triple 
{$(X,\rho ,\mu)$} is
a  {\rm space of homogeneous type  (SHT)}  in Coifman-Weiss's sense if 
\begin{enumerate}
\item  $\rho:X\times X\longrightarrow [0,\infty)$ is a {\rm quasi-metric} on~$X$, more precisely the following hold:
    \begin{itemize}
     \item[(a)] \emph{(positive definite)} $\;\rho (x,y) = 0$ if and only if $x = y$; 
     \item[(b)] \emph{(symmetry)} $\;\rho(x,y) = \rho (y,x) \geq 0$ for all $x$, $y\in X$;  
     \item[(c)]  \emph{(quasi-triangle inequality)} there exists constant $A_0\geq 1$ such that\[    \rho (x,y)   \leq{A_0 }\big (\rho (x,z) + \rho (z,y)\big ) \quad  \mbox{for all} \; \; x,y,z \in X.\]
     \end{itemize}
  \item  $\mu$ is a nonzero Borel regular\footnote{A measurable set $E$ of finite measure is \emph{Borel regular} if  there is a Borel set $B$ such that  $E\subset B$ and $\mu (E)=\mu (B)$.}  measure with respect to the topology induced by the quasimetric\footnote{The topology induced by a  quasi-metric is the largest topology $\mathcal{T}$ such that for each $x\in X$ the quasi-metric balls 
centered at $x$ form a fundamental system of neighborhoods  of $x$. Equivalently  a set $\Omega$ is \emph{open}, $\Omega \in \mathcal{T}$, if for each $x\in\Omega$ there exists $r>0$ such that the quasi-metric ball  $B(x,r)\subset \Omega$. A set in $X$ is \emph{closed} if it is the complement of an open set. }.
  \item Quasi-metric balls are $\mu$-measurable. 
  A {\rm quasi-metric ball} is the set $B(x,r):= \{y\in X: \rho (x,y)< r\}$, where $x\in X$ and  $r > 0$.
\item $\mu$ is a  {\rm doubling measure}, namely,  there  exists a constant $D_\mu\geq 1$ $($the doubling constant of the measure $\mu )$ such that for each quasimetric ball $B(x,r)$
 \[0< \mu(B(x,2r))\leq {D_\mu} \, \mu(B(x,r)) < \infty \quad  \mbox{for all} \;\; x\in X, r > 0.\]
\end{enumerate}
\end{definition}

Notice that  Condition~(4) implies that there are constants ${\omega} >0$
(known as an \emph{upper dimension} of $\mu$) and $C\geq 1$ such
that for all $x\in X$, $\lambda\geq 1$ and $r > 0$
   \[ \mu(B(x, \lambda r)) \leq C\lambda^{{\omega}} \mu(B(x,r)).\]
   In fact, we can choose $C=D_{\mu}\geq 1$ and $\omega = \log_2 D_{\mu}$.

The quasi-metric balls may {\sc not} be open in the topology induced by the quasi-metric, as Example~\ref{eg:non-open-balls} shows. Therefore the assumption that the quasimetric balls are $\mu$-measurable is not redundant.
The following example illustrate this phenomenon \cite{HK}

\begin{example}\label{eg:non-open-balls}
 Consider the set $X=\{-1\}\cup [0,\infty)$, the map $\rho: X\times X\to [0,\infty)$ given by
$\rho (-1,0)= \rho(0,-1)=1/2$ and $\rho(x,y)= |x-y|$ otherwise, and the measure $\mu(E) = \delta_{-1}(E) + m\big (E\cap [0,\infty)\big )$, where $m$ is the Lebesgue measure and $\delta_{-1}$ the point-mass at $x=-1$, that is $\delta_{-1} (E) = 0$ if $-1\notin E$ and $\delta_{-1}(E)=1$ if $-1\in E$. Then $\rho$ is not a metric since $\rho(1,-1)=2  > 3/2 = 1+1/2 = \rho(1,0)+\rho(0,-1)$, however $\rho$ is a quasi-metric and
the measure  $\mu$ is doubling. It is a good exercise to compute both the quasi-triangle constant of $\rho$ and  the doubling constant of $\mu$. Finally the ball $B(-1,1)=\{-1,0\}$ is not open because it does not contain any ball centered at 0 with positive radius $r$, since $[0,r)\subset B(0,r)$ and
the interval $[0,r)$ is not contained in $B(-1,1)$.
\end{example}

A couple of further remarks are in order.

  First, a given  quasi-metric $\rho$  may {\sc not} be  {H\"older regular}. 
Recall that $\rho$ is a \emph{H\"older regular quasi-metric} if  there are constants  $0<{\theta}<1$ and $C_0>0$ such that
 \[|\rho (x,y)-\rho (x',y)|\leq C_0 \rho (x,x')^{\theta}\big [ \rho (x,y)+\rho (x,y')\big ]^{{1-\theta}} \quad \forall x,x',y\in X.\]
Metrics are H\"older regular for any $0<\theta \leq 1$, $C_0=1$. The quasi-metric in Example~\ref{eg:non-open-balls} is not continuous let alone H\"older regular.  Quasi-metric balls for H\"older regular quasi-metrics  are always open.

Second, {Roberto Mac\'ias and Carlos Segovia  showed  in 1979 \cite{MS}  that given a space of homogeneous type  $(X,\rho , \mu)$ 
 there is an equivalent H\"older regular quasi-metric 
 $\rho '$ on $X$ and  some $\theta\in (0,1)$, and for which the measure $\mu$ is 1-\emph{Ahlfors regular}, more precisely,
 \[ \mu\big (B_{\rho '}(x,r)\big ) \sim r^{1}.\]
 
Here are some examples of spaces of homogeneous type.

\begin{itemize}
\item[-] $\R^n$, with the Euclidean metric  and the Lebesgue measure.
\item[-] $\R^n$ with  the Euclidean metric and an absolutely continuous measure with respect to the Lebesgue measure $d\mu = w\,dx$ where $w$ is a doubling weight 
(for example  $w$ could be an $A_{\infty}$ weight). 
\item[-] Quasi-metric spaces with $d$-\emph{Ahlfors regular measure}:
 $ \mu (B(x,r)) \sim r^{d}$ (e.g. Lipschitz surfaces, fractal sets, $n$-thick subsets of $\R^n$). More concretely, consider for example $X$ the four-corners Cantor set with the Euclidean metric and the one-dimensional Hausdorff measure, or consider $X$ the graph of a Lipschitz function $F:\R^n\to\R$ with the induced Euclidean metric and measure the volume of the set's "shadow",
  $\mu(E)=m\Big ( \{x\in\R^n\,: \, \big (x,F(x)\big )\subset E\} \Big )$ 
  where $m$ is the Lebesgue measure on $\R^n$.
\item[-] $C^{\infty}$ manifolds with doubling volume measure for geodesic balls.
\item[-] Nilpotent Lie groups $G$ with the left-invariant Riemannian metric and the induced measure
(e.g. Heisenberg group where $X$ is the boundary of the unit ball in $\mathbb{C}^n$, 
$\rho(z,w)=1-\overline{z}\cdot w$ and with surface measure).

\end{itemize}

The 2015  book  by {Ryan Alvarado and Marius Mitrea} \cite{AMi}  discusses in more detail many of these examples and  relies heavily on  the Mac\'ias-Segovia philosophy, meaning they consider equivalent classes of quasi-metrics knowing that among them they can choose a representative that is  H\"older regular  and for which the measure is Ahlfors regular.

\subsubsection{Dyadic cubes in {\rm SHT} }\label{sec:dyadic-cubes-SHT}
Systems of "dyadic cubes" were built by Hugo Aimar and Roberto Mac\'ias,
Eric Sawyer and Richard Wheeden,   and  Guy David in the 80's \cite{AiM, SW,Da}, and  by Michael Christ in the 90's 
\cite{Chr} on 
spaces of homogeneous type,  and by Tuomas Hyt\"onen and Anna Kairema in 2012  on geometrically doubling quasi-metric spaces \cite{HK} without reference to a measure. 

A \emph{geometrically doubling quasi-metric space } $(X, d)$ is one such that
 every quasi-metric ball of radius $r$ can be covered with at most $N$ quasi-metric balls of radius $r/2$ for some natural number $N$.

 \begin{example}
 Spaces of homogeneous type in the Coifman-Weiss sense  are  geometrically doubling \cite{CW}.
 \end{example}

Systems $\mathcal{D}$ of  dyadic cubes in spaces of homogeneous type or, more generally, on geometrically doubling spaces, are organized in disjoint generations $\mathcal{D}_k$, $k\in\Z$,  such that  $\mathcal{D}=\cup_{k\in\Z} \mathcal{D}_k$ and the following qualitative properties hold.
\begin{itemize} 
\item[(a)] Each generation  $\mathcal{D}_k$ is a  partition of $X$, so the cubes in a generation are pairwise disjoint and form a covering of $X$.
\item[(b)] The generations are nested, that is there is no partial overlap across generations.
\item[(c)] As a consequence, each cube has  unique ancestors in earlier generations.
\item[(d)] Dyadic cubes have  at most $M$ children  for some positive natural number $M$ (this is a consequence of the geometric doubling property).
\item[(e)] There exists a constant $\delta\in (0,1)$ such that for every dyadic cube in $\mathcal{D}_k$ there are
 inner and outer balls of radius roughly $\delta^k$ (the "sidelength" of the cube). \item[(f)] The outer ball  corresponding to a dyadic cube's child is inside its parent's outer ball.
\end{itemize}
Note that since $\delta\in (0,1)$ the larger $k$ is the smaller in diameter the cubes are. If $Q\in\mathcal{D}_k$ then its parent will be the unique cube $\widetilde{Q}\in \mathcal{D}_{k-1}$ such that  $Q\subset \widetilde{Q}$.

Furthermore, cubes can be constructed to have a "small boundary property" \cite{Chr,HK} which is very useful in applications. 

A quantitative and more precise statement  of the defining properties  for a dyadic system of cubes on geometric doubling metric spaces is  encapsulated in the following construction that appeared in \cite[Theorem 2.2]{HK}.

\begin{theorem}[Hyt\"onen, Kairema 2012]\label{thm:HK}
   Given $(X,d)$ a geometrically doubling quasi-metric space.  Suppose the  constants $C_0 \geq c_0 >1$ and
    $\delta\in (0,1)$ satisfy
      {$12 A_0^3 C_0\delta  \leq c_0$}.
    Given a set of points   $\{z_\alpha^k\, : \, \alpha \in \mathcal{A}_k\}$,  where $\mathcal{A}_k$ is a countable set of indexes, with the properties that
    \[d(z_{\alpha}^k,z_{\beta}^k)\geq c_0\delta_k \; (\alpha\neq \beta), \quad  \min_{\alpha\in\mathcal{A}_k}d(x,z_{\alpha}^k)< C_0\delta_k \;\mbox{for all} \; x\in X.\]
    For each $k\in \Z$ and $\alpha\in \mathcal{A}_k$ there exists  sets ${Q}_\alpha^{k,\circ}
    \subseteq Q_\alpha^k \subseteq   \overline{Q}_\alpha^k$ ---called open, half-open, and closed dyadic cubes---   such that:
   \begin{enumerate}
    \item $Q_\alpha^{k,\circ} \mbox{ and } \overline{Q}_\alpha^k
            \mbox{ are the interior and closure of } Q_\alpha^k, \mbox{ respectively}$;
    \item {\rm (nested)} $\mbox{if } \ell\geq k, \mbox{ then either } Q_\beta^\ell\subseteq
            Q_\alpha^k \mbox{ or } Q_\alpha^k
            \cap Q_\beta^\ell=\emptyset$; 
    \item {\rm (partition)} $ \quad X = \bigcup_{\alpha \in \mathcal{A}_k} Q_\alpha^k \;$
            for all $k\in\mathbb{Z}$;
    \item {\rm (inner/outer balls)} $\quad B(z_\alpha^k,c_1\delta^k)\subseteq Q_\alpha^k\subseteq
            B(z_\alpha^k,C_1\delta^k)\;$ where  $c_1 := (3 A_0^2)^{-1}c_0$
            and $C_1 := 2A_0C_0$;
    \item $\mbox{if } \ell \geq k \mbox{ and } Q_\beta^\ell\subseteq Q_\alpha^k,
            \mbox{ then } B(z_\beta^\ell,C_1\delta^\ell)\subseteq
            B(z_\alpha^k,C_1\delta^k)$.     
    \end{enumerate}
    
        The open and closed cubes $Q_\alpha^{k,\circ}$ and
    $\overline{Q}_\alpha^k$ depend only on the points
    $z_\beta^\ell$ for $\ell\geq k$. The half-open cubes
    $Q_\alpha^k$ depend on $z_\beta^\ell$ for $\ell\geq
    \min(k,k_0)$, where $k_0\in\mathbb{Z}$ is a preassigned
    number entering the construction.
\end{theorem}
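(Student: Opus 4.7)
The plan is to build the cubes from the reference points by first assembling a hierarchical tree structure across scales, then realizing the cubes as topological regions associated to the branches of this tree, and finally carving the half-open versions out of the closed ones so that the partition property holds.

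First, I would construct a parent map. For each $k\in\mathbb{Z}$ and each $\beta\in\mathcal{A}_{k+1}$, pick a unique $\alpha=\pi_k(\beta)\in\mathcal{A}_k$ such that $d(z_\beta^{k+1},z_\alpha^k)$ is minimal among $\mathcal{A}_k$ (breaking ties by some fixed well-ordering of $\mathcal{A}_k$). By the covering property at scale $k$ this minimum is at most $C_0\delta^k$. Iterating $\pi_{k+1}\circ\pi_k\circ\cdots$ gives each $z_\beta^\ell$ (for $\ell\geq k$) a unique ancestor at scale $k$. Define the ``center set''
\[
 \widetilde{Q}_\alpha^k \;:=\; \bigcup_{\ell\geq k}\{\,z_\beta^\ell : z_\beta^\ell \text{ has ancestor } z_\alpha^k\,\},
\]
and then set $\overline{Q}_\alpha^k$ to be its closure and $Q_\alpha^{k,\circ}$ its interior. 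Properties (2) and (5) for the closed cubes follow straight from the tree structure, since ancestry is transitive.

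Next I would verify the inner/outer ball containment (4), which is the heart of the construction. For the outer ball, a descendant $z_\beta^\ell$ of $z_\alpha^k$ is reached by a chain $z_{\alpha_0}^k=z_\alpha^k,\dots,z_{\alpha_{\ell-k}}^\ell=z_\beta^\ell$ with $d(z_{\alpha_j}^{k+j},z_{\alpha_{j+1}}^{k+j+1})\leq C_0\delta^{k+j}$. An iterated application of the quasi-triangle inequality with constant $A_0$ yields
\[
 d(z_\alpha^k,z_\beta^\ell)\;\leq\;\sum_{j\geq 0}A_0^{j+1}C_0\delta^{k+j}\;\leq\;\frac{A_0 C_0\,\delta^k}{1-A_0\delta},
\]
and the hypothesis $12A_0^3C_0\delta\leq c_0$ makes this at most $C_1\delta^k=2A_0C_0\delta^k$ after taking the closure. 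For the inner ball, given $x$ with $d(x,z_\alpha^k)<c_1\delta^k$, approximate $x$ by reference points $z_{\alpha_\ell}^\ell$ at scales $\ell\geq k$ chosen so that $d(x,z_{\alpha_\ell}^\ell)<C_0\delta^\ell$ (covering property). Combine the above geometric-series bound with the separation $d(z_\alpha^k,z_{\alpha'}^k)\geq c_0\delta^k$ for $\alpha'\neq\alpha$ to argue that the scale-$k$ ancestor of $z_{\alpha_\ell}^\ell$ must be $z_\alpha^k$, not any competitor; the specific constant $c_1=(3A_0^2)^{-1}c_0$ is then exactly what makes the ``distance to wrong parent'' strictly exceed the ``distance to right parent'' under $12A_0^3C_0\delta\leq c_0$. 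Thus the approximating reference points lie in $\widetilde{Q}_\alpha^k$ and $x\in\overline{Q}_\alpha^k$.

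Finally, to obtain the half-open cubes satisfying both (2) and (3) I would proceed by induction outwards from the preassigned base scale $k_0$. At scale $k_0$, well-order $\mathcal{A}_{k_0}$ and set $Q_\alpha^{k_0}:=\overline{Q}_\alpha^{k_0}\setminus\bigcup_{\alpha'<\alpha}\overline{Q}_{\alpha'}^{k_0}$, so $\{Q_\alpha^{k_0}\}$ partitions $X$. For $k>k_0$, inside each parent $Q_\alpha^{k-1}$ partition using the tree: for each child index $\beta$ with $\pi_{k-1}(\beta)=\alpha$ well-order and carve, giving $Q_\beta^k\subseteq Q_\alpha^{k-1}$. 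For $k<k_0$, go upward: each $Q_\beta^{k+1}$ is assigned to the unique ancestor cube at scale $k$, then unions taken and intersected with $\overline{Q}_\alpha^k$. This bookkeeping is tedious but routine; the tree structure and (2) for the closed cubes guarantee consistency. The main obstacle is the inner-ball estimate: one needs the quantitative threshold $12A_0^3C_0\delta\leq c_0$ to dominate simultaneously the quasi-triangle losses (factors of $A_0$), the covering radius at finer scales, and the geometric-series tail, in such a way that a point near $z_\alpha^k$ cannot be claimed by any competing center. Getting this inequality to close with the advertised constants $c_1=(3A_0^2)^{-1}c_0$ and $C_1=2A_0C_0$ is where all the arithmetic has to be done carefully.
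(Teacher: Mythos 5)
You should first be aware that the paper does not prove this statement: it is quoted verbatim from Hyt\"onen--Kairema \cite{HK} (their Theorem 2.2), so the relevant comparison is with the construction there (Christ's tree of centers plus the half-open bookkeeping), which is indeed the strategy you are following. The genuine gap in your sketch is the inner-ball step, which is the heart of the theorem and the only place where the threshold $12A_0^3C_0\delta\leq c_0$ and the constant $c_1=(3A_0^2)^{-1}c_0$ enter. You propose to force the scale-$k$ ancestor of a nearby fine-scale center $z_{\alpha_\ell}^\ell$ to be $z_\alpha^k$ by comparing ``distance to the right parent'' with ``distance to a wrong parent'' at scale $k$, using the geometric-series chain bound and the separation $c_0\delta^k$. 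This comparison cannot close: the chain bound from a scale-$\ell$ center to its scale-$k$ ancestor is of size $A_0C_0\delta^k/(1-A_0\delta)\geq A_0c_0\delta^k$, already larger than the separation $c_0\delta^k$, so no contradiction is available at scale $k$; the hypothesis carries an extra factor of $\delta$ and only becomes useful one generation down. The correct argument passes through the scale-$(k+1)$ ancestor (or an induction over intermediate scales): there the relevant distances are of order $A_0^2C_0\delta^{k+1}+A_0^2c_1\delta^k$, and the hypothesis together with the choice of $c_1$ keeps them below the one-scale forcing radius $c_0\delta^k/(2A_0)$, i.e.\ below the threshold at which any scale-$(k+1)$ center must select $z_\alpha^k$ as its parent. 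Your nearest-point parent rule does have this forcing property, but your sketch never isolates or invokes it, and as written the inner-ball inclusion is not established.

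Two further points. First, your claim that property (2) holds for the closed cubes ``straight from the tree structure'' is false in the disjointness direction: closures of descendant sets of incomparable indices can share boundary points; only the containment direction is automatic. Property (2) is stated, and is true, only for the half-open cubes, and arranging (2), (3) and (1) simultaneously at every scale --- in particular that the carving leaves each $Q_\alpha^k$ dense in $\overline{Q}_\alpha^k$ with interior $Q_\alpha^{k,\circ}$, and that the construction remains consistent when there is no bottom layer --- is exactly the non-routine part of \cite{HK} beyond Christ's construction; it cannot be dismissed as bookkeeping. Second, in the outer-ball step the phrase ``after taking the closure'' is unjustified: in a quasi-metric space $d$ need not be continuous, so the closure of a set contained in $B(z_\alpha^k,R)$ is a priori only contained in $\{y:\,d(z_\alpha^k,y)\leq A_0R\}$, and with $R\approx\tfrac{12}{11}A_0C_0\delta^k$ this overshoots $C_1\delta^k=2A_0C_0\delta^k$ as soon as $A_0>11/6$. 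This is repairable (approximate a point of the cube by descendant centers at scale $\ell$ to within $O(\delta^\ell)$, so that the extra quasi-triangle factor multiplies a vanishing term), but the repair is an additional argument that your proposal does not contain.
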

The geometrically doubling condition implies  that sets of points $\{x^k_{\alpha}: k\in\Z, \alpha\in\mathcal{A}_k\}$
with the required separation properties exist and that the set  $\mathcal{A}_k$ is a countable set of indices for each $k\in\mathbb{Z}$.
The cubes in this construction are built as countable unions of quasi-metric balls, hence once a space of homogeneous type is given, the cubes will be measurable sets.

\begin{figure}[ht] \label{fig:HaarSHT}
\label{haar in SHT fig}
\centering
\includegraphics[scale=0.5]{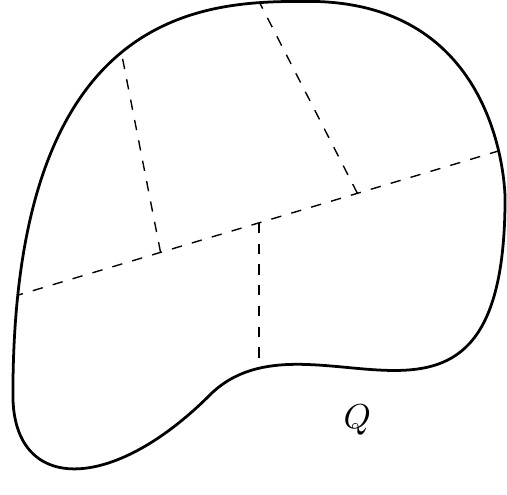}
\includegraphics[scale=0.5]{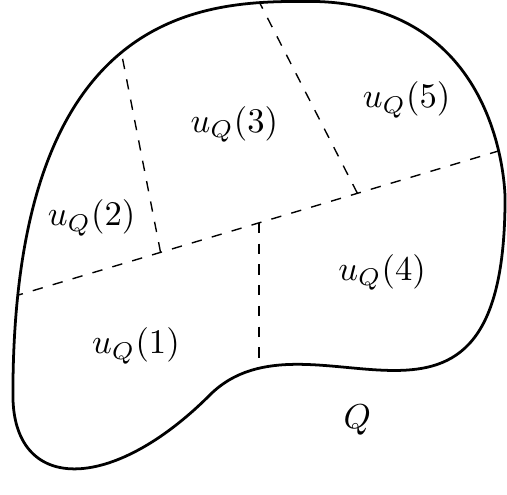}
\includegraphics[scale=0.5]{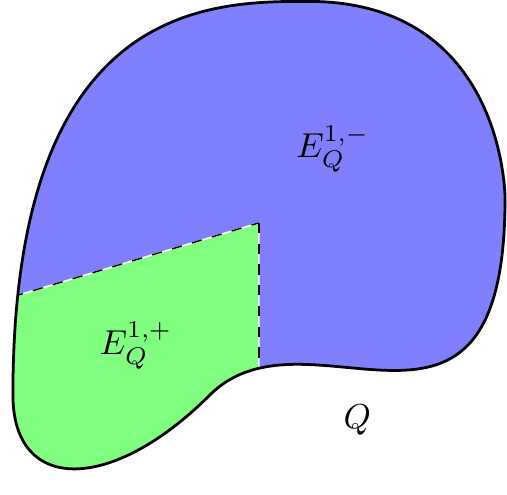}
\vspace{5mm}
\includegraphics[scale=0.5]{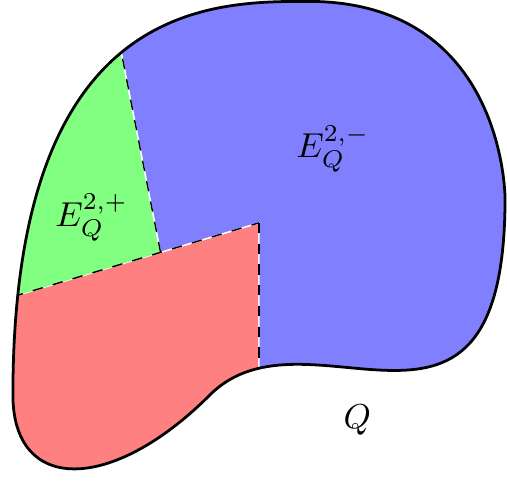}
\includegraphics[scale=0.5]{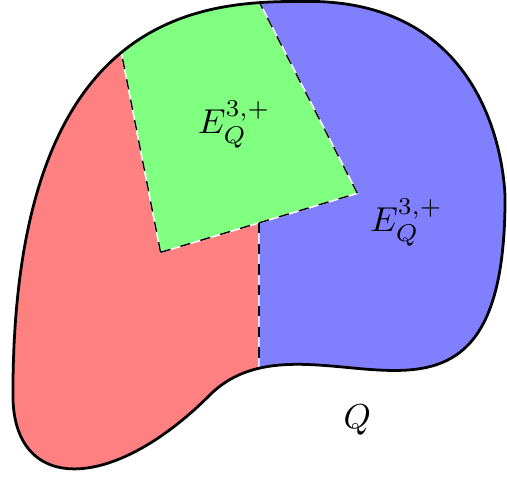}
\includegraphics[scale=0.5]{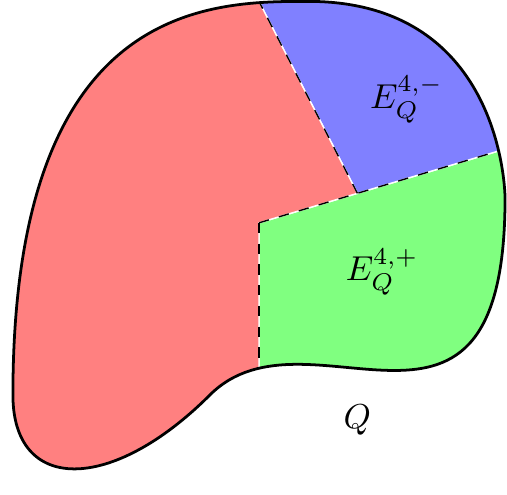}
\caption{The 4 Haar functions for a cube with 5 children in SHT. \footnotesize{Figures kindly provided by David Weirich \cite{We}}.}
\end{figure}

\subsubsection{Haar basis on {\rm SHT}}\label{sec:Haar-SHT} 
Given a space of homogeneous type  $(X, \rho, \mu)$ with a dyadic structure $\mathcal{D}$  given by  Theorem~\ref{thm:HK}, we can construct a system of Haar functions that will be an orthonormal basis of $L^2(X, \mu)$.

Given a  cube $Q\in \mathcal{D}$,  denote by ${\rm ch}(Q)$ the collection of dyadic children of $Q$, and by $N(Q)$ its cardinality, that is $Q$ has $N(Q)$ children.  
Let  $S_Q$ be the subspace  of $L^2(X,\mu)$  spanned by those  square integrable functions that are supported on $Q$ and  are constant on the children of $Q$. The subspace $S_Q$ has dimension $N(Q)$ as the characteristic functions of the children cubes normalized with respect to the $L^2$ norm, namely $\{\mathbbm{1}_{Q'}/\sqrt{\mu (Q')}\, : \, Q'\in {\rm ch}(Q)\}$,    form an orthonormal basis for $S_Q$.  The subspace $S_Q^0$ of $S_Q$ consisting of those functions that have mean zero, that is  $\int_Q f(x)\,dx=0$, will have one fewer dimension, namely  dim$(S^0_Q) = N(Q) -1$. 
 
 Given an enumeration of the children of $Q$, that is a  bijection $u_Q:\{1,2,\dots, N(Q)\}\to ch(Q)$, we will define recursively subsets of $Q$ that are unions of children of $Q$. More precisely at each stage we will remove one child according to the given enumeration,  let 
$E_Q^{1}:=Q$, given $E_Q^k\subset Q$, let  $E_Q^{k+1}=E_Q^k\setminus u_Q(k)$ for $k=1,2,\dots, N(Q)-1$.
We can split each of these sets into two  disjoint pieces, $E_Q^{i}:=E_Q^{i,+}\cup E_Q^{i,-}$ where $E_Q^{i,+}=u_Q(i)$, the child removed (green in Figure~\ref{fig:HaarSHT})  and $E_Q^{i,-}=E_Q^{i+1}$ (blue in Figure~\ref{fig:HaarSHT}). With this notation, 
 the Haar functions associated to the cube $Q$ and the enumeration $u_Q$ as illustrated in  Figure~\ref{fig:HaarSHT}, are supported on $Q$ and  are constant on the colored regions: positive on the green regions, negative on the blue  regions, and zero on the red regions, thus they are given  by
$$ h^i_Q(x) = {a} \mathbbm{1}_{E^{i,+}_Q}(x) - {b} \mathbbm{1}_{E^{i,-}_Q}(x), \quad 1\leq i \leq N(Q)-1,$$
where  the positive constants $a$ and $b$, dependent on the base cube $Q$ and the label $i$, are chosen to enforce $L^2$ normalization and mean zero. More precisely, the unknowns $a,b$ must satisfy  the system of two equations:
\begin{eqnarray*}
  \int_Q |h_Q^i(x)|^2 \,d\mu = a^2 \mu (E^{i,+}_Q) + b^2 \mu (E^{i,-}_Q) & = &1\\
  \int_Q h_Q^i(x)\, d\mu =a\, \mu (E^{i,+}_Q) - b\, \mu (E^{i,-}_Q) &=& 0.
 \end{eqnarray*}
 Solving the system of equations we get the positive solutions 
 $$  { a=\sqrt{ {\mu (E_Q^{i,-})}/ \big ({\mu (E_Q^{i})\, \mu (E_Q^{i,+})}}\big )}, \quad
{b=\sqrt{ {\mu (E_Q^{i,+})}/ \big ({\mu (E_Q^{i})\, \mu (E_Q^{i,-})}}\big )}.$$
Note that the doubling condition on the measure $\mu$ ensures $\mu(Q)>0$ for all $Q\in\mathcal{D}$, and hence also $\mu (E_Q^i)>0$ for all labels $i$.

The Haar basis consists of all functions $h_Q^i$  where $Q\in \mathcal{D}$ and $i=1,2,\dots , N(Q)-1$. Note that a cube may not subdivide for a while, meaning that it could have just one child, itself, for several generations or forever. In the former case we wait until we subdivide to define the subspace $S_Q^0$, in the later case we let $S_Q^0$ be the trivial subspace.

By construction for each $Q\in \mathcal{D}$ the collection $\{h^i_Q \, : \, i=1,\dots, N(Q)-1\}$ is normalized on $L^2(X,\mu )$, each Haar function has mean zero, and by the nested property of the dyadic cubes it is easy to verify this  is an orthonormal family. No matter what enumeration for ch$(Q)$ we use we will get  each time an orthonormal basis of $S^0_Q$. 
The orthogonal projection onto $S^0_Q$ of a square integrable function $f$ is independent of the  orthonormal basis chosen on $S^0_Q$. Given $x\in Q$ choose an enumeration so that $x\in u_Q(1)=:R\in \mbox{ch}(Q)$ then 
$$\mbox{Proj}_{S_Q} f(x) = \langle f, h^1_Q\rangle_{\mu} \, h^1_Q(x) = \langle f\rangle^{\mu}_R - \langle f\rangle^{\mu}_Q,$$
where $\langle f,g\rangle_{\mu }$ denotes the inner product in $L^2(X,\mu)$ and $\langle f\rangle^{\mu}_Q$ denotes the $\mu$-average of $f$. The first equality holds by support considerations, since $h^i(x)=0$ for all $i>1$ by the choice of the enumeration,  the second equality is now a simple calculation by substitution.

Using a telescoping sum argument one can verify that  completeness of  the Haar basis on $L^2(\mu)$ hinges on the following limits holding in the $L^2(\mu )$ sense:
\begin{eqnarray*} \lim_{j\to\infty} E_j^{\mu} f & {=}&  f,\\
 \lim_{j\to\infty} E_j^{\mu} f& {=}&  0,
 \end{eqnarray*}
 where $E_j f :=  \langle f\rangle^{\mu }_Q$ ,  with $x\in Q\in \mathcal{D}_j$, or $E_jf=\sum_{Q\in\mathcal{D}_j}  \langle f\rangle^{\mu }_Q \, \mathbbm{1}_Q$. That the limits do hold can be justified by martingale theory \cite{Hyt3, Mul}, in fact they do hold in $L^p(X,\mu)$ for $1<p<\infty$. The pointwise convergence a.e. of the averages to $f$ as $j$ goes to infinity is a consequence of the Lebesgue differentiation theorem which holds because the measure is assumed to be Borel regular, see \cite[Section 3.3]{AMi}.

Haar-type bases for $L^2(X,\mu)$ have been constructed in
general metric spaces, and the construction, along the lines described here,  is well known to
experts. 
 Haar-type wavelets associated to nested partitions in
abstract measure spaces were constructed in 1997 by  Girardi and Sweldens \cite{GS}.
 For the case of spaces of homogeneous type there is a lot of work related to Haar bases done in Argentina  this millennium,  specifically  by Hugo Aimar and collaborators Osvaldo Gorosito, Ana Bernardis, Bibiana Iaffei, and Luis Nowak  \cite{AiG,Ai,AiBI,AiBN1,AiBN2}, all descendants of Eleonor Harboure.
Haar functions have been used in geometrically doubling metric spaces  \cite{NRV}.
 For the case of  a geometrically doubling quasi-metric space $(X,\rho)$,
with a positive Borel regular measure~$\mu$, see \cite{KLPW}.

\subsubsection{Random dyadic grids, adjacent dyadic grids, and wavelets on {\rm SHT}} 
The counterparts of the random dyadic grids and the one-third trick have been identified in the general setting  of geometrically doubling quasi-metric spaces by Tuomas Hyt\"onen and his students and collaborators. Using them, {Pascal Auscher and Tuomas  Hyt\"onen } constructed  in 2013 a remarkable {orthonormal basis} of $L^2(X, \mu)$ \cite{AH, AH2}.

 A  notion of random dyadic grids can be introduced  on geometrically doubling quasi-metric spaces  $(X,d)$  by randomizing the order relations in the construction of the Hyt\"onen-Kairema cubes  \cite{HMa,HK}.
 In 2014, Tuomas Hyt\"onen and Olli Tapiola modified the randomization to improve upon Auscher-Hyt\"onen wavelets in metric spaces \cite{HTa}. 
 A different randomization can be found in \cite{NRV}.  

 One can find finitely many  adjacent families of Hyt\"onen-Kairema dyadic cubes, $\mathcal{D}^t$ for $t=1,\dots , T$,  with the same parameters, that play the role of the 1/3-shifted dyadic grids in~$\R$. The main property the adjacent families of dyadic cubes have is that  given any ball $B(x,r) \in X$, with $r\sim \delta^k$, then there is $t\in\{1,2,\dots, T\}$ and a cube in the $t$-grid  and in the $k$th-generation,  $Q\in\mathcal{D}^t_k$,  such that
$B(x,r)\subset Q\subset B(x, Cr)$,  where $C>0$ is a geometric constant only dependent on the quasi-metric and geometric doubling parameters of $X$ \cite{HK}. 
Furthermore, given a $\sigma$-finite measure $\mu$ on $X$, the adjacent dyadic systems can be chosen so that all cubes have small boundaries: $\mu (\partial Q) =0$ for all $Q\in \cup_{t=1}^T \mathcal{D}^t$  \cite{HK}.

 Given nested maximal sets $\mathcal{X}^k$ of $\delta^k$-separated points in $X$ for $k\in\mathbb{Z}$, let $\mathcal{Y}^k:=\mathcal{X}^{k+1}\setminus\mathcal{X}^k$ and  relabel points in $\mathcal{Y}^k$ by $y^k_{\alpha}$. To each point {$y^k_{\alpha}$},  Auscher and Hyt\"onen associate a {wavelet function $\psi^k_{\alpha}$} (a linear spline) of regularity $0< {\eta} <1$ that is morally supported near $y_{\alpha}^k$ at scale $\delta^k$, with mean zero and some smoothness. More precisely, 
these functions are not compactly supported, but have exponential decay away from the base cube $Q^k_{\alpha}$, 
and they have  H\"older regularity exponent $\eta>0 $, where $\eta$ depends only on $\delta$ and on some 
 finite quantities needed for extra labeling of the random dyadic grids used in the construction of the wavelets.
 The number  of indexes $\alpha$ so that $y_{\alpha}^k\in \mathcal{Y}_{k}$ 
for each  $Q_{\alpha}^k$ is exactly  {$N(Q_{\alpha}^k) - 1$}, }  where recall that $N(Q_\alpha^k)$ denotes the number of children of $Q_{\alpha}^k$. This is the right number of wavelets per cube $Q^k_{\alpha}$ if our intuition is to be guided by the constructions of the Haar functions.
The precise nature of these wavelets is   detailed in  \cite[Theorem 7.1]{AH}.

Furthermore, the functions $\{\psi^k_{\alpha}\}_{k\in\Z, \alpha\in \mathcal{Y}^k}$ form an unconditional basis on $L^p(X)$ for all $1<p<\infty$ and the following wavelet expansion is valid in $L^p(X)$,
\[    f(x)
    = \sum_{k\in\mathbb{Z}}\sum_{y_{\alpha}^k \in \mathcal{Y}^k}
        \langle f,\psi_{\alpha}^k \rangle \, \psi_{\alpha}^k(x).\]

Hyt\"onen and Tapiola} were able to  build such wavelets for all $0<\eta<1$ in the context of  metric spaces \cite{HTa}. It is still an open problem to construct smooth wavelets that are compactly supported. These wavelets have been used to study Hardy and  ${\rm BMO}$ spaces on product spaces of homogeneous type, as well as their dyadic counterparts \cite{KLPW}.

\section{Dyadic operators, weighted inequalities, and Hyt\"onen's representation theorem}\label{sec:Dyadic-operators}

In this section we introduce the model dyadic operators: the martingale transform, the dyadic square function, the dyadic paraproduct, Petermichl's Haar shift operator, and  Haar shift operators of arbitrary complexity. All ingredients in Hyt\"onen's proof of the $A_2$ conjecture \cite{Hyt2}. We will state the known quantitative one- and two-weight inequalities for these dyadic operators. We end the section with Hyt\"onen's representation theorem in terms of Haar shift operators of arbitrary complexity, dyadic paraproducts and adjoints of  dyadic paraproducts  over random dyadic grids, valid for all Calder\'on-Zygmund operators and  key to the resolution of the $A_2$ conjecture.

 \subsection{Martingale transform}\label{sec:martingale}
Let $\mathcal{D}$ denote a dyadic grid on $\R$, the Martingale transform is the linear operator  formally defined as
$$
   T_{\sigma} f (x):= \sum_{I\in\mathcal{D}}  \sigma_I \, \langle f, h_I\rangle \, h_I(x), 
\quad\quad  \mbox{where} \quad  \sigma_I=\pm 1. 
 $$
 
 This is  a constant Haar multiplier in analogy to Fourier multipliers, where here the Haar coefficients are modified  multiplying them by uniformly bounded constants, the \emph{Haar symbol} $\{\sigma_I\, :\, I\in\mathcal{D}\}$  (in this case arbitrary changes of sign). The martingale transform is bounded on $L^2(\R )$, in fact it is an isometry  on $L^2(\R)$ by Plancherel's identity, that is
 $\|T_{\sigma}f\|_{L^2}= \|f\|_{L^2}$.
 
The martingale transform is a good toy model for Calder\'on-Zygmund singular integral operators such as the Hilbert transform. Suffices to recall that  on Fourier side the Hilbert transform is a Fourier multiplier with Fourier symbol $m_H(\xi)=-i\, {\rm sgn} (\xi)$.  
Compare the Fourier transform of the Hilbert transform and the "Haar transform" of the martingale transform, namely,
$$\widehat{Hf}(\xi)=-i\,\mbox{sgn}(\xi) \,\widehat{f}(\xi) \quad \mbox{and} \quad \langle T_{\sigma}f,h_I\rangle = \sigma_I\, \langle f,h_I\rangle.$$

   Unconditionality of the Haar basis  on $L^p(\R )$ follows from uniform (on the choice of signs~$\sigma$) boundedness of  the martingale transform $T_{\sigma}$ on
$L^p(\R )$. More precisely  for all $f\in L^p(\R )$    
\[  \sup _{\sigma} \| T_{\sigma} f\|_{L^p}\lesssim_p \|f\|_{L^p}. \]
  This was proven by {Donald Burkholder in 1984}, he also found the optimal constant $C_p$ in work that can be described as the precursor of the (exact) Bellman function method  \cite{Bur}.

Unconditionality  of the Haar basis on ${L^p(w)}$ when  ${w\in A_p}$ follows
 from the uniform boundedness of $T_{\sigma}$ on
 $L^p(w)$, this was proven in 1996 by Sergei Treil and  Sasha Volberg  \cite{TV}.

 \subsubsection{Quantitative weighted inequalities for the martingale transform}\label{sec:martingale-transform}
Quantitative one- and two-weight inequalities are known for the martingale transform. In fact, the $A_2$ conjecture (linear bound) was proven by Janine Wittwer in 2000  and  necessary and sufficient conditions for two-weight  uniform (on the symbol $\sigma$) $L^2$ boundedness were identified by Fedja Nazarov,  Sergei Treil, and  Sasha Volberg in 1999. We present now the precise statements.

Sharp linear  bounds on $L^2(w)$  when $w$ is an $A_2$ weight   are known \cite{W1}. More precisely, for all $\sigma$ there is $C>0$ such that for all $w\in A_2$ and all $f\in L^2(w)$
\[ \|T_{\sigma}f\|_{L^2(w)}\leq C[w]_{A_2}\|f\|_{L^2(w)}.\]

 Sharp extrapolation gives optimal bounds on $L^p(w)$ when $w$ is an $A_p$  weight \cite{DGPPet}. More precisely,  for all $\sigma$ there is a constant $C_p>0$ such that for all $w\in A_p$ and $f\in L^p(w)$
\[ \|T_{\sigma}f\|_{L^p(w)}\leq C_p[w]_{A_p}^{\max\{1,\frac{1}{p-1}\}}\|f\|_{L^p(w)}.\]
 
 Necessary and sufficient conditions on pairs of weights $(u,v)$ are known ensuring two weight boundedness \cite{NTV1}. More precisely,  
 
 \begin{theorem}[Nazarov, Treil, Volberg 1999] \label{thm:NTVmartingale}
 The martingale transforms $T_{\sigma}$ are
uniformly $($on $\sigma)$  bounded  from $L^2(u)$ to $L^2(v)$ if and only if the following conditions hold simultaneously:
  \begin{itemize}
    \item[(i)] $(u,v)$ is in \emph{joint  dyadic $\mathcal{A}_2$}. 
   Namely  $[u,v]_{\mathcal{A}_2}:= \sup_{I\in\mathcal{D}} \langle u^{-1}\rangle_I \langle v\rangle_I <\infty$.
    \item[(ii)] $\{|I|\,|\Delta_I (u^{-1})|^2\langle v\rangle_I\}_{I\in\mathcal{D}}$ is a \emph{$u^{-1}$-Carleson sequence}. 
    \item[(iii)] $\{|I|\,|\Delta_Iv|^2\langle u^{-1}\rangle_I\}_{I\in\mathcal{D}}$ is a \emph{$v$-Carleson sequence}  (dual condition). 
    \item[(iv)] The {positive dyadic operator} $T_0$ is bounded from $L^2(u)$ into $L^2(v)\,.$ Where
\vskip -.1in
          $$ 
            T_0f(x):=\sum_{I\in\mathcal{D}}{\frac{\alpha_I}{|I|}}\, \langle f \rangle_I \, \mathbbm{1}_I(x)\,,
          $$
     with  $\,\alpha_I:=\big (|\Delta_Iv|/\langle v\rangle_I\big )\big (|\Delta_I(u^{-1})|/\langle u^{-1}\rangle_I\big )|I|\,,$ 
     and        $\,\Delta_I v:= \langle v\rangle_{I_+} - \langle v\rangle_{I_-}.$
\end{itemize}    
\end{theorem}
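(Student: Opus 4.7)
The plan is to prove the two implications separately, using as the central object the bilinear form
\[\langle T_\sigma f, gv\rangle = \sum_{I\in\mathcal{D}} \sigma_I \langle f, h_I\rangle \langle gv, h_I\rangle,\]
together with the observation that uniform $L^2(u)\to L^2(v)$ boundedness over $\sigma\in\{\pm 1\}^{\mathcal D}$ is equivalent, by choosing $\sigma_I$ to agree with the sign of each summand, to the absolute-value estimate
\[\sum_I |\langle f,h_I\rangle|\,|\langle gv,h_I\rangle| \lesssim \|f\|_{L^2(u)}\|g\|_{L^2(v)}.\]

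For necessity I would use the disbalanced Haar identity: for any weight $w$,
\[h_I = A_I^w\, h_I^w + B_I^w\,\frac{\mathbbm{1}_I}{\sqrt{|I|}},\]
where $h_I^w$ is the $L^2(w)$-normalized mean-zero Haar function adapted to $w$, and a direct computation gives $|B_I^w| \sim |\Delta_I w|/\langle w\rangle_I$ and $|A_I^w|^2 \sim \langle w\rangle_{I_+}\langle w\rangle_{I_-}/\langle w\rangle_I$. Testing the boundedness hypothesis on pairs of the form $f = u^{-1}\mathbbm{1}_J$, $g = \mathbbm{1}_J$, with the sum restricted to $I\subseteq J$, one reads off (i) from the leading $A$-$A$ contribution. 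Further testing against combinations of $u^{-1}\mathbbm{1}_J$ and $v\mathbbm{1}_J$ with signs tailored to isolate each part of the decomposition (in the spirit of Sawyer's testing conditions for the maximal function) successively produces (iii), (ii), and finally (iv), the last being the surviving $B$-$B$ interaction.

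For sufficiency I would insert the disbalanced identity simultaneously in both Haar pairings, with $w = u^{-1}$ in the first factor and $w = v$ in the second, splitting the bilinear form as $\Sigma_{AA} + \Sigma_{AB} + \Sigma_{BA} + \Sigma_{BB}$. The pure $A$-$A$ piece carries the factor $A_I^{u^{-1}}A_I^v$, comparable to $\sqrt{\langle u^{-1}\rangle_I\langle v\rangle_I}$; Cauchy--Schwarz combined with Bessel's inequality for the orthonormal systems $\{h_I^{u^{-1}}\}$ in $L^2(u^{-1})$ and $\{h_I^v\}$ in $L^2(v)$ controls it by $[u,v]_{\mathcal A_2}^{1/2}\|f\|_{L^2(u)}\|g\|_{L^2(v)}$ using (i). The mixed $\Sigma_{AB}$ and $\Sigma_{BA}$ pieces reduce, after Cauchy--Schwarz, to two weighted Carleson embedding estimates whose Carleson sequences are exactly those in (iii) and (ii); the powers of $|\Delta_I w|$ and the particular averages in the theorem are chosen so that the embedding absorbs precisely the factor $|B_I^w|^2$. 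A direct calculation identifies the remaining $\Sigma_{BB}$ piece with $\sum_I \sigma_I\alpha_I\langle f\rangle_I\langle gv\rangle_I$, which is (up to signs) the bilinear form associated to $T_0$; boundedness of this piece is therefore exactly (iv).

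The main obstacle I anticipate is the bookkeeping in the mixed pieces. After Cauchy--Schwarz in $\Sigma_{AB}$, one factor carries weighted Haar coefficients of $f$ that are summable by Bessel against $\|f\|_{L^2(u)}^2$, while the other factor is a weighted sum of unweighted averages of $gv$ against the sequence $\{|I||\Delta_I v|^2\langle u^{-1}\rangle_I\}$, which must be converted into $\|g\|_{L^2(v)}^2$ by a suitable weighted Carleson embedding lemma with respect to $v$; this is the point at which the precise asymmetric form of conditions (ii), (iii) is dictated by the argument rather than being a symmetric $\mathcal{A}_2$-type statement. The necessity of the extra hypothesis (iv) reflects a genuinely off-diagonal two-weight interaction that cannot be absorbed into (i)--(iii): in the one-weight case $u=v^{-1}$ it becomes automatic from dyadic $A_\infty$-type considerations, but in the truly two-weight setting it must be imposed separately.
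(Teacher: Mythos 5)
Your outline is essentially correct, but there is no proof in the paper to compare it with: Theorem~\ref{thm:NTVmartingale} is quoted from Nazarov--Treil--Volberg \cite{NTV1}, whose original treatment ran through Bellman functions, and these notes only record the statement (plus the remark that in the one-weight case (i)--(iii) hold automatically). What you propose is the now-standard ``disbalanced Haar'' route, and it does go through with tools already in the notes: writing $h_I=\alpha_I^{w}h_I^{w}+\beta_I^{w}\mathbbm{1}_I/\sqrt{|I|}$ with $w=u^{-1}$ on the $f$ side and $w=v$ on the $g$ side, noting $\langle f,h_I^{u^{-1}}\rangle=\langle fu,h_I^{u^{-1}}\rangle_{L^2(u^{-1})}$ and $\langle gv,h_I^{v}\rangle=\langle g,h_I^{v}\rangle_{L^2(v)}$ so Bessel applies, and using $|\alpha_I^{w}|\le\sqrt{\langle w\rangle_I}$, $\beta_I^{w}=\Delta_I w/(2\langle w\rangle_I)$, the four pieces are controlled exactly as you say: the $AA$ piece by (i) via Cauchy--Schwarz and Bessel, the mixed pieces by the weighted Carleson embedding (Lemma~\ref{lem:weighted-Carleson} applied to $(M^{\mathcal{D}}_{v}g)^2$, resp.\ $(M^{\mathcal{D}}_{u^{-1}}(fw))^2$, plus the universal bound \eqref{Mv-on-Lp(v)}) with the sequences of (iii) and (ii), and the $BB$ piece is termwise $\tfrac14\alpha_I\langle|f|\rangle_I\langle|g|v\rangle_I$, i.e.\ the $T_0$ form, so (iv) finishes sufficiency; this replaces the Bellman-function embedding step of \cite{NTV1} by the maximal-function proof of the embedding given in these notes. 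Two refinements to your necessity sketch: (ii) and (iii) drop out most cleanly not from sign-tailored testing but from averaging $\|T_\sigma(u^{-1}\mathbbm{1}_J)\|_{L^2(v)}^2$ over random signs (which yields $\sum_{I\subsetneq J}\tfrac{|I|}{4}|\Delta_I(u^{-1})|^2\langle v\rangle_I\lesssim u^{-1}(J)$, since $\langle u^{-1},h_I\rangle=\tfrac{\sqrt{|I|}}{2}\Delta_I(u^{-1})$ for $I\subsetneq J$, the top term being handled by (i)), together with the dual computation for (iii); and (iv) is not really obtained by testing at all, but by the termwise triangle inequality: for $f,g\ge 0$ the $T_0$ form equals four times the sum of $|BB_I|$, which is dominated by the full absolute bilinear form (bounded by hypothesis) plus the $AA$, $AB$, $BA$ sums already controlled by the necessary conditions (i)--(iii); here one needs the exact value of $\beta_I^{w}$, not just the upper bound stated in Section~5. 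With these two points made explicit, your argument is complete in outline.
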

      A sequence $\{\lambda_I\}_{I\in\mathcal{D}}$ is \emph{$v$-Carleson} if and only if  there is constant $B>0$  such that   $\sum_{I\in\mathcal{D}(J)} \lambda_I \leq B v(J)$  for all $J\in \mathcal{D}$.
 The smallest constant $B$ is called the \emph{intensity} of the sequence.     
 When $u=v=w\in A_2$ then  (i)-(iii)  hold,  and by Example~\ref{examples-Carleson-sequences} the sequence $\{\alpha_I\}_{I\in\mathcal{D}}$ is a 1-Carleson sequence implying (iv).  

In 2008, Nazarov, Treil, and Volberg found necessary and sufficient conditions for two-weight boundedness of individual martingale transforms  and other well-localized operators \cite{NTV4}, see also \cite{Vu1}.

\subsection{Dyadic square function}\label{sec:dyadic-square-function}

The dyadic square function is the sublinear operator formally defined as
$$
(S^{\mathcal{D}}f)(x) :=   \bigg (\sum_{I\in\mathcal{D} } \frac{|\langle f, h_I\rangle |^2}{|I|}\, \mathbbm{1}_I(x)\bigg )^{1/2}.
$$
 The dyadic square function is an isometry on $L^2(\R )$, as a calculation quickly reveals, namely $\|S^{\mathcal{D}}f\|_{L^2}=\|f\|_{L^2}$.
 It is also bounded on $L^p(\R )$ for  $1<p<\infty$,  furthermore
    $$ 
          \|S^{\mathcal{D}} f\|_{L^p} \sim \|f\|_{L^p}.
     $$
   This result plays the role of \emph{Plancherel on $L^p$} (Littlewood-Paley theory). 
   It readily implies boundedness of $T_{\sigma}$   on $L^p(\R )$  since $S^{\mathcal{D}}(T_{\sigma}f) = S^{\mathcal{D}} f$, as follows,
   $$   \|T_{\sigma}f\|_{L^p}\sim \|S^{\mathcal{D}}(T_{\sigma}f)\|_{L^p} = \|S^{\mathcal{D}}f\|_{L^p}\sim \|f\|_{L^p}.$$
   
   A somewhat convoluted argument can be done to prove the $L^p$ boundedness of the dyadic square function. First  prove $L^2(w)$ estimates for  $A_2$ weights $w$, second  extrapolate to get  $L^p(w)$  estimates for $A_p$ weights $w$,  and third set $w\equiv 1 \in A_p$. Stephen Buckley has a very nice and elementary argument showing boundedness of the dyadic square function on  $L^2(w)$ when $w$ is an $A_2$ weight  \cite{Bu2} or see \cite[Section 2.5.1]{P1}.  One can track the dependence on the weight an get a 3/2 power on the $A_2$ characteristic of the weight \cite[Section 5]{BeCMP}, far from the optimal linear dependence discussed in Section~\ref{sec:one-weight-square-function}

\subsubsection{One-weight estimates for $S^{\mathcal{D}}$}\label{sec:one-weight-square-function}
Quantitative one-weight inequalities are known for the dyadic square function. The $A_2$ conjecture (linear bound) was proven by Sanja Hukovic, Sergei Treil, and Sasha Volberg in 2000 \cite{HTV} and 
the  reverse estimate was proven by Stefanie Petermichl and Sandra Pott in 2002 \cite{PetPo}.  

We present now the precise statements.
 For all weights $w\in A_2$ and functions $f\in L^2(w)$ 
    $$ 
      {[w]^{-\frac12}_{A_2}}\|f\|_{L^2(w)} \lesssim \|S^{\mathcal{D}}f\|_{L^2(w)} \lesssim {[w]_{A_2}}\|f\|_{L^2(w)} 
    $$
 The direct and reverse estimates  on $L^2(w)$ for the dyadic square function play the role of  {Plancherel on $L^2(w)$}.
  We can use these inequalities to  obtain  $L^2(w)$ bounds for the martingale transform $T_{\sigma}$ of the form ${[w]_{A_2}^{3/2}}$. However the optimal bound is   linear \cite{W1}, as we already mentioned in Section~\ref{sec:martingale-transform}.

 Boundedness on $L^2(w)$ for all weights $w\in A_2$ implies by extrapolation boundedness on $L^p(\R )$ (and on $L^p(w)$ for all $w\in A_p$).  However sharp extrapolation will only yield the optimal power for $1<p\leq 2$, if one starts with the optimal linear bound on $L^2(w)$. Not only   $S^{\mathcal{D}}$ is bounded on $L^p(w)$ if $w\in A_p$, moreover, for $1<p<\infty$ and for all $w\in A_p$ and $f\in L^p(w)$
 $$
     \|S^{\mathcal{D}}f\|_{L^p(w)} \lesssim_p{[w]_{A_p}^{\max\{\frac{1}{2},\frac{1}{p-1}\}} }      \|f\|_{L^p(w)}.
    $$
   The  power $\max\{{1}/{2},{1}/({p-1})\}$ is  optimal. It corresponds to \emph{sharp extrapolation} starting at {$r=3$ } with square root power   \cite{CrMPz2}. More precisely, for all $w\in A_3$ and $f\in L^3(w)$, 
  \[ \|S^{\mathcal{D}} f\|_{L^3(w)}\lesssim [w]_{A_3}^{\frac12}\|f\|_{L^3(w)}.\]
  This estimate is valid more generally for \emph{Wilson's intrinsic square function} \cite{Le2, Wil2}.

Sharp extrapolation from the reverse estimate on  $L^2(w)$   also yields the following  reverse estimate on $L^p(w)$ for all $w\in A_p$ and $f\in L^p(w)$,
\[ \|f\|_{L^p(w)} \lesssim_p [w]_{A_p}^{\frac{1}{2}\max\{1, \frac{1}{p-1}\}} \|S^{\mathcal{D}}f\|_{L^p(w)}.\]
This estimate can be improved, using deep estimates of Chang, Wilson, and Wolff \cite{ChaWilWo}  for all $p>1$ to the following $1/2$ power of the smaller Fujii-Wilson $A_{\infty}$ characteristic,
\[ \|f\|_{L^p(w)} \lesssim_p [w]_{A_{\infty}}^{\frac{1}{2}}\|S^{\mathcal{D}}f\|_{L^p(w)}.\]
This estimate is better in the range $1<p<2$ where the power is $1/2$ instead of $1/2(p-1)$.

For future reference, we can compute precisely the weighted $L^2$ norm of $S^{\mathcal{D}}f$ as follows
 $$\|S^{\mathcal{D}}f\|_{L^2(w)}^2=\sum_{I\in\mathcal{D}} |\langle f,h_I\rangle |^2\langle w\rangle_I.$$

\subsubsection{Two-weight estimates for $S^{\mathcal{D}}$} 
Two-weight inequalities are understood for the dyadic square function. The necessary and sufficient conditions for two-weight   $L^2$ boundedness are known  \cite{NTV1}. Qualitative (mixed) estimates have been found by different authors, these estimates reduce to the linear estimate in the one-weight case.  We present now the precise statements.

\begin{theorem}[Nazarov, Treil, Volberg 1999]\label{thm:NTVdyadicS}
The dyadic square function $S^{\mathcal{D}}$ is bounded from $L^2(u)$ into $L^2(v)$ if and only if the following conditions hold simultaneously:
   \begin{itemize}
      \item[(i)]   $\;(u,v)\in \mathcal{A}_2$ (joint dyadic $\mathcal{A}_2$).
       \item[(ii)] $\; \{|I|\,|\Delta_I u^{-1}|^2\langle v\rangle_I\}_{I\in\mathcal{D}}$ is a {$u^{-1}$-Carleson sequence}        
       with intensity $C_{u,v}$.
    \end{itemize}
 \end{theorem}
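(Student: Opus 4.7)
Necessity of both conditions is obtained by testing the hypothesis $\|S^{\mathcal{D}} f\|_{L^2(v)} \le C \|f\|_{L^2(u)}$ on $f_J = u^{-1}\mathbbm{1}_J$ for an arbitrary $J \in \mathcal{D}$. Since a direct computation gives $\langle u^{-1}, h_I\rangle = \tfrac{|I|^{1/2}}{2}\,\Delta_I u^{-1}$ for $I\subset J$, restricting the sum $\sum_I |\langle f_J, h_I\rangle|^2 \langle v\rangle_I$ to $I\subset J$ yields $\tfrac{1}{4}\sum_{I\subset J}|I|\,|\Delta_I u^{-1}|^2 \langle v\rangle_I \le C^2\, u^{-1}(J)$, which is exactly condition (ii). Keeping instead only the single term $I=\tilde J$ (the parent of $J$) in the same Haar expansion gives, after a short computation, $\langle u^{-1}\rangle_J \langle v\rangle_{\tilde J} \lesssim C^2$; repeating with $J$ replaced by each of its two dyadic children and taking the worst of the two estimates produces the joint dyadic $\mathcal{A}_2$ condition (i).

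Sufficiency is the deeper half. The plan is to estimate $\sum_I |\langle f, h_I\rangle|^2 \langle v\rangle_I$ directly by introducing, for each $I \in\mathcal{D}$, the $u$-adapted Haar function $h_I^u$ (the unique function supported on $I$, constant on the children $I_\pm$, with $\int h_I^u\,u\,dx = 0$ and $\|h_I^u\|_{L^2(u)}=1$) and decomposing the standard Haar function as
\[
 h_I \;=\; \alpha_I\, h_I^u \;+\; \beta_I\,\frac{\mathbbm{1}_I}{|I|^{1/2}},
\]
where the explicit constants $\alpha_I,\beta_I$ are built from $u(I)$ and $u(I_\pm)$; in particular $\beta_I$ vanishes precisely when the two children have equal $u$-mass. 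Substituting this decomposition and applying $|a+b|^2 \le 2|a|^2+2|b|^2$ splits the target sum into a \emph{martingale piece} $\sum_I |\alpha_I|^2 |\langle f, h_I^u\rangle|^2 \langle v\rangle_I$ and an \emph{average piece} $\sum_I |\beta_I|^2 |I|\, |\langle f\rangle_I|^2 \langle v\rangle_I$.

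For the martingale piece, a short calculation gives $|\alpha_I|^2 \langle v\rangle_I \lesssim [u,v]_{\mathcal{A}_2}$, so hypothesis (i) combined with the $L^2(u)$-orthonormality of $\{h_I^u\}$ bounds this piece by $[u,v]_{\mathcal{A}_2}\|f\|_{L^2(u)}^2$. For the average piece the plan is to rewrite $\langle f\rangle_I = \langle fu\rangle^{u^{-1}}_I$ (the $u^{-1}$-average of $fu$) and invoke the weighted Carleson embedding theorem: if $\{\mu_I\}_{I\in\mathcal{D}}$ is $u^{-1}$-Carleson with intensity $B$, then $\sum_I \mu_I\, |\langle g\rangle^{u^{-1}}_I|^2 \lesssim B\, \|g\|_{L^2(u^{-1})}^2$. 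Applying this with $g=fu$ (so that $\|g\|_{L^2(u^{-1})}=\|f\|_{L^2(u)}$) and $\mu_I = |\beta_I|^2 |I| \langle v\rangle_I$ reduces matters to showing that $\mu_I$ is a $u^{-1}$-Carleson sequence with intensity $\lesssim [u,v]_{\mathcal{A}_2}\,C_{u,v}$.

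The main obstacle is precisely this last verification: the coefficient $|\beta_I|^2$ is naturally a $u$-object, while the Carleson hypothesis (ii) is phrased in terms of $|\Delta_I u^{-1}|^2$, a $u^{-1}$-object. The bridge is an algebraic inequality of the form $|\beta_I|^2 \lesssim [u,v]_{\mathcal{A}_2}\,|\Delta_I u^{-1}|^2/\langle u^{-1}\rangle_I^{\,2}$, obtained by expressing $u(I_\pm)$ in terms of $\langle u^{-1}\rangle_{I_\pm}$ through the joint $\mathcal{A}_2$ hypothesis; summing $\mu_I$ over $I\subset J$ and invoking (ii) then produces the required Carleson bound. Should this combinatorial step prove too delicate to execute cleanly, the fallback is the original NTV route: construct a Bellman function $B$ of the variables $\langle u^{-1}\rangle_I,\langle v\rangle_I,\langle f\rangle_I,\langle f^2 u\rangle_I$ on a convex domain cut out by (i) and exploit a concavity/size estimate that exactly absorbs the Carleson intensity $C_{u,v}$ from (ii).
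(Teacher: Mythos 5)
Your necessity argument is correct and matches the paper's remark that (ii) is a testing condition on $u^{-1}\mathbbm{1}_J$ (the paper itself gives no proof, quoting the result from Nazarov--Treil--Volberg, whose argument is a Bellman function one). The genuine gap is in your sufficiency argument: you adapt the Haar system to the wrong weight. With $h_I^u$ normalized in $L^2(u)$, the coefficient in $h_I=\alpha_I h_I^u+\beta_I\mathbbm{1}_I/|I|^{1/2}$ satisfies $|\alpha_I|\sim\sqrt{\langle u\rangle_I}$, so $|\alpha_I|^2\langle v\rangle_I\sim\langle u\rangle_I\langle v\rangle_I$, which is \emph{not} controlled by $[u,v]_{\mathcal{A}_2}=\sup_I\langle u^{-1}\rangle_I\langle v\rangle_I$; your claim $|\alpha_I|^2\langle v\rangle_I\lesssim[u,v]_{\mathcal{A}_2}$ fails. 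Moreover $\langle f,h_I^u\rangle$ is an unweighted pairing, i.e.\ $\langle fu^{-1},h_I^u\rangle_{L^2(u)}$, so Bessel's inequality in $L^2(u)$ produces $\|fu^{-1}\|^2_{L^2(u)}=\|f\|^2_{L^2(u^{-1})}$, not $\|f\|^2_{L^2(u)}$. In the average piece, $\langle f\rangle_I$ is not $\langle fu\rangle^{u^{-1}}_I$ (a factor $\langle u^{-1}\rangle_I$ is missing), and the proposed bridge $|\beta_I|^2\lesssim [u,v]_{\mathcal{A}_2}\,|\Delta_I u^{-1}|^2/\langle u^{-1}\rangle_I^2$ is simply false: one can arrange $\Delta_I u^{-1}=0$ while the children of $I$ carry different $u$-mass, so $\beta_I\neq0$ and the right-hand side vanishes. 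Hence the $u$-adapted splitting cannot be fed into condition (ii).

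The outline is salvageable, and becomes the standard proof, once you dualize: set $\sigma:=u^{-1}$, write $f=g\sigma$ with $\|g\|_{L^2(\sigma)}=\|f\|_{L^2(u)}$, and expand $h_I=\alpha^{\sigma}_I h^{\sigma}_I+\beta^{\sigma}_I\mathbbm{1}_I/\sqrt{|I|}$ with the bounds $|\alpha^{\sigma}_I|\le\sqrt{\langle u^{-1}\rangle_I}$ and $|\beta^{\sigma}_I|\le|\Delta_I u^{-1}|/\langle u^{-1}\rangle_I$ recorded in the paper. Then the martingale piece is at most $[u,v]_{\mathcal{A}_2}\sum_I|\langle g,h^{\sigma}_I\rangle_{L^2(\sigma)}|^2\le[u,v]_{\mathcal{A}_2}\|f\|^2_{L^2(u)}$, while, since $\langle g\sigma\rangle_I=\langle g\rangle^{\sigma}_I\langle\sigma\rangle_I$, the average piece is bounded by $\sum_I|I|\,|\Delta_I u^{-1}|^2\langle v\rangle_I\,\big(\langle g\rangle^{\sigma}_I\big)^2$, which the Weighted Carleson Lemma (Lemma~\ref{lem:weighted-Carleson}), applied with $F=(M^{\mathcal{D}}_{\sigma}g)^2$ and the sequence of condition (ii), controls by $C_{u,v}\|f\|^2_{L^2(u)}$ — no bridge inequality and no Bellman fallback needed.
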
   
   Notice  (ii) is  a localized "testing condition" on test functions $u^{-1}\mathbbm{1}_J$ . Also note that the necessary and sufficient conditions (i)-(iii) in Theorem~\ref{thm:NTVmartingale} for the martingale transform can be now replaced by
   \begin{itemize}
   \item[(i)] $S^{\mathcal{D}}$ is bounded from $L^2(u)$ into $L^2(v)$.
   \item[(ii)] $S^{\mathcal{D}}$ is bounded from $L^2(v^{-1})$ into $L^2(u^{-1})$.
   \end{itemize}
  This is because $(u,v)\in\mathcal{A}_2$ if and only if $(v^{-1},u^{-1})\in\mathcal{A}_2$. 
  
A quantitative version of the boundedness estimate in terms of the constants appearing in the necessary and sufficient conditions is the following
$$
\|S^{\mathcal{D}}\|_{L^2(u)\to L^2(v)} \lesssim ([u,v]_{\mathcal{A}^d_2} + {C}_{u,v})^{1/2}.
$$
There are similar two-weight $L^p$ estimates for continuous square function 
 \cite{LLi,LLi2}, see also \cite[Theorem 6.2]{BeCMP}.

If the weights $(u,v)\in \mathcal{A}_2$ and $u^{-1}\in A_{\infty}$ then they satisfy the necessary and sufficient conditions in Theorem~\ref{thm:NTVdyadicS} and the following estimate holds \cite{BeCMP}, 
$$
\|S^{\mathcal{D}}\|_{L^2(u)\to L^2(v)} \lesssim ([u,v]_{\mathcal{A}_2} +[u,v]_{\mathcal{A}_2 }[u^{-1}]_{A_{\infty}})^{1/2}.
$$

Setting $u=v=w \in A_2$ this improves the known linear bound to a mixed-type bound
 $$
  \|S^{\mathcal{D}}\|_{L^2(w)}\lesssim ([w]_{A_2}[w^{-1}]_{A_{\infty}})^{1/2}\lesssim [w]_{A_2}.
 $$
 
   Same one-weight estimate have been shown to hold for the dyadic square function and  for  matrix valued weights \cite{HPetV}.
Quantitative weighted estimates from $L^p(u)$ into  $L^q(v)$ in terms of \emph{quadratic testing condition} are known \cite{Vu2}.

\subsection{Petermichl's dyadic shift operator}\label{sec:Petermichl'sSha}
Given parameters $(r,\beta) \in \Omega = [1,2)\times \{0,1\}^{\Z}$,
the {\em Petermichl's  dyadic shift operator} $\Sha^{r,\beta}$ (pronounced ``Sha'')
associated to the random 
dyadic grid ${\mathcal{D}^{r,\beta}}$  is defined for functions $f\in L^2(\R)$ by 
$$ \Sha^{r,\beta} f(x) := \sum_{I\in\mathcal{D}^{r,\beta}}  
      \langle f, h_I\rangle\,  H_I(x) = \sum_{I\in\mathcal{D}^{r,\beta}} 2^{-1/2}\sigma(I) \,\langle f,h_{\widetilde{I}}\rangle \, h_I(x),
$$ 
{where}  $  H_I=  2^{-1/2}(h_{I_r}-h_{I_l})$ 
and $\sigma(I)$ is $\pm 1$ depending whether $I$ is the right or left child of  $I$'s parent $\widetilde{I}$. More precisely,
$\sigma (I)=1$ if $I=(\widetilde{I})_r$ and $\sigma (I)=-1$ if $I=(\widetilde{I})_l$.

When $r=1$ and $\beta_j=0$ for all $j\in\Z$ the corresponding grid is the regular dyadic grid and we denote the associated daydic shift operator simply $\Sha$.

 Petermichl's dyadic shift operators  are isometries on $L^2(\R )$,  that is for all $r,\beta\in \Omega$, $\|\Sha^{r,\beta} f\|_{L^2}=\|f\|_{L^2}$,  and they are bounded  operators on $L^p(\R )$, as can be readily seen using Plancherel's identity and dyadic square function estimates.

Each operator  $\Sha^{r,\beta}$ is a good dyadic model for the Hilbert transform $H$. The images  under $\Sha^{r,\beta}$ of the Haar functions are the normalized differences of the Haar functions on its children, namely 
  $\; \Sha^{r,\beta} h_J(x) = H_J(x)$. The functions  $h_J$ and $H_J$  can be viewed  as localized
sines and cosines, in the sense that if we were to extend them periodically, with period the length of the support, we will see two square waves  shifted by half the length of the period. 
 More evidence comes from the way the family $\{\Sha^{r,\beta}\}_{(r,\beta)\in\Omega}$ interacts with translations, 
dilations and reflections.
Each dyadic shift operator does not have symmetries
that characterize the Hilbert transform\footnote{Recall that any bounded linear operator  on $L^2(\R )$  that commutes with dilations  and translations and anticommutes with reflexions must be a constant multiple of the Hilbert transform.}, but an average over all random dyadic 
grids $\mathcal{D}^{r,\beta}$ does. It is a good exercise to figure out how each individual shift interacts with these rigid motions, they almost commute except that the dyadic grid changes. For example regarding reflections,
it can be seen that if we denote by $R(x)=-x$ then
$  R\Sha^{r,\beta} = - \Sha^{r,-\beta} R$, where $-\beta = \{ 1-\beta_i\}_{i\in\Z}$. The corresponding rules for translations and dilations are slightly more complicated, but what matters is that there is a one-to-one correspondence between the dyadic grids so that when averaging over all dyadic grids the average will have the desired properties, and hence it will be a constant multiple of the Hilbert transform. This is precisely what Stefanie Petermichl proved in 2000, a ground breaking and unexpected new result for the Hilbert transform \cite{Pet1}. More precisely she showed that 
$$
H=-\frac{8}{{\pi}} \, \mathbb{E}_{\Omega} \Sha^{{r,\beta}} = -\frac{8}{\pi}\,\int_{\Omega} \Sha^{r,\beta}  d\mathbb{P}(r,\beta ).
$$

 The result  follows after verifying that the averages have the invariance properties that characterize the Hilbert transform  \cite{Pet1, Hyt1}. 
  Because the shift operators
 $\Sha^{r,\beta}$ are uniformly bounded on $L^p(\R )$ for $1<p<\infty$, this representation will immediately imply that the Hilbert transform   $H$ is bounded on $L^p(\R )$ in the same range, a result first proved by Marcel Riesz in 1928.
Similarly once uniform (on the dyadic grids $\mathcal{D}^{r,\beta}$) weighted inequalities are verified for $\Sha^{r,\beta}$ the inequalities will be inherited by the Hilbert transform. Petermichl proved the linear bounds on $L^2(w)$ for the shift operators using Bellman function methods  and hence she proved the $A_2$ conjecture for the Hilbert transform \cite{Pet2}.

These results added a very precise new dyadic  perspective to   such a classic and well-studied operator  as the Hilbert transform.
 Similar representations hold for the  \emph{Beurling} 
and the \emph{Riesz}  transforms \cite{PetV,Pet3}, these operators have many invariance properties as the Hilbert transform does. For a while it was believed that such invariances were responsible for these representation formulas.  It came as a surprise when  Tuomas Hyt\"onen proved in 2012 that 
there is  a representation formula valid for {\sc all} Calder\'on-Zygmund singular integral operators \cite{Hyt2}.
To state Hyt\"onen's result we need to introduce Haar shift operators of arbitrary complexity and paraproducts.

\subsection{Haar shift operators of arbitrary complexity}\label{sec:Haar-shift-complexity}

The  \emph{Haar shift operators of complexity $(m,n)$} associated to a dyadic grid $\mathcal{D}$ were introduced by Michael Lacey,  Stefanie Petermichl, and Mari Carmen Reguera \cite{LPetR}, they are defined on $L^2(\R )$ as follows
$$
 \Sha_{m,n}f(x) := \sum_{L\in{\mathcal D}} \sum_{I\in{\mathcal D}_m(L), 
J\in {\mathcal D}_n(L)} c_{I,J}^L \, \langle f, h_I\rangle \, h_J(x),
$$
 where the coefficients $|c_{I,J}^L  | \leq \frac{\sqrt{|I|\,|J|}}{|L|}$,
 and ${\mathcal D}_m(L)$ denotes the dyadic subintervals of $L$
 with length $2^{-m}|L|$.

 The cancellation property of the Haar functions and the
 normalization of the coefficients ensures that 
$\|\Sha_{m,n}f\|_{L^2}\leq \|f\|_{L^2}$, square function estimates ensure boundedness on $L^p(\R )$ for all
$1<p<\infty$. The martingale transform, $T_{\sigma}$, is a Haar shift operator of complexity $(0,0)$.
Petermichl's  $\Sha^{r,\beta}$ operators    are Haar shift operators of complexity $(0,1)$.
 The dyadic paraproduct, $\pi_b$,  to be introduced in Section~\ref{sec:paraproduct},  is not one of these and nor is its adjoint $\pi_b^*$.

The following estimates are known for dyadic shift operators of arbitrary complexity.
First, Michael Lacey, Stefanie Petermichl,  and Mari Carmen Reguera proved the $A_2$ 
conjecture for the Haar shift operators of arbitrary complexity with constant depending exponentially
on the complexity \cite{LPetR}. Unlike their predecessors, they 
did not use Bellman functions, instead they used 
stopping time techniques and a 
two-weight theorem for  "well localized operators"
 of \cite{NTV4}.
Second,  David Cruz-Uribe, Chema Martell, and  Carlos P\'erez   \cite{CrMPz2} used a local 
median oscillation technique introduced by Andrei Lerner \cite{Le1, Le2}.
The local median oscillation method was quite  flexible, they  obtained new results such as 
the sharp bounds for the square function for $p>2$, 
 for the dyadic paraproduct,  also for vector-valued maximal
operators, as well as two-weight results, however  for the dyadic shift operators the weighted estimates
 still depended exponentially  on the complexity. 
 Third, Tuomas Hyt\"onen \cite{Hyt2} obtained the linear estimates with polynomial dependence on the complexity, needed to prove  the $A_2$ conjecture for Calder\'on-Zygmund  singular integral operators. 

\subsection{Dyadic paraproduct}\label{sec:paraproduct}

Quoting from  an article on "What is....a Paraproduct?" for the broader public by Arpad B\'enyi, Diego Maldonado, and Virginia Naibo \cite{BMN}:
\begin{quote}{``The term paraproduct is nowadays used rather loosely in the literature to indicate a bilinear operator that, although noncommutative, is somehow better behaved than the usual product of functions. Paraproducts emerged in J.-M. Bony's theory of paradifferential operators [Bo], which stands as a milestone on the road beyond pseudodifferential operators pioneered by R. R. Coifman and Y. Meyer in [CM]. 
 Incidentally, the Greek word $\pi\alpha\rho\alpha$  (para) translates as \emph{beyond} in English 
and \emph{au délà de} in French, just as in the title of [CM]. 
The defining properties of a paraproduct should therefore go beyond the desirable properties of the product.''}
\end{quote}

 The \emph{dyadic paraproduct} associated to a dyadic grid $\mathcal{D}$ and to $b\in {\rm BMO}^{\mathcal{D}}$ is an operator acting on square integrable functions $f$ as follows
\vskip -.1in
   $$
 \pi_b f(x) := \sum_{I\in{\mathcal D}}  \langle f\rangle_I \, \langle b,h_I\rangle \, h_I(x),
$$
where $ \langle f\rangle_I=\frac{1}{|I|}\int_If(x)\,dx = \langle f,\mathbbm{1}_I/|I|\rangle$. A function $b$ is in the \emph{space of dyadic bounded mean oscillation}, ${\rm BMO}^{\mathcal{D}}$ if and only if 
$$ \| b\|_{{\rm BMO}^{\mathcal{D}}}:= \sup_{J\in\mathcal{D}}\bigg (\frac{1}{|J|} \int_J |b(x)-\langle b\rangle_J|^2dx \bigg )^{1/2}<\infty.$$
Notice that we are using an $L^2$ mean oscillation instead of the $L^1$ mean oscillation used in \eqref{BMO} in the definition of ${\rm BMO}$, and of course, we are restricting to dyadic intervals. As it turns out, one could use an $L^p$  mean oscillation for any $1\leq p$ and obtain equivalent norms in ${\rm BMO}$ thanks to the celebrated John-Nirenberg Lemma \cite{JN}.

 Formally, expanding $f$  and $b$ in the Haar basis, multiplying and separating the terms into upper triangular, diagonal, and lower triangular parts, one gets that  
 $$ bf = \pi_bf + \pi^*_bf + {\pi_fb},$$ 
 in doing so is important to note that $\sum_{I\in\mathcal{D}: I\supset J} \langle f, h_I\rangle \, h_I = \langle f\rangle_J$.
 It is well known that multiplication by a function $b$ is a bounded operator on $L^p(\R )$ if and only if the function is essentially bounded, that is, $b\in L^{\infty}(\R )$. However the
 paraproduct is a  bounded operator on $L^p(\R )$ if and only if $b\in {\rm BMO}^{\mathcal{D}}$, which is a space strictly larger than $L^{\infty}(\R )$. The  $L^2$ estimate can be obtained using,  for example,  the {Carleson embedding  lemma}, see Section~\ref{sec:Carleson-sequences}.

 Using a weighted Carleson embedding lemma, one can  check that the paraproduct  is bounded on $L^2(w)$ for all  $w\in A_2$ \cite{P1}.  Furthermore, Beznosova proved the $A_2$ conjecture for paraproducts \cite{Be}, namely
  $$ 
    \|\pi_bf\|_{L^2(w)}\leq C{[w]_{A_2}\|b\|_{{\rm BMO}^{\mathcal{D}}}}\|f\|_{L^2(w)}.     
    $$
By extrapolation one concludes that the paraproduct is bounded on $L^p(w)$ for all $w\in A_p$  and $1<p<\infty$, in particular it is bounded on $L^p(\R )$. In  Section~\ref{sec:A2paraproduct}, we will present  Beznosova's Bellman function argument proving the $A_2$ conjecture for the dyadic paraproduct. This argument was generalized to $\R^d$ in \cite{Ch3} and to spaces of homogeneous type in \cite{We}. It was pointed out to us recently \cite{Wic} that the paraproduct is a well-localized operator (for trivial reasons) in the sense of \cite{NTV4} and therefore it falls under their theory.

To finish this brief introduction to the paraproduct, we would like to mention its intimate connection to the $T(1)$ and $T(b)$ Theorems of Guy David, Jean-Lin Journ\'e, and Stephen Semmes \cite{DaS, DaJS}. These theorems give (necessary and sufficient) conditions to verify boundedness on $L^2(\R )$ for singular integral  operators  $T$ with a Calder\'on-Zygmund  kernel when Fourier analysis, almost-orthogonality (Cotlar's lemma), or other more standard techniques fail. In the $T(1)$ theorem the conditions amount to checking some weak boundedness property which is a necessary condition, and checking that the function~$1$ is "mapped" under the operator and its adjoint, $T(1)$ and $T^*(1)$,  into ${\rm BMO}$. Once this is verified the operator can be decomposed into a "simpler"  operator $S$ with the property that $S(1)=S^*(1)=0$,  a paraproduct, $\pi_{T(1)}$, and the adjoint of a paraproduct, $\pi_{T^*(1)}^*$. The paraproduct terms are bounded on $L^2(\R)$, the operator $S$ can be verified to be bounded on $L^2(\R )$, and as a consequence so will be the operator $T$. %

We have defined all these model operators in the one-dimensional case, there are corresponding Haar shift operators and dyadic paraproducts  defined  on $\R^d$ as well as $T(1)$ and $T(b)$ theorems.

\subsection{Hyt\"onen's representation theorem}\label{sec:Hytonen-representation-theorem}
Let us remind the reader that a bounded operator on $L^2(\R^d)$ is a  Calder\'on-Zygmund singular integral operator with  smoothness parameter  $\alpha>0$ if it has  an integral representation
$$ Tf(x)= \int_{\R^d} K(x,y)f(y)\,dy, \quad\quad x\notin \mbox{supp}\,f,$$
for a kernel $K(x,y)$ defined for all  $(x,y) \in \R^d\times \R^d$ such that $x\neq y$, and verifying the standard size and smoothness estimates, respectively $\displaystyle{|K(x,y)|\leq {C}/{|x-y|^d}}$ and
$$|K(x+h,y)-K(x,y)| +|K(x,y+h)-K(x,y)|\leq {C|h|^{\alpha}}/{|x-y|^{d+\alpha}},$$
for all $|x-y|>2|h|>0$ and some fixed $\alpha\in[0,1]$.

It is worth remembering that such Calder\'on-Zygmund  singular integral operators  are bounded on $L^p(\R^d)$ for all $1<p<\infty$, they are of weak-type $(1,1)$, and they map ${\rm BMO}$ into itself.

We now have all the ingredients to state the celebrated Hyt\"onen's representation theorem \cite{Hyt2} at least in the one-dimensional case.

 \begin{theorem}[Hyt\"onen's 2012]
 Let $T$ be a Calder\'on-Zygmund singular integral operator with smoothness parameter $\alpha>0$, then 
 $$ T(f) = \mathbb{E}_{\Omega} \left (\sum_{(m,n)\in\N^2} e^{-(m+n)\alpha/2} \Sha_{m,n}^{r,\beta}(f) +
\pi^{r,\beta}_{T(1)}(f) + (\pi^{r,\beta}_{T^*(1)})^*(f) \right ).
$$
  \end{theorem}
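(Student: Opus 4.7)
The strategy, following Hyt\"onen, is to expand $T$ in a random Haar basis and read off the dyadic shift and paraproduct structure from its matrix coefficients. Fix a grid $\mathcal{D} = \mathcal{D}^{r,\beta}$, and use the completeness of $\{h_I\}_{I\in\mathcal{D}}$ to write formally
$$Tf = \sum_{I,J\in\mathcal{D}} \langle Th_I, h_J\rangle \, h_J \, \langle f, h_I\rangle.$$
For each pair, let $L = I \vee J$ denote the smallest common dyadic ancestor and set $m = \log_2(|L|/|I|)$, $n = \log_2(|L|/|J|)$. Split the double sum into three families: (a) the \emph{shift family}, where both $I$ and $J$ are proper subcubes of $L$ sitting inside distinct children; (b) the \emph{right paraproduct family}, where $I \subsetneq J = L$; and (c) the \emph{left paraproduct family}, where $J \subsetneq I = L$.

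For family (a), I would use the $\alpha$-H\"older smoothness of the kernel together with the mean-zero property of $h_I$ at the smaller scale to prove a Haar-to-Haar estimate of the form
$$|\langle Th_I, h_J\rangle| \lesssim e^{-(m+n)\alpha/2}\,\frac{\sqrt{|I|\,|J|}}{|L|}.$$
Summing over the pairs of fixed complexity $(m,n)$, the resulting inner double sum is, after factoring out the $e^{-(m+n)\alpha/2}$ prefactor, exactly a Haar shift operator $\Sha^{r,\beta}_{m,n}$ in the sense of Section~\ref{sec:Haar-shift-complexity}, because the remaining coefficients $c^L_{I,J}$ obey the admissible normalization $|c^L_{I,J}| \le \sqrt{|I|\,|J|}/|L|$. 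For family (b), with $J$ fixed I sum the coefficients $\langle Th_I, h_J\rangle$ over $I \subsetneq J$; a telescoping identity together with the cancellation of $h_J$ lets me replace the argument of $T$ by the constant function $1$, reducing the collected coefficient to $\langle T(1), h_J\rangle$, which produces precisely $\pi^{r,\beta}_{T(1)}$. Family (c) is symmetric and yields $(\pi^{r,\beta}_{T^\ast(1)})^\ast$. That $T(1), T^\ast(1) \in \mathrm{BMO}$, which is built into the standard definition of a Calder\'on-Zygmund operator bounded on $L^2$, guarantees that both paraproducts are well-defined bounded operators.

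The subtle issue is that the kernel bound for family (a) degrades when $I$ or $J$ sits too close to the boundary of its sibling inside $L$, and such pairs cannot be directly absorbed into a shift operator with admissible coefficients. The resolution is to restrict the expansion to \emph{good} Haar coefficients, in the sense of Nazarov-Treil-Volberg, and then take expectation $\mathbb{E}_\Omega$ over the random parameters $(r,\beta) \in [1,2) \times \{0,1\}^{\mathbb Z}$: by the independence of the shifts $\beta_j$ across scales, the probability that a fixed pair of scales produces a good configuration in $\mathcal{D}^{r,\beta}$ is bounded below uniformly, while the expectation of any term indexed by a "bad" pair vanishes or is reabsorbed. After averaging, the restricted good expansion reproduces $f$, and the three families assemble into the shift sum and the two paraproducts as stated.

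The main obstacle I anticipate is this last step: executing the good/bad decomposition so that only good Haar pairs appear in the shift family while still recovering all of $Tf$ after taking $\mathbb{E}_\Omega$, and keeping track of the polynomial-in-$(m,n)$ losses hidden in the constants so that the operator-norm convergence of $\sum_{(m,n)} e^{-(m+n)\alpha/2}\Sha^{r,\beta}_{m,n}$ is not spoiled. The identification of the limits of families (b) and (c) as precisely $\pi^{r,\beta}_{T(1)}$ and $(\pi^{r,\beta}_{T^\ast(1)})^\ast$ also requires some care to ensure that no residual diagonal term is left behind.
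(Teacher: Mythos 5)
The paper itself does not prove this theorem: it states it and refers to Hyt\"onen's original article \cite{Hyt2}, offering only the surrounding commentary (random grids from \cite{NTV3}, reductions from \cite{PzTV}, polynomial-in-complexity bounds). Your outline is, in architecture, the same as Hyt\"onen's cited argument --- expand in a random Haar basis, sort the matrix coefficients $\langle Th_I,h_J\rangle$ by relative position, use kernel smoothness for decay, extract $\pi_{T(1)}$ and $(\pi_{T^*(1)})^*$ from the nested sums, and use a good/bad reduction over $\mathbb{E}_\Omega$ --- so you have identified the right skeleton. As a proof, however, two of your steps would fail as written.

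First, the probabilistic mechanism is misstated. It is not true that ``the expectation of any term indexed by a bad pair vanishes or is reabsorbed''; nothing cancels bad terms, and one never estimates them. The actual device is an identity: because the event that the smaller cube is good has probability $\pi_{\rm good}>0$ independent of the cube's position (goodness depends only on the random shifts at strictly larger scales), one proves $\pi_{\rm good}\,\langle Tf,g\rangle=\mathbb{E}_\Omega\sum_{\text{smaller cube good}}\langle f,h_I\rangle\langle Th_I,h_J\rangle\langle g,h_J\rangle$, so that bad pairs simply never enter the expansion. Making this rigorous requires the independence bookkeeping (the goodness of $I$ and the Haar functions $h_J$ at larger scales are functions of the same random parameters), and it is the heart of the proof; your sketch leaves it as a hope rather than an argument. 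Second, assigning each pair to $L=I\vee J$, the minimal common ancestor, does not work. For nearby cubes of comparable size whose minimal common ancestor is enormous (or, in a deterministic grid, nonexistent across a ``quadrant'' boundary), the coefficient $\langle Th_I,h_J\rangle$ does not decay with $|L|$, so it cannot satisfy the normalization $|c^L_{I,J}|\le\sqrt{|I|\,|J|}/|L|$ after extracting $e^{-(m+n)\alpha/2}$; this is exactly the degenerate configuration goodness is designed to exclude, and in Hyt\"onen's proof separated pairs are instead assigned to an ancestor of size comparable to $\ell(I)+\ell(J)+\mathrm{dist}(I,J)$ (raised through the goodness exponent), whose existence in the random grid is guaranteed for good cubes. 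Relatedly, the nearly diagonal pairs ($I=J$ or touching cubes of comparable size) are not controlled by kernel smoothness plus mean zero at all --- they require the a priori $L^2$ boundedness/weak boundedness property and constitute the low-complexity shifts --- and your trichotomy omits $I=J$ altogether. Finally, in the paraproduct step the replacement of the argument of $T$ by the constant $1$ only applies to the part of $h_I$ living on the child of $I$ containing $J$, where $h_I$ is constant; the remaining piece does not telescope and must itself be shown to assemble into admissible shifts, a step your ``cancellation of $h_J$'' phrase glosses over.
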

  Where  for each pair of random parameters $(r,\beta)\in \Omega$, the operator
 $\Sha_{m,n}^{r,\beta}$ is a Haar shift operators of complexity $(m,n)$,  the operator $\pi^{r,\beta}_{T(1)}$ is a dyadic paraproduct,
 and the operator $(\pi^{r,\beta}_{T^*(1)})^*$ is  the adjoint of a dyadic paraproduct,
 all defined on the {random dyadic grid $\mathcal{D}^{r,\beta}$}.  The paraproducts and their adjoints in the decomposition depend on 
 the operator $T$ via $T(1)$ and $T^*(1)$. The Haar shift operators in the decomposition also depend on $T$ although it is not obvious in the notation we used. Indeed, the coefficients $c^L_{I,J}$, in the definition of the Haar shift multiplier of complexity $(m,n)$ (see Section~\ref{sec:Haar-shift-complexity}), will depend on the given operator $T$ for each $L\in \mathcal{D}^{r,\beta}$, $I\in \mathcal{D}^{r,\beta}_m(L)$, and $J\in \mathcal{D}^{r,\beta}_n(L)$. 
 Notice that the exponential nature of the coefficients in the expansion explains why the Haar shift multipliers of arbitrary complexity will need to be bounded with a bound depending at most polynomially on the complexity.
 
  To the author, this is a remarkable result providing a dyadic decomposition theorem for a large class of operators. 
 Once you have such decomposition and $L^2(w)$ estimates for each of the components (Haar shift operators, paraproducts and their adjoints) that are linear on $[w]_{A_2}$ and that are uniform on  the dyadic grids then the $A_2$ conjecture is resolved in the positive, as Tuomas Hyt\"onen did in his celebrated paper \cite{Hyt2}.

\section{$A_2$ theorem for the dyadic paraproduct: A Bellman function proof}\label{sec:A2paraproduct}

As a model example we will present in this section Beznosova's argument proving  the $A_2$~conjecture for the dyadic paraproduct \cite{Be}. The goal is  to show  that  for all weights  $w\in A_2$, functions $b\in {\rm BMO}^{\mathcal{D}}$, and functions $f\in L^2(w)$   the following estimate holds,
$$ \|\pi_bf\|_{L^2(w)}\lesssim [w]_{A_2}\|b\|_{BMO^{\mathcal{D}}}\|f\|_{L^2(w)}.$$
We remind the reader that the \emph{dyadic paraproduct} associated to $b\in {\rm BMO}^{\mathcal{D}}$ is defined by
$
 \pi_b f(x) := \sum_{I\in{\mathcal D}}   \langle f\rangle_I  \, b_I\,h_I(x),
$
{where}  $b_I=\langle b,h_I\rangle$ and $\langle f\rangle_I=(1/|I|)\int_If(y)\,dy$.

To achieve a preliminary estimate, where instead of the linear bound on $[w]_{A_2}$ we get a 3/2 bound, namely $[w]_{A_2}^{3/2}$, we need to introduce a few ingredients: (weighted) Carleson sequences, Beznosova's Little Lemma,  and the Weighted Carleson Lemma. Afterwards, in Section~\ref{sec:A2-conjecture-paraproduct}, we will refine the argument to get the desired linear bound. To achieve the linear bound,  we will need a few additional  ingredients, including  the $\alpha$-Lemma,   introduced by Oleksandra Beznosova in her original proof \cite{Be1}. Both the Little Lemma and the $\alpha$-Lemma are proved using Bellman functions and we sketch their proofs, as well as the proof of the Weighted Carleson Lemma.

\subsection{Weighted Carleson sequences, Weighted Carleson Lemma, and Little Lemma}\label{sec:Carleson-sequences}
In this section we introduce  weighted an unweighted Carleson sequences and the weighted Carleson embedding lemma. We also present Bezonosova's Little Lemma that enables us to ensure that given a weight $w$ and a Carleson sequence $\{\lambda_I\}_{I\in\mathcal{D}}$ we can create a $w$-weighted Carleson sequence by multiplying each term of the given sequence by the reciprocal of $w^{-1}(I)$.

\subsubsection{Weighted Carleson sequences and lemma}
 Given a weight $w$, a  positive sequence $\{\lambda_I\}_{I\in\mathcal{D}}$ is  \emph{$w$-Carleson} if
there is a constant  $A>0$ such that
$$
\sum_{I\in\mathcal{D}(J)}\lambda_I\leq Aw(J)\quad \mbox{for all} \; J\in\mathcal{D},
$$
where  $w(J)=\int_Jw(x)\,dx$. The smallest  constant $A>0$ is called the \emph{intensity} of the sequence.}
When $w=1$ a.e.  we say that the sequence is  Carleson (not 1-Carleson).

\begin{example} \label{eg:b-BMO-Carleson}
If $b\in {\rm BMO}^{\mathcal{D}}$ then the sequence $\{b_I^2\}_{I\in \mathcal{D}}$ is  Carleson with intensity $\|b\|_{{\rm BMO}^{\mathcal{D}}}^2$. Indeed,   for any $J\in\mathcal{D}$, the collection of Haar functions corresponding to dyadic intervals $I\subset J$, $\{h_I\}_{I\in\mathcal{D}(J)}$, forms an orthonormal basis on $L^2_0(J)=\{f\in L^2(J): \int_J f(x)\,dx=0\}$.  The function $(b-\langle b\rangle_J)\big |_{J}$ belongs to $L^2_0(J)$, therefore by Plancherel's inequality,
$$  \sum_{I\in\mathcal{D}(J)} b_I^2= \sum_{I\in\mathcal{D}(J)} |\langle b, h_I\rangle|^2 = \int_J|b(x)-\langle b\rangle_J|^2\, dx  \leq \|b\|_{{\rm BMO}^{\mathcal{D}}}^2 |J|.$$
\end{example}

The following weighted Carleson lemma  that appeared in \cite{NTV1} will be extremely useful in our estimates, you can find a proof in \cite{MP} that we reproduce in Section~\ref{sec:W-C-L}.

\begin{lemma}[Weighted Carleson Lemma] \label{lem:weighted-Carleson} Given a weight $v$,  then  $\{\lambda_I\}_{I\in\mathcal{D}}$ is a $v$-Carleson sequence with intensity $A$ if and only if  for all non-negative $F\in L^1(v)$ we have
$$ \sum_{I\in\mathcal{D}} \lambda_I \inf_{x\in I} F(x)\leq A\int_{\R}F(x) \,v(x) \, dx.$$
\end{lemma}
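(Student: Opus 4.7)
The proof proceeds by establishing both directions of the equivalence, with the reverse direction being essentially trivial and the forward direction using a double application of the layer-cake formula.

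First I would dispose of the easy direction. Assuming the integral inequality holds for every non-negative $F\in L^1(v)$, fix $J\in\mathcal{D}$ and test with $F=\mathbbm{1}_J$. By the nested property of the dyadic grid, for any $I\in\mathcal{D}$ with $I\cap J\neq\emptyset$ one has either $I\subseteq J$ or $J\subsetneq I$. In the first case $\inf_{x\in I}\mathbbm{1}_J(x)=1$; in the second case the infimum is $0$, as it also is when $I\cap J=\emptyset$. Hence the left-hand side collapses to $\sum_{I\in\mathcal{D}(J)}\lambda_I$, while the right-hand side is $Av(J)$, which is exactly the $v$-Carleson condition with intensity $A$.

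For the converse I would use layer-cake twice. Writing $m_I:=\inf_{x\in I}F(x)$ and using $m_I=\int_0^\infty \mathbbm{1}_{\{m_I>t\}}\,dt$, non-negativity of the $\lambda_I$ and Tonelli's theorem give
\[
\sum_{I\in\mathcal{D}}\lambda_I\, m_I \;=\; \int_0^\infty \Big(\sum_{I:\,I\subset E_t}\lambda_I\Big)\,dt,
\]
where $E_t:=\{x\in\R: F(x)>t\}$ and where I used the elementary observation that $m_I>t$ if and only if $I\subset E_t$. For each fixed $t$, let $\mathcal{M}_t$ denote the collection of dyadic intervals that are maximal with respect to the property of being contained in $E_t$. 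By the nested property of $\mathcal{D}$ the intervals in $\mathcal{M}_t$ are pairwise disjoint, and every $I\in\mathcal{D}$ with $I\subset E_t$ is contained in a unique $J\in\mathcal{M}_t$. Regrouping the sum and applying the $v$-Carleson hypothesis on each $J\in\mathcal{M}_t$ yields
\[
\sum_{I:\,I\subset E_t}\lambda_I \;=\; \sum_{J\in\mathcal{M}_t}\,\sum_{I\in\mathcal{D}(J)}\lambda_I \;\leq\; A\sum_{J\in\mathcal{M}_t} v(J)\;=\;Av\Big(\bigsqcup_{J\in\mathcal{M}_t} J\Big)\;\leq\; Av(E_t).
\]
Integrating in $t$ and invoking the standard layer-cake identity $\int_0^\infty v(E_t)\,dt=\int_\R F(x)\,v(x)\,dx$ produces the desired bound $A\|F\|_{L^1(v)}$.

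The only real subtlety is the regrouping step: one must use the nested property to guarantee that the maximal dyadic subintervals of $E_t$ are pairwise disjoint (so their $v$-masses add) and that every dyadic interval contained in $E_t$ sits under one of them. Everything else is routine bookkeeping with Tonelli and layer cake, and no properties of $v$ beyond positivity and local integrability are used.
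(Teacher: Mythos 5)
Your proof is correct and follows essentially the same route as the paper's: the converse by testing with $F=\mathbbm{1}_J$, and the main direction via the layer-cake/Tonelli decomposition over level sets $E_t$, regrouping under maximal disjoint dyadic intervals contained in $E_t$, applying the $v$-Carleson condition on each, and concluding with the layer-cake identity $\int_0^\infty v(E_t)\,dt=\int_{\R}F\,v\,dx$. Nothing further is needed.
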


The following particular instance of the Weighted Carleson Lemma will be useful. 
\begin{example} Let $\{\lambda_I\}_{I\in\mathcal{D}}$ be a $v$-Carleson sequence with intensity $A$, let  $f\in L^2(v)$ and set  $F(x)=(M^{\mathcal{D}}_vf(x))^2$ where $M^{\mathcal{D}}_v$ is the weighted dyadic maximal function, namely 
$$M^{\mathcal{D}}_vf(x) :=\hskip -.1in  \sup_{I\in\mathcal{D}:x\in I} \langle |f|\rangle_I^v \quad \mbox{where} \;\;
\langle |f|\rangle^v_I  :={\langle |f|v\rangle_I}/{\langle v \rangle_I}.$$ 
By definition of the dyadic maximal function,  $\langle |f|\rangle^v_I \leq \inf_{x\in I} M^{\mathcal{D}}_vf(x)$.
Then by the Weighted Carleson Lemma  {\rm (Lemma~\ref{lem:weighted-Carleson})} and the boundedness of $M_v^{\mathcal{D}}$ on $L^2(v)$ with operator bound  independent of the weight we conclude that
$$ \sum_{I\in\mathcal{D}} \lambda_I \,(\langle |f| \rangle_I^v  )^2\leq A \| M^{\mathcal{D}}_vf\|^2_{L^2(v)}\lesssim A \|f\|^2_{L^2(v)}.$$
\end{example}

Specializing even further we get another useful result, that establishes  the boundedness of the dyadic paraproduct on $L^2(\R )$ when $b\in {\rm BMO}^{\mathcal{D}}$.
\begin{example}\label{BMO-CarlesonSeq}
In particular, if $v\equiv 1$ and $b\in {\rm BMO}^{\mathcal{D}}$, then  $\lambda_I:=b_I^2$ for $I\in\mathcal{D}$ defines a  Carleson sequence with intensity $\|b\|_{{\rm BMO}^{\mathcal{D}}}^2$, hence
$${ \|\pi_bf\|_{L^2}^2=\sum_{I\in\mathcal{D}}|\langle \pi_bf,h_I\rangle |^2   \leq \sum_{I\in\mathcal{D}} b_I^2\, \langle |f|\rangle_I^2\lesssim \|b\|_{{\rm BMO}^{\mathcal{D}}}^2 \|f\|^2_{L^2}}.$$
\end{example}

\subsubsection{Beznosova's Little Lemma}

We will need to create $w$-Carleson sequences from  given Carleson sequences. The following lemma  will come in handy \cite{Be}.
 
\begin{lemma}[Little Lemma]\label{thm:Little-Lemma}
Let $w$ be a weight, such that $w^{-1}$ is also  a weight.
Let $\{ \lambda_I \}_{I \in \mathcal{D}}$ be a Carleson sequence 
with intensity $A$,  the sequence $\{{\lambda_I}/{\langle w^{-1}\rangle_I}\}_{I\in\mathcal{D}}$ is  $w$-Carleson  with intensity $4A$. In other words,
for all $J \in \mathcal{D}$
\begin{equation}\label{Little-Lemma}
\sum_{I \in \mathcal{D}(J)}
\frac{\lambda_I}{\langle w^{-1}\rangle_I}\leq 4A \; w(J).
\end{equation}

\end{lemma}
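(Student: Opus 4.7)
The plan is a Bellman function argument on the dyadic tree. By the rescaling $\lambda_I\mapsto \lambda_I/A$, I may assume the Carleson intensity equals $1$, so \eqref{Little-Lemma} reduces to proving $\sum_{I\in\mathcal{D}(J)}\lambda_I/\langle w^{-1}\rangle_I\leq 4\,w(J)$ for every $J\in\mathcal{D}$.

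To each dyadic $I\subset J$ associate three ``Bellman variables'' $u_I := \langle w\rangle_I$, $v_I := \langle w^{-1}\rangle_I$, and $\ell_I := |I|^{-1}\sum_{K\in\mathcal{D}(I)}\lambda_K$. Cauchy--Schwarz gives $u_Iv_I\geq 1$ and the (normalized) Carleson hypothesis gives $\ell_I\leq 1$, so these triples lie in the domain
\[
\Omega := \bigl\{(u,v,\ell)\colon u,v>0,\ uv\geq 1,\ 0\leq \ell\leq 1\bigr\}.
\]
Across the dyadic tree they satisfy the recursions $u_I = \tfrac12(u_{I_+}+u_{I_-})$, $v_I = \tfrac12(v_{I_+}+v_{I_-})$, and $\ell_I = \lambda_I/|I| + \tfrac12(\ell_{I_+}+\ell_{I_-})$; in particular the ``increment'' $\mu_I := \ell_I - \tfrac12(\ell_{I_+}+\ell_{I_-})$ equals $\lambda_I/|I|\geq 0$.

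The crux of the argument is to construct a Bellman function $B:\Omega\to [0,\infty)$ satisfying (i) $0\leq B(u,v,\ell)\leq 4u$ on $\Omega$, and (ii) the concavity-type ``main inequality''
\[
B(u,v,\ell)-\tfrac12\bigl[B(u_+,v_+,\ell_+)+B(u_-,v_-,\ell_-)\bigr]\;\geq\;\frac{\mu}{v}
\]
for every admissible parent-children configuration $(u,v,\ell),(u_\pm,v_\pm,\ell_\pm)\in\Omega$ with $u=\tfrac12(u_++u_-)$, $v=\tfrac12(v_++v_-)$, $\ell=\mu+\tfrac12(\ell_++\ell_-)$, $\mu\geq 0$. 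This is the main technical obstacle: $B$ must simultaneously encode the Carleson bound (through the $\ell$ dependence), the duality $uv\geq 1$ between $w$ and $w^{-1}$, and produce exactly the loss term $\mu/v$. Following the Nazarov--Treil--Volberg template, I would look for $B$ of the form $u\,\Phi(uv,\ell)$ with $\Phi$ a mild rational expression, then tune the parameters---which is where the constant $4$ is pinned down---so that (i) becomes immediate on $\Omega$ and (ii) reduces to a one-variable convexity check for $1/v$ coupled with elementary manipulations in $\ell$.

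Given such a $B$, multiplying (ii) by $|I|$ and iterating down the dyadic subtree rooted at $J$ telescopes (using $|I|/2=|I_\pm|$ and $|I|\mu_I=\lambda_I$) to
\[
|J|\,B(u_J,v_J,\ell_J)\;\geq\;\sum_{I\in\mathcal{D}_n(J)}|I|\,B(u_I,v_I,\ell_I)\;+\;\sum_{k=0}^{n-1}\sum_{I\in\mathcal{D}_k(J)}\frac{\lambda_I}{\langle w^{-1}\rangle_I}.
\]
Since $B\geq 0$ I drop the first sum, let $n\to\infty$ (monotone convergence on the right), and apply the upper bound $B(u_J,v_J,\ell_J)\leq 4u_J$ to conclude
\[
\sum_{I\in\mathcal{D}(J)}\frac{\lambda_I}{\langle w^{-1}\rangle_I}\;\leq\;|J|\,B(u_J,v_J,\ell_J)\;\leq\;4|J|\,\langle w\rangle_J\;=\;4\,w(J).
\]
Undoing the initial rescaling restores the factor $4A$ on the right, which is \eqref{Little-Lemma}.
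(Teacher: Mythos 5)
Your overall strategy is exactly the paper's: normalize the Carleson intensity, encode the averages $u_I=\langle w\rangle_I$, $v_I=\langle w^{-1}\rangle_I$, $\ell_I=|I|^{-1}\sum_{K\in\mathcal{D}(I)}\lambda_K$ as points of the convex domain $\{u,v>0,\ uv\ge 1,\ 0\le\ell\le 1\}$, posit a Bellman function with a size bound and a discrete ``main inequality'' producing the gain $\lambda_I/(|I|\langle w^{-1}\rangle_I)$, and run the induction on scales / telescoping over the subtree of $J$. That second half of your argument is correct and is precisely the content of the paper's induction-on-scales lemma. However, there is a genuine gap: you never actually produce the Bellman function. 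The existence of $B$ with $0\le B\le 4u$ and $B(x)-\tfrac12[B(x_+)+B(x_-)]\ge \mu/v$ is the crux of the lemma, and your proposal only says you ``would look for'' a function of the form $u\,\Phi(uv,\ell)$ and ``tune the parameters,'' asserting without verification that this reduces to a one-variable convexity check and that the tuning ``pins down'' the constant $4$. As written, the lemma has been reduced to an unproved existence statement, so the proof is incomplete.

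For comparison, the paper closes this gap with the explicit choice $B(u,v,\ell)=u-\frac{1}{v(1+\ell)}$ (which is indeed of your ansatz form, since it equals $u\bigl(1-\frac{1}{uv(1+\ell)}\bigr)$). On the domain one has $0\le B\le u$, and the main inequality is obtained \emph{not} by a purely one-variable check but by verifying two differential conditions, namely $\partial B/\partial\ell=\frac{1}{v(1+\ell)^2}\ge\frac{1}{4v}$ for $0\le\ell\le 1$ (this is where the constant $4$ really comes from, via the normalization $B\le u$ with gain $\mu/(4v)$, equivalent to your $B\le 4u$ with gain $\mu/v$) together with negative semidefiniteness of the full Hessian $d^2B$; one then passes from the infinitesimal statement to the discrete parent--children inequality by the mean value theorem and the integral formula $B(x)-\tfrac12[B(x_+)+B(x_-)]=\frac{\partial B}{\partial \ell}(u,v,\ell')\,\mu-\tfrac12\int_{-1}^{1}(1-|t|)\,b''(t)\,dt$ along the segment $x(t)=\tfrac{1+t}{2}x_++\tfrac{1-t}{2}x_-$, using that the domain is convex so $x(t)$ stays admissible. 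Supplying this explicit function and carrying out these verifications (or an equivalent construction) is what is needed to turn your outline into a complete proof.
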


The proof uses a Bellman function argument that we will present in Section~\ref{sec:loose-ends}. Note that the weight $w$ in the Little Lemma is not required to be in the  Muckenhoupt $A_2$ class. It does require that the reciprocal $w^{-1}$ is a weight, of course if $w\in A_2$ then $w^{-1}$ is a weight  in $A_2$.

\begin{example}\label{eg:little-lemma}
Let $b\in {\rm BMO}^{\mathcal{D}}$ and  $w\in A_2$.The sequence  $\{b_I^2/\langle w\rangle_I \}_{I\in\mathcal{D}}$ is
a $w^{-1}$-Carleson, with intensity $4\|b\|_{{\rm BMO}^{\mathcal{D}}}^2$. By {\rm Example~\ref{eg:b-BMO-Carleson}}
the sequence $\{b_I^2\}_{I\in\mathcal{D}}$ is a Carleson sequence with intensity $\|b\|_{{\rm BMO}^{\mathcal{D}}}^2$, and then applying {\rm Lemma~\ref{thm:Little-Lemma}} with the roles of $w$ and $w^{-1}$ interchanged we get the stated result.
\end{example}

\subsection{The $3/2$ bound for the paraproduct on weighted $L^2$}
We now show that the paraproduct $\pi_b$   is bounded on $L^2(w)$ when $w\in A_2$ and $b\in {\rm BMO^{\mathcal{D}}}$, with bound $[w]_{A_2}^{3/2}\|b\|_{{\rm BMO}^{\mathcal{D}}}$, not yet the optimal linear bound.
\begin{proof} By duality suffices to show that for all  $f\in L^2(w)$ and $g\in L^2(w^{-1})$ 
$${ |\langle \pi_b f, g\rangle | \lesssim [w]_{A_2}^{3/2} \|b\|_{{\rm BMO}^{\mathcal{D}}}
\|f\|_{L^2(w)}\|g\|_{L^2(w^{-1})}.}$$
By definition of the dyadic paraproduct and the triangle inequality,
$${ |\langle \pi_b f, g\rangle | \leq \sum_{I\in\mathcal{D}} \langle |f| \rangle_I  \,|b_I| \,|\langle g, h_I\rangle|}.$$
{First, using the  Cauchy-Schwarz inequality,  we can estimate as follows,
$$|\langle \pi_b f, g\rangle | \leq  \left (\sum_{I\in\mathcal{D}} \frac{\langle |f|\rangle^2_I \,b^2_I }{\langle w^{-1}\rangle_I}\right )^{1/2}
 \left (\sum_{I\in\mathcal{D}}|\langle g, h_I\rangle |^2\langle w^{-1}\rangle_I\right )^{1/2}.$$
Second, using  the fact that $\|S^{\mathcal{D}}g\|_{L^2(w^{-1})}^2=\sum_{I\in\mathcal{D}}|\langle g, h_I\rangle |^2\langle w^{-1}\rangle_I$  and the linear bound on $L^2(v)$ for the square function for $v=w^{-1}\in A_2$ with $[w^{-1}]_{A_2}=[w]_{A_2}$, we further estimate by,
 \begin{align*} |\langle \pi_b f, g\rangle | & \leq  \left (\sum_{I\in\mathcal{D}} \left  (\frac{\langle |f|ww^{-1}\rangle_I}{\langle w^{-1}\rangle_I}\right )^2\frac{b^2_I }{\langle w\rangle_I}\, {\langle w\rangle_I\,\langle w^{-1}\rangle_I}\right )^{1/2} \|S^{\mathcal{D}}g\|_{L^2(w^{-1})} \\
 &\lesssim  {[w]_{A_2}^{1/2}}\left (\sum_{I\in\mathcal{D}} \big (\langle |f|w\rangle^{w^{-1}}_I\big )^2{\frac{b^2_I }{\langle w\rangle_I}}\right )^{1/2}  { [w]_{A_2}}\|g\|_{L^2(w^{-1})}.
 \end{align*}
  Third, using the Weighted Carleson Lemma (Lemma~\ref{lem:weighted-Carleson}) for the $w^{-1}$-Carleson sequence $\{b_I^2/\langle w\rangle_I \}_{I\in\mathcal{D}}$ with intensity $4\|b\|_{{\rm BMO}^{\mathcal{D}}}^2$ (see Example~\ref{eg:little-lemma}), together with the fact that $\|f\|_{L^2(w)}=\|fw\|_{L^2(w^{-1})}$, we get that,}
\begin{eqnarray*}
 |\langle \pi_b f, g\rangle |      
     &\lesssim & {[w]_{A_2}^{3/2}}\, {2 \|b\|_{{\rm BMO}^{\mathcal{D}}}\| M^{\mathcal{D}}_{w^{-1}}(fw)\|_{L^2(w^{-1})} }\|g\|_{L^2(w^{-1})}\\ 
     &\lesssim & {  [w]_{A_2}^{3/2}  \|b\|_{{\rm BMO}^{\mathcal{D}}}} \|f\|_{L^2(w)}\|g\|_{L^2(w^{-1})},
    \end{eqnarray*}
where in the last line we used the boundedness of the dyadic weighted maximal function $M^{\mathcal{D}}_{v}$ on $L^2(v)$ with an operator norm independent of the weight $v$. This implies that
\[ \|\pi_b f\|_{L^2(w)} \lesssim [w]_{A_2}^{3/2}\|b\|_{{\rm BMO}^{\mathcal{D}}}\|f\|_{L^2(w)}.\]
This is precisely what we set out to prove. 
\end{proof}

\subsection{Algebra of Carleson sequences, $\alpha$-Lemma, and weighted Haar bases}
To get a linear bound instead of the 3/2 power bound we just obtained, we will need a couple more ingredients, some algebra with Carleson sequences, the $\alpha$-Lemma, and weighted Haar bases.

\subsubsection{Algebra of Carleson sequences} Given weighted Carleson sequences we can create new weighted Carleson sequences by linear operations or by taking geometric means.
\begin{lemma}[Algebra of Carleson sequences]\label{lem:algebra-of-Carleson-seq}
Given  a weight $v$,
let $\{\lambda_I\}_{I\in\mathcal{D}}$ and $\{\gamma_I\}_{I\in\mathcal{D}}$ be two $v$-Carleson sequences with
intensities $A$ and $B$ respectively then for any $c, d >0$ 
\begin{itemize}
\item[(i)] The sequence $ \{c \lambda_I + d\gamma_I\}_{I\in\mathcal{D}}$ is a $v$-Carleson sequence with
intensity at most $cA + dB$.
\item[(ii)] The sequence $ \{\sqrt{\lambda_I \gamma_I}\}_{I\in\mathcal{D}}$ is a $v$-Carleson sequence
with intensity at most $\sqrt{AB}$.
\end{itemize}
\end{lemma}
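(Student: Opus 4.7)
The plan is to verify both parts directly from the definition of a $v$-Carleson sequence by summing over $\mathcal{D}(J)$ for a fixed but arbitrary $J\in\mathcal{D}$ and showing that the resulting sum is bounded by the claimed intensity times $v(J)$.

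For part (i), I would simply use the linearity of the (absolutely convergent) sum: for any $J\in\mathcal{D}$,
\[
\sum_{I\in\mathcal{D}(J)} \bigl(c\lambda_I + d\gamma_I\bigr) \;=\; c\sum_{I\in\mathcal{D}(J)}\lambda_I + d\sum_{I\in\mathcal{D}(J)}\gamma_I \;\leq\; cA\,v(J) + dB\,v(J) \;=\; (cA+dB)\,v(J),
\]
where the inequality uses the hypothesis that $\{\lambda_I\}$ and $\{\gamma_I\}$ are $v$-Carleson with intensities $A$ and $B$ respectively. Taking the supremum over $J$ yields the desired bound on the intensity.

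For part (ii), the key idea is Cauchy--Schwarz applied to the sum $\sum_{I\in\mathcal{D}(J)}\sqrt{\lambda_I\gamma_I}$, viewing it as an inner product of the nonnegative sequences $\{\sqrt{\lambda_I}\}$ and $\{\sqrt{\gamma_I}\}$ in $\ell^2(\mathcal{D}(J))$. This gives
\[
\sum_{I\in\mathcal{D}(J)} \sqrt{\lambda_I \gamma_I} \;\leq\; \Bigl(\sum_{I\in\mathcal{D}(J)} \lambda_I\Bigr)^{1/2} \Bigl(\sum_{I\in\mathcal{D}(J)} \gamma_I\Bigr)^{1/2} \;\leq\; \bigl(A\,v(J)\bigr)^{1/2}\bigl(B\,v(J)\bigr)^{1/2} \;=\; \sqrt{AB}\, v(J).
\]
Again taking the supremum over $J\in\mathcal{D}$ gives the claimed intensity bound.

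There is no real obstacle here: part (i) is pure linearity, and part (ii) is one application of Cauchy--Schwarz. The only minor technicality is making sure the sums are well-defined nonnegative (possibly extended-real) quantities so that termwise manipulations are justified, which is automatic since all $\lambda_I,\gamma_I\geq 0$. No Bellman function machinery, no stopping time, and no appeal to the Weighted Carleson Lemma are needed.
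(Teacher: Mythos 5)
Your proof is correct: part (i) is indeed pure linearity of the sums and part (ii) is one application of the Cauchy--Schwarz inequality, both applied locally over $\mathcal{D}(J)$ for an arbitrary $J\in\mathcal{D}$. The paper leaves this lemma as an exercise to the reader, and your argument is precisely the intended elementary verification, so there is nothing to add.
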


The proof is a simple exercise which we leave to the interested reader.  We do need some specific Carleson sequences, and we record them in the next example.

\begin{example}\label{examples-Carleson-sequences}
Let $u,v \in A_{\infty}$ and  $\Delta_I v:= \langle v\rangle_{I_+} - \langle v\rangle_{I_-}$.
Then 
\begin{itemize}
\item[(i)] The sequence $\big \{ \left |{\Delta_I v|}/{\langle v\rangle_I}\right |^2|I| \big \}_{I\in\mathcal{D}}$ 
 is a Carleson sequence, with intensity  {$C\log [w]_{A_{\infty}}$}.  
\item[(ii)] Let $\alpha_I=({|\Delta_I v|}/{\langle v\rangle_I})({|\Delta_I u|}/{\langle u\rangle_I})|I|$. The sequence $\{\alpha_I\}_{I\in\mathcal{D}}$ is a Carleson sequence. 
\item[(iii)] When $v\in A_2$, $u=v^{-1}$ $($also in $A_2)$ the sequence $\{\alpha_I\}_{I\in\mathcal{D}}$ defined in item {\rm (ii)} has  intensity at most $ {\log [v]_{A_2}}$.
\end{itemize}
\end{example}
Example~\ref{examples-Carleson-sequences}(i) was discovered by Robert Fefferman, Carlos Kenig\footnote{Carlos Kenig, an Argentinian mathematician,  was elected President of the International Mathematical Union in July 2018 in the International Congress of Mathematicians (ICM) held in Brazil and for the first time in the Southern hemisphere.}, and Jill Pipher\footnote{Jill Pipher is the president-elect of the American Mathematical Society (AMS), and will begin a two-year term in 2019.}.  in 1991, see \cite{FKP}.  The sharp constant $C=8$ was obtained by Vasily Vasyunin using the Bellman function method \cite{Va}. In fact this example provides a characterization of $A_{\infty}$ by summation conditions, for many more such characterizations for other weight classes see \cite{Bu2,BeRe}.
Example~\ref{examples-Carleson-sequences}(ii)-(iii) follow from Example~\ref{examples-Carleson-sequences}(i) and from Lemma~\ref{lem:algebra-of-Carleson-seq}(ii).

\subsubsection{The $\alpha$-Lemma}
The key to dropping  from power 3/2 to linear power in the weighted $L^2$ estimate for the paraproduct is
the following lemma, discovered by Beznosova, like the Little Lemma, in the course of writing her PhD Dissertation \cite{Be1}, see also \cite{Be,MP}. Both lemmas were proved using  Bellman functions and  we will sketch the arguments in Section~\ref{sec:loose-ends}.
\begin{lemma}[$\alpha$-Lemma] \label{lem:alpha-lemma}
If $w\in A_2$  and $\alpha >0 $, then the sequence
\begin{equation*}
  \mu_I:= \langle w\rangle_I^{\alpha}\langle \,w^{-1}\rangle_I^{\alpha} \, |I| \bigg( \frac{|\Delta_Iw|^2}{\langle w\rangle_I^2} +\frac{|\Delta_Iw^{-1}|^2}{\langle w^{-1}\rangle_I^2}  \bigg) \quad I  \in \mathcal{D}
  \end{equation*}
is a Carleson sequence with  intensity at most {$C_{\alpha}[w]_{A_2}^{\alpha}$}, and $C_{\alpha} =\max\{72/(\alpha-2\alpha^2), 576\}$. 
\end{lemma}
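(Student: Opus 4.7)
The proof is by a Bellman function argument on the ``$A_2$ Bellman domain''
$$\Omega_Q := \big\{(x, y) \in (0,\infty)^2 : 1 \leq xy \leq Q\big\}, \qquad Q := [w]_{A_2}.$$
The pair $(x_I, y_I) := (\langle w \rangle_I, \langle w^{-1}\rangle_I)$ lies in $\Omega_Q$ for every $I \in \mathcal{D}$: the lower bound is Jensen's inequality, the upper is the definition of $[w]_{A_2}$. When $I$ is split into its two dyadic children $I_\pm$ of equal measure, $(x_I, y_I) = \tfrac{1}{2}[(x_{I_+}, y_{I_+}) + (x_{I_-}, y_{I_-})]$, and all three points remain in $\Omega_Q$.

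The plan is to construct a function $B \colon \Omega_Q \to [0,\infty)$ satisfying
\begin{itemize}
\item[(i)] \emph{size:} $\; 0 \leq B(x,y) \leq C_\alpha\, Q^\alpha$ on $\Omega_Q$;
\item[(ii)] \emph{concavity gap:} for every triple $(x,y), (x_\pm, y_\pm) \in \Omega_Q$ with $(x,y) = \tfrac12[(x_+,y_+) + (x_-, y_-)]$,
$$B(x,y) - \frac{B(x_+,y_+) + B(x_-, y_-)}{2} \;\geq\; c_\alpha \,(xy)^\alpha \left(\frac{(x_+ - x_-)^2}{x^2} + \frac{(y_+ - y_-)^2}{y^2}\right).$$
\end{itemize}
Given such a $B$, set $b_I := B(x_I, y_I)\,|I|$. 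Since $x_{I_+} - x_{I_-} = \Delta_I w$, $y_{I_+} - y_{I_-} = \Delta_I w^{-1}$ and $|I_\pm| = |I|/2$, property (ii) translates into the pointwise dyadic inequality $b_I - b_{I_+} - b_{I_-} \geq c_\alpha\,\mu_I$. Telescoping this over dyadic descendants of a fixed $J \in \mathcal{D}$ (up to depth $N$) and discarding the nonnegative boundary terms yields $c_\alpha \sum_I \mu_I \leq b_J \leq C_\alpha Q^\alpha |J|$; passing $N \to \infty$ by monotone convergence produces the desired intensity bound, with the constant $C_\alpha/c_\alpha$ relabeled as $C_\alpha$.

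The main obstacle is the construction of $B$. The natural candidate is
$$B(x,y) := (xy)^\alpha - 1,$$
which is nonnegative on $\Omega_Q$ (since $xy \geq 1$) and bounded above by $Q^\alpha$. With $u := (x_+ - x_-)/(2x)$ and $v := (y_+ - y_-)/(2y)$, the concavity gap equals
$(xy)^\alpha \big(1 - \tfrac{1}{2}[(1+u)^\alpha(1+v)^\alpha + (1-u)^\alpha(1-v)^\alpha]\big)$.
A Taylor expansion shows that the bracket equals $\tfrac{\alpha(1-\alpha)}{2}(u^2+v^2) - \alpha^2 uv + O(3)$, and the elementary bound $2|uv| \leq u^2+v^2$ gives the leading-order lower bound $\tfrac{\alpha-2\alpha^2}{2}(u^2+v^2)(xy)^\alpha$ for $0<\alpha<1/2$. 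This accounts for the denominator $\alpha-2\alpha^2$ in the stated $C_\alpha$. The hard part is upgrading this infinitesimal estimate to a \emph{uniform} finite-difference inequality on $\Omega_Q$: I would control the cubic and higher remainders using $|u|, |v| < 1$ together with the constraint $xy \leq Q$, perturbing $B$ by a lower-order concave correction if needed to absorb those terms. For $\alpha \geq 1/2$ this candidate's gap degenerates; in that regime I would either replace $B$ by a composition $(xy)^\alpha \circ \varphi$ for a suitable concave $\varphi$, or invoke the algebra of Carleson sequences (Lemma~\ref{lem:algebra-of-Carleson-seq}) to reduce to the $\alpha < 1/2$ case. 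This last route accounts for the absolute constant $576 = 72/(\alpha(1-2\alpha))\big|_{\alpha = 1/4}$ in the second branch of the formula, which is precisely the minimum of the first branch over $\alpha \in (0, 1/2)$.
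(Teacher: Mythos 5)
Your proposal follows essentially the same route as the paper's own sketch: the paper uses the Bellman function $B(u,v)=(uv)^{\alpha}$ on the same $A_2$ domain, derives the differential (Hessian) estimate that produces the factor $\alpha-2\alpha^2$ for $0<\alpha<1/2$, and for $\alpha\geq 1/2$ does exactly your second alternative, factoring out $\langle w\rangle_I^{\alpha-1/4}\langle w^{-1}\rangle_I^{\alpha-1/4}\leq [w]_{A_2}^{\alpha-1/4}$ and applying the case $\alpha=1/4$, which is precisely where the constant $576$ arises. The step you flag as the hard part (upgrading the infinitesimal Hessian bound to the uniform midpoint inequality) is likewise only asserted, not carried out, in the paper's sketch, with details deferred to \cite{Be} and \cite{MP}.
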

Notice that  the algebra of Carleson sequences encoded in Lemma~\ref{lem:algebra-of-Carleson-seq} together with  the R.~Fefferman-Kenig-Pipher Example~\ref{examples-Carleson-sequences}(iii) give, for $\mu_I$, an intensity of $[w]_{A_2}^{\alpha}\log [w]_{A_2}$, which is larger by a logarithmic factor than the one claimed in the $\alpha$-Lemma. This lesser estimate will improve the 3/2 estimate to a linear times logarithmic estimate \cite{Be1}, the stronger $\alpha$-Lemma will yield the desired linear estimate.
\begin{example}\label{eg:nuI-Carleson}
Let $w\in A_2$ and $b\in {\rm BMO}^{\mathcal{D}}$.
By the $\alpha$-Lemma and  the algebra of Carleson sequences   we conclude that
\begin{itemize}
\item[(i)]  $\{\nu_I := |\Delta_I w|^2\langle w^{-1}\rangle_I^2 |I| \}_{I\in\mathcal{D}}$ is  Carleson  with intensity $C_{1/4}[w]^2_{A_2}$, $C_{1/4}=576$.
\item[(ii)] $\{b_I\sqrt {\nu_I} \}_{I\in\mathcal{D}}$  is Carleson with intensity $24[w]_{A_2}\|b\|_{{\rm BMO}^{\mathcal{D}}}$.
\end{itemize}
\end{example}

\subsubsection{Weighted Haar basis}

The last ingredient before we present the proof of the $A_2$ conjecture for the dyadic paraproduct
is the weighted Haar basis.

Given a doubling weight $w$  and  an interval $I$,  the  \emph{weighted Haar 
function} $h_I^w$ is given by
$$  h^w_I(x):= \sqrt{w(I_-)}/\sqrt{w(I)w(I_+)}\,\mathbbm{1}_{I_+}(x)
             -\sqrt{w(I_+)}/\sqrt{w(I)w(I_-)}\,\mathbbm{1}_{I_-}(x).
$$
The collection $\{h_I^w\}_{I\in \mathcal{D}}$, of weighted Haar functions indexed on $\mathcal{D}$  --a system of dyadic intervals--,  is an orthonormal system of $L^2(w)$. In fact, the weighted Haar functions are the Haar functions corresponding to the space of homogeneous type $X=\R$ with the Euclidean metric,  the doubling measure $d\mu=w\,dx$, and  the dyadic structure $\mathcal{D}$, defined in Section~\ref{sec:Haar-SHT}.

There is a very simple formula relating the weighted Haar function and the regular Haar function. More precisely, 
given $I\in\mathcal{D}$ there exist numbers $\alpha_I^w$,
$\beta^w_I$ such that
$$ h_I(x) = \alpha^w_I h^w_I(x) + \beta_I^w {\mathbbm{1}_I(x)}/{\sqrt{|I|}}.
$$
The coefficients can be calculated precisely,  and they have the following upper bounds:
   \begin{itemize}
       \item[(i)] $\;\;|\alpha^w_I | \leq \sqrt{\langle w\rangle_I}$,
       $\quad\quad\quad$ (ii) $\;\;|\beta^w_I| \leq {|\Delta_I w|}/{\langle w\rangle_I}$ where $\;\Delta_I w:= \langle w\rangle_{I_+} -\langle w\rangle_{I_-}$.
  \end{itemize}

\subsection{$A_2$ conjecture for the dyadic paraproduct}\label{sec:A2-conjecture-paraproduct}
We present a proof of Beznosova's theorem,  namely, for all  $b\in {\rm BMO}^{\mathcal{D}}$, $w\in A_2$, and $f\in L^2(w)$
$$ \| \pi_bf\|_{L^2(w)}\lesssim \|b\|_{{\rm BMO}^{\mathcal{D}}} [w]_{A_2}\|f\|_{L^2(w)}.$$

The proof uses the same ingredients introduced by Oleksandra Beznosova \cite{Be}, and 
 a beautiful argument by Fedja Nazarov, Sasha Reznikov,  and Sasha Volberg that yields polynomial in the complexity bounds for
Haar shift operators on geometric doubling metric spaces \cite{NRV}. An extension of  their result 
to \emph{paraproducts with  arbitrary complexity} can be found in joint work with Jean Moraes \cite{MP}.

\begin{proof}
Suffices by duality to prove that
$$ |\langle \pi_b f, g\rangle| \leq C \|b\|_{{\rm BMO}^{\mathcal{D}}} [w]_{A_2} \|f\|_{L^2(w)}\|g\|_{L^2(w^{-1})}.
$$

We  introduce weighted Haar functions to obtain two terms to be estimated separately,
$$ | \langle \pi_b f, g \rangle | \leq  
\sum_{I \in \mathcal{D}} |b_I| \, \langle |f|w w^{-1}\rangle_I \, | \langle gw^{-1}w, h_I \rangle | \leq \Sigma_1 + \Sigma_2.
$$
Explicitly,  the sums $\Sigma_1$ and $\Sigma_2$ are obtained replacing $h_I = \alpha^{w}_I h^{w}_I + \beta^{w}_I {\mathbbm{1}_I}/{\sqrt{|I|}}$, and using the estimates on the coefficients $\alpha_I$, $\beta_I$, to  get
\begin{eqnarray*}
\Sigma_1 & :=  & \sum_{I \in \mathcal{D}} |b_I| \,\langle |f|ww^{-1}\rangle_I \,| \langle gw^{-1}w,h_I^{w}  \rangle | \,\sqrt{\langle w\rangle_I},\\
\Sigma_2 & :=  & \sum_{I \in \mathcal{D}} |b_I| \,\langle |f|ww^{-1}\rangle_I \, \langle |g| w^{-1}w\rangle_I \, \frac{|\Delta_Iw|} {\langle w\rangle_I}\,{\sqrt{|I|}}.
\end{eqnarray*}

\subsubsection*{First Sum $\Sigma_1$}  Denote the $L^2(w)$ pairing  $\langle h, k\rangle_{L^2(w)}:=\langle hw,k\rangle$.
To estimate the first sum we observe that the weighted average (with respect to the weight $w^{-1}$) of the function $|f|w$ over a dyadic interval  is bounded by the corresponding dyadic weighted maximal function evaluated at any point on the interval, hence by the infimum over the interval, more precisely, 
$ \langle |f|ww^{-1}\rangle_I\ /\langle w^{-1}\rangle_I \leq \inf_{x \in I} M^{\mathcal{D}}_{w^{-1}}(fw) (x)  $. Then  using the definition of $A_2$ and the Cauchy-Schwarz inequality,   we get
\begin{align*}
\Sigma_1 & =   \sum_{I \in \mathcal{D}} \frac{|b_I|}{\sqrt{\langle w\rangle_I }} \frac{\langle |f|ww^{-1}\rangle_I}{\langle w^{-1}\rangle_I} | \langle gw^{-1},h_I^{w}  \rangle_{L^2(w)} |\, \langle w\rangle_I \langle w^{-1}\rangle_I\\
&  \leq  [w]_{A_2} \sum_{I \in \mathcal{D}} \frac{|b_I|}{\sqrt{\langle w\rangle_I}}\inf_{x \in I} M^{\mathcal{D}}_{w^{-1}}(fw) (x) \, | \langle gw^{-1},h_I^{w}  \rangle_{L^2(w)} | \\
    & \leq  [w]_{A_2} \Bigg (\sum_{I \in \mathcal{D}} \frac{|b_I|^2}{\langle w\rangle_I} \inf_{x \in I} |M^{\mathcal{D}}_{w^{-1}}(fw)(x)|^2\,  \Bigg)^{\frac{1}{2}}\Bigg
(\sum_{I \in \mathcal{D}} \big |\langle gw^{-1}, h_I^{w} \rangle_{L^2(w)}\big |^2 \Bigg)^{\frac{1}{2}}.
\end{align*}
Using the {Weighted Carleson Lemma}  (Lemma~\ref{lem:weighted-Carleson}) with $F(x)=|M^{\mathcal{D}}_{w^{-1}} (fw)(x)|^2$, with  weight $v=w^{-1}\in A_2$ recalling  $[w]_{A_2}=[w^{-1}]_{A_2}$, and with  $w^{-1}$-Carleson sequence $\{b_I^2/ \langle w\rangle_I\}_{I\in\mathcal{D}}$ with intensity $4\|b\|_{{\rm BMO}^{\mathcal{D}}}^2$ by the {Little Lemma} (Lemma~\ref{Little-Lemma}), we get that,
\begin{align*}
\Sigma_1&\leq 2 [w]_{A_2} \|b\|_{{\rm BMO}^{\mathcal{D}}} \left ( \int_{\mathbb{R}} |M^{\mathcal{D}}_{w^{-1}}(fw)(x)|^2 \, w^{-1}(x) \, dx \right )^{\frac{1}{2}} \|gw^{-1}\|_{L^2(w)} \\
&\leq 4 {[w]_{A_2} \|b\|_{{\rm BMO}^{\mathcal{D}}}} \|f\|_{L^2(w)} \|g\|_{L^2(w^{-1})}.
\end{align*}
Where  we used in the last inequality the estimate  \eqref{Mv-on-Lp(v)} for the weighted dyadic maximal function,
and noting that $h\in L^2(w)$ if and only if $hw\in L^2(w^{-1})$,  moreover $\|hw\|_{L^2(w^{-1})} =\|h\|_{L^2(w)}$ (we used this twice, for $h=f$ and for $h=gw^{-1}$).

\subsubsection*{Second Sum $\Sigma_2$}
Using similar arguments to those
used for $\Sigma_1$ we get
\begin{align*}
\Sigma_2  &\leq \sum_{I \in \mathcal{D}} |b_I| \,\frac{\langle |f|ww^{-1}\rangle)}{\langle w^{-1}\rangle_I}\, \frac{\langle |g|w^{-1}w\rangle_I}{\langle w\rangle_I}\; 
\sqrt{{|\Delta_Iw|^2}{\langle w^{-1}\rangle_I^2} \|I| }\\
& \leq  \sum_{I \in \mathcal{D}} |b_I|\,
\sqrt{\nu_I} \; \inf_{x \in I} M^{\mathcal{D}}_{w^{-1}}(fw) (x) \, M^{\mathcal{D}}_{w}(gw^{-1})(x),
\end{align*}
where $|b_I|^2$ and $\nu_I:=|\Delta_Iw|^2{\langle w^{-1}\rangle_I^2}|I|$ 
are Carleson sequences with intensities
$\|b\|^2_{{\rm BMO}^{\mathcal{D}}}$ and $[w]^{2}_{A_2}$  respectively, by Example~\ref{eg:b-BMO-Carleson} and Example~\ref{eg:nuI-Carleson}(i).
Then by the algebra of Carleson sequences
 the sequence $|b_I| \sqrt{\nu_I}$ is a Carleson
sequence with intensity $\|b\|_{{\rm BMO}^{\mathcal{D}}}[w]_{A_2}$.
Using the Weighted Carleson Lemma (Lemma~\ref{lem:weighted-Carleson}) with $F(x)=M^{\mathcal{D}}_{w^{-1}}(fw) (x) \, M^{\mathcal{D}}_{w}(gw^{-1})(x)$ and with  $v=1$,  we conclude that 
$$
\Sigma_2 \leq [w]_{A_2} \|b\|_{{\rm BMO}^{\mathcal{D}}} \int_{\R} M^{\mathcal{D}}_{w^{-1}}(fw)(x) \, M^{\mathcal{D}}_{w}(gw^{-1})(x) \, dx.
$$
To finish we use the Cauchy-Schwarz inequality,  the fact that $w^{\frac12}(x)\,w^{-\frac12}(x)=1$, and estimate~\eqref{Mv-on-Lp(v)} for the weighted dyadic maximal functions,  to get that,
\begin{align*}
\Sigma_2 
& \leq [w]_{A_2}\|b\|_{{\rm BMO}^{\mathcal{D}}}\left [ \int_{\mathbb{R}} \big (M^{\mathcal{D}}_{w^{-1}}(fw) (x)\big )^2w^{-1}(x)\, dx \right ]^{\frac{1}{2}}\left [ \int_{\mathbb{R}} \big (M^{\mathcal{D}}_{w}(gw^{-1}) (x)\big )^2w(x)\, dx \right ]^{\frac{1}{2}}\\
& = [w]_{A_2} \|b\|_{{\rm BMO}^{\mathcal{D}}} \|M^{\mathcal{D}}_{w^{-1}} (fw) \|_{L^2(w^{-1})}\|M^{\mathcal{D}}_{w}(gw^{-1}) \|_{L^2(w)} \\
&\leq 4{ [w]_{A_2} \|b\|_{{\rm BMO}^{\mathcal{D}}}} \|f \|_{L^2(w)}\|g \|_{L^2(w^{-1})}.
\end{align*}
All together this implies that  $\|\pi_bf\|_{L^2(w)}\leq 8[w]_{A_2}\|b\|_{{\rm BMO}^{\mathcal{D}}}\|f\|_{L^2(w)}$, proving the $A_2$ conjecture for the dyadic paraproduct.
\end{proof}

\subsection{Auxiliary lemmas}\label{sec:loose-ends}
We now present the Bellman function proofs (or at least the main ideas)  for the Little Lemma (Lemma~\ref{thm:Little-Lemma}) and  the $\alpha$-Lemma (Lemma~\ref{lem:alpha-lemma}), to illustrate the method in a very simple setting. For completeness we also present the proof of  the Weighted Carleson Lemma (Lemma~\ref{lem:weighted-Carleson}).  The lemmas in this section hold on $\R^d$ and also on geometrically doubling metric spaces \cite{Ch3, NRV}.

\subsubsection{Proof of Beznosova's Little Lemma}
We wish to prove Lemma~\ref{thm:Little-Lemma}.
The proof uses a Bellman function argument, which we now describe. As usual, the argument proceeds in two steps. First, 
 Lemma~\ref{lem:nduction-on-scales}, encodes what now is called an \emph{induction on scales argument}.
If we can find a Bellman function with certain properties, then we will solve our problem
by induction on scales. This type of arguments shows that if we can find a function with certain size, domain, and dyadic convexity properties tailored to the inequality of interest, we will be able to induct on scales and obtain the desired inequality. Second, Lemma~\ref{lem:existence-of-Bellman-function}
 will show that such Bellman function exists.
 
\begin{lemma}[Beznosova 2008]\label{lem:nduction-on-scales}
\label{prop1 lemma1} Suppose there exists a real valued function of
$3$ variables $B(x) = B(u,v,l)$, whose domain $\mathfrak{D}$ contains points $x=(u,v,l)$
$$ \mathfrak{D} := \{(u,v,l)\in \mathbb{R}^3:
u,v >0, \;\;\; uv \geq 1 \;\;\; and \;\;\; 0\leq l \leq 1\},
$$
whose range is given by
$\;
0 \leq B(x)  \leq u,
$
and such that the following convexity property holds,
\begin{equation}\label{dyadic-convexity}
B(x) - ({B(x_+) + B(x_-)})/{2} \;
\geq \;  \alpha /4v, \;\;\mbox{for all  $x, x_\pm \in \mathfrak{D}$ with $x -
\frac{x_+ + x_-}{2} = (0,0,\alpha)$}.
\end{equation}
Then {\rm Lemma~{\rm \ref{thm:Little-Lemma}}}. will be proven, more precisely \eqref{Little-Lemma}  holds.
\end{lemma}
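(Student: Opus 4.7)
The plan is a standard \emph{induction on scales} argument powered by the Bellman function $B$. First I would associate to each dyadic subinterval $I\in\mathcal{D}(J)$ a ``Bellman point''
$$x_I:=\Big(\langle w\rangle_I,\ \langle w^{-1}\rangle_I,\ \ell_I\Big),\qquad \ell_I:=\frac{1}{A\,|I|}\sum_{I'\in\mathcal{D}(I)}\lambda_{I'},$$
and verify that $x_I\in\mathfrak{D}$: positivity of the first two coordinates is automatic; Cauchy--Schwarz applied to $1=\langle w^{1/2}\,w^{-1/2}\rangle_I$ gives $\langle w\rangle_I\,\langle w^{-1}\rangle_I\geq 1$; and the Carleson hypothesis $\sum_{I'\in\mathcal{D}(I)}\lambda_{I'}\leq A\,|I|$ forces $0\leq \ell_I\leq 1$.

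Next, the first two coordinates of $x_I$ are martingale averages, so $\langle w\rangle_I=\tfrac12(\langle w\rangle_{I_+}+\langle w\rangle_{I_-})$ and likewise for $w^{-1}$, whereas the third coordinate satisfies the elementary identity
$$\ell_I-\frac{\ell_{I_+}+\ell_{I_-}}{2}\ =\ \frac{\lambda_I}{A\,|I|}.$$
Hence $x_I-(x_{I_+}+x_{I_-})/2=(0,0,\alpha_I)$ with $\alpha_I=\lambda_I/(A|I|)$, so the convexity hypothesis~\eqref{dyadic-convexity} applies with $x=x_I$, $x_\pm=x_{I_\pm}$, $v=\langle w^{-1}\rangle_I$, and yields, after multiplying through by $|I|$ and using $|I|/2=|I_\pm|$,
$$|I|\,B(x_I)-|I_+|\,B(x_{I_+})-|I_-|\,B(x_{I_-})\ \geq\ \frac{\lambda_I}{4A\,\langle w^{-1}\rangle_I}.$$

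Finally, I would sum this inequality over all $I$ in the first $n$ dyadic generations below $J$; the left-hand side telescopes to $|J|\,B(x_J)-S_n$, where $S_n:=\sum_{I\in\mathcal{D}_n(J)}|I|\,B(x_I)\geq 0$ by the lower bound $B\geq 0$. Combined with the upper bound $B(x_J)\leq\langle w\rangle_J$, which gives $|J|\,B(x_J)\leq w(J)$, discarding $S_n\geq 0$ and letting $n\to\infty$ produces
$$\sum_{I\in\mathcal{D}(J)}\frac{\lambda_I}{\langle w^{-1}\rangle_I}\ \leq\ 4A\,w(J),$$
which is precisely~\eqref{Little-Lemma}. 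The bookkeeping here is essentially forced once the trajectory $x_I$ is identified; the real obstacle is pushed entirely onto the existence of a function $B$ with the prescribed size, domain, and ``dyadic concavity'' properties (the content of the companion Lemma~\ref{lem:existence-of-Bellman-function}), and the art lies in designing $\mathfrak{D}$ so that the natural martingale trajectory $x_I$ never leaves it.
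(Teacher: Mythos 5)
Your proposal is correct and follows essentially the same induction-on-scales argument as the paper: Bellman points $x_I=(\langle w\rangle_I,\langle w^{-1}\rangle_I,\ell_I)$, the martingale identity giving $\alpha_I=\lambda_I/(A|I|)$, the convexity inequality at each scale, and summation/telescoping down the generations using $0\leq B\leq u$. The only differences are cosmetic: you absorb the intensity $A$ into $\ell_I$ where the paper assumes $A=1$ without loss of generality, and you make the paper's ``repeat the argument'' explicit as a telescoping sum over the first $n$ generations followed by a limit.
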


\begin{proof} Without loss of generality we may  assume that the intensity $A$ of the Carleson sequence $\{\lambda_I\}_{I\in\mathcal{D}}$ in Lemma~\ref{thm:Little-Lemma} is one, $A=1$.

Fix a dyadic interval $J$. Let $u_J :=
\langle w\rangle_J$, $v_J := \langle w^{-1}\rangle_J$ and $\ell_J:=\frac{1}{|J|}\sum_{I\in
\mathcal{D}(J)}{\lambda_I}$, then $x_J := (u_J, v_J, \ell_J)\in \mathfrak{D}$.  Recall that $\mathcal{D}(J)$ denotes the intervals $I\in\mathcal{D}$ such that $I\subset J$.

Let $x_\pm := x_{J^\pm} \in \mathfrak{D}$, then 
$$
x_J - \frac{x_{J^+} + x_{J^-}}{2} \; = \; (0,0,\alpha_J), \;\; \mbox{where} \;\; \alpha_J := \frac{\lambda_J}{|J|}.
$$
Hence, by the size and convexity
property  \eqref{dyadic-convexity},  and  $|J^+| = |J^-| = |J|/2$,
$$
|J| \; \langle w\rangle_J \; \geq \; |J| \; B(x_J) \geq |J^+| B(x_{J^+}) + |J^-|B(x_{J^-}) + {\lambda_J}/{4 \langle w^{-1}\rangle_J}.
$$
Repeat the argument this time for $|J^+| B(x_{J^+})$
and $|J^-|B(x_{J^-})$, use that $B\geq 0$ on $\mathfrak{D}$ and keep repeating to get,  after dividing by $|J|$, that
$$\langle w\rangle_J \geq \frac{1}{4|J|}\sum_{I\in D(J)}{\frac{\lambda_I}{\langle w^{-1}\rangle_I}}$$ which implies \eqref{Little-Lemma} after multiplying through by $4|J|$.  
The lemma is proved.
\end{proof}

The previous induction on scales argument is conditioned on the existence of a function with certain properties, a Bellman function. We now establish the existence of such function, Both lemmas appeared in \cite{Be}.

\begin{lemma}[{Beznosova 2008}]\label{lem:existence-of-Bellman-function}
The function
$
{B(u,v,l) := u - \frac{1}{v(1+l)}}
$
is {\rm (i)}  defined on the domain $\mathfrak{D}$ introduced in {\rm Lemma~\ref{lem:nduction-on-scales}},  {\rm (ii)} $0 \leq B(x) \leq u$ for all
$x=(u,v,l) \in \mathfrak{D}$, and  {\rm (iii)} obeys the following differential estimates on $\mathfrak{D}$:
$$ ({\partial B}/{\partial l})(u,v,l) \;
\geq \; {1}/({4v})  \quad\mbox{and} \quad 
 - \left( du, dv, dl \right) d^2B(u,v,l)
\left( du, dv, dl
 \right )^t \; \geq \; 0,
$$
where $d^2B(u,v,l)$ denotes the Hessian matrix of the function $B$
evaluated at $(u,v,l)$. Moreover, these  imply the dyadic convexity condition
$B(x) - (B(x_+) + B(x_-))/{2} \;
\geq \; \alpha /(4v).
$
\end{lemma}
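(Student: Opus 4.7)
The plan is to verify each claim about $B(u,v,l) = u - 1/(v(1+l))$ by direct computation, then deduce the dyadic convexity from the pointwise differential estimates via a standard one-dimensional integration along a segment in the $l$-variable combined with concavity.

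First I would check (i) and (ii) by inspection. On $\mathfrak{D}$ one has $v > 0$ and $1 + l \geq 1 > 0$, so $B$ is well-defined. The upper bound $B(u,v,l) \leq u$ is immediate from $1/(v(1+l)) \geq 0$. For the lower bound $B(u,v,l) \geq 0$, I would observe that $uv(1+l) \geq uv \geq 1$, since $uv \geq 1$ on $\mathfrak{D}$ and $1 + l \geq 1$; dividing by $v(1+l) > 0$ gives $u \geq 1/(v(1+l))$.

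Next I would verify the two differential estimates in (iii). A direct computation yields $\partial B/\partial l = 1/(v(1+l)^2)$, and since $0 \leq l \leq 1$ forces $(1+l)^2 \leq 4$, the inequality $\partial B/\partial l \geq 1/(4v)$ follows. For the Hessian, only the $(v,l)$-block is nonzero, with entries $B_{vv} = -2/(v^3(1+l))$, $B_{ll} = -2/(v(1+l)^3)$, and $B_{vl} = -1/(v^2(1+l)^2)$. I would check $-d^2B \geq 0$ by writing
\[
-(du,dv,dl)\,d^2B\,(du,dv,dl)^t \;=\; \frac{2}{v(1+l)}\Bigl(a^2 + ab + b^2\Bigr),
\]
where $a := dv/v$ and $b := dl/(1+l)$. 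Completing the square gives $a^2 + ab + b^2 = (a + b/2)^2 + 3b^2/4 \geq 0$, which finishes the concavity claim.

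Finally I would deduce the dyadic convexity. Given $x, x_\pm \in \mathfrak{D}$ with $x - (x_+ + x_-)/2 = (0,0,\alpha)$, set $y := (x_+ + x_-)/2$; then $y$ agrees with $x = (u,v,l)$ in the first two coordinates and differs only by $\alpha$ in the last. By concavity of $B$ on the segment from $x_-$ to $x_+$ through $y$,
\[
B(y) \;\geq\; \tfrac{1}{2}\bigl(B(x_+) + B(x_-)\bigr).
\]
On the other hand, the fundamental theorem of calculus in the $l$-variable together with $\partial B/\partial l \geq 1/(4v)$ gives
\[
B(x) - B(y) \;=\; \int_{l-\alpha}^{l} \frac{\partial B}{\partial l}(u,v,s)\,ds \;\geq\; \frac{\alpha}{4v}.
\]
Combining these two bounds yields $B(x) - \tfrac{1}{2}(B(x_+) + B(x_-)) \geq \alpha/(4v)$, as required. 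The only small technical point to watch is that the whole segment from $x_-$ to $x_+$ (and the segment from $y$ to $x$ in the $l$-direction) stays in $\mathfrak{D}$; this is automatic because $\mathfrak{D}$ is convex and the endpoints $x, x_\pm$ are assumed to lie in $\mathfrak{D}$.
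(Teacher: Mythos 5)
Your proof is correct and takes essentially the same route as the paper: the dyadic convexity is obtained by isolating the shift in the $l$-variable (your fundamental-theorem-of-calculus integral is exactly the paper's mean-value term $\frac{\partial B}{\partial l}(u,v,l')\,\alpha$) and handling the remaining midpoint inequality via concavity coming from the negative semidefinite Hessian, with convexity of $\mathfrak{D}$ keeping all points in the domain. The only difference is that you carry out the Hessian computation and the completing-the-square verification explicitly, which the paper leaves as an exercise, and your computation is correct.
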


\begin{proof}
Differential conditions can be checked by a direct calculation that we leave as an exercise for the reader.
By the Mean Value Theorem and some calculus,
$$B(x) - \frac{B(x_+) + B(x_-)}{2} =\frac{\partial B}{\partial l}(u,v,l^\prime)\alpha - \frac{1}{2}
\int_{-1}^1{(1-|t|)b^{\prime\prime}(t)dt}\geq \frac{\alpha}{4v},
$$
where  $b(t) := B(x(t))$ and  $x(t) :=\frac{1+t}{2}\, x_+ + \frac{1-t}{2}\,x_-$ for $-1\leq t \leq 1$.

 Note that $x(t) \in \mathfrak{D}$ whenever $x_+$ and $x_-$ are in the domain, since $\mathfrak{D}$ is a
convex domain and $x(t)$ is a point on the line segment between
$x_+$ and $x_-$, and $l^\prime$ is a point between $l$ and
$\frac{l_+ + l_-}{2}$.  This proves the lemma.
\end{proof}

These two lemmas prove Beznosova's Little Lemma (Lemma~\ref{thm:Little-Lemma}).

\subsubsection{ $\alpha$-Lemma} 
We present a very brief sketch of the argument leading to the proof of the $\alpha$-Lemma  (Lemma~\ref{lem:alpha-lemma}), see  \cite{Be} for $0<\alpha <1/2$, and \cite{MP} for $\alpha\geq 1/2$. Recall that we wish to show that if $w\in A_2$  and $0< \alpha $, then the sequence
\begin{equation*}
  \mu_I:= \langle w\rangle_I^{\alpha}\langle w^{-1}\rangle_I^{\alpha}|I| \bigg( \frac{|\Delta_Iw|^2}{\langle w\rangle_I^2} +\frac{|\Delta_Iw^{-1}|^2}{\langle w^{-1}\rangle_I^2}  \bigg) \quad \mbox{for} \;\; I  \in \mathcal{D}
  \end{equation*}
is a Carleson sequence with  intensity at most {$C_{\alpha}[w]_{A_2}^{\alpha}$}, and $C_{\alpha} =\max\{{72}/(\alpha-2\alpha^2), 576\}$. 

\begin{proof}[Sketch of the Proof]
Use the Bellman function method.
 Figure out the domain, range and dyadic convexity conditions needed to run an induction on
scale argument that will yield the inequality.
 Verify that the Bellman function {$B(u,v) = (uv)^{\alpha }$} satisfies those conditions
(or at least a differential version, that  can then be seen implies the dyadic convexity) for $0<\alpha <1/2$. For $\alpha\geq 1/2$ just observe that one can factor out
$ \langle w\rangle_I^{\alpha-1/4}\langle w^{-1}\rangle_I^{\alpha-1/4}\leq [w]_{A_2}^{\alpha -1/4}$  and then use the already proven lemma when $\alpha =1/4 <1/2$.
\end{proof}

\subsubsection{Weighted Carleson Lemma}\label{sec:W-C-L}
Finally we present a proof of the Weighted Carleson Lemma (Lemma~\ref{lem:weighted-Carleson}), that states that if
   $v$ is a  weight, 
 $\{\alpha_L\}_{L\in\mathcal{D}}$  a $v$-Carleson sequence with intensity~$A$,
and $F$ a positive measurable function on $\R$,  then
$$
\sum_{L\in \mathcal{D}} \alpha_L \inf_{x\in L} F(x) \leq A \int_{\R } F(x)\,v(x)\,dx.
$$
The Weighted Carleson Lemma we present here is a variation in the spirit of other weighted Carleson embedding theorems that appeared before in the literature \cite{NTV1}.  The converse is immediately true by choosing $F(x)=\mathbbm{1}_J(x)$.
\begin{proof}
Assume that $F \in  L^1(v)$ otherwise the first statement
is automatically true. Setting $\displaystyle{\gamma_L =
\inf_{x \in L}F(x)}$, we can write
\begin{equation}\label{eq:CarlesonLemma}
 \sum_{L \in \mathcal{D}} \alpha_L \gamma_L = \sum_{L \in \mathcal{D}} \alpha_L  \int^{\infty}_{0} \chi(L,t)\,dt
= \int_0^{\infty}\Big (\sum_{L\in\mathcal{D}} \chi (L,t)\,\alpha_L \Big )dt,
\end{equation}
where $\chi(L,t)=1$ for $ t < \gamma_L$ and zero otherwise, and where we used the monotone convergence theorem in the last equality.
Define the level set $E_t= \{ x \in \mathbb{R} \; :\; F(x)>t\}$.
 Since $F\in L^1(v)$  then $E_t$ is a $v$-measurable set for
every $t$ and we have, by
Chebychev's inequality, that the $v$-measure of $E_t$ is finite for all
 $t>0$.  Moreover, there is a collection of  maximal disjoint
dyadic intervals $\mathcal{P}_t$ that will cover $E_t$ except for at
most a set of $v$-measure zero. Finally observe that 
 $L \subset E_t$  if and only if $\;\chi(L,t)=1$.
All together we can rewrite the integrand in the right-hand-side of \eqref{eq:CarlesonLemma} as
$$
 \sum_{L \in \mathcal{D}} \chi(L,t) \, \alpha_L = \sum_{L \subset E_t} \alpha_L \leq \sum_{L \in \mathcal{P}_t} \sum_{I \in \mathcal{D}(L)} \alpha_I \leq  A \sum_{L \in \mathcal{P}_t}v(L) = A\,v(E_t),
$$
where we used in the second inequality the fact that $\{\alpha _J \}_{I \in \mathcal{D}}$ is a $v$-Carleson sequence with intensity $A$.
Thus we can estimate
\begin{equation*}
  \sum_{L\in \mathcal{D}} \alpha_L \inf_{x\in L} F(x)= \sum_{L \in \mathcal{D}}  \alpha_L\gamma_L \leq A \int^{\infty}_{0} v(E_t )\, dt = A \int_{\mathbb{R}} F(x)\,v(x)\,dx,
\end{equation*}
where the last equality follows from the layer
cake representation.
\end{proof}

\section{Case study: Commutator of Hilbert transform and function in ${\rm BMO}$}\label{sec:Commutator}

In this section, we  summarize chronologically the weighted norm inequalities known for the commutator
$[b,T]$ where $T$ is a linear operator and $b$ a function in $ {\rm BMO}$. In  particular we will consider  $T=H$ the Hilbert transform.
We   sketch a dyadic proof of the first quantitative weighted estimate for the commutator $[b,H]$ due to  Daewon Chung \cite{Ch2}, yielding the optimal quadratic dependence on the $A_2$ characteristic of the weight. We  discuss a very useful transference theorem of Chung, P\'erez and the author \cite{ChPPz}, and present its proof based on the celebrated Coifman--Rochberg--Weiss argument. The transference theorem allows to deduce quantitative weighted $L^p$ estimates for the commutator of a linear operator  with a ${\rm BMO}$ function, from given quantitative weighted $L^p$ estimates for the operator.

\subsection{$L^p$ theory for $[b,H]$}
Recall that the commutator of  a function $b\in {\rm BMO}$ and  $H$ the Hilbert Transform is defined to be
$$ [b,H](f) := b\,(Hf) -H(bf).$$
 The commutator $[b,H]$  is {bounded on ${L^p(\R )}$}  for ${1<p<\infty}$ if and only if $b\in {\rm BMO}$ \cite{CRW}.  Moreover, the following estimate is known to hold for all $b\in {\rm BMO}$ and $f\in L^p(\R )$
$${ \| [b,H](f) \|_{L^p} \lesssim_p\|b\|_{{\rm BMO}} \|f\|_{L^p}}.$$
In fact the operator norm $\|[b,H]\|_{L^2\to L^2} \sim \|b\|_{{\rm BMO}}$. Observe that  {$bH$} and {$Hb$} are NOT necessarily bounded on  ${L^p}(\R )$ when ${b\in {\rm BMO}}$. {The commutator introduces
some key cancellation}. This is very much connected to the celebrated {$H^1$-${\rm BMO}$} duality theorem by
Feffferman and   Stein  \cite{FS}, where the Hardy space $H^1$ can be defined as those functions $f$  in $L^1(\R )$ such that their maximal function $Mf$ is also in $L^1(\R )$.

The commutator $[b,H]$  is more singular than $H$, as evidenced by
the fact that, unlike the Hilbert transform, the commutator is  {\sc not} of weak-type {$(1,1)$  \cite{Pz1}. In particular the commutator is not a Calder\'on-Zygmund operator, if it were it would be of weak-type $(1,1)$, and  is not.

\subsection{Weighted Inequalities}

The first two-weight results for the commutator that we present are of a qualitative nature. The first one is a two-weight result due to Steven Bloom for the commutator of the Hilbert transform with a function in weighted ${\rm BMO}$  when both weights are in $A_p$ \cite{Bl}.

\begin{theorem}[Bloom 1985]
If {$u,v\in A_p$} then $[b,H]: L^p(u)\to L^p(v)$ is bounded if and only if {$b\in {\rm BMO}_{\mu}$} where {$\mu=u^{-1/p}v^{1/p}$}. Where $b\in {\rm BMO}_{\mu}$ if and only if 
\begin{equation}\label{BloomBMO}
 \|b\|_{{\rm BMO}_{\mu}} := \sup_{I\in \R} \frac{1}{\mu (I)}\int_I |b(x)-\langle b\rangle_{I} |\, dx <\infty.
 \end{equation}
\end{theorem}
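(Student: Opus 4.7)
My plan is to address the two directions of Bloom's theorem separately. For sufficiency---assuming $b \in {\rm BMO}_\mu$ and deriving the two-weight boundedness of $[b,H]$---I would follow the analytic argument of Coifman--Rochberg--Weiss. Consider the analytic family $\Phi_z(f) := e^{zb} H(e^{-zb} f)$, for which $\Phi_0 = H$ and $\Phi'_0 = [b,H]$. Cauchy's integral formula over a small circle $|z| = \varepsilon$ gives
$$\|[b,H]f\|_{L^p(v)} \le \frac{1}{\varepsilon}\sup_{|z|=\varepsilon}\|\Phi_z f\|_{L^p(v)},$$
reducing matters to uniform bounds $\Phi_z : L^p(u) \to L^p(v)$ for some small $\varepsilon$. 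After the substitution $g = e^{-zb} f$ this is equivalent to the two-weight inequality $H : L^p(e^{p\operatorname{Re}(z)b}u) \to L^p(e^{p\operatorname{Re}(z)b}v)$ uniformly in $|z| = \varepsilon$.

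For necessity, I would use the classical test-function argument. Fix an interval $I_0$ and choose $f$ supported on an adjacent interval $J$ of comparable size, normalized so that $\|f\|_{L^p(u)}$ is manageable (e.g.\ $f = \sigma_u \mathbbm{1}_J$ with $\sigma_u = u^{-1/(p-1)}$). A standard computation shows that $Hf(x)$ has a definite sign and size of order $\sigma_u(J)/|J|$ on $I_0$, so that $[b,H]f(x)$ essentially equals a constant multiple of $b(x) - \langle b\rangle_{I_0}$ on $I_0$, up to harmless errors. The hypothesized inequality $\|[b,H]f\|_{L^p(v)} \le C\|f\|_{L^p(u)}$ then yields an estimate $\int_{I_0}|b-\langle b\rangle_{I_0}|^p v\,dx \lesssim$ an expression in $u(I_0)$ and $\sigma_u(I_0)$, which via H\"older's inequality and the $A_p$ relation between $u$, $v$, and $\mu = u^{-1/p}v^{1/p}$ transforms into $\mu(I_0)^{-1}\int_{I_0}|b-\langle b\rangle_{I_0}|\,dx \le C$.

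The main obstacle is the sufficiency direction: the uniform two-weight bound for $H$ with weights $e^{p\operatorname{Re}(z)b}u$ and $e^{p\operatorname{Re}(z)b}v$ is nontrivial since $u,v \in A_p$ alone does not imply any two-weight inequality for $H$. The crucial input is the relation $v = u\mu^p$ together with $b\in{\rm BMO}_\mu$: these jointly ensure that the perturbation $e^{p\operatorname{Re}(z)b}$ keeps both weights in $A_p$ with controlled characteristics and preserves the appropriate joint relationship for small $|z|$. In the spirit of this survey, an alternative modern route would bypass the analytic family entirely via a bilinear sparse domination
$$|\langle[b,H]f,g\rangle| \lesssim \sum_{Q\in\mathcal{S}}|Q|\,\big\langle |b-\langle b\rangle_Q|\,|f|\big\rangle_Q\,\langle|g|\rangle_Q + (\text{dual term}),$$
\`a la Lerner--Ombrosi--Rivera-R\'ios, and then estimate this sparse bilinear form by a weighted Carleson-type argument in which the factor $\langle |b - \langle b\rangle_Q|\rangle_Q$ is absorbed using that $\{\mu(Q)\|b\|_{{\rm BMO}_\mu}\}_{Q\in\mathcal{S}}$ behaves as a $\mu$-Carleson sequence.
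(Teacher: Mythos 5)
Your primary route for sufficiency (the Coifman--Rochberg--Weiss conjugation) has a genuine gap. After the substitution, you need the operator $H$ itself to be bounded from $L^p(e^{p\operatorname{Re}(z)b}u)$ to $L^p(e^{p\operatorname{Re}(z)b}v)$ uniformly over the circle $|z|=\varepsilon$. But at the points $z=\pm i\varepsilon$ one has $\operatorname{Re}(z)=0$, so this is exactly the claim $H\colon L^p(u)\to L^p(v)$, which is false for general $u,v\in A_p$; moreover, at those points the hypothesis $b\in{\rm BMO}_{\mu}$ does not enter the operator $\Phi_z$ at all, so no assumption on $b$ can rescue the bound there. Your remark that the perturbation keeps both weights in $A_p$ "with the appropriate joint relationship" does not close this: membership of the two conjugated weights in $A_p$ gives one-weight bounds for each separately, but a two-weight bound for $H$ between them would require Sawyer/Lacey-type joint testing conditions, which are simply not available. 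This is precisely why the conjugation trick, in the two-weight Bloom setting, is used (as in Hyt\"onen's note cited in Section~\ref{sec:Commutator}) only to pass from the \emph{first-order} Bloom bound to higher-order commutators --- the conjugation leaves the ratio $u/v$, hence $\mu$, unchanged --- and not to produce the first-order bound itself.

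For the record, the paper does not prove Bloom's theorem; it states it with references and supplies two related pieces of machinery: the CRW argument in the one-weight transference theorem of Section~\ref{sec:Commutator}, and, for the upper bound, the sparse route of Section~\ref{sec:Sparse-vs-commutators}, which is essentially your "alternative modern route." There, however, the oscillation factor is not absorbed by a Carleson-sequence argument as you suggest: the mechanism is Lemma~\ref{lem:LOR-R}, which expands $|b-\langle b\rangle_Q|$ as a sum of mean oscillations over a larger sparse family, so that $\mathcal{T}^*_{\mathcal{S},b}|f|\lesssim\|b\|_{{\rm BMO}_{\mu}}\,\mathcal{A}_{\widetilde{\mathcal{S}}}\big(\mathcal{A}_{\widetilde{\mathcal{S}}}(|f|)\mu\big)$, and the two-weight bound then follows from composing one-weight sparse estimates in $L^p(u)$ and $L^p(v)$; if you pursue this route you should reproduce that step rather than a Carleson embedding. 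Your necessity sketch (adjacent-interval test functions, $Hf$ essentially constant on $I_0$, then H\"older and the $A_p$ properties of $u,v$ to pass from an $L^p(v)$ oscillation bound to the $L^1$ oscillation against $\mu(I_0)$) is the standard argument and is fine in outline, though it is not addressed anywhere in the paper and the details do require the doubling/$A_p$ properties of $u$, $v$ and their dual weights to be carried out carefully.
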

What is important in this setting is that the hypothesis $u,v\in A_p$ imply that $\mu\in A_2$ as can be seen by a direct calculation using H\"older's inequality.
The weighted ${\rm BMO}$ space defined by \eqref{BloomBMO} was first introduced by Eric Sawyer and Richard Wheeden \cite{SW}, and it has been called in the literature, somewhat misleadingly,  Bloom ${\rm BMO}$. For a "modern" dyadic proof of Bloom's result see \cite{HoLW1,HoLW2}.

The second result is a one-weight result for very general linear operators $T$  obtained by Josefina \'Alvarez, Richard Bagby, Doug Kurtz, and Carlos P\'erez \cite{ABKPz}, they also prove two-weight estimates.

\begin{theorem}[\'Alvarez, Bagby, Kurtz, P\'erez 1993]  
Let $T$  be a linear operator on the set of real-valued Lebesgue measurable functions defined on $\R^d$, with a domain of definition which contains every compactly supported function in a fixed $L^p$ space.
 If {$w\in A_p$} and $b\in {\rm BMO}$ then there is a constant $C_p(w)>0$ such that for all $f\in L^p(w)$ the following inequality holds,
$$\| [b,T](f) \|_{L^p(w)} \leq C_p(w)\|b\|_{{\rm BMO}} \|f\|_{L^p(w)}.$$
\end{theorem}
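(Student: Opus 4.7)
The plan is to use the celebrated Coifman--Rochberg--Weiss complex method: encode the commutator as the derivative at $z=0$ of an analytic family of conjugates of $T$, and then recover that derivative via Cauchy's integral formula, reducing the whole problem to uniform weighted $L^p$ bounds for $T$ on a one-parameter family of perturbed weights. Concretely, for $z\in\C$ define
\[
F(z)f := e^{zb}\,T(e^{-zb}f),
\]
and observe the formal identity $[b,T]f = F'(0) = \tfrac{1}{2\pi i}\oint_{|z|=\varepsilon} z^{-2}F(z)f\,dz$. Assuming the implicit hypothesis that $T$ is bounded on $L^p(\tilde w)$ for every $\tilde w\in A_p$ (which must be part of the statement, since the pointwise-defined $T$ cannot otherwise have any weighted estimate), the task reduces to bounding $\|F(z)f\|_{L^p(w)}$ uniformly on a small circle around the origin.

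The heart of the argument is a weight perturbation lemma: if $w\in A_p$ and $b\in{\rm BMO}$, then there exists $\varepsilon_0=\varepsilon_0\big([w]_{A_p},\|b\|_{{\rm BMO}},p,d\big)>0$ such that for every complex $z$ with $|z|\le\varepsilon_0$ the weight $w_z:=w\cdot e^{p\,\mathrm{Re}(z)\,b}$ belongs to $A_p$ with $[w_z]_{A_p}\le 2[w]_{A_p}$. This is the standard consequence of the John--Nirenberg inequality: for a ${\rm BMO}$ function $b$ the averages $\langle e^{s(b-\langle b\rangle_Q)}\rangle_Q$ are uniformly bounded whenever $|s|\|b\|_{{\rm BMO}}$ is small enough, so $e^{sb}\in A_2$ for such $s$, and multiplying by $w$ preserves the $A_p$ condition with a controlled constant. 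Using this lemma together with the change of variable $|e^{-zb}|^p\,w_z = w$, one gets
\[
\|F(z)f\|_{L^p(w)} = \|T(e^{-zb}f)\|_{L^p(w_z)} \le \|T\|_{L^p(w_z)\to L^p(w_z)}\,\|e^{-zb}f\|_{L^p(w_z)} = \|T\|_{L^p(w_z)\to L^p(w_z)}\,\|f\|_{L^p(w)},
\]
uniformly for $|z|=\varepsilon_0/2$, with the operator norm controlled by a function of $[w_z]_{A_p}\le 2[w]_{A_p}$.

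Applying Minkowski's integral inequality to the Cauchy representation of $F'(0)$ on the circle $|z|=\varepsilon_0/2$ then yields
\[
\|[b,T]f\|_{L^p(w)} \le \frac{1}{2\pi}\oint_{|z|=\varepsilon_0/2}\frac{\|F(z)f\|_{L^p(w)}}{|z|^2}\,|dz| \le \frac{2}{\varepsilon_0}\sup_{|z|=\varepsilon_0/2}\|T\|_{L^p(w_z)\to L^p(w_z)}\,\|f\|_{L^p(w)},
\]
which, since $\varepsilon_0\sim 1/\|b\|_{{\rm BMO}}$, gives the asserted estimate with $C_p(w)\lesssim \|b\|_{{\rm BMO}}\cdot \sup\nolimits_{[\tilde w]_{A_p}\le 2[w]_{A_p}}\|T\|_{L^p(\tilde w)\to L^p(\tilde w)}$. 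The main obstacle is the weight perturbation lemma: one must track quantitatively how the $A_p$ characteristic of $w_z$ depends on $|z|$, $[w]_{A_p}$, and $\|b\|_{{\rm BMO}}$, and this is exactly where John--Nirenberg and Hölder interact delicately. The remaining task, to verify that $F(\cdot)f$ is analytic with values in $L^p(w)$ on a neighborhood of $0$ so that the Cauchy formula applies rigorously, is routine once the uniform weighted bounds on $F(z)f$ are in hand, since $e^{-zb}f$ depends analytically on $z$ in $L^p(w_z)$ for compactly supported $f$ (and one passes to general $f\in L^p(w)$ by density).
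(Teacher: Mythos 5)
Your strategy is the same one the paper attributes to this theorem and carries out quantitatively later on: the Coifman--Rochberg--Weiss conjugation $T_z f=e^{zb}T(e^{-zb}f)$, the Cauchy integral representation of $[b,T]$ as $\frac{1}{2\pi i}\oint_{|z|=\epsilon}z^{-2}T_z f\,dz$, Minkowski's inequality, and a uniform weighted bound for $T_z$ on a small circle; you are also right that the statement implicitly requires $T$ to be bounded on $L^p(v)$ for every $v\in A_p$ with operator norm controlled in terms of $[v]_{A_p}$ (or at least uniformly over classes of bounded characteristic), which is indeed part of the \'Alvarez--Bagby--Kurtz--P\'erez hypotheses.

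There is, however, a genuine gap in your justification of the key weight-perturbation lemma. John--Nirenberg does give $e^{sb}\in A_2$ (even $A_p$) with characteristic close to $1$ when $|s|\,\|b\|_{{\rm BMO}}$ is small, but the step ``multiplying by $w$ preserves the $A_p$ condition with a controlled constant'' is false: the product of an $A_p$ weight with an $A_p$ weight of small characteristic need not be in $A_p$. For instance, with $w(x)=|x|^{d(p-1)-\delta}\in A_p$ and $b(x)=\log|x|\in{\rm BMO}$, the weight $we^{sb}=|x|^{d(p-1)-\delta+s}$ leaves $A_p$ as soon as $s>\delta$, however small $[e^{sb}]_{A_p}$ is; in particular no admissible radius depending only on $\|b\|_{{\rm BMO}}$, $p$, $d$ can work (so your closing claim $\varepsilon_0\sim 1/\|b\|_{{\rm BMO}}$ only holds with an implied constant depending on $w$, which is harmless here since $C_p(w)$ may depend on $w$). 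The missing ingredient is the self-improvement of $w$: since $w\in A_p$, both $w$ and its dual weight $w^{1-p'}$ satisfy a reverse H\"older inequality with some exponent $q>1$ (quantitatively $q-1\approx c_d/[w]_{A_p}$ with constant $2$), and one estimates $\langle we^{tb}\rangle_Q\le\langle w^q\rangle_Q^{1/q}\langle e^{tq'b}\rangle_Q^{1/q'}\le 2\langle w\rangle_Q\,\langle e^{tq'b}\rangle_Q^{1/q'}$, similarly for the dual weight, and only then applies John--Nirenberg under the condition $|t|q'\lesssim 1/\|b\|_{{\rm BMO}}$; thus the radius necessarily shrinks with $[w]_{A_p}$ through $q'$. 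This is exactly how the paper's proof of the transference theorem (the $r=2$ argument in Section~\ref{sec:proof-transference}) obtains $[we^{2{\rm Re}z\,b}]_{A_2}\le 4[w]_{A_2}\,[e^{2{\rm Re}z\,q'b}]_{A_2}^{1/q'}$ and chooses $\epsilon\approx([w]_{A_2}\|b\|_{{\rm BMO}})^{-1}$. With the reverse H\"older step inserted, your argument is complete and coincides with the paper's.
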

 The proof  uses a classical  argument by Raphy Coifman\footnote{As I am writing  these notes, it has been announced that Coifman won the 2018 Schock Prize in Mathematics for his "fundamental contributions to pure and applied harmonic analysis".}, Richard Rochberg, and Guido Weiss \cite{CRW}. In Section~\ref{sec:proof-transference} we will present a quantitative version of this argument \cite{ChPPz}. For a proof of Bloom's result using this type of argument, see \cite{Hyt5}. 

The next result is a quantitative weighted inequality obtained by Daewon Chung in his PhD Dissertation \cite{Ch1, Ch2}.

\begin{theorem}[Chung 2010] For all  $b\in {\rm BMO}$, $w\in A_2$  and $f\in L^2(w)$ the following holds
$$ \| [b,H](f) \|_{L^2(w)} \lesssim \|b\|_{{\rm BMO}} {[w]_{A_2}^2 }\|f\|_{L^2(w)}.$$
\end{theorem}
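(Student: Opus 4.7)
The plan is to reduce the continuous commutator to a dyadic model via Petermichl's representation of $H$, decompose multiplication by $b$ into dyadic paraproducts, and compose the known linear $A_2$ bounds to pick up a quadratic power. First, I would invoke Petermichl's formula $H = -\tfrac{8}{\pi}\,\mathbb{E}_{\Omega}\,\Sha^{r,\beta}$ from Section~\ref{sec:Petermichl'sSha}. Since $b\mapsto [b,\cdot]$ is linear in the operator slot, this gives
$$[b,H] = -\tfrac{8}{\pi}\,\mathbb{E}_{\Omega}\,[b,\Sha^{r,\beta}].$$
By Minkowski's integral inequality in $L^2(w)$, and using that $\|b\|_{{\rm BMO}^{\mathcal{D}^{r,\beta}}}\lesssim \|b\|_{{\rm BMO}}$ uniformly in $(r,\beta)$, the task reduces to proving a uniform dyadic estimate
$$\|[b,\Sha^{r,\beta}]f\|_{L^2(w)}\lesssim \|b\|_{{\rm BMO}^{\mathcal{D}^{r,\beta}}}\,[w]_{A_2}^{2}\,\|f\|_{L^2(w)}$$
for every choice of random grid, after which integration over $\Omega$ returns the desired bound for $[b,H]$.

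Second, I would use the standard paraproduct decomposition of the multiplication operator $M_b f := bf$ relative to the grid $\mathcal{D}^{r,\beta}$. Expanding both $b$ and $f$ in the Haar basis and grouping terms according to the relative position of the indices yields
$$M_b \;=\; \pi_b + \pi_b^{*} + \Gamma_b,$$
where $\pi_b$ is the dyadic paraproduct from Section~\ref{sec:paraproduct}, $\pi_b^{*}$ is its $L^2$-adjoint, and $\Gamma_b$ is a ``diagonal'' Haar multiplier whose symbol is controlled directly by the Haar coefficients of $b$. Hence
$$[b,\Sha^{r,\beta}] \;=\; [\pi_b,\Sha^{r,\beta}] + [\pi_b^{*},\Sha^{r,\beta}] + [\Gamma_b,\Sha^{r,\beta}],$$
i.e., six composite operators of the form $P\Sha^{r,\beta}$ or $\Sha^{r,\beta}P$ with $P\in\{\pi_b,\pi_b^{*},\Gamma_b\}$.

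Third, each composition picks up a quadratic $A_2$ bound by chaining the two linear bounds already available: Beznosova's theorem (proved in Section~\ref{sec:A2paraproduct}) gives $\|\pi_b\|_{L^2(w)\to L^2(w)}\lesssim \|b\|_{{\rm BMO}^{\mathcal{D}}}[w]_{A_2}$; duality together with the symmetry $[w^{-1}]_{A_2}=[w]_{A_2}$ extends the same estimate to $\pi_b^{*}$; and Petermichl's $A_2$ theorem gives $\|\Sha^{r,\beta}\|_{L^2(w)\to L^2(w)}\lesssim [w]_{A_2}$ uniformly in $(r,\beta)$. Multiplying the two linear bounds yields operator norms $\lesssim \|b\|_{{\rm BMO}}[w]_{A_2}^{2}$ for each of the six composites, and the triangle inequality closes the argument.

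The main obstacle is the diagonal piece $\Gamma_b$, which is not one of the operators whose linear $A_2$ bound has been explicitly proved in the preceding sections. To handle it, I would note that $\Gamma_b$ is a Haar multiplier whose symbol $\{\gamma_I\}_{I\in\mathcal{D}}$ satisfies $|\gamma_I|\lesssim |b_I|/\sqrt{|I|}$, so that $\{\gamma_I^2|I|\}_{I\in\mathcal{D}}$ is a Carleson sequence of intensity $\lesssim \|b\|_{{\rm BMO}^{\mathcal{D}}}^{2}$ by Example~\ref{eg:b-BMO-Carleson}. A short argument in the spirit of the one carried out for $\pi_b$ in Section~\ref{sec:A2paraproduct}, using the Weighted Carleson Lemma (Lemma~\ref{lem:weighted-Carleson}), Beznosova's Little Lemma (Lemma~\ref{thm:Little-Lemma}), and the $\alpha$-Lemma (Lemma~\ref{lem:alpha-lemma}) applied to the weighted Haar decomposition, then delivers the required linear $A_2$ bound for $\Gamma_b$. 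A minor secondary check is that all three pieces of the decomposition of $M_b$ are defined relative to the same grid $\mathcal{D}^{r,\beta}$ as $\Sha^{r,\beta}$, so the compositions make sense term by term; this is automatic. Assembling the estimates and averaging in $(r,\beta)$ produces the quadratic bound $\|b\|_{{\rm BMO}}[w]_{A_2}^{2}$.
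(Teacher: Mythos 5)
Your overall route---Petermichl's averaging formula $[b,H]=-\frac{8}{\pi}\,\mathbb{E}_{\Omega}[b,\Sha^{r,\beta}]$, a dyadic paraproduct decomposition of multiplication by $b$, and then combining the known linear $A_2$ bounds for $\pi_b$, $\pi_b^*$ and $\Sha^{r,\beta}$---is exactly the strategy of Chung's proof sketched in the paper. The gap is in your identification of the remainder term $\Gamma_b$. When $b$ and $f$ are expanded in the Haar basis, the \emph{diagonal} part of the product is precisely $\pi_b^*f$, and the true remainder is the lower-triangular piece $f\mapsto \pi_f b=\sum_{I\in\mathcal{D}}\langle b\rangle_I\,\langle f,h_I\rangle\,h_I$, i.e.\ a Haar multiplier whose symbol consists of the \emph{averages} $\langle b\rangle_I$, not the Haar coefficients $b_I$. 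For $b\in{\rm BMO}\setminus L^{\infty}$ (e.g.\ $b=\log|x|$) these averages are unbounded, so this piece is not bounded on $L^2(\R)$, let alone on $L^2(w)$, with constant controlled by $\|b\|_{{\rm BMO}}$. Indeed, if a decomposition $M_b=\pi_b+\pi_b^*+\Gamma_b$ with symbol $|\gamma_I|\lesssim |b_I|/\sqrt{|I|}\le\|b\|_{{\rm BMO}^{\mathcal{D}}}$ existed, then $M_b$ would be bounded on $L^2(\R)$ for every ${\rm BMO}$ function $b$, contradicting the fact (recalled in Section~\ref{sec:paraproduct}) that multiplication by $b$ is bounded only when $b\in L^{\infty}$. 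Consequently your six-term, term-by-term chaining cannot be carried out: the two compositions coming from the third piece, $\pi_{\Sha f}b$ and $\Sha(\pi_f b)$, are individually out of control.

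This third piece is exactly what the paper calls ``the enemy.'' Chung's proof keeps it together as the commutator combination $\pi_{\Sha f}(b)-\Sha(\pi_f b)$ and shows, by a Bellman function argument rather than by composing operator norms, that this difference obeys a \emph{linear} bound $\lesssim\|b\|_{{\rm BMO}}[w]_{A_2}\|f\|_{L^2(w)}$; linear bounds also hold for the halves $\Sha\pi_b$ and $\pi_b^*\Sha$, and the quadratic power in the theorem comes only from the non-local terms $\pi_b\Sha$ and $\Sha\pi_b^*$. Your reduction to the dyadic shifts via Minkowski's inequality and your quadratic estimates for $[\pi_b,\Sha^{r,\beta}]$ and $[\pi_b^*,\Sha^{r,\beta}]$ by chaining Beznosova's and Petermichl's linear bounds are fine, but without an argument that exploits the cancellation inside $\pi_{\Sha f}(b)-\Sha(\pi_f b)$ the proof does not close.
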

The quadratic power on the $A_2$ characteristic and the linear bound on the ${\rm BMO}$ norm are both optimal powers. The quadratic dependence on the $A_2$ characteristic is another indication that this operator is more singular than the Calder\'on-Zygmund   singular integral operators for whom the dependence is linear \cite{Hyt2}, as we have emphasized throughout these lectures.

\subsection{Dyadic proof of Chung's Theorem}
We now  sketch Chung's dyadic proof of the quadratic estimate  for the commutator \cite{Ch2}.
\begin{proof}[Sketch of proof] Chung's "dyadic" proof is based on using Petermichl's dyadic shift operators $\Sha^{r,\beta}$ instead of $H$ \cite{Pet1} and proving uniform (on the dyadic grids $\mathcal{D}^{r,\beta}$) quadratic estimates for  the corresponding commutators {$[\Sha^{r,\beta} ,b]$}. To ease notation we drop the superscripts $r,\beta$ and simply write $\Sha$ for $\Sha^{r,\beta}$, the estimates will be independent of the parameters $r$ and $\beta$.

To achieve this we first 
recall the decomposition of a  product ${ bf}$  in terms of paraproducts and their adjoints,
$$
bf = {\pi_bf }+ {\pi_b^*f}+{ \pi_fb},
$$ 
notice that 
the first two terms are bounded on ${L^p(w)}$ when ${b\in {\rm BMO}}$ and ${w\in A_p}$,
the enemy is the third term.
 Decomposing the commutator accordingly we get, \begin{equation}\label{commutatorSha-b}
[b,\Sha](f) =  [\pi_b,\Sha](f) +  [\pi_b^*,\Sha](f) + \big ( \pi_{\Sha f}(b)-\Sha (\pi_fb)\big ).
\end{equation}
 Known linear bounds on $L^2(w)$ for the dyadic paraproduct {$\pi_b$}, its adjoint {$\pi_b^*$}, and  for Petermichl's dyadic shift operator  $\Sha$, see \cite{Be, Pet2}, immediately
give by iteration, {quadratic} bounds for the  first two terms on the right-hand-side of \eqref{commutatorSha-b}. Surprisingly, the third term is better,
it obeys a {linear} bound, and so do halves of the  first two commutators, as shown in \cite{Ch2} using \emph{Bellman function} techniques, namely
$$
{\|\pi_{\Sha f}(b)-\Sha (\pi_fb)\|_{L^2(w)}}+ {\|  \Sha \pi_b(f)\|_{L^2(w)}}+{\|\pi_b^*\Sha (f)\|_{L^2(w)}}  \leq  C \|b\|_{{\rm BMO}} {[w]_{A_2}}\|f\|_{L^2(w)}.
$$
All together   providing uniform (on the random dyadic grids $\mathcal{D}^{r,\beta}$)  quadratic bounds for the commutators ${[b,\Sha^{r,\beta}]}$, and hence, averaging over the random grids we get the desired quadratic estimate for $[b,H]$. 
\end{proof}

The quadratic estimate  and the corresponding extrapolated estimates, namely for all $b\in {\rm BMO}$, $w\in A_p$, and $f\in L^p(w)$ 
\begin{equation}\label{optimal-Lp-bound-commutator}
\|[b,H](f)\|_{L^p(w)}\lesssim_p {[w]_{A_p}^{2\max\{1,\frac{1}{p-1}\}}}\|b\|_{{\rm BMO}}\|f\|_{L^p(w)},
\end{equation}
are optimal for all $1<p<\infty$, as can be seen considering appropriate power functions and  power weights \cite{ChPPz}.

The "bad guys" are the  non-local terms {$\pi_b\Sha$}, {$\Sha\pi_b^*$}.   A posteriori one realizes the  pieces that obey linear bounds are generalized Haar Shift operators and hence
their linear bounds can be deduced from general results for those operators.

 As a byproduct of Chung's dyadic proof we get that 
the  extrapolated bounds for the dyadic paraproduct  are optimal \cite{P2}, namely for all $b\in {\rm BMO}$, $w\in A_p$, and $f\in L^p(w)$ 

$$
\|\pi_bf\|_{L^p(w)}\lesssim_p {[w]_{A_p}^{\max\{1,\frac{1}{p-1}\}}}\|b\|_{{\rm BMO}}\|f\|_{L^p(w)}.
$$

\begin{proof} By  contradiction, if not for some $p$ then $[b,H]$ will have a better bound
in $L^p(w)$ than the known optimal  bound given by \eqref{optimal-Lp-bound-commutator} for that $p$.
\end{proof}

\subsection{A quantitative transference theorem} 
The following theorem provides a mechanism for transferring known quantitative weighted estimates for linear operators to their commutators with ${\rm BMO}$ functions \cite{ChPPz,P2}.

\begin{theorem}[Chung, Pereyra, P\'erez 2012] 
Given  linear operator $T$ and $1<r<\infty$,  such that {for all $w\in A_r$} and  $f\in L^r(w)$ the following estimate holds
\vskip -.1in
$$
\|Tf\|_{L^r(w)}\lesssim_{T,d} {[w]^{\alpha}_{A_r}} \|f\|_{L^r(w)},
$$
then the  commutator of $T$ with $b\in {\rm BMO}$ is such that  for all $w\in A_r$ and $f\in L^r(w)$
$$
  \|{[b,T]}(f)\|_{L^r(w)} \lesssim_{r,T,d} {[w]_{A_r}^{\alpha +\max\{1,\frac{1}{r-1}\}} \|b\|_{{\rm BMO}}} \|f\|_{L^r(w)}.
  $$
\end{theorem}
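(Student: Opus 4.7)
The plan is to execute the classical Coifman--Rochberg--Weiss complex-analytic argument with quantitative bookkeeping. For $z\in\C$ introduce the conjugated operator
$$T_z f := e^{zb}\,T(e^{-zb}f),$$
which is well defined (at least formally) since $e^{\pm zb}$ is bounded on every bounded set. A one-line differentiation at $z=0$ yields the identity $\frac{d}{dz}T_zf\big|_{z=0} = [b,T]f$, so by Cauchy's integral formula, for any admissible radius $\epsilon>0$,
$$[b,T]f \;=\; \frac{1}{2\pi i}\oint_{|z|=\epsilon}\frac{T_zf}{z^2}\,dz,
\qquad\text{hence}\qquad
\|[b,T]f\|_{L^r(w)}\le\frac{1}{\epsilon}\sup_{|z|=\epsilon}\|T_zf\|_{L^r(w)}.$$
The game is to pick $\epsilon=\epsilon(w,b,r)$ as large as possible while still controlling the sup on the right.

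The key elementary step is a change of variable: writing $w_z:=w\,e^{\mathrm{Re}(z)\,r\,b}$,
$$\|T_zf\|_{L^r(w)}^r=\int|T(e^{-zb}f)|^r\,e^{\mathrm{Re}(z)rb}\,w\,dx=\|T(e^{-zb}f)\|_{L^r(w_z)}^r,$$
and, crucially, $\|e^{-zb}f\|_{L^r(w_z)}=\|f\|_{L^r(w)}$ because the exponential factors cancel identically. Provided $w_z\in A_r$, the hypothesis on $T$ gives
$$\|T_zf\|_{L^r(w)}\;\lesssim_{T,d}\;[w_z]_{A_r}^{\alpha}\,\|f\|_{L^r(w)}.$$

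The crux is therefore the following quantitative \emph{perturbation lemma}: there is a constant $c_r>0$ such that whenever
$$|z|\;\le\;\epsilon\;:=\;\frac{c_r}{\|b\|_{{\rm BMO}}\,[w]_{A_r}^{\max\{1,\,1/(r-1)\}}},$$
the perturbed weight $w_z$ lies in $A_r$ with $[w_z]_{A_r}\le 2\,[w]_{A_r}$. To see where the exponent $\max\{1,1/(r-1)\}$ comes from, recall that $w\in A_r$ iff its dual $\sigma:=w^{-1/(r-1)}\in A_{r'}$ with $[\sigma]_{A_{r'}}=[w]_{A_r}^{1/(r-1)}$. Estimating $[w_z]_{A_r}$ amounts to controlling $\langle w\,e^{\lambda b}\rangle_Q$ and $\langle \sigma\,e^{-\lambda b/(r-1)}\rangle_Q^{r-1}$ simultaneously, and each average is bounded by a constant times the unperturbed average provided $|\lambda|$ is small compared to $1/\|b\|_{{\rm BMO}}$ times the reciprocal of the sharp reverse-Hölder exponent of the relevant weight. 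That sharp reverse-Hölder exponent scales like the $A_\infty$ characteristic, which is at most $[w]_{A_r}$ for $w$ and at most $[\sigma]_{A_{r'}}=[w]_{A_r}^{1/(r-1)}$ for $\sigma$; taking the worse of the two yields exactly the factor $[w]_{A_r}^{\max\{1,1/(r-1)\}}$ in the denominator of $\epsilon$. This lemma is the main obstacle and the only step that is really hard: a naive application of John--Nirenberg alone gives only $\epsilon\sim 1/\|b\|_{{\rm BMO}}$ and misses the correct $[w]_{A_r}$-dependence; one needs the sharp reverse-Hölder inequality for $A_r$ weights (à la Hytönen--Pérez) to extract the right quantitative scaling.

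Granted the lemma, combining the three steps yields
$$\|[b,T]f\|_{L^r(w)}\;\le\;\frac{1}{\epsilon}\,(2[w]_{A_r})^{\alpha}\,\|f\|_{L^r(w)}\;\lesssim_{r,T,d}\;\|b\|_{{\rm BMO}}\,[w]_{A_r}^{\alpha+\max\{1,1/(r-1)\}}\,\|f\|_{L^r(w)},$$
which is the stated bound. A minor technical point to address is the analyticity of $z\mapsto T_zf$ into $L^r(w)$ needed to justify the contour integral formula: this is standard once one works first with a dense class (e.g.\ bounded, compactly supported $f$ and bounded $b$, obtained by truncation) and passes to the limit using the uniform bound on $\|T_zf\|_{L^r(w)}$ provided by the perturbation lemma on the closed disk $|z|\le\epsilon$.
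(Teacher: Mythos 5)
Your proposal is correct and follows essentially the same route as the paper: the Coifman--Rochberg--Weiss conjugation $T_z f=e^{zb}T(e^{-zb}f)$ with the Cauchy integral formula, combined with the quantitative sharp reverse-H\"older inequality for $A_r$ weights (applied to both $w$ and its dual weight $\sigma=w^{-1/(r-1)}$) and the quantitative John--Nirenberg/exponential-$A_r$ estimate to choose the radius $\epsilon\sim \big(\|b\|_{{\rm BMO}}[w]_{A_r}^{\max\{1,1/(r-1)\}}\big)^{-1}$. The paper writes out the details only for $r=2$ (where $\epsilon^{-1}\approx[w]_{A_2}\|b\|_{{\rm BMO}}$) and refers to \cite{P2} for general $r$, but your bookkeeping for the exponent $\max\{1,\frac{1}{r-1}\}$ matches that argument.
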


The proof follows the classical Coifman--Rochberg--Weiss  argument using (i) the Cauchy integral formula; (ii) the following quantitative Coifman-Fefferman result: $w\in A_r$ implies $w\in RH_q$ with 
$q=1+c_d/[w]_{A_r}$ and  $[w]_{RH_q}\leq 2$; (iii) a quantitative version of the estimate:  $b\in {\rm BMO}$ implies $e^{\alpha b}\in A_r$ for $\alpha$ small enough with control on $[e^{\alpha b}]_{A_r}$. 
We will present the whole argument in  the case $r=2$ in  Section~\ref{sec:proof-transference}. Here the \emph{Reverse H\"older-$q$ weight class} ($RH_q$) for $1<q<\infty$ is defined to be all those weights $w$ such that
\[ [w]_{RH_q} := \sup_{Q} \, \langle w^q\rangle_Q^{1/q}\langle \,w\rangle_Q^{-1} <\infty,\]
where the supremum is taken over all cubes in $\R^d$ with sides parallel to the axes.

A variation on the argument yields corresponding estimates for the \emph{higher order commutators }$T^k_b:=[b,T^{k-1}_b]$ for $k\geq 1$ and $T^0_b:=T$. More precisely, given the initial estimate $\|T^0_bf\|_{L^r(w)} \lesssim [w]_{A_r}^{\alpha}$, valid for all $w\in A_r$,  then the following estimate holds for all $k\geq 1$, $b\in {\rm BMO}$, $w\in A_r$, and $f\in L^r(w)$
$$
  \|{T^k_b}f\|_{L^r(w)} \lesssim_{r,T,d} {[w]_{A_r}^{\alpha +k\max\{1,\frac{1}{r-1}\}} \|b\|^k_{{\rm BMO}}} \|f\|_{L^r(w)}.
  $$

Transference theorems for commutators are useless unless there are operators known to obey an  initial  $L^r(w)$ bound valid for all $w\in A_r$.
We  have already mentioned  that the class of Calder\'on-Zygmund singular integral operators obey linear bounds on $L^2(w)$ thanks to Hyt\"onen's $A_2$ theorem \cite{Hyt2}. We conclude that for all Calder\'on-Zygmund singular integral operators $T$ their commutators obey a quadratic bound on $L^2(w)$, more precisely, 
$$
\|[b,T]f\|_{L^2(w)}\lesssim_{T,d} {[w]^2_{A_2}  \|b\|_{{\rm BMO}}} \, \|f\|_{L^2(w)}.
$$
With a slight modification of the argument one can see \cite{ChPPz} that the correct estimate for the iterated commutators of Calder\'on-Zygmund singular integral operators and function $b\in {\rm BMO}$ is
$$
\|[T_b^kf\|_{L^2(w)}\lesssim_{T,d} {[w]^{1+k}_{A_2} \|b\|^k_{{\rm BMO}} } \, \|f\|_{L^2(w)}.
$$
There are operators (for example the Hilbert, Riesz, and Beurling transforms) for whom these estimates are optimal in terms of the powers both for the $A_2$ characteristic and the $BMO$ norm. This can be seen testing power functions and weights \cite{ChPPz}.

\subsubsection{Some generalizations}

There are extensions to commutators with fractional integral operators, 
 two-weight problem  and more \cite{CrMoe,Cr}.  
There are mixed $A_2$-$A_{\infty}$ estimates, where  recall that $A_{\infty}=\cup_{p>1}A_p$ and   $[w]_{A_{\infty}}\leq [w]_{A_2}$  \cite{HPz, OrPzRe}, more precisely estimates of the form,
$$
\| [b,T]\|_{L^2(w)} \lesssim {[w]^{\frac{1}{2}}_{A_2}\big ( [w]_{A_{\infty}} + [w^{-1}]_{A_{\infty}}\big )^{\frac{3}{2}} \|b\|_{{\rm BMO}}}.
$$
 
There are generalizations to commutators of matrix valued operators  and ${\rm BMO}$ \cite{IKP} as well as 
  to the two-weight setting (both weights in $A_p$, \`a la Bloom) \cite{HoLW1,HoLW2},  and
 also for  biparameter Journ\'e operators  \cite{HoPetW}. See also the comprehensive paper \cite{BMMST} where a systematic use of the Coifman--Rochberg--Weiss trick recovers all known results and some new ones such as boundedness of the commutator of the bilinear Hilbert transform and a function in ${\rm BMO}$.
  Pointwise control by sparse operators adapted to the commutator, improving weak-type, Orlicz bounds, and quantitative two weight Bloom bounds was recently obtained  \cite{LeOR1,LeOR2}.} We will say more about this generalization in Section~\ref{sec:Sparse}.

\subsubsection{Proof of the transference theorem}\label{sec:proof-transference}
We now present the proof of the quantitative transference theorem when $r=2$, following the lines of the Coifman--Rochberg--Weiss argument \cite{CRW} with a few quantitative ingredients. For $r\neq 2$ see \cite{P2}.
\begin{proof}[Proof in \cite{ChPPz}]
"Conjugate" the operator as follows: for any  $z\in \C$  define
$$T_z(f)=e^{zb}\, T\, (e^{-zb}f).$$
A computation together with the  Cauchy integral theorem give (for "nice" functions),
$$
[b,T](f)=\frac{d}{dz}T_z(f)|_{z=0}=\frac{1}{2\pi i}\int_{|z|=\epsilon}
\frac{T_z(f)}{z^2}\,dz , \quad \epsilon>0.$$
Now, by Minkowski's integral inequality
\[\|[b,T](f)\|_{L^2(w)}\leq \frac{1}{2\pi\,\epsilon^2} \,\int_{|z|=\epsilon}
\|T_z(f)\|_{L^2(w)}|dz|, \quad \epsilon>0.\]
The key point is to find an {appropriate radius $\epsilon>0$}. 
To that effect, we look at  the inner norm and try to find  bounds depending on $z$.  More precisely,
$$
\|T_z(f)\|_{L^2(w)}=\|T(e^{-zb}f)\|_{L^2 ({w\, e^{2{\rm Re}z\,b} })}.
$$
We use the  main hypothesis, namely that $T$ is bounded on $L^2(v)$ if $v\in A_2$ with
$\|T\|_{L^{2}(v)} \le C[v]_{A_2}$,  for  $v=w\, e^{2{\rm Re} z\,b }$. 
We must check that if $w\in A_2$ then $v\in A_2$  for $|z|$ small enough. Indeed,
$$
[v]_{A_2}= \sup_Q \left (\frac1{|Q|}\int_Q w(x)\, e^{2{\rm Re}
z\,b(x) }\,dx \right )\left (\frac1{|Q|}\int_Q w^{-1}(x) \,e^{-2{\rm Re} z\,b(x)
}\,dx\right ).
$$
It is well known that if   $w\in A_2$ then $w\in RH_q$ for some $q>1$  \cite{CoFe}. There is a quantitative
version of this result \cite{Pz2}, namely if
{$q=1+\frac{1}{2^{d+5}[w]_{A_2}}\;$} then
$$
\left ( \frac{1}{|Q|}\int_Q w^{q}(x) \, dx\right )^{\frac{1}{q}} \leq
\frac{2}{|Q|}\int_Q w(x) \,d x , 
$$
similarly for  $w^{-1}\in A_2$ and for the same $q$, since  $[w]_{A_2}=[w^{-1}]_{A_2}$, we have that
$$
\left ( \frac{1}{|Q|}\int_Qw^{-q}(x)\, dx\right )^{\frac{1}{q}} \leq
\frac{2}{|Q|}\int_Qw^{-1} (x)\, dx\, .
$$

In what follows $q=1+{1}/{(2^{d+5}[w]_{A_2})}$.
Using these estimates and Holder's inequality we have for an arbitrary  cube $Q$
\begin{align*}
&\hskip -.1cm \left (\frac1{|Q|}\int_Q w(x)e^{2{\rm Re} z\,b(x)}\,dx \right )\left
(\frac1{|Q|}\int_Q w(x)^{-1}e^{-2{\rm Re} z\,b(x) }\,dx \right )\\
&  \leq \left ( \frac{1}{|Q|}\int_Q w^{q}(x) \, dx \right )^{\frac{1}{q}} \hskip -.05in
\left ( \frac1{|Q|} \int_Q e^{2{\rm Re} zq'b(x) } \,dx\right )^{\frac{1}{q'}}  \hskip -.06in
\left ( \frac{1}{|Q|} \int_Q w^{-q}(x) \,dx\right )^{\frac{1}{q}}   \hskip -.05in
\left ( \frac1{|Q|} \int_Q e^{-2{\rm Re} zq'b(x)}\,dx\right )^{\frac{1}{q'}}\\
& \leq 4\,\left (\frac{1}{|Q|} \int_Q w(x) \, dx \right ) \hskip -.05in 
\left (\frac{1}{|Q|} \int_Q w^{-1}(x) \, dx \right ) \hskip -.05in 
\left (\frac1{|Q|} \int_Q e^{2{\rm Re} z\,q'\,b(x) } \,dx \right )^{\frac{1}{q'}}  \hskip -.05in 
\left ( \frac1{|Q|} \int_Q e^{-2{\rm Re} z\,q'\,b(x) } \, dx \right )^{\frac{1}{q'}}\\
&\hskip .8cm \leq 4\,{[w]_{A_2}}\,[e^{2{\rm Re} z\,q'\,b}\,  ]_{A_2}^{\frac{1}{q'}}.
\end{align*}
Taking the supremum over all cubes we conclude that
$$ [v]_{A_2}=[w\, e^{2{\rm Re} z\,b}]_{A_2} \leq 4\,{[w]_{A_2}}\,[e^{2{\rm Re} z\,q'\,b}]_{A_2}^{\frac{1}{q'}}.$$
Now, since $b\in {\rm BMO}$  there are $0<\alpha_d <1$ and $\beta_d >1$ such that
if $\;  |2{\rm Re} z\,q'| \leq {\alpha_d}/\|b\|_{{\rm BMO}}$
 then $ [e^{2{\rm Re} z\,q'\,b}]_{A_2}\leq \beta_d$, see  \cite[Lemma 2.2]{ChPPz}.
Hence for these $z$, 
$$
[v]_{A_2}\leq 4\,[w]_{A_2}\, \beta_d^{\frac{1}{q'}}\leq
4\, {[w]_{A_2}}\, \beta_d .
$$
We have shown that if  $|z| \leq \alpha_d / (2q'\|b\|_{{\rm BMO}})$ then $[v]_{A_2}\leq 4 [w]_{A_2}\,\beta_d$ and
$$
\|T_z(f)\|_{L^2(w)}=\|T(e^{-zb}f)\|_{L^2 (v)}\lesssim
[v]_{A_2}\|f\|_{L^2(w)} \leq 4{[w]_{A_2}}\,
\beta_d\,
 \|f\|_{L^2(w)}. 
$$
Where the first inequality holds since  $\|e^{-zb}f\|_{L^2(v)}=\|e^{-zb}f\|_{L^2(we^{2{\rm Re} z \,b})}=\|f\|_{L^2(w)}$.

Thus choose the radius
$\displaystyle{\;\epsilon:= {\alpha_d}/({2q'\|b\|_{{\rm BMO}}}) ,}$ and get
\begin{align*}
\|[b,T](f)\|_{L^2(w)} & \leq \frac{1}{2\pi\,\epsilon^2}
\,\int_{|z|=\epsilon} \|T_z(f)\|_{L^2(w)}|dz|\\
&\leq \frac{1}{2\pi\,\epsilon^2} \,\int_{|z|=\epsilon} 4{[w]_{A_2}}\,
\beta_d\,  \|f\|_{L^2(w)} |dz|
= \frac{1}{\epsilon}\,4{[w]_{A_2}}\, \beta_d\,\|f\|_{L^2(w)}.
\end{align*}
Note that  $\epsilon^{-1} \approx {[w]_{A_2}}\|b\|_{{\rm BMO}}$, because $q'=1+2^{d+5}[w]_{A_2}\approx 2^d[w]_{A_2}$, 
$$\|[b,T](f)\|_{L^2(w)}\leq  C_d\,{[w]^2_{A_2}}\,\|b\|_{{\rm BMO}}.
$$
Which is exactly what we wanted to prove. \end{proof}

\section{Sparse operators and sparse  families of dyadic cubes}\label{sec:Sparse}

In this section, we discuss the sparse domination by finitely many positive dyadic operators paradigm that has 
recently emerged as a byproduct of the study of weighted inequalities. This sparse domination paradigm has proven to be very powerful with applications in  areas other than  weighted norm inequalities. In this section, we introduce the sparse operators and the sparse families of cubes.
We discuss a characterization of sparse families of cubes via Carleson families of dyadic cubes due to Andrei Lerner and Fedja Nazarov, however this was well known 20 years earlier by Igor Verbitsky \cite[Corollary 2, p.23]{Ve}, see also \cite{Ha}. We  present the beautiful proof of the $A_2$ conjecture for sparse operators due to David Cruz-Uribe, Chema Martell, and  Carlos P\'erez. We record the sparse domination results for the operators discussed in these notes. We present   how to  dominate  pointwise the martingale transform by a sparse operator following Michael Lacey's argument, illustrating the technique in a toy model. Finally we briefly discuss a sparse domination theorem for commutators valid for (rough) Calderón-Zygmund singular integral operators due to Andrei Lerner, Sheldy Ombrosi, and Israel Rivera-Ríos that yields  new quantitative two weight estimates of Bloom type, and recovers all known  weighted results for the commutators.

\subsection{Sparse operators}\label{sec:sparse-operators}

David Cruz-Uribe, Chema Martell, and Carlos P\'erez  showed in \cite{CrMPz2} the $A_2$ conjecture in a few lines for
\emph{sparse operators} $\mathcal{A}_{\mathcal{S}}$ defined as follows,
$$
 \mathcal{A}_{\mathcal{S}} f (x)= \sum_{Q\in \mathcal{S}} {\langle f\rangle_Q}\,\mathbbm{1}_Q(x).
 $$
Here  $\mathcal{S}$ is a sparse collection of dyadic cubes.  A collection of dyadic cubes $\mathcal{S}$ in $\R^d$ is \emph{$\eta$-sparse}, $0<\eta<1$ if there are pairwise disjoint measurable sets $E_Q$ for each $Q\in \mathcal{S}$ such that
 $$
 E_Q\subset Q \;\;\mbox{with}\;\;  |E_Q|\geq \eta |Q| \quad \mbox{for all} \;\; Q\in\mathcal{S}.
 $$
  A primary example for us  are the Calder\'on-Zygmund singular integral operators, they and the "rough" Calder\'on-Zygmund operators  have been shown to be   pointwise dominated by a finite number of sparse operators  \cite{Le4,CR,LeN,L4}. A  quantitative form of these estimates can be found in  \cite{Le5,HRoTa}.
 More recently see sparse domination principles for rough Calder\'on-Zygmund singular integral operators  \cite{CuDiPOu1,CCuDiPOu,HRoTa, DiPHLi}.

\subsection{Sparse vs Carleson families of dyadic cubes}
We have seen in Section~\ref{sec:Dyadic-operators} how  Carleson sequences and Carleson embedding lemmas come handy when proving weighted inequalities. There is an intimate connection between Carleson families of cubes and sparse families of cubes.
 A family of dyadic cubes $\mathcal{S}$ in $\R^d$ is called \emph{$\Lambda$-Carleson} for  $\Lambda >1$ if
 $$
 \sum_{P\in\mathcal{S}, P\subset Q}|P|\leq \Lambda |Q| \quad \forall Q\in\mathcal{D}.
 $$
 
 Notice that a family of cubes being  $\Lambda$-Carleson is equivalent to  the sequence $\{|P|\mathbbm{1}_{\mathcal{S}}(P)\}_{P\in\mathcal{D}}$ being  Carleson with intensity $\Lambda$. Furthermore the notion is equivalent to the family of cubes being $1/\Lambda$-sparse.  These type of conditions are also called \emph{Carleson packing conditions}. 
 
 \begin{lemma}[Verbitsky 1996, Lerner, Nazarov 2014] Let $\Lambda >1$.
 The family of dyadic cubes  $\mathcal{S}$ in $\R^d$  is $\Lambda$-Carleson if and only if  $\mathcal{S}$ is $1/\Lambda$-sparse.
  \end{lemma}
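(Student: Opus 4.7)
The plan is to establish the two implications separately: the forward direction ($(1/\Lambda)$-sparse $\Rightarrow$ $\Lambda$-Carleson) is a one-line counting argument, while the converse is a greedy construction whose only subtlety is ordering.

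For the forward direction, suppose $\mathcal{S}$ is $(1/\Lambda)$-sparse, witnessed by pairwise disjoint measurable $E_P \subset P$ with $|E_P| \geq (1/\Lambda)|P|$. For any dyadic cube $Q$,
\[
\sum_{\substack{P \in \mathcal{S} \\ P \subset Q}} |P| \;\leq\; \Lambda \hspace{-.2cm}\sum_{\substack{P \in \mathcal{S} \\ P \subset Q}}\hspace{-.15cm} |E_P| \;=\; \Lambda \Big| \bigcup_{P \subset Q} E_P \Big| \;\leq\; \Lambda|Q|,
\]
using pairwise disjointness and $E_P \subset P \subset Q$; hence $\mathcal{S}$ is $\Lambda$-Carleson.

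For the converse, assuming $\mathcal{S}$ is $\Lambda$-Carleson, I would build the $E_P$ greedily, processing cubes from smaller to larger and carving out at each stage a set of measure exactly $(1/\Lambda)|P|$ that avoids everything previously chosen. Concretely, fix a well-order $\prec$ on the countable family $\mathcal{S}$ refining strict containment (so $P' \subsetneq P \Rightarrow P' \prec P$) and define by transfinite recursion
\[
E_P \;\subset\; P \setminus \hspace{-.25cm}\bigcup_{\substack{P' \in \mathcal{S} \\ P' \prec P}}\hspace{-.2cm} E_{P'}, \qquad |E_P| \;=\; (1/\Lambda)|P|.
\]
The budget estimate making this choice possible is as follows: any previously chosen $E_{P'}$ meeting $P$ must have $P' \subsetneq P$ by dyadicity (the alternative $P \subsetneq P'$ would force $P \prec P'$, contradicting $P' \prec P$); these $E_{P'}$ are pairwise disjoint with $|E_{P'}| = (1/\Lambda)|P'|$ by construction, so applying the Carleson condition at the dyadic cube $Q = P$ and subtracting the $|P|$ contributed by $P$ itself yields
\[
\sum_{\substack{P' \in \mathcal{S} \\ P' \subsetneq P}} |E_{P'}| \;=\; \frac{1}{\Lambda}\hspace{-.2cm}\sum_{\substack{P' \in \mathcal{S} \\ P' \subsetneq P}}\hspace{-.15cm} |P'| \;\leq\; \frac{\Lambda - 1}{\Lambda}\,|P|.
\]
Thus at least $(1/\Lambda)|P|$ of $P$ remains free and the required $E_P$ can be selected.

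The main technical obstacle is that $\mathcal{S}$ may admit infinite descending chains of cubes, so an ordinary induction on $\mathbb{N}$ starting from a "smallest" cube is unavailable. The transfinite induction above is the clean fix, since $\mathcal{S}$ is countable (hence admits a well-order refining $\subsetneq$) and the budget inequality at each stage depends only on which $P'$ are strictly contained in $P$, not on the order type of $\prec$. An equally serviceable alternative is to run the finite-scale greedy construction on the truncated family $\mathcal{S}_N := \{P \in \mathcal{S} : \ell(P) \geq 2^{-N}\}$ and then pass to a weak-$\ast$ cluster point of the indicators $\mathbbm{1}_{E_P^{(N)}}$ in $L^\infty$ via Banach-Alaoglu and a diagonal argument over the countable $\mathcal{S}$.
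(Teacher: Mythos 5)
Your forward direction and your bottom-up budget estimate are exactly the paper's: disjointness of the $E_P$ gives the Carleson bound, and conversely the Carleson condition at $Q=P$, minus the term $|P|$ contributed by $P$ itself, leaves at least $|P|/\Lambda$ of free room once all strictly smaller selected cubes have been treated. The genuine gap is in how you propose to run that recursion when there is no bottom layer. A well-order $\prec$ on $\mathcal{S}$ with $P'\subsetneq P\Rightarrow P'\prec P$ exists only if the containment order on $\mathcal{S}$ is well-founded: an infinite chain $P_1\supsetneq P_2\supsetneq\cdots$ would force $\cdots\prec P_3\prec P_2\prec P_1$, and then the set $\{P_k\}_k$ has no $\prec$-least element, contradicting well-orderedness. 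Such chains are not exotic (a tower of nested dyadic cubes shrinking to a point is a $2$-Carleson family), and they are precisely the case your transfinite induction was meant to handle; countability of $\mathcal{S}$ gives \emph{some} well-order, but never one compatible with $\subsetneq$ in this situation, so the ``clean fix'' collapses exactly where it is needed.

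The fallback you sketch (truncate at scale $2^{-N}$, run the finite construction, extract weak-$\ast$ limits of $\mathbbm{1}_{E_P^{(N)}}$) does not close the gap as stated either: a weak-$\ast$ cluster point of indicators is in general only a function $g_P$ with $0\le g_P\le 1$, supported in $P$, with $\int g_P=|P|/\Lambda$ and $\sum_P g_P\le 1$ a.e. --- pairwise disjointness does not survive weak-$\ast$ limits, only the pointwise bound on finite sums does. That is a ``fractional'' sparse system, not a family of pairwise disjoint sets $E_P$ with $|E_P|\ge |P|/\Lambda$, and upgrading it to genuine disjoint sets is itself a nontrivial selection problem (a Hall-type theorem for measurable sets), which is essentially the whole content of this implication. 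This is exactly the point where the paper, following Lerner and Nazarov, replaces ``free choice'' by ``canonical choice'' in the truncated constructions so that the selected sets themselves stabilize as the bottom layer recedes (see \cite[Lemma 6.3]{LeN}). So either make your truncated choices canonical and prove the sets converge, or supply the fractional-to-genuine upgrading step; as written, the hard implication is not proved.
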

 
 \begin{proof} We sketch the beautiful argument in \cite{LeN}.\\
 
  \noindent{\bf ($\Leftarrow$)} The family of cubes $\mathcal{S}$  being $1/\Lambda$-sparse means that for all cubes $P\in\mathcal{S}$
 there are  pairwise disjoint  subsets $E_P\subset P$ that have a considerable portion of the total mass of the cube, more precisely   $\Lambda |E_P| \geq  |P| $.
 Hence, 
 $$  \sum_{P\in\mathcal{S}, P\subset Q}|P|\leq  \Lambda \sum_{P\in\mathcal{S}, P\subset Q}|E_P| \leq \Lambda |Q|.$$
Where the last inequality holds because the sets $E_P\subset Q$ and are pairwise disjoint.  Therefore the family of cubes $\mathcal{S}$ is $\Lambda$-Carleson.\\

\noindent {\bf ($\Rightarrow$)} 
Assume now that $\mathcal{S}$ is a $\Lambda$-Carleson family. We say that a  \emph{family $\mathcal{S}$ has a bottom layer} $\mathcal{D}_K$ if for all $Q\in\mathcal{S}$ we have  $Q\in \mathcal{D}_k$ for some $k\leq K$. 
Assume {\sc  $\mathcal{S}$ has a bottom layer $\mathcal{D}_K$}. Then consider all cubes in the bottom layer, $Q\in \mathcal{S}\cap \mathcal{D}_K$,  and {choose any} sets $E_Q\subset Q$ with $|E_Q| = \frac{1}{\Lambda}|Q|$. This choice is always possible, because of the nature of the Lebesgue measure, and the sets will automatically be pairwise disjoint because the cubes in a fixed generation $\mathcal{D}_K$ are pairwise disjoint.  
Then go up layer by layer,  meaning we have already selected sets $E_R\subset R$ for all $R\in \mathcal{S}\cap\mathcal{D}_j$ and $k<j\leq K$ with the property that $|E_R| = \frac{1}{\Lambda}|R|$, then for each $Q\in \mathcal{D}_k$, $k< K$, {choose any} $E_Q\subset Q\setminus\cup_{R\in\mathcal{S},R\subsetneq Q}E_R$
 with $|E_Q|=\frac{1}{\Lambda}|Q|$.  Such choice is always possible because for every $Q\in\mathcal{S}$ we have
 $$ \Big | \cup_{R\in\mathcal{S},R\subsetneq Q}E_R\Big | \leq \frac{1}{\Lambda}\sum_{R\in\mathcal{S},R{\subsetneq} Q} |R|\; {\leq}\; \frac{\Lambda -1}{\Lambda} |Q|=\Big (1-\frac{1}{\Lambda}\Big )|Q|,$$
where we used in the inequality  the hypothesis that $\mathcal{S}$ is a $\Lambda$-Carleson family.
Therefore  
$$|Q\setminus\cup_{R\in\mathcal{S},R\subsetneq Q}E_R|\geq  \frac{1}{\Lambda}|Q|,$$
hence there is enough mass left in $Q$, after removing the sets $E_R$  corresponding to $R$ in $\mathcal{S}$ and proper subcubes of $Q$, to select a subset $E_Q$ of $Q$ with the aformentioned property. Moreover  by construction the
sets $E_Q$ are pairwise disjoint, and we are done. 

{{\sc But, what if there is no bottom layer?}} The idea is to run the construction for each $K\geq 0$ and pass to the limit! One has  to be a bit careful! As Lerner and Nazarov put it:
 \emph{``All we have to do is replace "free choice" with "canonical choice".''}  The diligent reader can find the details of the argument, including a very illuminating picture, in  \cite[Lemma 6.3 and Figure 8]{LeN}.
\end{proof}

\subsection{$A_2$ theorem for sparse operators}
We now present David Cruz-Uribe, Chema Martell, and  Carlos P\'erez's  beautiful proof of the $A_2$ conjecture for sparse operators \cite{CrMPz2}.
\begin{theorem}[Cruz-Uribe, Martell, P\'erez 2012] Let $\mathcal{S}$ be an $\eta$-sparse family of cubes then forall~$w\in A_2$ and  $f\in L^2(w)$ the following inequality holds
\begin{equation}\label{A2-conjecture-sparse}
\| \mathcal{A}_{\mathcal{S}}f\|_{L^2(w)}\lesssim {[w]_{A_2}} \|f\|_{L^2(w)}.
\end{equation}
\end{theorem}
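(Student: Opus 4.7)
The plan is to pass to the dual statement and exploit the sparsity structure together with the two-weight factorization afforded by the $A_2$ characteristic. Writing $\sigma := w^{-1}$, by duality it suffices to prove
$$\langle \mathcal{A}_{\mathcal{S}} f, g\rangle \lesssim [w]_{A_2}\,\|f\|_{L^2(w)}\,\|g\|_{L^2(\sigma)}$$
for nonnegative $f, g$. The key bookkeeping trick is to substitute $f = F\sigma$ and $g = G w$, so that $\|F\|_{L^2(\sigma)} = \|f\|_{L^2(w)}$ and $\|G\|_{L^2(w)} = \|g\|_{L^2(\sigma)}$, and every coefficient in the sparse sum factors as
$$\langle f\rangle_Q\,\langle g\rangle_Q\,|Q| = \langle F\rangle_Q^{\sigma}\,\langle G\rangle_Q^{w}\cdot\bigl(\langle w\rangle_Q \langle \sigma\rangle_Q\bigr)\,|Q|,$$
where $\langle F\rangle_Q^{\sigma} := \sigma(Q)^{-1}\int_Q F\sigma\,dx$, and similarly for $\langle G\rangle_Q^{w}$. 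The $A_2$ hypothesis immediately pulls the factor $\langle w\rangle_Q\langle \sigma\rangle_Q$ out of the sum as $[w]_{A_2}$.

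Next I would invoke $\eta$-sparsity to replace $|Q|$ by $\eta^{-1}|E_Q|$, where the $E_Q \subset Q$ are pairwise disjoint, and observe the pointwise domination
$\langle F\rangle_Q^{\sigma} \leq \inf_{x\in E_Q} M_\sigma^{\mathcal{D}}F(x)$, and analogously for $\langle G\rangle_Q^{w}$. Summing and using disjointness of the $E_Q$ telescopes the discrete sum into a single Lebesgue integral:
$$\sum_{Q\in\mathcal{S}} \langle F\rangle_Q^{\sigma}\,\langle G\rangle_Q^{w}\,|E_Q| \;\leq\; \int_{\R^d} M_\sigma^{\mathcal{D}}F(x)\,M_w^{\mathcal{D}}G(x)\,dx.$$

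Finally, I would split the integrand trivially via $1 = \sigma^{1/2}w^{1/2}$ and apply Cauchy–Schwarz to land each maximal function in its natural weighted $L^2$ space. The universal bound $\|M_v^{\mathcal{D}}\|_{L^2(v)\to L^2(v)} \lesssim 1$ — valid with a constant independent of the weight $v$, as recorded in~\eqref{Mv-on-Lp(v)} — then controls the two factors by $\|F\|_{L^2(\sigma)}$ and $\|G\|_{L^2(w)}$, which by our choice of substitution equal $\|f\|_{L^2(w)}$ and $\|g\|_{L^2(\sigma)}$ respectively. Combining the displayed estimates yields \eqref{A2-conjecture-sparse} with implicit constant $\lesssim \eta^{-1}[w]_{A_2}$.

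There is no serious obstacle: the weight-independence of the $L^2$ bound for the weighted dyadic maximal function is the only nontrivial analytic input, and the rest is bookkeeping. The elegance of the argument lies in how sparsity converts a sum over cubes into a single integral, how the two-weight $A_2$ identity $\langle w\rangle_Q\langle\sigma\rangle_Q \leq [w]_{A_2}$ appears with precisely the right symmetry, and how the substitution $f = F\sigma$, $g = Gw$ aligns the weighted averages with the weighted maximal functions that are automatically bounded.
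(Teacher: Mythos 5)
Your proof is correct and is essentially the paper's argument: duality, the factorization $\langle w\rangle_Q\langle w^{-1}\rangle_Q\le [w]_{A_2}$, sparsity to replace $|Q|$ by $\eta^{-1}|E_Q|$, disjointness of the sets $E_Q$, and the weight-independent $L^2(v)$ bound \eqref{Mv-on-Lp(v)} for $M^{\mathcal{D}}_v$. The only cosmetic difference is where Cauchy--Schwarz enters: the paper splits $|E_Q|\le \big(w(E_Q)\big)^{1/2}\big(w^{-1}(E_Q)\big)^{1/2}$ and applies a discrete Cauchy--Schwarz over $\mathcal{S}$, whereas you first convert the sum into the single integral $\int M^{\mathcal{D}}_{\sigma}F\, M^{\mathcal{D}}_{w}G\,dx$ via the infimum trick and then apply the continuous Cauchy--Schwarz, landing in the same place with the same constant $\eta^{-1}[w]_{A_2}$.
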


\begin{proof}
For $w\in A_2$,  $\mathcal{S}$  and $\eta$-sparse family with $\eta\in (0,1)$,  showing \eqref{A2-conjecture-sparse}
 is equivalent by duality to showing that for all $f\in L^2(w)$, $g\in L^2(w^{-1})$
 $$|\langle \mathcal{A}_{\mathcal{S}} f, g\rangle | \lesssim  [w]_{A_2} \|f\|_{L^2(w)}\|g\|_{L^2(w^{-1})}.$$
By the Cauchy-Schwarz inequality $|E_Q| = \int_{E_Q} w^{\frac12}w^{-\frac12} \leq (w(E_Q))^{\frac12} (w^{-1}(E_Q))^{\frac12}$. Using the definition of the sparse operator, some algebra  and the definition of an $\eta$-sparse family of cubes, namely $|Q|\leq (1/\eta) |E_Q|$ we get that
\begin{eqnarray*}
|\langle \mathcal{A}_{\mathcal{S}} f, g\rangle | & \leq &
 \sum_{Q\in \mathcal{S}} \langle |f| \rangle_Q\, \langle |g| \rangle_Q \, |Q|  \\
 &\leq &  \frac{1}{\eta} \sum_{Q\in \mathcal{S}} \frac{\langle |f| w w^{-1} \rangle_Q}{\langle w^{-1}\rangle_Q} \, \frac{ \langle |g| w^{-1}w \rangle_Q}{\langle w\rangle_Q} \,  {\langle w\rangle_Q} \, {\langle w^{-1}\rangle_Q}\, |E_Q|
\\ 
 &\leq & \frac{{[w]_{A_2}}}{\eta} \sum_{Q\in \mathcal{S}} 
 \frac{\langle |f| w w^{-1} \rangle_Q}{\langle w^{-1}\rangle_Q} \, (w^{-1}(E_Q))^{\frac12} \, 
   \frac{ \langle |g| w^{-1}w \rangle_Q}{\langle w\rangle_Q} \, (w(E_Q))^{\frac12}.
   \end{eqnarray*}
Using once more the Cauchy-Schwarz inequality and the fact that for all $x\in E_Q\subset Q$ it holds that
$\langle |h| v\rangle_Q /\langle v \rangle_Q\leq M^{\mathcal{D}}_vh(x)$  therefore $|\langle |h| v\rangle_Q /\langle v \rangle_Q |^2v(E_Q)\leq \int_{E_Q} |M^{\mathcal{D}}_vh(x)|^2 \,v(x)\,dx$, we conclude that 
\begin{eqnarray*}
|\langle \mathcal{A}_{\mathcal{S}} f, g\rangle |&   \leq  & \frac{[w]_{A_2}}{\eta}   \bigg [\sum_{Q\in \mathcal{S}} 
 \frac{\langle |f| w w^{-1} \rangle_Q^2}{\langle w^{-1}\rangle_Q^2}\, w^{-1}(E_Q)\bigg ]^{\frac12}
  \bigg [\sum_{Q\in \mathcal{S}}   \frac{ \langle |g| w^{-1}w \rangle_Q^2}{\langle w\rangle_Q^2} \, w(E_Q)\bigg ]^{\frac12} \\
&  & \hskip -.5in \leq \;\; \frac{[w]_{A_2}}{\eta} 
\bigg [\sum_{Q\in \mathcal{S}} \int_{E_Q} |M^{\mathcal{D}}_{{w^{-1}}}(fw)(x)|^2{w^{-1}(x)} \, dx\bigg ]^{\frac12}
\bigg [\sum_{Q\in \mathcal{S}} \int_{E_Q} |M^{\mathcal{D}}_{{w}}(gw^{-1})(x) |^2{w}(x) \,dx\bigg ]^{\frac12} \\
 &  &  \hskip -.5in  \leq \;\;  \frac{[w]_{A_2}}{\eta} \|M^{\mathcal{D}}_{w^{-1}}(fw)\|_{L^2(w^{-1})} \, \|M^{\mathcal{D}}_{w}(gw^{-1})\|_{L^2(w)}\\
 &  &  \hskip -.5in  \lesssim \;\;  {[w]_{A_2}} \|fw\|_{L^2(w^{-1})} \, \|gw^{-1}\|_{L^2(w)} 
 \; = \; {{[w]_{A_2}}} \|f\|_{L^2(w)} \, \|g\|_{L^2(w^{-1})}.  
  \end{eqnarray*}
  Where, in the line before the last, we used the fact that the sets $E_Q$ for $Q\in\mathcal{S}$ are pairwise disjoint and, in the
   last line, we used estimate \eqref{Mv-on-Lp(v)} for the weighted dyadic maximal functions.
  \end{proof}
Similar argument yields linear bounds on $L^p(w)$ for $p>2$ and by duality (sparse operators are self-adjoint) we get  bounds like  $[w]_{A_p}^{{1}/{(p-1)}}=[w^{{-1}/{(p-1)}}]_{A_{p'}}$ when $1<p<2$, see  \cite{Moe}. 
In other words, we can get directly the same $L^p(w)$  bounds that sharp extrapolation will give if we were to extrapolate from the linear $L^2(w)$ bounds, namely, for all $w\in A_p$ and $f\in L^p(w)$
\begin{equation}\label{Lpw-bounds-sparse-operators}
 \| \mathcal{A}_{\mathcal{S}}f\|_{L^p(w)} \lesssim  [w]_{A_p}^{\max\{1,\frac{1}{p-1}\}} \|f\|_{L^p(w)}.
 \end{equation}

\subsection{Domination by Sparse Operators}

Many operators can be  dominated by finitely many sparse operators,  pointwise, in norm, or by forms. 
The collections $\mathcal{S}$, $\mathcal{S}_i$ are sparse families  tailored to the operator and the particular function $f$ the operator is acting on. Identifying these sparse families is where most of the work lies, usually  done using some sort of weak-$(1,1)$ inequality that is available a priori, or a specific stopping time designed for the problem at hand. We will illustrate this process for the martingale transform in Section~\ref{sec:sparse-martingale-Lacey}. Here is the status, in terms of sparse domination, of the operators we have been discussing in these lecture notes. In particular quantitative weighted estimates for corresponding sparse operators, such as \eqref{Lpw-bounds-sparse-operators}, immediately transfer to the dominated operators, providing new and streamlined proofs of the quantitative weighted inequalities we have been focusing on previous sections.

The martingale transforms and the dyadic paraproduct  are locally pointwise dominated by sparse operators \cite{L4}. More precisely,  given a cube $Q_0$ and $f\in L^1(\R )$ there are  sparse families $\mathcal{S}, \mathcal{S}'$ such that 
$$|\mathbbm{1}_{Q_0}T_{\sigma}(f\mathbbm{1}_{Q_0}) | \lesssim\mathcal{A}_\mathcal{S}|f|, \quad |\mathbbm{1}_{Q_0} \pi_b(f\mathbbm{1}_{Q_0}) |\lesssim\mathcal{A}_{\mathcal{S}'}|f|.$$ 
We will say more about the martinagale transform in Section~\ref{sec:sparse-martingale-Lacey}.

 Calder\'on-Zygmund operators are pointwise dominated by finitely many sparse operators \cite{CR,Le5,LeN}. More precisely,  given $T$ and $f$ there are finitely many sparse families $\mathcal{S}_i$, for $i=1,\dots, N_d$, such that 
$$|Tf|\leq \sum_{i=1}^{N_d}\mathcal{A}_{\mathcal{S}_i}f.$$ 

 The dyadic square function is pointwise dominated by finitely many sparse-like operators \cite{LLi2}. More precisely,   given $f$ there are finitely many sparse families $\mathcal{S}_i$, for  $i=1,\dots, N_d$, such that 
$$|S^{\mathcal{D}}f|^2 \leq \sum_{i=1}^{N_d}\sum_{Q\in\mathcal{S}_i} \langle |f|\rangle_Q^2\mathbbm{1}_Q.$$
Notice that the sparse-like operators have been adapted to the square function.

Commutator $[b,T]$  for  $T$ an $\omega$-Calder\'on-Zygmund operator with $\omega$ satisfying a Dini condition, $b\in L^1_{{\rm loc}}(\R )$ can be pointwise dominated by finitely many sparse-like operators and their adjoints \cite{LeOR1,LeOR2}. We will say more about this in Section~\ref{sec:Sparse-vs-commutators}.

The finitely many sparse families come from the analogue of the one-third trick for the dyadic grids, usually $N_d=3^d$ will suffice.

\subsection{Domination of martingale transform d'apr\`es {Lacey}}\label{sec:sparse-martingale-Lacey}
We would like to illustrate how to achieve domination by sparse operators for a toy model operator, the martingale transform $T_{\sigma}$ on $L^2(\R )$. Following  an argument of Michael Lacey  \cite[Section 3]{L4}.

Given interval $I_0\in\mathcal{D}$ and function $f\in L^1(\R )$ supported on $I_0$,  we need to find a $1/2$-sparse family $\mathcal{S}\subset\mathcal{D}$, such that for all  choices of signs $\sigma$, there is a constant $C>0$ such that 
$$|\mathbbm{1}_{I_0}T_{\sigma}f| \leq C \mathcal{A}_{\mathcal{S}}|f|.$$

\begin{proof}  Without loss of generality we can assume that $f\in L^1(\R )$ is not only supported on $I_0$ but also $\int_{I_0}|f(x)| dx >0$. We will need  the following well-known  weak-type estimates. 

First, the  {sharp truncation $T_{\sigma}^{\sharp}$ is of weak-type $(1,1)$} \cite{Bur1}, with a constant independent of the choice of signs $\sigma$, thus
$$ \sup_{\lambda>0} \lambda \big |\{x\in \R: T^{\sharp}_{\sigma}f(x)>\lambda\}\big |\leq  C\|f\|_{L^1(\R)},$$
where 
${T^{\sharp}_{\sigma}f=\sup_{I'\in\mathcal{D}}\big |\sum_{I\in\mathcal{D}, I\supset I'} \sigma_I\langle f,h_I\rangle h_I  }\big |$.

Second, the maximal function {$M$ is also of weak-type $(1,1)$}, therefore
$$ \sup_{\lambda>0} \lambda \big |\{x\in \R: Mf(x)>\lambda\}\big |\leq  C\|f\|_{L^1(\R)},$$

As a consequence there exists a constant $C_0>0$ such that the subset of $I_0$ defined by
$$ F_{I_0}:=\{x\in I_0: \max\{Mf(x), T^{\sharp}_{\sigma}f(x)\}>\mbox{$\frac{1}{2}$}C_0\langle |f|\rangle_{I_0}\}$$
has no more than half the mass of $I_0$,  that is, $|F_{I_0}|\leq \frac{1}{2} |I_0|$.  In fact, suppose no such constant would exist, then for all $C_0>0$ it would hold that 
$$|F_{I_0}| =  \Big |\{x\in I_0: \max\{Mf(x), T^{\sharp}_{\sigma}f(x)\}>\mbox{$\frac{1}{2}$}C_0\langle |f|\rangle_{I_0}\} \Big | > \frac{1}{2} |I_0|,$$ 
therefore for each $C_0>0$ it must be that either
$ |\{x\in I_0: Mf(x)> \frac{1}{2}C_0\langle |f|\rangle_{I_0}\}| > \frac{1}{4}|I_0|$  or
$ |\{x\in I_0: T_{\sigma}^{\sharp}f(x)> \frac{1}{2}C_0\langle |f|\rangle_{I_0}\}| > \frac{1}{4}|I_0|$.
But either of these sets has measure bounded above by $2C\|f\|_{L^1(\R )}/ (C_0\langle|f|\rangle_{I_0})$, choosing $C_0$ large enough, so that $2C\|f\|_{L^1(\R )}/ (C_0\int_{I_0} |f(y)|dy ) <1/4$, a contradiction will be reached. It sems as if the constant $C_0$ depends on the interval $I_0$, however once we recall that the function $f$ is supported on $I_0$ then all is required is that $2C/C_0 < 1/4$.

 Let  $\mathcal{E}_{I_0}$ be the collection of maximal dyadic intervals $I\in\mathcal{D}$ contained in the set $F_{I_0}$, then we claim that
   \begin{equation}\label{Tsigma}
|T_{\sigma}f(x)|\,\mathbbm{1}_{I_0}(x)\leq C_0\langle |f|\rangle_{I_0}+\sum_{I\in\mathcal{E}_{I_0}}|T_{\sigma}^I f(x)|
\end{equation}
where ${T_{\sigma}^If:={\sigma_{\widetilde{I}}}\langle f\rangle_I\mathbbm{1}_I+\sum_{J: J\subset I}\sigma_J\langle f, h_J\rangle h_J}$,  and  $\widetilde{I}$ is the parent of $I$.

 Repeat for each $I\in\mathcal{E}_{I_0}$ and the function $T^I_{\sigma}f$ which is supported on $I$, then repeat  for each $I'\in\mathcal{E}_I$, etc. 
Let  $\mathcal{S}_0=\{I_0\}$, and 
$\mathcal{S}_j := \cup_{I\in\mathcal{S}_{j-1}}  \mathcal{E}_{I}$. Finally let $\mathcal{S}:=\cup_{j=0}^{\infty}\mathcal{S}_j$. For each $I\in\mathcal{S}$, let $E_I=I\setminus F_I$, by construction  the sets $E_I\subset I$ are pairwise disjoint and $ |E_I|\geq \frac{1}{2} |I|$, therefore  $\mathcal{S}$ is a $\frac{1}{2}$-sparse family.  Moreover
$$|\mathbbm{1}_{I_0}T_{\sigma}f| \leq C_0 \mathcal{A}_{\mathcal{S}}|f|,$$
which is what we set out to prove. We are done modulo verifying the claimed inequality \eqref{Tsigma}, which we now prove. 
Note that $|T_{\sigma}f(x)| \leq 2T_{\sigma}^{\sharp}f(x)$.  Thus, if {$x\in I_0\setminus F_{I_0}$} then
$|T_{\sigma}f(x)| \leq  C_0\langle |f|\rangle_{I_0}$, and \eqref{Tsigma} is satisfied.

 If $x\in F_{I_0}$ then there is unique $I\in \mathcal{S}_1=\mathcal{E}_{I_0}$ with {$x\in I$}, and  recalling that 
 $\langle f, h_{\widetilde{I}} \rangle h_{\widetilde{I}}(x)=\langle f\rangle_I -\langle f\rangle_{\widetilde I}$, we conclude that
\begin{eqnarray*} T_\sigma f(x) & =  &
\sum_{J\supsetneq \widetilde{I}} \sigma_J \langle f, h_J\rangle h_J(x) 
+\sum_{ J\subset \widetilde{I}} \sigma_J \langle f, h_J\rangle h_J(x) \\
& = & \sum_{J\supsetneq \widetilde{I}} \sigma_J \langle f, h_J\rangle h_J(x) - \sigma_{\widetilde I}  \langle f\rangle_{\widetilde{I}} + T_\sigma^If(x).
\end{eqnarray*}
Therefore we find that  when $x\in F_{I_0}$ and for all $y\in \widetilde{I}$ the following inequality holds
\begin{equation}\label{eqn:Lacey}
 |T_\sigma f(x)| \leq   
T_{\sigma}^{\sharp}f(y) + Mf(y) +\sum_{I\in\mathcal{E}_{I_0}} T^I_{\sigma}f(x).
\end{equation}
In particular,   because $I$ is a maximal dyadic interval in $F_{I_0}$,  there must be $y_0\in \widetilde{I}\setminus I$ such that $y_0\notin F_{I_0}$ and therefore $T_{\sigma}^{\sharp}f(y_0) + Mf(y_0) \leq  \frac{1}{2} C_0 \langle |f|\rangle_{I_0}$.  Substituting $y=y_0$ in \eqref{eqn:Lacey}, and using this estimate proves the claimed inequality \eqref{Tsigma}, and therefore the pointwise localized domination by sparse operators for the martingale transform is proven.
\end{proof}

\subsection{Case study: Sparse operators vs  commutators}\label{sec:Sparse-vs-commutators}

Carlos P\'erez and Israel Rivera-R\'ios proposed the following  $L\log L$-sparse operator  as a candidate for sparse domination of the commutator. $$ B_{\mathcal{S}} f(x) = \sum_{Q\in \mathcal{S}} {\|f\|_{L\log L, Q}} \mathbbm{1}_Q(x).$$
The reason for this choice is that $M^2 \sim M_{L\log L}$ is the correct maximal function for the commutator.
However they showed that these operators {cannot bound pointwise} the commutator $[b,T]$ in \cite{PzR}.

Andrei  Lerner, Sheldy Ombrosi, and  Israel Rivera-R\'ios   proposed the following  sparse-like operator and its adjoint adapted to the commutator with locally integrable function $b$,
 \begin{eqnarray*}
\mathcal{T}_{\mathcal{S},b}f(x)& := &\sum_{Q\in\mathcal{S}}{  |b(x)-\langle b\rangle_Q|\,\langle|f|\rangle_Q }\,\mathbbm{1}_Q(x), \\
\mathcal{T}^*_{\mathcal{S},b}f(x)& := &\sum_{Q\in\mathcal{S}}{ \langle |b-\langle b\rangle_Q|\,|f|\rangle_Q }\,\mathbbm{1}_Q(x).
\end{eqnarray*}
They showed,  in \cite{LeOR1},
 that finitely many of these operators will provide pointwise domination for the commutator, $[b,T]$, where $T$ is a rough Calder\'on-Zygmund operator and $b$ a locally integrable function.

\begin{theorem}[Lerner, Ombrosi, Rivera-R\'ios 2017]
Let $T$ be an $\omega$-Calder\'on-Zygmund singular integral  operator with $\omega$ satisfying a Dini condition, $b\in L^1_{{\rm loc}}(\R^d )$. For every compactly supported $f\in L^{\infty}(\R^d )$, there are $3^n$ dyadic lattices $\mathcal{D}^{(k)}$ and 
$\frac{1}{2\cdot 9^n}$-sparse families $\mathcal{S}_k\subset \mathcal{D}^{(k)}$ such that for a.e. $x\in\R^d$
\[ |[b,T](f)(x)|\lesssim_{d,T}\sum_{k=1}^{3^n}\big (\mathcal{T}_{\mathcal{S}_k,b}|f|(x) + \mathcal{T}^*_{\mathcal{S}_k,b}|f|(x)\big ).\]
\end{theorem}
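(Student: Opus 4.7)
The plan is to establish a localized pointwise domination on one dyadic cube and then to assemble the global statement by covering $\mathbb{R}^n$ with disjoint cubes and using a multi-grid argument analogous to the one-third trick in $\mathbb{R}^n$ (which yields $3^n$ adjacent dyadic lattices such that every cube $3Q$ is contained in a comparable cube of some lattice). The localized claim I would prove first is: for every $Q_0 \in \mathcal{D}$ and every $f$ with $\mathrm{supp}\,f \subset 3Q_0$, there is a $\tfrac12$-sparse family $\mathcal{S} \subset \mathcal{D}(Q_0)$ such that for a.e. $x \in Q_0$,
\[
|[b,T](f)(x)|\,\mathbbm{1}_{Q_0}(x) \;\lesssim_{d,T}\; \mathcal{T}_{\mathcal{S},b}|f|(x) + \mathcal{T}^{*}_{\mathcal{S},b}|f|(x).
\]
Granting the local version, the global bound follows by partitioning $\mathbb{R}^n$ into maximal dyadic cubes in the appropriate sense, applying the local result on each, and then re-indexing the resulting families through the $3^n$ adjacent grids; the sparseness loses a factor of $9^n$ coming from the relation between $Q_0$, $3Q_0$, and the covering cube in an adjacent grid.

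For the local step, the key algebraic input is the Coifman--Rochberg--Weiss-type identity
\[
[b,T](f)(x) = \big(b(x) - \langle b\rangle_{Q_0}\big)\,T(f)(x) - T\big((b-\langle b\rangle_{Q_0})f\big)(x),
\]
which cleanly produces the two structural pieces $|b(x)-\langle b\rangle_{Q_0}|\langle|f|\rangle_{3Q_0}$ and $\langle|b-\langle b\rangle_{Q_0}|\,|f|\rangle_{3Q_0}$ that build $\mathcal{T}_{\mathcal{S},b}$ and $\mathcal{T}^{*}_{\mathcal{S},b}$. Next, I would run a Calder\'on-Zygmund-type stopping time on $Q_0$ by defining the exceptional set
\[
F_{Q_0} := \{x\in Q_0 : M(f)(x) > C_0 \langle|f|\rangle_{3Q_0}\} \cup \{x\in Q_0 : M((b-\langle b\rangle_{Q_0})f)(x) > C_0\,\langle|b-\langle b\rangle_{Q_0}||f|\rangle_{3Q_0}\}
\]
unioned with two analogous sets in which $M$ is replaced by the \emph{grand maximal truncation operator}
\[
\mathcal{M}_T^{\sharp}g(x) := \sup_{Q\ni x}\,\mathrm{ess\,sup}_{\xi\in Q}\,\big|T(g\,\mathbbm{1}_{\mathbb{R}^n\setminus 3Q})(\xi)\big|.
\]
Choosing $C_0$ sufficiently large and invoking the weak-$(1,1)$ bounds for $M$ and $\mathcal{M}_T^{\sharp}$, one forces $|F_{Q_0}| \le \tfrac12|Q_0|$. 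Let $\mathcal{E}_{Q_0}$ be the maximal dyadic subcubes of $Q_0$ contained in $F_{Q_0}$.

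On $Q_0 \setminus \bigcup_{P\in\mathcal{E}_{Q_0}} P$ the stopping inequalities combined with the commutator identity above yield the desired bound with the two averages evaluated on $Q_0$, producing the $Q_0$-contribution to $\mathcal{T}_{\mathcal{S},b}|f| + \mathcal{T}^{*}_{\mathcal{S},b}|f|$. On each $P \in \mathcal{E}_{Q_0}$ one splits $f = f\mathbbm{1}_{\mathbb{R}^n \setminus 3P} + f\mathbbm{1}_{3P}$; the off-$3P$ piece is bounded at the scale of (the parent of) $P$ using the stopping conditions at that scale, while the on-$3P$ piece is the quantity one iterates on with $P$ replacing $Q_0$. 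Iterating this recursion produces a pairwise disjoint-mass family $\mathcal{S} = \bigcup_{j\geq 0}\mathcal{S}_j$ with $|\bigcup_{P\in\mathcal{E}_Q} P| \le \tfrac12|Q|$ for every $Q \in \mathcal{S}$, which is exactly the $\tfrac12$-sparsity property.

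The principal obstacle is the weak-$(1,1)$ estimate for $\mathcal{M}_T^{\sharp}$, and this is precisely where the $\omega$-Dini condition is used: a standard Calder\'on-Zygmund decomposition for $g$ together with a careful estimate of $T(g_{\mathrm{bad}}\mathbbm{1}_{3Q^c})$ using the modulus of continuity shows $\|\mathcal{M}_T^{\sharp}g\|_{L^{1,\infty}} \lesssim (C_K + \|\omega\|_{\mathrm{Dini}})\|g\|_{L^1}$. A secondary subtlety is that $\mathcal{M}_T^{\sharp}$ must be applied to $(b - \langle b\rangle_{Q_0})f$, which is only guaranteed to be integrable because $b \in L^1_{\mathrm{loc}}$ and $f$ has compact support in $3Q_0$; tracking this at each recursive scale requires verifying that the corresponding truncation of $b$ remains controllably integrable, so that the recursion closes. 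Once this weak-type bound is in hand, the two-term bilinear structure appearing at every stopping cube matches $\mathcal{T}_{\mathcal{S},b}$ and $\mathcal{T}^{*}_{\mathcal{S},b}$ exactly, and averaging over the $3^n$ adjacent grids gives the claimed sparseness constant $\tfrac{1}{2\cdot 9^n}$.
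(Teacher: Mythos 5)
Your proposal is correct and takes essentially the same route as the original proof in \cite{LeOR1}, which these notes state but do not reproduce, and it parallels the Lacey-style domination of the martingale transform in Section~\ref{sec:sparse-martingale-Lacey}: the commutator identity with a fixed constant at each scale producing the two structural terms, a stopping time on level sets of $M$ and of the grand maximal truncation operator applied to $f$ and $(b-\lambda)f$ (whose weak-$(1,1)$ bound is where the Dini condition enters), iteration yielding $\tfrac12$-sparse local families, and the $3^n$ adjacent lattices accounting for the $\tfrac{1}{2\cdot 9^n}$ constant. One small point to tighten: selecting the maximal dyadic cubes contained in $F_{Q_0}$ forces the off-piece estimate to the parent scale and misaligns $3P$ with $3\widehat{P}$ in the recursion; the cited proof avoids this by running a Calder\'on--Zygmund decomposition of the exceptional set at small height (so every stopping cube contains points where the stopping bounds hold), and in your setup the discrepancy can be absorbed using the kernel size estimate together with $Mf$, so this is a repair rather than a gap.
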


 Quadratic bounds on $L^2(w)$ for the commutator $[b,T]$  will follow from quadratic bounds for these adapted sparse operators \cite{LeOR1}.
 The following quadratic bounds on $L^2(w)$ for $\mathcal{T}_{\mathcal{S},b}$ , $\mathcal{T}^*_{\mathcal{S},b}$ hold,
$$ \|\mathcal{T}_{\mathcal{S},b}f\|_{L^2(w)}+\|\mathcal{T}^*_{\mathcal{S},b}f\|_{L^2(w)} \lesssim [w]_{A_2}^2  \|b\|_{{\rm BMO}} \|f\|_{L^2(w)}.$$
These quadratic bounds, the  corresponding extrapolated bounds on $L^p(w)$
$$ \|\mathcal{T}_{\mathcal{S},b}f\|_{L^p(w)}+\|\mathcal{T}^*_{\mathcal{S},b}f\|_{L^p(w)} \lesssim_p [w]_{A_p}^{2\max\{1,\frac{1}{p-1}\}}  \|b\|_{{\rm BMO}} \|f\|_{L^p(w)},$$
and  much more follow from a key lemma that we now state.

\begin{lemma}[Lerner, Ombrosi, Rivera-R\'ios 2017]\label{lem:LOR-R}
Given $\mathcal{S}$ an $\eta$-sparse family in $\mathcal{D}$ , $b\in L^1_{{\rm loc}}(\R^d)$ then there is a larger collection $\mathcal{\widetilde{S}}\in\mathcal{D}$ which is an $\frac{\eta}{2(1+\eta)}$-sparse family, $\mathcal{S}\subset\mathcal{\widetilde{S}}$, such that  for all $Q\in \mathcal{\widetilde{S}}$,   the following estimate holds
$$ |b(x)-\langle b\rangle_Q|\leq 2^{d+2}\sum_{R\in\mathcal{\widetilde{S}}, R\subset Q}\Omega(b; R)\mathbbm{1}_R(x), \quad \mbox{a.e.}\; x\in Q,$$
where  $\Omega(b; R):= \frac{1}{|R|}\int_R|b(x)-\langle b\rangle_R|\,dx$, the mean oscillation of $b$ on the dyadic cube $R$.
\end{lemma}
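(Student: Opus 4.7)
The approach is a Calder\'on--Zygmund (John--Nirenberg type) stopping-time construction applied to $b$ inside each cube of $\mathcal{S}$. For each $Q\in\mathcal{S}$, I would build a tree $\mathcal{G}(Q)\subset\mathcal{D}(Q)$ with root $Q$ by setting $\mathcal{G}_0(Q):=\{Q\}$ and, for each $P\in\mathcal{G}_k(Q)$, letting $\operatorname{ch}_{\mathcal{G}}(P)$ be the maximal dyadic cubes $P'\subsetneq P$ with
\[
\bigl|\langle b\rangle_{P'}-\langle b\rangle_P\bigr|>2^{d+1}\,\Omega(b;P).
\]
By maximality, for each $P$,
\[
\sum_{P'\in\operatorname{ch}_{\mathcal{G}}(P)}|P'|\;\le\;\frac{1}{2^{d+1}\Omega(b;P)}\int_P|b-\langle b\rangle_P|\,dx\;=\;\frac{|P|}{2^{d+1}}.
\]
Iterating and setting $\tilde{\mathcal{S}}:=\mathcal{S}\cup\bigcup_{Q\in\mathcal{S}}\mathcal{G}(Q)$, the construction is self-similar: $\mathcal{G}(P)\subset\tilde{\mathcal{S}}$ for every $P\in\tilde{\mathcal{S}}$, because the stopping rule used at any node $P$ depends only on $b$ and $P$ itself, not on the enclosing ancestor.

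For the pointwise bound, fix $Q\in\tilde{\mathcal{S}}$ and $x\in Q$, and consider the maximal chain $Q=P_0\supsetneq P_1\supsetneq\cdots$ of cubes in $\mathcal{G}(Q)$ containing $x$. If the chain terminates at some $P_n$, then $x$ belongs to no stopping child of $P_n$, so the Lebesgue differentiation theorem combined with the negation of the stopping condition yields $|b(x)-\langle b\rangle_{P_n}|\le 2^{d+1}\Omega(b;P_n)$ a.e.; if the chain is infinite then $\langle b\rangle_{P_k}\to b(x)$ a.e.\ as $k\to\infty$, since the diameters of the $P_k$ shrink to zero. In either case the triangle inequality telescopes to
\[
|b(x)-\langle b\rangle_Q|\;\le\;2^{d+1}\sum_{k}\Omega(b;P_k)\mathbbm{1}_{P_k}(x)\;\le\;2^{d+2}\!\!\!\sum_{R\in\tilde{\mathcal{S}},\,R\subset Q}\!\!\!\Omega(b;R)\mathbbm{1}_R(x),
\]
where the extra factor is harmlessly absorbed into $2^{d+2}$ and allows one to replace the chain by all of $\tilde{\mathcal{S}}\cap\mathcal{D}(Q)$.

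For the sparseness claim, by the Lerner--Nazarov characterization recalled earlier it suffices to show that $\tilde{\mathcal{S}}$ is $\Lambda$-Carleson with $\Lambda=2(1+\eta)/\eta$. Fix $Q_0\in\mathcal{D}$ and split the sum $\sum_{P\in\tilde{\mathcal{S}},\,P\subset Q_0}|P|$ into the $\mathcal{S}$-contribution, bounded by $|Q_0|/\eta$ via the $1/\eta$-Carleson property of $\mathcal{S}$, and the stopping-descendant contribution $\sum_{Q\in\mathcal{S}}\sum_{P\in\mathcal{G}(Q)\setminus\{Q\},\,P\subset Q_0}|P|$. The geometric-series measure estimate $\sum_{k\ge 1}2^{-k(d+1)}\le 1/(2^{d+1}-1)\le 1$ from the stopping bound above controls every single tree by its root's measure, so the second piece is dominated by $\sum_{Q\in\mathcal{S}}|Q\cap Q_0|\le |Q_0|/\eta$ using once more the Carleson property of $\mathcal{S}$; collecting the two pieces produces the constant $2(1+\eta)/\eta$ as required.

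The main obstacle I anticipate is the precise constant in the sparseness bookkeeping: stopping trees issued from distinct cubes of $\mathcal{S}$ are not independent, and a na\"ive tree-by-tree accounting double counts whenever one element of $\mathcal{S}$ happens to lie inside the stopping region of another, producing a Carleson constant strictly larger than $2(1+\eta)/\eta$. Getting the sharp factor requires grouping the stopping descendants by their \emph{minimal} ancestor in $\mathcal{S}$ (or, equivalently, assigning disjoint sets $F_P\subset P$ by removing both the next-level stopping children of $P$ and the $E_{P'}$'s of any $P'\in\mathcal{S}$ strictly below $P$) so that the measure surplus $1-2^{-(d+1)}\ge 1/2$ from the stopping step and the mass $\eta|Q|$ from the original sparseness combine in the ratio $\eta:(1+\eta)$ that appears in the conclusion.
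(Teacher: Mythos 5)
There is a genuine gap, and it lies in your stopping rule, not in the bookkeeping you worry about at the end. (The paper itself states this lemma without proof, quoting \cite{LeOR1}, so the comparison is with the original Lerner--Ombrosi--Rivera-R\'ios argument.) Selecting the maximal $P'\subsetneq P$ with $|\langle b\rangle_{P'}-\langle b\rangle_P|>2^{d+1}\Omega(b;P)$ gives a \emph{lower} bound on the jump of the averages at each selected cube, whereas your telescoping step needs an \emph{upper} bound; maximality only controls the cubes you did not select, and passing from the (non-selected) parent $\widetilde{P'}$ to $P'$ costs $|\langle b\rangle_{P'}-\langle b\rangle_{\widetilde{P'}}|\le 2^{d}\,\Omega(b;\widetilde{P'})$, where $\widetilde{P'}$ is not in your family and $\Omega(b;\widetilde{P'})$ can exceed $\Omega(b;P)$ by the unbounded factor $|P|/|\widetilde{P'}|$. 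Concretely, on $Q=[0,1)$ take $b=A\bigl(\mathbbm{1}_{[0,2^{-m-1})}-\mathbbm{1}_{[2^{-m-1},2^{-m})}\bigr)$ with $m$ large: then $\langle b\rangle_Q=0$, $\Omega(b;Q)=A2^{-m}$, every dyadic interval strictly between $[0,2^{-m-1})$ and $Q$ has average $0$, so your construction selects exactly $P_1=[0,2^{-m-1})$ and its sibling and then stops, since $b$ is constant on each (so $\Omega(b;P_1)=0$ and no further cubes appear). For $x\in P_1$ your claimed estimate would give $|b(x)-\langle b\rangle_Q|\le 2^{d+2}\Omega(b;Q)=8A2^{-m}$, while in fact $|b(x)-\langle b\rangle_Q|=A$. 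The oscillation that must pay for the large jump lives on intermediate cubes that a mean-value stopping rule never sees, because it is blind to oscillation that cancels in the average.

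The repair is to stop on the averages of the fixed nonnegative function $|b-\langle b\rangle_P|$, i.e.\ a Calder\'on--Zygmund decomposition at height $2\Omega(b;P)$: inside each $P$ already in the family, select the maximal dyadic $P'\subset P$ with $\langle |b-\langle b\rangle_P|\rangle_{P'}>2\,\Omega(b;P)$. Then $\sum_{P'}|P'|\le |P|/2$; off the union one has $|b-\langle b\rangle_P|\le 2\Omega(b;P)$ a.e.\ (your Lebesgue differentiation argument applies verbatim); and, crucially, $|\langle b\rangle_{P'}-\langle b\rangle_P|\le\langle|b-\langle b\rangle_P|\rangle_{P'}\le 2^{d}\langle|b-\langle b\rangle_P|\rangle_{\widetilde{P'}}\le 2^{d+1}\Omega(b;P)$, because the stopped quantity is the average of a fixed nonnegative function and so dilates by at most $2^{d}$ in passing to the parent --- exactly the upper bound on the jump that your rule cannot deliver. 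Iterating gives $|b(x)-\langle b\rangle_Q|\le (2+2^{d+1})\sum_{R}\Omega(b;R)\mathbbm{1}_R(x)\le 2^{d+2}\sum_{R}\Omega(b;R)\mathbbm{1}_R(x)$ over the tree rooted at $Q$, each tree is $\tfrac12$-sparse, and the union with $\mathcal{S}$, accounted for essentially as in your final paragraph, yields the $\frac{\eta}{2(1+\eta)}$-sparseness. Your packing estimate and the sparseness discussion are fine in spirit; the missing idea is the stopping criterion itself.
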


From this lemma we immediately deduce quantitative Bloom bounds for the sparse-like adjoint operator associated to the commutator \cite{LeOR1}. A similar result holds for $\mathcal{T}_{\mathcal{S},b}$.         

\begin{corollary}[Quantitative Bloom]
Let $u,v\in A_p$, $\mu = u^{1/p}v^{-1/p}$ and $b\in {\rm BMO}_{\mu}$ then there is a constant $c_{d,p}>0$ such that for all $f\in L^p(u)$ the following inequality holds,
$$\|\mathcal{T}_{\mathcal{S},b}^*|f|\|_{L^p(v)}\leq 
{c_{d,p} \|b\|_{{\rm BMO}_{\mu}} \big ( [v]_{A_p}[u]_{A_p}\big )^{\max\{1,\frac{1}{p-1}\}} \|f\|_{L^p(u)}}.$$
Similarly for $\mathcal{T}_{\mathcal{S},b}$.
\end{corollary}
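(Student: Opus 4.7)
The plan is to proceed by $L^p(v)$ duality and dominate the resulting bilinear form using the sparse-like control of $|b-\langle b\rangle_Q|$ provided by Lemma~\ref{lem:LOR-R}, and then to factor the outcome through the H\"older decomposition $\mu=u^{1/p}v^{-1/p}$ so as to decouple the estimate into two one-weight sparse operator bounds.

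First I would dualize: for $g\ge 0$ with $\|g\|_{L^{p'}(v)}=1$, the quantity to bound is
\[
\sum_{Q\in\mathcal{S}}|Q|\,\langle gv\rangle_Q\,\langle|b-\langle b\rangle_Q|\,|f|\rangle_Q.
\]
Applying Lemma~\ref{lem:LOR-R} (after harmlessly replacing $\mathcal{S}$ by the slightly larger sparse family $\widetilde{\mathcal{S}}$ it produces) inside the inner average gives
\[
\langle|b-\langle b\rangle_Q||f|\rangle_Q \;\le\; \frac{C_d}{|Q|}\sum_{R\in\widetilde{\mathcal{S}},\,R\subset Q}\Omega(b;R)\int_R|f|.
\]
Substituting and swapping the order of summation in $Q$ and $R$, I would observe that for every $x\in R$ the tail sum $\sum_{Q\in\widetilde{\mathcal{S}},\,Q\supset R}\langle gv\rangle_Q$ is majorized by $\mathcal{A}_{\widetilde{\mathcal{S}}}(gv)(x)$, so taking the infimum over $x\in R$ produces
\[
C_d\sum_{R\in\widetilde{\mathcal{S}}}\Omega(b;R)\int_R|f|\cdot\inf_{x\in R}\mathcal{A}_{\widetilde{\mathcal{S}}}(gv)(x).
\]

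Next I would inject the Bloom hypothesis in its sharp form $\Omega(b;R)\le\|b\|_{{\rm BMO}_\mu}\langle\mu\rangle_R$, which pulls out the BMO norm and yields a weighted sum $\sum_R\mu(R)\,\langle|f|\rangle_R\,\inf_{x\in R}\mathcal{A}_{\widetilde{\mathcal{S}}}(gv)(x)$. The key move is then to dominate the infimum by the $\mu$-average, $\inf_R F\le\mu(R)^{-1}\int_R F\,d\mu$: after interchanging summation and integration and recognizing $\sum_R\langle|f|\rangle_R\mathbbm{1}_R=\mathcal{A}_{\widetilde{\mathcal{S}}}|f|$, the double sum collapses into the single bilinear integral
\[
\|b\|_{{\rm BMO}_\mu}\int \mathcal{A}_{\widetilde{\mathcal{S}}}|f|(x)\,\mathcal{A}_{\widetilde{\mathcal{S}}}(gv)(x)\,\mu(x)\,dx.
\]
With the two sparse operators now decoupled, I would apply H\"older against the factorization $\mu(x)=u^{1/p}(x)\cdot v^{-1/p}(x)$, which pairs them cleanly as
\[
\|\mathcal{A}_{\widetilde{\mathcal{S}}}|f|\|_{L^p(u)}\,\|\mathcal{A}_{\widetilde{\mathcal{S}}}(gv)\|_{L^{p'}(\sigma_v)},\qquad\sigma_v:=v^{1-p'}.
\]

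The final step is to invoke the Cruz-Uribe--Martell--P\'erez sparse $A_p$ bound \eqref{Lpw-bounds-sparse-operators} on each factor. The first yields $[u]_{A_p}^{\max\{1,1/(p-1)\}}\|f\|_{L^p(u)}$; for the second, using the duality $[\sigma_v]_{A_{p'}}=[v]_{A_p}^{1/(p-1)}$, the isometry $\|gv\|_{L^{p'}(\sigma_v)}=\|g\|_{L^{p'}(v)}$, and the identity $\max\{1,p-1\}/(p-1)=\max\{1,1/(p-1)\}$, one gets $[v]_{A_p}^{\max\{1,1/(p-1)\}}\|g\|_{L^{p'}(v)}$. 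Multiplying and taking the supremum over $g$ completes the argument. The main obstacle is conceptual rather than computational: after Lemma~\ref{lem:LOR-R} exposes a sparse-like structure inside the mean-oscillation factor, the correct sequence (swap sums to reveal an inner $\mathcal{A}_{\widetilde{\mathcal{S}}}(gv)$, then convert the pointwise infimum into a $\mu$-average to produce a bilinear integral in the two sparse operators, then use precisely the factorization $\mu=u^{1/p}\cdot v^{-1/p}$ inherited from $u,v\in A_p$) is what makes the one-weight sparse bound \eqref{Lpw-bounds-sparse-operators} apply symmetrically and deliver the exponent $\max\{1,1/(p-1)\}$ on both $[u]_{A_p}$ and $[v]_{A_p}$ rather than an asymmetric one.
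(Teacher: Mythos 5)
Your proposal is correct: every step checks out, including the swap of sums after Lemma~\ref{lem:LOR-R}, the bound $\Omega(b;R)\leq\|b\|_{{\rm BMO}_{\mu}}\langle\mu\rangle_R$, the passage from the infimum to the $\mu$-average, the H\"older splitting along $\mu=u^{1/p}v^{-1/p}$, and the dual-weight bookkeeping $[\sigma_v]_{A_{p'}}=[v]_{A_p}^{1/(p-1)}$ with $\|gv\|_{L^{p'}(\sigma_v)}=\|g\|_{L^{p'}(v)}$, which together give the symmetric exponent $\max\{1,\frac{1}{p-1}\}$ on both characteristics. The route, however, is the dual of the paper's argument rather than a copy of it: the paper first upgrades Lemma~\ref{lem:LOR-R} and the Bloom hypothesis into the pointwise composition bound $\mathcal{T}^*_{\widetilde{\mathcal{S}},b}|f|\leq c_d\|b\|_{{\rm BMO}_{\mu}}\,\mathcal{A}_{\widetilde{\mathcal{S}}}\big(\mathcal{A}_{\widetilde{\mathcal{S}}}(|f|)\,\mu\big)$, then takes the $L^p(v)$ norm directly, using the identity $\|h\mu\|_{L^p(v)}=\|h\|_{L^p(u)}$ (since $\mu^pv=u$) to peel off one sparse bound on $L^p(v)$ and one on $L^p(u)$ — no duality and no dual weights are needed. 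Your bilinear form $\int\mathcal{A}_{\widetilde{\mathcal{S}}}|f|\cdot\mathcal{A}_{\widetilde{\mathcal{S}}}(gv)\,\mu\,dx$ is precisely the pairing $\langle\mathcal{A}_{\widetilde{\mathcal{S}}}(\mathcal{A}_{\widetilde{\mathcal{S}}}(|f|)\mu),\,gv\rangle$ unfolded by the self-adjointness of $\mathcal{A}_{\widetilde{\mathcal{S}}}$, so the two arguments are transposes of one another: yours pays a small extra price in duality bookkeeping but displays the symmetric roles of $u$ and $v$ and the form-type structure (useful if one only has sparse form domination rather than pointwise domination), while the paper's is shorter because the weight change is absorbed into the single norm identity $\|h\mu\|_{L^p(v)}=\|h\|_{L^p(u)}$ and \eqref{Lpw-bounds-sparse-operators} is quoted twice verbatim.
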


\begin{proof} First notice that since $\|b\|_{{\rm BMO}_{\mu}} =\sup_Q |Q|\, \Omega(b;Q)/\mu (Q) $, 
$$\mathcal{T}_{\mathcal{\widetilde{S}},b}^*|f|(x)\leq  c_d\|b\|_{{\rm BMO}_{\mu}} \mathcal{A}_{\mathcal{\widetilde{S}}}\big ( \mathcal{A}_{\mathcal{\widetilde{S}}} (|f|)\mu\big )(x),$$ 
where $\mathcal{\widetilde{S}}$ is the larger sparse family given by Lemma~\ref{lem:LOR-R}.

Taking $L^p(v)$ norm on both sides, and unfolding we conclude that
\begin{align*}
\|\mathcal{T}_{\mathcal{\widetilde{S}},b}^*|f|\|_{L^p(v)} & \leq  \;c_{d,p} \|b\|_{{\rm BMO}_{\mu}} 
\|\mathcal{A}_{\mathcal{\widetilde{S}}}\|_{L^p(v)}\|\mathcal{A}_{\mathcal{\widetilde{S}}}\|_{L^p(u)} \|f\|_{L^p(u)}\\
&\leq c_{d,p} \|b\|_{{\rm BMO}_{\mu}} \big ( [v]_{A_p}[u]_{A_p}\big )^{\max\{1,\frac{1}{p-1}\}} \|f\|_{L^p(u)},
\end{align*}
where in the last line we used the one-weight estimates on both $L^p(u)$ and $L^p(v)$ for the sparse operator
 $\mathcal{A}_{\mathcal{\widetilde{S}}}$ given that $u$ and $v$ are $A_p$ weights by assumption.  Observing that $\mathcal{T}_{\mathcal{S},b}^*|f|(x)\leq \mathcal{T}_{\mathcal{\widetilde{S}},b}^*|f|(x)$ we get the desired estimate. 
\end{proof}

Setting  $u=v=w\in A_p$, then $\mu\equiv 1$, $b\in {\rm BMO}$, and  we recover the expected one-weight  
quantitative  $L^p$ estimates for the sparse-like  operators dominating the commutator, and hence for the commutator itself, without  using extrapolation, 
$$\|\mathcal{T}_{\mathcal{S},b}|f|\|_{L^p(w)} +\|\mathcal{T}_{\mathcal{S},b}^*|f|\|_{L^p(w)}   \leq 
c_{n,p}\|b\|_{{\rm BMO}}  [w]_{A_p}^{2\max\{1,\frac{1}{p-1}\}}\|f\|_{L^p(w)}.$$

\section{Summary and recent progress}\label{sec:Recent-progress}

In these lecture notes we  have studied  weighted norm  inequalities  through the dyadic harmonic analysis lens.
We focused on classical operators such as the Hilbert transform and the maximal function, and dyadic operators such as the dyadic maximal function, the martingale transform, the dyadic square function, Haar shift multipliers,  the dyadic paraproduct, and the latest "kid in the block" the dyadic sparse operator.  To carry on our program, we  discussed  dyadic tools such as dyadic cubes (regular, random, adjacent) and Haar functions on $\R$, $\R^d$, and more generally on spaces of homogenenous type.

In this millennium  the interest shifted from qualitative weighted norm inequalities to quantitative weighted norm inequalities. New techniques were developed to obtain quantitative estimates, including Bellman function and median oscillation techniques,  quantitative extrapolation and transference theorems, corona decompositions and stopping times, representation of operators as averages of dyadic operators, and, most recently, domination by dyadic sparse operators.  One important landmark in this quest was the proof of the $A_2$ conjecture.
Some of  these techniques  are amenable to  generalizations to other settings that support dyadic structures such as spaces of homogeneous type.  

We  tried to illustrate the power of the dyadic methods studying in detail the maximal function  and  the commutator of the Hilbert transform  with a function in ${\rm BMO}$  via their dyadic counterparts,  in both cases obtaining the optimal estimates on weighted Lebesgue spaces. We  presented a self-contained Bellman function  proof of the $A_2$ conjecture for the dyadic paraproduct,  in order to illustrate these technique. We  showed how to pointwise dominate the martingale transform by sparse operators, and we presented the beautiful and simple proof of the $A_2$ conjecture for sparse operators. We illustrated the power of pointwise domination techniques by sparse-like operators through a case study:  the commutator of Calder\'on-Zygmund singular integral operators and  locally integrable functions, recovering all the quantitative weighted norm inequalities discussed in the notes, and some new ones.

The methods developed in this millennium, initially to study quantitative weighted inequalities for operators defined on $\R^d$,  have proven to be quite flexible and far reaching. 
There are extensions to {metric spaces with geometrically doubling condition}, {spaces of homogeneous type}, and beyond doubling even in a non-commutative setting of operator-valued dyadic harmonic analysis
\cite{Hyt4, NRV, KLPW, LoMaPa, DGKLWY, ThTV, CLo}. There are 
off-diagonal sharp two-weight estimates for sparse operators \cite{FaHyt}.
There are  generalizations to {matrix valued operators} \cite{IKP}, so far the best weighted $L^2$ estimates in this setting are 3/2 powers for the matrix-valued paraproducts, shift operators, and Calder\'on-Zygmund operators satisfying a Dini condition  \cite{NPetTV},  and  linear for the square function \cite{HPetV}. The validity of the $A_2$ conjecture in the matrix setting is unknown.  Two-weight estimates have been obtained for well localized operators  with matrix weights \cite{BiCuTW} and a weighted Carleson embedding theorem  with matrix weights  is known and proved using a "Bellman function with a parameter" \cite{CuT} . Researchers are busy working towards increasing our knowledge on this setting, see for example \cite{CuPetPo} where  a bilinear Carleson embedding theorem with matrix weight and scalar measure is proved using Bellman function techniques.

More importantly, out of these investigations a domination paradigm by {sparse positive dyadic operators} has emerged and proven to be very powerful with applications in many areas not only weighted inequalities. The following is a partial  and ever-growing  list of such applications to: (maximal) rough singular integrals  \cite{CuDiPOu1, CCuDiPOu, HRoTa, DiPHLi}; singular non-integral operators 
\cite{BFPet}; multilinear maximal and singular integral operators 
\cite{CuDiPOu2, LeN, BMu1, Z};  non-homogeneous spaces and  operator-valued  singular integral operators 
 \cite{CPa,VZ}; uncentered variational operators  \cite{deFZ};  variational Carleson operators  
\cite{DiPDoU};  Walsh-Fourier multipliers \cite{CuDiPLOu}; Bochner-Riesz multipliers 
\cite{BBLu, LMR, KL}; maximally truncated oscillatory  singular integral operators  
\cite{KrL1,KrL3}; spherical maximal function   \cite{L5}; Radon transform  
 \cite{Ob}; Hilbert transform along curves   \cite{ClOu}; pseudodifferential operators 
\cite{BCl}; the lattice Hardy-Littlewood maximal operator \cite{HaLo}; fractional operator with $L^{\alpha,r'}$-H\"ormander conditions \cite{IbRiVi}; Rubio de Francia's Littlewood--Paley square function \cite{GRS}. Sparse $T(1)$ theorems \cite{LM} and applications in the  discrete setting  
\cite{KrL2, KMe, CuKL} have been found as well as logarithmic bounds for  maximal sparse operators \cite{KaL}.

We are starting to understand why in certain settings this philosophy does not work. For example
very recently  it was shown that  dominating the dyadic strong maximal function by (1,1)-type sparse forms based on rectangles with sides parallel to the axes  is impossible \cite{BaCOuR}, this is in the realm of multiparameter analysis were many questions  still need to be answered. Perhaps a new type of sparse domination in this setting will have to be dreamed.

Not only the methodology is tried on each author's favorite operator, far reaching extensions and broader understanding is being gained. For example, the convex body domination paradigm  
\cite{NPetTV} shows  that if a scalar operator can be dominated by a sparse operator, then its vector version can be dominated by a convex body valued sparse operator, a transference theorem.
Similarly, 
multiple vector-valued extensions of operators  and more can be explained through the very general  helicoidal method \cite{BMu2}, yet another far-reaching transference methodology.

This is a very 
active area of research and we hope this lecture notes have helped to  impress on the reader its vitality.



\begin{thebibliography}{09}

\bibitem[Ai]{Ai} H. Aimar, 
\emph{Construction of Haar type bases on quasi-metric spaces with finite Assouad dimension}.
 Anal. Acad. Nac. Cs. Ex., F. y Nat., Buenos Aires 54, 2002.
 
 \bibitem[AiBI]{AiBI} H. Aimar, A. Bernardis, B. Iaffei, 
 \emph{Multiresolution approximations and unconditional
bases on weighted Lebesgue spaces on spaces of homogeneous type}.
 J. Approx. Theory {\bf 148}, no. 1 (2007), 12--34.
 
  \bibitem[AiBN1]{AiBN1} H. Aimar, A. Bernardis, L. Nowak,
  \emph{Dyadic Fefferman-Stein inequalities and the equivalence of Haar bases on weighted Lebesgue spaces}. Proc. Roy. Soc. Edinburgh Sect. A  {\bf 141}, no. 1 (2011),  1--21.
  
  \bibitem[AiBN2]{AiBN2} H. Aimar, A. Bernardis,  L. Nowak, 
  \emph{Equivalence of Haar bases associated with different dyadic systems}.
   J. Geom. Anal. {\bf 21}, no. 2 (2011), 288--304.

\bibitem[AiG]{AiG} H. Aimar  O. Gorosito, 
\emph{Unconditional Haar bases for Lebesgue spaces on spaces of homogeneous type}.
 Proc. SPIEE {\bf 4119}, Wavelet Applications in Signal and Image Processing VIII (2000), 556--563.

\bibitem[AiM]{AiM} H. Aimar, R. A. Mac\'ias,
\emph{Weighted norm inequalities for the Hardy-Littlewood maximal operator on spaces of homogeneous type.}
Proc. Amer. Math. Soc. {\bf 91} (2) (1984), 213--216.

\bibitem[AMi]{AMi} R. Alvarado, M. Mitrea,
\emph{Hardy spaces on Ahlfors-regular quasi metric spaces. A sharp theory}.
Springer Lecture Notes in Mathematics {\bf 2142}, 2015.

\bibitem[ABKPz]{ABKPz} J. \'Alvarez, R. J.  Bagby, D. S. Kurtz, C. P\'erez,
\emph{Weighted estimates for commutators of linear operators.} 
Studia Math. {\bf 104}(2)  (1993), 195--209.

\bibitem[AIS]{AIS} K. Astala, T. Iwaniec, E. Saksman,
\emph{Beltrami operators in the plane.}
Duke Math. J. {\bf 107}, 1 (2001), 27--56.

\bibitem[AH1]{AH} P. Auscher, T. Hyt\"onen,
\emph{Orthonormal bases of regular wavelets in spaces of homogeneous type}.
Appl. Comput. Harmon. Anal. {\bf 34}, no. 2 (2013), 266--296.

\bibitem[AH2]{AH2} P. Auscher, T. Hyt\"onen,
 \emph{Addendum to Orthonormal bases of regular wavelets in spaces of homogeneous type}.
  Appl. Comput. Harmon. Anal. {\bf 39}, no. 3 (2015), 568--569.

\bibitem[BaCOuR]{BaCOuR} A. Barron, J. Conde-Alonso, Y. Ou, G. Rey,
\emph{Sparse domination and the strong maximal function}.
Available at arXiv:1811.01243.

\bibitem[BCl]{BCl} D. Beltran, L. Cladek,
\emph{Sparse bounds for pseudodifferential operators}.
To appear in J. Anal. Math. Available at arXiv:1711.02339.

\bibitem[BBLu]{BBLu} C. Benea, F. Bernicot, T. Luque,
\emph{Sparse bilinear forms for Bochner Riesz multipliers and applications}.
Available on arXiv:1605.06401.

\bibitem[BMu1]{BMu1} C. Benea, C. Muscalu. 
\emph{Multiple vector-valued inequalities via the helicoidal method. }
Anal. PDE {\bf 9}(8) (2016), 1931--1988.

\bibitem[BMu2]{BMu2} C. Benea, C. Muscalu,
\emph{Sparse domination via the helicoidal method}.
Available at arXiv:1707.05484

\bibitem[BMN]{BMN} A. B\'enyi, D. Maldonado, V. Naibo,
\emph{What is... a Paraproduct?}.
Notices Amer. Math. Soc. {\bf 57}, no. 7 (2010), 858--860.

\bibitem[BMMST]{BMMST} A. B\'enyi, J. M. Martell, K. Moen, E. Stachura, R. Torres,
\emph{Boundedness results for commutators with BMO functions via weighted estimates: a comprehensive approach}.
Available on arXiv:1710.08515 . 

\bibitem[BFPet]{BFPet} F. Bernicot, D. Frey, S. Petermichl,
\emph{Sharp weighted norm estimates beyond Calder\'on-Zygmund theory}.
Anal. PDE {\bf 9}  (2016), 1079--1113.

\bibitem [Be1]{Be1} O. Beznosova,
\emph{Bellman functions, paraproducts, Haar multipliers, and weighted inequalities.}
PhD Dissertation, University of New Mexico, 2008.

\bibitem [Be2]{Be} O. Beznosova,
\emph{Linear bound for the dyadic paraproduct on weighted Lebesgue space $L^2(w)$}.
 J. Func. Anal. {\bf 255} (2008), 994--1007.

\bibitem [BeCMoP]{BeCMP} O. Beznosova, D. Chung, J. Moraes, M. C. Pereyra,
\emph{On two weight estimates for dyadic operators}.
Harmonic analysis, partial differential equations, complex analysis, Banach spaces, and operator theory. Vol. 2, 135--169,  Assoc. Women  Math. Ser. {\bf 5}. Springer, Cham, 2017. 

\bibitem [BeRe]{BeRe} O. Beznosova, A. Reznikov,
\emph{Equivalent definitions of dyadic Muckenhoupt and Reverse Holder classes in terms of Carleson sequences,
weak classes, and comparability of dyadic $L\log L$ and $A_\infty$ constants}
Rev. Mat. Iberoam. {\bf 30}, Issue 4 (2014), 1191--1190.
 
 \bibitem[BiCuTW]{BiCuTW} K. Bickel, A. Culiuc, S. Treil, B. Wick,
 \emph{Two weight estimates for well localized operators with matrix weights.}
 To appear Trans. Amer. Math. Soc. DOI: 10.1090/tran/7400.
 
\bibitem [Bl]{Bl} S. Bloom,
 \emph{A commutator theorem and weighted {\rm BMO}.}
 Trans. Amer. Math. Soc. {\bf 292},  no. 1 (1985), 103--122.
 
\bibitem[Bo]{Bo} J.-M.Bony,
\emph{Calcul symbolique et propagation des singularit\'es pour les \'equations aux d\'eriv\'ees partielles non lin\'eaires}. 
Ann. Sci. \'Ecole Norm. Sup. {\bf 14} (1981), 209--246.

\bibitem [Bu1]{Bu} S. M. Buckley,
\emph{Estimates for operator norms on weighted spaces and reverse Jensen inequalities}.
Trans. Amer. Math. Soc. {\bf 340}, no. 1 (1993), 253--272.

\bibitem [Bu2]{Bu2} S. M. Buckley,
\emph{Summation condition on weights.} 
Michigan Math. J. {\bf 40} (1993), 153--170.

\bibitem[Bur1]{Bur1}  D.L. Burkholder, 
\emph{Martingale transforms}.
Ann. Math. Statist. {\bf 37}, no. 6 (1966),  1494--1504.

\bibitem[Bur2]{Bur}  D.L. Burkholder, 
\emph{Boundary value problems and sharp inequalities for martingale transforms}.
 Ann. Probab. {\bf 12} (1984), 647--702.
 
 \bibitem[CWilW]{ChaWilWo} S.-Y. A. Chang, J. M. Wilson, T. H. Wolff,
 \emph{Some weighted norm inequalities concerning the Schrödinger operators}.
  Comment. Math. Helv. {\bf 60}, no. 2 (1985), 217--246.
  
 \bibitem[CLW]{CLW}
 P. Chen, J. Li, L. A. Ward,
 \emph{{\rm BMO} from dyadic {\rm BMO} via expectations on product spaces of homogeneous type}.
  J. Func. Anal. {\bf 265}, no. 10 (2013), 2420--2451.
  
 \bibitem[Chr]{Chr} M. Christ,
 \emph{A  $T(b)$ theorem with remarks on analytic capacity and the Cauchy integral,}.
 Colloq. Math. {\bf 60/61}, no. 2  (1990), 601--628.

\bibitem[Ch1]{Ch1} D. Chung,
\emph{Commutators and dyadic paraproducts on weighted Lebesgue spaces}.
PhD Dissertation, University of New Mexico 2010.

\bibitem[Ch2]{Ch2} D. Chung, \emph{Sharp estimates for the commutators of the Hilbert,
Riesz and Beurling transforms on weighted Lebesgue spaces}.
 Indiana U. Math. J. {\bf 60}, Issue 5 (2011), 1543--1588.
 
 \bibitem[Ch3]{Ch3}  D. Chung, 
 \emph{Weighted inequalities for multivariable dyadic paraproducts}. 
 Publ. Mat. {\bf 55}, no. 2 (2011), 475--499.

\bibitem[ChPPz]{ChPPz} D. Chung, M. C. Pereyra, C. P\'erez,
\emph{Sharp bounds for general commutators on weighted Lebesgue spaces}.
Trans. Amer. Math. Soc. {\bf 364} (2012), 1163--1177.

\bibitem[ClOu]{ClOu} L. Cladek, Y. Ou,
\emph{Sparse domination of Hilbert transforms along curves}.
Math. Res. Lett. {\bf 25}, no. 2 (2018), 415--436. 

\bibitem [CoFe] {CoFe} R. Coifman, C. Fefferman.
\emph{Weighted norm inequalities for maximal functions and singular integrals}.
 Studia Math. \textbf{ 51} (1974), 241--250.

\bibitem[CoM]{CM} R. R. Coifman, Y. Meyer, 
\emph{Au d\'el\`a des op\'erateurs pseudo-diff\'erentiels}. Ast\'erisque {\bf 57}, 1979.

\bibitem[CoRW]{CRW}
R. R. Coifman, R. Rochberg, G. Weiss,
\emph{Factorization theorems for Hardy spaces in several variables}.
Ann. of Math. {\bf 103} (1976), 611--635.

\bibitem[CoW]{CW}
R. R. Coifman, G. Weiss,
\emph{Analyse harmonique non-commutative sur certains espaces homogènes. Etude de certaines int\'egrales singuli\`eres.} Lecture Notes in Math. {\bf 242}, Springer-Verlag, Berlin, 1971.

\bibitem[C]{C} J. M. Conde. 
\emph{A note on dyadic coverings and nondoubling Calder\'on-Zygmund theory.}
 J. Math. Anal. Appl. {\bf 397}(2) (2013), 785--790.

\bibitem[CCuDiPOu]{CCuDiPOu} J. M. Conde-Alonso, A. Culiuc, F. Di Plinio, Y. Ou,
\emph{A sparse domination principle for rough singular integrals}.
Anal.  PDE {\bf 10}, no. 5 (2017), 1255--1284

\bibitem[CLo]{CLo} J. M. Conde-Alonso, L. D. L\'opez-S\'anchez,
\emph{Operator-valued dyadic harmonic analysis beyond doubling measures.}
Proc. Amer. Math. Soc. {\bf 144}, no. 9 (2016), 3869--3885. 

\bibitem[CPa]{CPa} J. M. Conde-Alonso, J. Parcet,
\emph{Nondoubling Calder\'on-Zygmund theory -a dyadic approach-}.
To appear in J. Fourier Anal. Appl.
Available at arXiv:1604.03711.

\bibitem[CR]{CR} J. M. Conde-Alonso, G. Rey,
\emph{A pointwise estimate for positive dyadic shifts and some applications}.
Math. Annalen {\bf 365} (3-4) (2016), 1111--1135.

\bibitem[CS1]{CS1} M. Cotlar, C. Sadosky,
\emph{On the Helson-Szeg\"{o} theorem and a related class of modified Toeplitz kernels}.
 in Harmonic Analysis in Euclidean spaces, ed. by G.Weiss and S. Wainger, Proc. Symp. Pure Math. {\bf 35},
 Amer. Math. Soc., Providence, R.I., (1979), 383--407.

\bibitem[CS2]{CS2}  M. Cotlar, C. Sadosky,
\emph{On some $L^p$ versions of the Helson-Szeg\"o theorem}. Conference on
Harmonic Analysis in honor of Antoni Zygmund, Vol. I, II (Chicago, Ill., 1981), 306--317,
Wadsworth Math. Ser., Wadsworth, Belmont, CA (1983).

\bibitem [Cr]{Cr} D. Cruz-Uribe,
\emph{Two weight norm inequalities for fractional integral operators and commutators.}
Advanced Courses of Mathematical Analysis VI (2017), 25--85.

\bibitem [CrMoe]{CrMoe} D. Cruz-Uribe,  K. Moen.
{\em Sharp norm inequalities for commutators of classical operators}.
Publ. Mat. {\bf 56} (2012), 147--190.

\bibitem [CrMPz1]{CrMPz1} D. Cruz-Uribe, J. M. Martell,  C. Per\'{e}z,
\emph{Weights, extrapolation and the theory of Rubio the Francia}.
Birkh\"{a}user, 2011.

\bibitem [CrMPz2]{CrMPz2} D. Cruz-Uribe, J. M. Martell, C. P\'erez,
{\em Sharp weighted estimates for classical operators}.
 Adv. Math. {\bf 229} (2012), 408--441.

\bibitem[CuDiPLOu]{CuDiPLOu} A. Culiuc, F. Di Plinio, M. T. Lacey, Y. Ou,
\emph{Endpoint sparse bound for Walsh-Fourier multipliers of Marcinkiewicz type.}
Submitted to Rev. Mat. Iberoam. Available at arXiv:1805.06060.

\bibitem[CuDiPOu1]{CuDiPOu1} A. Culiuc, F. Di Plinio, Y. Ou,
\emph{Uniform sparse domination of singular integrals via dyadic shifts}.
 Math. Res. Lett. {\bf 25}, no. 1 (2018), 21--42.  

\bibitem[CuDiPOu2]{CuDiPOu2} A. Culiuc, F. Di Plinio, Y. Ou,
\emph{Domination of multilinear singular integrals by positive sparse forms}.
J.  London Math. Soc. {\bf 98}, no. 2 (2018), 369--392. 

\bibitem[CuKL]{CuKL} A. Culiuc, R. Kesler, M. T. Lacey,
\emph{Sparse Bounds for the discrete cubic Hilbert transform}.
To appear in Anal. PDE.
Available at arXiv:1612:08881

\bibitem[CuPetPo]{CuPetPo} A. Culiuc, S. Petermichl, S. Pott,
\emph{A matrix weighted bilinear Carleson lemma and maximal function}. 
Available at arXiv:1811.05838.

\bibitem[CuT]{CuT} A. Culiuc, S. Treil,
\emph{The Carleson Embedding Theorem with matrix weights.}
To appear Int. Math. Res. Not.  https://doi.org/10.1093/imrn/rnx222. Available at arXiv:1508.01716. 

\bibitem[Da]{Da} G. David,
\emph{Morceaux de graphes lipschitziens et intégrales singulières sur une surface}.
 Rev. Mat. Iberoam.  {\bf 4}(1) (1988), 73--114.
 
  \bibitem[DaS]{DaS} G. David,  S. Semmes,
  \emph{A boundedness criterion for generalized Calder\'on-Zygmund operators}.
  Ann. of Math. {\bf 20} (1984), 371--397.
  
 \bibitem[DaJS]{DaJS} G. David, J.-L. Journ\'e, S. Semmes,
 \emph{Op\'erateurs de Calder\'on-Zygmund, fonctions paraaccr\'etives et interpolation.}
 Rev. Mat. Iberoam. {\bf 1}, no. 4 (1985), 1--56.
 
 \bibitem[deFZ]{deFZ} F. C. de Fran\c{c}a Silva and P. Zorin-Kranich, 
 \emph{Sparse domination of sharp variational truncations}. 
 Available at arXiv:1604.05506.
 
 \bibitem[DiPDoU]{DiPDoU} F. Di Plinio, Y. Do, G. Uraltsev. 
 \emph{Positive sparse domination of variational Carleson operators.}
 Ann. Sc. Norm. Super. Pisa Cl. Sci. (5) {\bf 18}, no. 4 (2018), 1443--1458. 
  
 \bibitem[DiPHytLi]{DiPHLi} F. Di Plinio, T. Hyt\"onen, K. Li,
 \emph{Sparse bounds for maximal rough singular integrals via the Fourier transform}.
 Submitted to Ann. Inst. Fourier.  (Grenoble). Available at arXiv 1706.07111.
 
\bibitem[DH]{DH}  D.G. Deng, Y. Han,
 \emph{Harmonic analysis on spaces of homogeneous type}.  
 Springer-Verlag 2009.

\bibitem[DGPPet] {DGPPet} O. Dragi\v{c}evi\'{c}, L. Grafakos, M. C. Pereyra, S. Petermichl,
\emph{Extrapolation and sharp norm estimates for classical operators in weighted Lebesgue spaces}.
Publ. Mat.  {\bf 49} (2005), 73--91.

\bibitem[Duo1]{Duo1} J. Duoandicoetxea,
\emph{Fourier Analysis}.
  Graduate Studies in Math. {\bf 29},  Amer.
Math. Soc., Providence, RI, 2001.
 
 \bibitem[Duo2]{Duo2} J. Duoandicoetxea,
\emph{Extrapolation of weights revisited: New proofs and sharp bounds}.
J. Func. Anal. {\bf 260}, Issue 6, 15 (2011), 1886--1901.

\bibitem[FaHyt]{FaHyt} S. Fackler, T. Hyt\"onen,
\emph{Off-diagonal sharp two-weight estimates for sparse operators}.
New York J. Math {\bf 24} (2018), 21--42.

\bibitem[DGKLWY]{DGKLWY} X. T. Duong, R. Gong, M.-J. S. Kuffner, J. Li, B. Wick, D. Yang,
\emph{Two weight commutators on spaces of homogeneous type and applications}.
Available at arXiv:1809.07942.

\bibitem [FKP]{FKP}  R. Fefferman, C. Kenig, J. Pipher,
\emph{The theory of weights and the Dirichlet problem for elliptic equations}.
 Ann. of Math. (2) {\bf 134}, no. 1 (1991), 65--124.

\bibitem[FS]{FS} C. Fefferman, E. Stein,
\emph{$H^p$ spaces of several variables}.
 Acta Math. {\bf 129}, no. 3--4 (1972), 137--193.
 
\bibitem[Fi]{Fi} T. Fiegel,
\emph{ Singular integral operators: A martingale approach, in: Geometry of Banach Spaces}. London Math. Soc. Lecture Notes Ser. {\bf 158}, Cambridge Univ. Press, 1990, 95--110.

\bibitem [Ga]{Ga} J. Garc\'ia-Cuerva,
\emph{Jos\'e Luis Rubio de Francia (1949-1988)}. 
Collect. Math. {\bf 38} (1987), 3--15.

\bibitem [GaRu]{GR} J. Garc\'ia-Cuerva and J. L. Rubio de Francia,
\emph{Weighted norm inequalities and related topics}.
 North-Holland Mathematics Studies {\bf 116}, Amsterdam, 1981.
 
 \bibitem[GRS]{GRS} R. Garg, L. Roncal, S. Shrivastava,
 \emph{Quantitative weighted estimates for Rubio de Francia's Littlewood--Paley square function}.
 Available at arxiv:1809.02937.
 
\bibitem[GJ]{GJ} J. Garnett, P. W. Jones,
\emph{{\rm BMO} from dyadic {\rm BMO}}.
Pac. J. Math. {\bf 99}, no. 2 (1982), 351--371.

\bibitem[GS]{GS} M. Girardi, W. Sweldens, 
\emph{A new class of unbalanced Haar wavelets that form an unconditional basis for $L^p$ on general measure spaces}. J. Fourier Anal. Appl. {\bf 3}, no. 4 (1997), 45--474.

\bibitem[Gr1]{Gr1} L. Grafakos,
\emph{ Classical Fourier Analysis}.
Third Edition, Graduate Texts in Math. {\bf 249}, 
Springer, New York, 2014.

\bibitem[Gr2]{Gr2} L. Grafakos,
\emph{ Modern Fourier Analysis}.
Third Edition, Graduate Texts in Math. {\bf  250}, 
Springer, New York, 2014.

\bibitem[Ha]{Ha} T. S.  H\"anninen,
\emph{Equivalence of sparse and Carleson coefficients for general sets},
Available at arXiv:1709.10457

\bibitem[HaLo]{HaLo} T. S. H\"anninen, E. Lorist,
\emph{Sparse domination for the lattice Hardy-Littlewood maximal operator}. 
Available at arXiv:1712.02592.

\bibitem [HoLW1]{HoLW1} I. Holmes, M. T. Lacey, B. Wick,
\emph{Commutators in the two-weight setting}.
Math. Ann. {\bf 367} (2017), 5--80.

\bibitem [HoLW2]{HoLW2} I. Holmes, M. T. Lacey, B. Wick,
\emph{Bloom's inequality: Commutators in a two-weight setting}.
Arch. Math. (Basel) {\bf 106}(1) (2016) 53--63.

\bibitem[HoPetW]{HoPetW} I. Holmes, S. Petermichl, B. Wick,
\emph{Weighted little bmo and two-weight inequalities for Journ\'e commutators}. 
Anal. PDE {\bf 11}, no. 7 (2018), 1693--1740.

\bibitem[HTV]{HTV} S. Hukovic, S. Treil, A. Volberg,
\emph{The Bellman function and sharp weighted inequalities for square functions.}
In ``Complex analysis, operators and related topics", Oper. Theory Adv.  Appl.
{\bf 113} (2000), 97--113, Birka\"user Basel.

\bibitem [HMW]{HMW} R. Hunt, B. Muckenhoupt, R. Wheeden
\emph{Weighted norm inequalities for the conjugate func-
tion and the Hilbert transform.}
Trans. Amer. Math. Soc. {\bf 176} (1973), 227--252.

\bibitem [Hyt1]{Hyt1} T. Hyt\"onen,
\emph{On Petermichl's dyadic shift and the Hilbert transform}.
Compt. Rend. Math. {\bf 346} (2008), 1133--1136.

\bibitem [Hyt2]{Hyt2} T. Hyt\"onen,
\emph{The sharp weighted bound for general Calder\'on-Zygmund Operators}.
Ann. of Math. (2) {\bf 175}, no. 3 (2012), 1473--1506.

\bibitem[Hyt3]{Hyt3}  T. Hyt\"onen,
\emph{Martingales and harmonic analysis}. Preprint, (2013) can be found online at 
http://www.ctr.maths.lu.se/media/MATP29/2016vt2016/maha-eng$_-$1.pdf

\bibitem[Hyt4]{Hyt4}  T. Hyt\"onen,
\emph{The two-weight inequality for the Hilbert transform with general measures}.
Proc. London Math. Soc. {\bf 117}, Issue 3 (2018), 483--526. 

\bibitem[Hyt5]{Hyt5}  T. Hyt\"onen,
\emph{The Holmes-Wick theorem on two-weight bounds for higher order commutators revisited}.
Archiv der Mathematik. {\bf 107}, 4 (2016), 389--395.

\bibitem[HytK]{HK} T. Hyt\"onen, A. Kairema,
\emph{Systems of dyadic cubes in a doubling metric space}.
Colloq. Math. {\bf 126}(1), (2012), 1--33. 

\bibitem [HytL]{HL} T. Hyt\"onen, M. T. Lacey,
\emph{The $A_p$-$A_{\infty}$ inequality for general Calder\'{o}n-Zygmund operators}.
Indiana Univ. Math. J. {\bf 61} (2012), 2041--2052.

\bibitem [HytLi]{HLi} T. Hyt\"onen, K. Li,
\emph{Weak and strong $A_p$-$A_{\infty}$ estimates for square functions and related operators}.
Proc. Amer. Math. Soc. {\bf 146}, no. 6 (2018), 2497--2507. 

\bibitem[HytMa]{HMa} T. Hyt\"onen, H. Martikainen,
\emph{ Non-homogeneous $Tb$ theorem and random dyadic cubes on metric measure spaces}.
J. Geom.Anal. {\bf 22} (4) (2012), 1071--1107.

\bibitem [HytPz]{HPz} T. Hyt\"onen, C. P\'erez,
\emph{Sharp weighted bounds involving $A_{\infty}$.}
Anal. PDE {\bf 6}, no. 4 (2013), 777-818.

\bibitem[HytPetV]{HPetV} T. Hyt\"onen, S. Petermichl, A. Volberg,
\emph{The sharp square function estimate with matrix weight.}
Available on arXiv:1702.04569

\bibitem[HytRoTa]{HRoTa} T. Hyt\"onen, L. Roncal, O. Tapiola,
\emph{Quantitative weighted estimates for rough homogeneous singular integrals}.
 Israel J. Math. {\bf 218}, no. 1  (2017), 133--164.

\bibitem[HytTa]{HTa} T. Hyt\"onen, O. Tapiola,
\emph{Almost Lipschitz-continuous wavelets in metric spaces via a new randomization of dyadic cubes}.
J. Approx. Theory {\bf 185} (2014), 12--30.

\bibitem[IbRiVi]{IbRiVi} G. H. Iba\~nez-Firnkorn, M. S. Riveros, R. E. Vidal,
\emph{Sharp bounds for fractional operator with $L^{\alpha,r'}$-H\"ormander conditions}.
Available at arXiv:1804.09631.

\bibitem [IKP]{IKP} J. Isralowitz, H. K. Kwon, S. Pott,
\emph{Matrix weighted norm inequalities for commutators and paraproducts with matrix symbols.}
J. London Math. Soc. (2) {\bf 96}, no. 1 (2017), 243--270. 

\bibitem[JN]{JN} F. John, L. Nirenberg, 
\emph{On functions of bounded mean oscillation}. 
Comm. Pure Appl. Math. {\bf 14} (1961), 415--426.

\bibitem[KLPW]{KLPW} A. Kairema, J. Li, M. C. Pereyra, L. A. Ward,
\emph{Haar bases on quasi-metric measure spaces, and dyadic structure theorems for function spaces on product spaces of homogeneous type}.
J. Func. Anal. {\bf 271}, 7 (2016), 1793--1843. 

\bibitem[KaL]{KaL} G. A. Karagulyan, M. T. Lacey,
\emph{On logarithmic bounds of maximal sparse operators.}
Available on arXiv:1802.00954.

\bibitem [KP]{KP} N. H. Katz, M. C. Pereyra,
\emph{On the two weight problem for the Hilbert transform}.
Rev. Mat. Iberoamericana {\bf 13}, 01 (1997), 211--242.

\bibitem[KL]{KL} R. Kesler,  M. T. Lacey,
\emph{Sparse endpoint estimates for Bohner-Riesz multipliers on the plane.}
Collec. Math. {\bf 69}, no. 3 (2018), 427--435.

\bibitem[KMe]{KMe} R. Kesler, D. Mena,
\emph{Uniform sparse bounds for discrete quadratic phase Hilbert transform}.
Anal. Math. Phys. (2017). https://doi.org/10.1007/s13324-017-0195-3.

\bibitem[KrL1]{KrL1} B. Krause, M. T. Lacey,
\emph{Sparse bounds for maximal monomial oscillatory Hilbert transforms}.
 Studia Math.  {\bf 242}, no. 3 (2018), 217--229. 

\bibitem[KrL2]{KrL2} B. Krause, M. T.  Lacey,
\emph{Sparse bounds for random discrete Carleson theorems.} 50 years with Hardy spaces, 317--332,
 Oper. Theory Adv. Appl. {\bf 261}, Birkh\"auser/Springer, Cham, 2018.  

\bibitem[KrL3]{KrL3} B. Krause, M. T.  Lacey,
\emph{Sparse bounds  for maximally truncated oscillatory singular integrals}.
To appear Annal. Sci. Scuola Norm. Sup. DOI Number:10.2422/2036-2145.201706$_-$023.
Available at arXiv:1701.05249.

\bibitem  [L1]{L2} M. T. Lacey,
\emph{Two weight Inequality for the Hilbert transform: A real variable characterization, II. }
Duke Math. J. {\bf 163}, no. 15 (2014), 2821--2840.

\bibitem  [L2]{L3} M. T. Lacey,
\emph{The two weight Inequality for the Hilbert transform: A Primer.}  Harmonic analysis, partial differential equations, Banach spaces, and operator theory. Vol. 2, 11--84,  Assoc. Women  Math. Ser. {\bf 5}. Springer, Cham. 2017.

\bibitem[L3]{L4} M. T. Lacey,
\emph{An elementary proof of the $A_2$ bound.}
Israel J. Math. {\bf 217} (2017), 181--195.

\bibitem[L4]{L5} M. T. Lacey,
\emph{Sparse Bounds for Spherical Maximal Functions}.
To appear in J. d'Analyse Math. Available at arXiv:1702.08594.

\bibitem [LLi1]{LLi}  M. T. Lacey, K. Li,
\emph{Two weight norm inequalities for the $g$ function.}
Math. Res. Lett. {\bf 21}, no. 03 (2014), 521--536.

\bibitem [LLi2]{LLi2}  M. T. Lacey, K. Li,
\emph{On $A_p$-$A_{\infty}$ estimates for square functions.}
Math. Z. {\bf 284} (2016), 1211--1222. 

\bibitem[LMe]{LM} M.  T. Lacey, D. Mena,
\emph{The sparse $T1$ Theorem}.
Houston J. Math. {\bf 43}, no. 1 (2017), 111--127.

\bibitem[LMeR]{LMR} M. T. Lacey, D. Mena, M. C. Reguera,
\emph{Sparse Bounds for Bochner-Riesz Multipliers}.
J. Fourier Anal.  Appl. (2017). https://doi.org/10.1007/s00041-017-9590-2.

\bibitem[LMPzTo]{LMPzTo}  M. T. Lacey, K. Moen, C. P\'erez, R. H. Torres, 
\emph{Sharp weighted bounds for fractional integral operators}.
 J. Funct. Anal. {\bf 259} (2010), 107--1097.

\bibitem [LPetR]{LPetR} M. T.  Lacey, S. Petermichl, M. C. Reguera, 
\emph{Sharp $A_2$ inequality for Haar shift operators.}
 Math. Ann. {\bf 348} (2010), 127--141. 

\bibitem [LSSU]{LSSU} M. T.  Lacey, E. Sawyer, C.-Y. Shen, I. Uriarte-Tuero,
\emph{The two weight inequality for the Hilbert transform, coronas and energy conditions.}
Duke Math. J. {\bf 163},  no.15 (2014), 2795--2820.

\bibitem[LS]{LS}  M. T. Lacey, S. Spencer,
\emph{Sparse Bounds for Oscillatory and Random Singular Integrals}.
New York J. Math. {\bf 23} (2017), 119--131.   

\bibitem[LTh]{LTh}
M. T.  Lacey, C. Thiele,
\emph{ $L^p$  bounds for the bilinear Hilbert transform}.
 Ann. of Math. {\bf 146} (1997), 693--724.

\bibitem [Le1]{Le1} A. K. Lerner,
\emph{An elementary approach to several results  on the Hardy-Littlewood maximal operator}.
Proc. Amer. Math. Soc. {\bf 136}, no. 8 (2008), 2829--2833.

\bibitem [Le2]{Le2} A. K. Lerner,
\emph{Sharp weighted norm inequalities for Littlewood-Paley operators and singular integrals}.
Adv. Math. {\bf 226} (2011), 3912--3926.

\bibitem [Le3]{Le3} A. K. Lerner,
\emph{Mixed $A_p$-$A_r$ inequalities for classical singular integrals and Littlewood-Paley operators.}
J. Geom. Anal. {\bf 23}  (2013), 1343--1354.

\bibitem[Le4]{Le4} A. K. Lerner,
\emph{On an estimate of Calder\'on-Zygmund operators by dyadic positive operators.}
 J. Anal. Math. {\bf 121} (2013), 141--161.

\bibitem[Le5]{Le6} A. K. Lerner,
\emph{A simple proof of the $A_2$ conjecture.} Int. Math. Res. Not. {\bf 14} (2013), 3159--3170.

\bibitem [Le6]{Le5}  A. K. Lerner,
\emph{On pointwise estimates involving sparse operators.}
 New York J. Math. {\bf 22} (2016), 341--349. 
 
\bibitem [LeMoe]{LeMoe} A. K. Lerner, K. Moen,
\emph{Mixed $A_p$-$A_{\infty}$ estimates with one supremum.}
Studia Math. {\bf 219},  no. 3 (2013),  247--267.

\bibitem [LeN]{LeN} A. K. Lerner, F. Nazarov,
\emph{Intuitive dyadic calculus: the basics}.
To appear  in Expo. Math. Available at arXiv:1508.05639.

\bibitem[LeOR1]{LeOR1} A. K. Lerner, S. Ombrosi, I. Rivera-R\'ios,
\emph{On pointwise and weighted estimates for commutators of Calder\'on-Zygmund operators.}
 Adv. Math. {\bf 319} (2017),  153--181.
 
 \bibitem[LeOR2]{LeOR2} A.  K. Lerner, S. Ombrosi, I. Rivera-R\'ios,
\emph{Commutators of singular integrals revisited}.
To appear in Bull. London. Math. Soc. Online Nov 2018. https://doi.org/10.1112/blms.12216

\bibitem [LoMaPa]{LoMaPa} L. D. L\'opez-S\'anchez, J. M. Martell, J. Parcet,
\emph{Dyadic harmonic analysis beyond doubling measures}.
Adv.  Math. {\bf 267} (2014), 44--93.

\bibitem[MS]{MS} R. A. Macías, C. Segovia, 
\emph{Lipschitz functions on spaces of homogeneous type}.
 Adv. Math. {\bf  33} (1979), 257--270.

\bibitem[Me]{Me} T. Mei,
\emph{{\rm BMO} is the intersection of two translates of dyadic {\rm BMO}}.
C. R. Math. Acad. Sci. Paris {\bf 336}, no. 12 (2003), 1003--1006.

\bibitem [Moe]{Moe} K. Moen,
\emph{Sharp one-weight  and two-weight bounds for maximal operators.}
Studia Math.  {\bf 194}(2) (2009), 163--180.

\bibitem [MoP]{MP} J. C. Moraes, M. C. Pereyra,
\emph{Weighted estimates for dyadic Paraproducts and $t$-Haar multiplies with complexity $(m,n)$}.
Publ. Mat. {\bf 57} (2013), 265--294.

\bibitem [Mu]{Mu} B. Muckenhoupt,
\emph{Weighted norm inequalities for the Hardy--Littlewood maximal function}.
Trans. Amer. Math. Soc. {\bf 165} (1972), 207--226.

\bibitem[MuW]{MuW} B. Muckenhoupt, R. Wheeden, 
\emph{Weighted bounded mean oscillation and the Hilbert transform.}
 Studia Math. {\bf 54} (1975/76), 221--237.
 
 \bibitem[Mul]{Mul} P. F. X. M\"uller,
 \emph{Isomorphisms between $H^1$ spaces}.
 Mathematics Institute of the Polish Academy of Sciences. Mathematical Monographs (New Series), {\bf 66}. Birkhäuser Verlag, Basel, 2005.
 

\bibitem[NPetTV]{NPetTV} F. Nazarov, S. Petermichl, S. Treil, A. Volberg,
\emph{Convex body domination and weighted estimates with matrix weights}.
Adv.  Math. {\bf 318} (2017), 279--306.

\bibitem [NRTV] {NRTV} F. Nazarov, A. Reznikov, S. Treil, A. Volberg,
\emph{A Bellman function proof of the $L^2$ bump conjecture}.
 J. Anal. Math. {\bf 121} (2013), 255--277.

\bibitem [NRV] {NRV} F. Nazarov, A. Reznikov, A. Volberg,
\emph{The proof of $A_2$ conjecture in a geometrically doubling metric space}.
 Indiana Univ. Math. J. {\bf 62}, no. 5 (2013), 1503--1533.

\bibitem[NT]{NT} F, Nazarov, S. Treil,
\emph{The hunt for a Bellman function: applications to estimates for singular integral operators and to other classical problems of harmonic analysis}.
 St. Petersburg Math. J. {\bf 8} (1997), 721--824.

\bibitem [NTV1] {NTV1} F. Nazarov, S. Treil, A. Volberg,
\emph{The Bellman functions and the two-weight inequalities for Haar multipliers}.
 J. Amer. Math. Soc. {\bf 12} (1999), 909--928.

\bibitem [NTV2] {NTV2}F. Nazarov, S. Treil, A. Volberg, 
\emph{Bellman function in stochastic optimal control and harmonic analysis (how our Bellman function got its name)}. 
Oper. Theory: Adv. Appl. {\bf 129} (2001), 393--424.

\bibitem [NTV3] {NTV3} F. Nazarov, S. Treil, A. Volberg,
\emph{The $Tb$-theorem on non-homogeneous spaces.}
 Acta Math. {\bf 190} (2003), 151--239.

\bibitem [NTV4] {NTV4} F. Nazarov, S. Treil, A. Volberg,
\emph{Two weight inequalities for individual Haar multipliers and other well localized operators.}
 Math. Res. Lett. {\bf 15}, no. 3 (2008), 583--597.

\bibitem[NTV5]{NTV5} F. Nazarov, S. Treil, A. Volberg,
\emph{Two weight estimate for the Hilbert transform and
corona decomposition for non-doubling measures}. 
Preprint 2005 posted in 2010,  available arXiv:1003.1596.

\bibitem[Ob]{Ob} R. Oberlin,
\emph{Sparse bounds for a prototypical singular Radon transform}.
Canadian Math. Bull. 12 pages. https://doi.org/10.4153/CMB-2018-007-5. 

\bibitem[Ok]{Ok} K. Okikiolu,
\emph{Characterization of subsets of rectifiable curves in $\R^n$}. J. London Math. Soc. (2) {\bf 46}, no. 2 (1992), 336--348.

\bibitem[OrPzRe]{OrPzRe}  C. Ortiz-Caraballo, C. P\'erez, E. Rela,
\emph{Improving bounds for singular operators via Sharp Reverse H\"older Inequality for $A_{\infty}$}.
Operator Theory: Advances and Applications {\bf 229} (2013), 303--321.

\bibitem [P1]{P1} M. C. Pereyra,
 \emph{Lecture notes on dyadic harmonic analysis}.
\emph{Contemp. Math.} {\bf 289} (2001), 1--60.

\bibitem[P2]{P2} M. C. Pereyra,
\emph{Weighted inequalities and dyadic harmonic analysis. }
"Excursions in harmonic analysis. Volume 2", 281--306, Appl. Numer. Harmon. Anal., Birkhauser/Springer, New York, 2013.

\bibitem[PW]{PW} M. C. Pereyra, L. A. Ward,
\emph{Harmonic Analysis: from Fourier to wavelets.}
Student Mathematical Library Series {\bf 63}, Amer. Math. Soc. 2012. 

\bibitem[Pz1]{Pz1} C. P\'erez, 
\emph{Endpoint Estimates for Commutators of Singular Integral Operators}.
 J. Func. Anal.  (1) {\bf 128} (1995), 163--185.

\bibitem[Pz2]{Pz2} C. P\'erez,
\emph{A course on singular integrals and weights.} 
Harmonic and Geometric Analysis, Advanced courses in Mathematics C.R.M. Barcelona, 
Birkauser, Basel, 2015.

\bibitem [PzR]{PzR} C. P\'erez, E. Rela,
\emph{A new quantitative two weight theorem for the Hardy-Littlewood maximal operator}.
Proc. Amer. Math. Soc. {\bf 143} (2015), 641--655.

\bibitem[PzTV]{PzTV} C. P\'erez, S. Treil, A. Volberg,
\emph{Sharp weighted estimates for dyadic shifts and the $A_2$ conjecture}.
 J. Reine Angew. Math. (Crelle's Journal) {\bf 687} (2014), 43--86.

\bibitem [Pet1]{Pet1} S. Petermichl,
\textit{Dyadic shift and a logarithmic estimate for Hankel operators with matrix symbol.}
C. R. Acad. Sci. Paris S\'er. I Math.  \textbf{330}, no. 6 (2000), 455--460.

\bibitem [Pet2]{Pet2} S. Petermichl,
 \emph{The sharp bound for the Hilbert transform on weighted Lebesgue
spaces in terms of the classical $A_p$ characteristic}.
Amer. J. of Math. {\bf 129} (2007), 1355--1375.

\bibitem [Pet3]{Pet3} S. Petermichl,
 \emph{The sharp weighted bound for the Riesz transforms}.
Proc.  Amer. Math. Soc. {\bf 136}(04) (2007), 1237--1249

\bibitem[PetPo]{PetPo} S. Petermichl, S. Pott,
\emph{An estimate for weighted Hilbert transform via square functions.}
 Trans. Amer. Math. Soc. {\bf 354} (2002), 281--305.

\bibitem [PetV]{PetV} S. Petermichl, A. Volberg,
 \emph{Heating of the Ahlfors-Beurling operator: weakly quasiregular maps on the plane are quasiregular.} Duke Math J. {\bf 112}, 2, (2002), 281--305.

\bibitem[PiW]{PiW} J. Pipher, L. A. Ward,
\emph{{\rm BMO} from dyadic {\rm BMO} on the bidisc}.
 J. London Math. Soc. {\bf 77}, no. 2 (2008), 524--544.

\bibitem[PoR]{PoR}  S. Pott, M. C. Reguera,
\emph{Sharp Bekolle estimates for the Bergman projection}.
 J. Func. Anal. {\bf 265} (12) (2013), 3233--3244.
 
\bibitem [S1]{S1} E. Sawyer,
\emph{A characterization of a two weight norm inequality for maximal functions}.
 Studia Math. {\bf 75}, no. 1 (1982), 1--11.

\bibitem[S2]{S2} E. Sawyer,
\emph{A characterization of two weight norm inequalities for fractional and Poisson integrals}.
 Trans. Amer. Math. Soc. {\bf 308}, no. 2 (1988), 533--545.

\bibitem[SW]{SW} E. Sawyer, R. L.  Wheeden,
\emph{Weighted inequalities for fractional integrals on Euclidean and homogeneous spaces}. 
Amer. J. Math.  {\bf 114}(4) (1992), 813--874.

\bibitem[St]{St} E. Stein,
\emph{Harmonic analysis : real-variable methods, orthogonality, and oscillatory integrals}.
First Edition, Princeton University Press, 1993.

\bibitem[Th]{Th} C. Thiele,
\emph{Time-Frequency Analysis in the Discrete Phase Plane}.
PhD Thesis Yale 1995.

\bibitem[ThTV]{ThTV} C. Thiele, S. Treil, A. Volberg,
\emph{Weighted martingale multipliers in the non-homogeneous setting and outer measure spaces.}
Adv. Math. {\bf 285} (2015), 1155--1188.

\bibitem [T]{T} S. Treil, 
\emph{Sharp $A_2$ estimates of Haar shifts via Bellman function.}
"Recent trends in analysis" 187--208, Theta Ser. Adv. Math., Theta, Bucharest, 2013.

\bibitem [TV]{TV} S. Treil, A. Volberg,
\emph{Wavelets and the angle between past and future}.
J. Func. Anal. {\bf 143}(2) (1997), 269--308.

\bibitem [Va]{Va} V. Vasyunin,
\emph{Cincinnati Lectures on Bellman Functions}. 
Edited by L. Slavin.  Available at arXiv:1508.07668.

\bibitem[Ve]{Ve} I. Verbitsky,
\emph{Imbedding and multiplier theorems for discrete Littlewood-Paley spaces}.
Pacific J. Math. {\bf 176}, no. 2 (1996), 529--556.

\bibitem [V]{V} A. Volberg,
\emph{Bellman function technique in Harmonic Analysis.}
 Lectures of INRIA Summer School in Antibes, Preprint (2011) 1-58, available at arXiv:1106.3899 
 
 \bibitem [VZ]{VZ} A. Volberg, P. Zorin-Kranish,
 \emph{Sparse domination on non-homogeneous spaces with an application to $A_p$ weights}.
 Rev. Mat. Iberoam. {\bf 34}, no. 3 (2018), 1401--1414.

\bibitem[Vu1]{Vu1} E. Vuorinen,
\emph{$L^p(\mu ) \to L^q(\nu)$ characterization for well localized operators.}
J. Fourier Anal. Appl.  {\bf 22}(5) (2016), 1059--1075. 

\bibitem[Vu2]{Vu2} E. Vuorinen,
\emph{Two weight $L^p$-inequalities for dyadic shifts and the dyadic square function.}
Studia Math. {\bf 237}(1) (2017), 25--56.  

\bibitem[We]{We} D. Weirich,
\emph{Weighted inequalities for dyadic operators over spaces of homogeneous type}.
PhD Dissertation, University of New Mexico, 2018.

\bibitem[Wic]{Wic} B. Wick,
\emph{Personal communication.}
April 2016.

\bibitem[Wil1]{Wil87} M. Wilson,
\emph{Weighted inequalities for the dyadic square function without dyadic $A_{\infty}$.}
Duke Math. J. {\bf 55} (1987), 19--49.

\bibitem[Wil2]{Wil2} M. Wilson,
\emph{Weighted Littlewood-Paley Theory and Exponential-Square Integrability}.
{ Lecture Notes in Mathematics, 1924. Springer, Berlin},  2008.

\bibitem [W1] {W1} J. Wittwer,
\emph{A sharp estimate on the norm of the martingale transform}.
Math. Res. Letters  {\bf 7} (2000), 1--12.

\bibitem [W2] {W2} J. Wittwer,
\emph{A sharp estimate on the norm of the continuous square function}.
Proc. Amer. Math. Soc. {\bf 130}(8) (2002), 2335--2342.

\bibitem [Z]{Z} P. Zorin-Kranish,
\emph{$A_p-A_{\infty}$ estimates for multilinear maximal and sparse operators}.
To appear in J. Anal. Math. 

\end{thebibliography}
\end{document}